\renewcommand{\baselinestretch}{1.18}
\newtheorem{thm}{Theorem}[section]
\newtheorem{lemma}[thm]{Lemma}
\newtheorem{defin}[thm]{Definition}
\newtheorem{prop}[thm]{Proposition}
\newtheorem{rmk}[thm]{Remark}
\newtheorem{example}[thm]{Example}
\newtheorem{cor}[thm]{Corollary}
\newtheorem*{unnumcor}{Corollary}
\newtheorem*{unnumlemma}{Lemma}
\newtheorem*{unnumprop}{Proposition}
\newtheorem*{unnumthm}{Theorem}
\newtheorem*{unnumrmk}{Remark}
\newtheorem*{maintheorem}{Main Theorem}
\newcommand{\resfield}{k_F}
\newcommand{\valfield}{\mathbb{C}}
\newcommand{\kapchi}{\kappa_\chi}
\newcommand{\wbar}{\bar{w}}
\newcommand{\wtrans}{t_{\lambda(w)}}
\newcommand{\torus}{T}
\newcommand{\dualtorus}{\widehat{T}}
\newcommand{\critgrpJ}{A_{w,J,\resfield}}
\newcommand{\algintF}{\mathcal{O}_F}
\newcommand{\algintR}{\mathcal{O}_r}
\newcommand{\relgrp}{S_w}
\newcommand{\relgrpJ}{S_{w,J}}
\newcommand{\dzrelgrp}{S_w^{\rm{dz}}}
\newcommand{\dzrelgrpJ}{S_{w,J}^{\rm{dz}}}
\newcommand{\chars}[1]{X^*(#1)}
\newcommand{\cochars}[1]{X_*(#1)}
\newcommand{\berncent}{\mathfrak{Z}(G)}
\newcommand{\phirone}{\phi_{r,1}}
\newcommand{\phironeaug}{\phi_{r,1}^\prime}
\title{A Combinatorial Formula for Test Functions \\ with Pro-$p$ Iwahori Level Structure}
\author{Marc Horn}
\date{\today}
\begin{document}

\maketitle

\begin{abstract}
The Test Function Conjecture due to Haines and Kottwitz predicts that the geometric Bernstein center is a source of test functions required by the Langlands-Kottwitz method for expressing the local semisimple Hasse-Weil zeta function of a Shimura variety in terms of automorphic L-functions. Haines and Rapoport found an explicit formula for such test functions in the Drinfeld case with pro-$p$ Iwahori level structure.

This article generalizes the Haines-Rapoport formula for the Drinfeld case to a broader class of split groups. The main theorem presents a new formula for test functions with pro-$p$ Iwahori level structure, which can be computed through some combinatorics on Coxeter groups. Explicit descriptions of the test function in certain low-rank general linear and symplectic group examples are included.
\end{abstract}


\section{Introduction}


The Hasse-Weil zeta function of an algebraic variety defined over a number field is an important object of study in modern number theory connected to several guiding problems. A goal of the Langlands Program is to express these zeta functions in terms of automorphic L-functions. Much can be said about the zeta functions in the case of Shimura varieties due to the contributions of many mathematicians, and in particular, the \emph{Langlands-Kottwitz method} outlines a rigorous strategy for studying local factors in the Euler product of a zeta function of a Shimura variety.

Although the Langlands-Kottwitz method and the Test Function Conjecture of Haines and Kottwitz serve as the motivation for this work, very little of the technology involved with that theory will be used in what is to come. For a complete explanation, see the survey article \cite{haines2005}, while the Test Function Conjecture is precisely stated in \cite{haines2014}, Conjecture 4.30.

We will focus instead on a single aspect of a certain identity involving the semisimple trace of Frobenius on the $\ell$-adic cohomology of a Shimura variety, which must be established in the course of following the Langlands-Kottwitz approach. Here is the formula as stated in \cite{haines2014}, Section 6.1:
$$
{\rm{tr}}^{\rm{ss}}\left(\Phi_p^r, H_c^\bullet(Sh_{K_p} \otimes_E \bar{\mathbb{Q}}_p, \bar{\mathbb{Q}}_\ell)\right) = \sum_{(\gamma_0; \gamma, \delta)} c(\gamma_0; \gamma, \delta) {\rm{O}}_\gamma(1_{K^p}) {\rm{TO}}_{\delta \theta} (\phi_r).
$$

For our purposes, we can limit our attention to the term ${\rm{TO}}_{\delta \theta} (\phi_r)$ in the trace formula, which is a twisted orbital integral defined by
$$
{\rm{TO}}_{\delta \theta}(\phi_r) = \int_{G_{\delta \theta}^\circ (F) \setminus G(F_r)} \phi_r \Big(g^{-1} \delta \theta(g)\Big) d\bar{g},
$$
where
\begin{itemize}
\item $G$ is a connected reductive group over a $p$-adic field $F$,
\item $F_r / F$ is a degree $r$ unramified extension,
\item $\theta$ generates the Galois group ${\rm{Gal}}(F_r/F)$,
\item $\delta$ is an element of $G(F_r)$ whose norm in $G(F)$ is semisimple,
\item $G_{\delta \theta} (F) = \{ g \in G(F_r) \mid g^{-1} \delta \theta (g) = \delta\}$, with identity component $G_{\delta \theta}^\circ (F)$,
\item $d\bar{g}$ is a quotient Haar measure, and
\item $\phi_r$ is a locally-constant compactly-supported $K_{p^r}$-biinvariant function on $G(F_r)$.
\end{itemize}
See \cite{haines2012}, Section 6.2, for a complete explanation of ${\rm{TO}}_{\delta \theta} (\phi_r)$.

The function $\phi_r$ is called a \emph{test function}, and it is the focal point of this article. The aforementioned Test Function Conjecture predicts that the Bernstein center of $G$ is a source of test functions that satisfy the above trace formula; however, we do not directly address the Conjecture. Instead, we consider functions defined via the Bernstein center in the case of split connected reductive groups with connected center with pro-$p$ Iwahori level structure, in which case the function is denoted $\phirone$, and then we develop a combinatorial formula for a closely related function $\phironeaug$ whose twisted orbital integrals match those of $\phirone$, that is, ${\rm{TO}}_{\delta\theta} (\phirone) = {\rm{TO}}_{\delta\theta}(\phironeaug).$

Let us conclude this introduction with some motivation for proving explicit formulas for test functions arising from the Bernstein center and an overview of previous work.

This area grew out of an effort to understand nearby cycles of Shimura varieties with parahoric level structure. At first, calculations were done using geometric methods, but eventually conjectures were formulated using functions coming from the Bernstein center. Then it became useful to have explicit versions of the test functions to compare with the geometric calculations. We refer the reader to the survey article \cite{haines2005} for additional background and history.

A test function $\phi_r = q^{r\ell(t_\mu)/2}(Z_{V_\mu} \ast 1_{K_r})$ is defined in terms of a distribution $Z_{V_\mu}$ in the Bernstein center and a level structure group $K_r$, which is a compact open subgroup of $G(F_r)$. As the subgroup $K_r$ changes, so do both the coefficients of the test function $\phi_r$ and its support. For example, suppose $K_r = G(\mathcal{O}_{F_r})$ is a hyperspecial maximal compact subgroup. Then $\phi_r$ is just $1_{K_r \varpi^\mu K_r}$. So the coefficients of the test function are 0 or 1 and the support is the double $K_r$-coset corresponding to $\mu$. If $K_r = I_r$ is an Iwahori subgroup, the situation becomes more complex. Now the test function $\phi_r$ is supported on the \emph{$\mu$-admissible set} and the coefficient of $\phi_r$ at an admissible element $w$ involves the polynomial $R_{w, t_{\lambda(w)}}(q)$ coming from Kazhdan-Lusztig theory~\cite{haines2000b}.

This paper addresses the case where $K_r = I_r^+$ is a pro-$p$ Iwahori subgroup following on the earlier work of Haines and Rapoport, which considered the Drinfeld case for this level structure \cite{haines-rapoport2012}. In this case, the coefficients of $\phi_r$ are far more complicated than in the case of Iwahori level structure and the support is stratified by elements which are products of elements in the $\mu$-admissible set with certain elements in the set of $k_r$-points of a split maximal torus.

Finally, Scholze~\cite{scholze2011} discovered explicit test function formulas in the $GL_2$ case for deeper level structure groups and subsequently opened new directions of research into the Langlands-Kottwitz method~\cite{scholze2013a}.

\emph{Acknowledgements.} This article was the author's Ph.D. thesis at the University of Maryland, College Park. I thank my advisor, Thomas Haines, for his guidance during this period.


\subsection{Summary of this paper}
\label{section::summary-of-thesis}

We highlight key definitions and results, while pointing out the various hypotheses assumed along the way. Background material can be found in Section~\ref{section::preliminaries}.

The group $G$ is a split connected reductive algebraic group with connected center defined over a $p$-adic field $F$, which admits the cases of general linear groups and general symplectic groups. Fixing a choice of Borel subgroup $B$, which we do, in turn determines a split maximal torus $T \subset B$. Now define the Iwahori subgroup $I$ to be the subgroup of $G(\mathcal{O}_F)$ whose reduction modulo $\varpi$ is $B(\resfield)$. Let $\mu$ be a dominant minuscule cocharacter of $\torus$. Given a degree $r$ unramified extension $F_r/F$, the $F_r$-points of $G$ shall be denoted $G_r$.

Let $q$ denote order of the residue field $k_F$, hence the residue field $k_r$ of $F_r$ has order $q^r$. We will often use the difference $Q_r = q^{-r/2} - q^{r/2}$ in what follows.

Our group $G$ has a dual group $\widehat{G}$ defined over $\mathbb{C}$ corresponding to the dual root datum of $G$. There exists a highest-weight representation $(r_\mu, V_\mu)$ of $\widehat{G}$ determined by our chosen $\mu$. By the theory of the stable Bernstein center $\mathfrak{Z}^{\rm{st}}(G)$, there is an element $Z_{V_\mu}$ in $\mathfrak{Z}^{\rm{st}}(G)$ that maps an infinitesimal character $(\lambda)_{\widehat{G}}$ on the Weil group $W_F$ to the semisimple trace of Frobenius on $V_\mu$ (Proposition~\ref{prop::reg-func-semisimple-trace}). Assuming the LLC+ conjecture, described in \cite{haines2014} Section 5.2, the distribution $Z_{V_\mu}$ can be viewed as an element of the usual Bernstein center $\mathfrak{Z}(G)$. All of this is tied together in Definition~\ref{defin::test-function}, which is stated for a general test function. The discussion at the start of Section~\ref{section::depth-zero-characters} specializes that definition to the case where the level structure group is the pro-unipotent radical $I_r^+$ of an Iwahori subgroup $I_r$ of $G_r$. So we come to consider the test function
$$
\phirone = q^{r \ell(t_\mu)/2} \left(Z_{V_\mu} \ast 1_{I_r^+} \right).
$$

The function $\phirone$ lies in the center of the Hecke algebra $\mathcal{H}(G_r, I_r^+)$. This algebra is related to Hecke algebras $\mathcal{H}(G_r, I_r, \rho_{\chi_r})$, each of which is determined by a depth-zero character $\chi_r$ on $\torus(\mathcal{O}_r)$ obtained by composing a depth-zero character $\chi : \torus(\mathcal{O}_F) \rightarrow \valfield^\times$ with the norm $N_r : \torus(\mathcal{O}_r) \rightarrow \torus(\mathcal{O}_F)$. Because $\torus(k_r) \cong I_r / I_r^+$, by Proposition~\ref{prop::iwahori-prop-quotient}, the character $\chi_r$ can be extended to a character $\rho_{\chi_r}$ on the Iwahori subgroup $I_r$ that is trivial on $I_r^+$. Section~\ref{section::depth-zero-characters} is devoted to objects and results, such as these, associated to depth-zero characters.

Definition~\ref{endoscopic-element-definition} builds on the LLC for Tori to associate an ``endoscopic element'' $\kappa_\chi$ in $\dualtorus(\valfield)$ to each depth-zero character $\chi$ on $\torus(\mathcal{O}_F)$. Proposition~\ref{prop::dz-endoscopic-elements-equal-kernel} characterizes these endoscopic elements as the kernel $K_{q-1}$ of the endomorphism on $\dualtorus(\valfield)$ given by $\kappa \mapsto \kappa^{q-1}$.

Section~\ref{section::first-formula} takes advantage of the work with depth-zero characters to prove
$$
\phirone = [I_r : I_r^+]^{-1} q^{r\ell(t_\mu)/2} \sum_{\xi \in \torus(k_r)^\vee} Z_{V_\mu} \ast e_\xi,
$$
where $e_\xi$ is an idempotent in the Hecke algebra $\mathcal{H}(G)$ and $\xi$ is a depth-zero character on $\torus(\mathcal{O}_r)$. It turns out that we can ignore certain terms in this sum when viewing $\phirone$ as a test function to be plugged into a twisted orbital integral. This is Lemma~\ref{lemma::orbital-intergrals-zero}:

\begin{unnumlemma}{\rm{(Haines)}}
Suppose $\xi \in \torus(k_r)^\vee$ is not a norm, that is, there is not a $\chi \in \torus(\resfield)^\vee$ such that $\xi = \chi \circ N_r$. Then all twisted orbital integrals at $\theta$-semisimple elements vanish on functions in $\mathcal{H}(G_r, I_r, \rho_\xi)$.
\end{unnumlemma}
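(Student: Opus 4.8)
The plan is to exhibit a right-translation symmetry of the twisted orbital integral that forces it to vanish, after first translating the arithmetic ``norm'' hypothesis into a statement about $\theta$-invariance of the character.

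First I would reinterpret the hypothesis. The norm map $N_r$ induces on residue fields the map $\torus(k_r) \to \torus(\resfield)$, $t \mapsto t\,\theta(t)\cdots\theta^{r-1}(t)$, where $\theta$ acts as the $q$-power Frobenius. For the split torus $\torus$ this is a product of field norms $k_r^\times \to \resfield^\times$, and by Hilbert 90 (equivalently Lang's theorem) its kernel is exactly $(1-\theta)\torus(k_r)$. A character $\xi$ has the form $\chi\circ N_r$ precisely when it is trivial on this kernel, i.e. precisely when $\xi\circ\theta=\xi$. Thus the hypothesis that $\xi$ is \emph{not} a norm is equivalent to $\xi\circ\theta\neq\xi$, and this is the form of the hypothesis I would actually use.

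Next I would exploit right translation by the Iwahori subgroup. Fix $f\in\mathcal{H}(G_r,I_r,\rho_\xi)$ and a $\theta$-semisimple $\delta$, and set $\Phi(g)=f(g^{-1}\delta\theta(g))$. For $t\in I_r$, since $\theta$ preserves $I_r$ we have $\theta(t)\in I_r$, and the $(\rho_\xi,\rho_\xi)$-equivariance of $f$ gives
$$
\Phi(gt)=f\big(t^{-1}\,g^{-1}\delta\theta(g)\,\theta(t)\big)=\rho_\xi(t)^{-1}\rho_\xi(\theta(t))\,\Phi(g).
$$
Because $\theta$ induces the Frobenius on $I_r/I_r^+\cong \torus(k_r)$ (Proposition~\ref{prop::iwahori-prop-quotient}), one has $\rho_\xi(\theta(t))=\rho_{\xi\circ\theta}(t)$, so $\Phi(gt)=\rho_{(\xi\circ\theta)\xi^{-1}}(t)\,\Phi(g)$. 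Right translation by $t$ is a measure-preserving bijection of $G_{\delta\theta}^\circ(F)\backslash G(F_r)$ onto itself ($G(F_r)$ and the reductive group $G_{\delta\theta}^\circ$ both being unimodular), whence
$$
{\rm{TO}}_{\delta\theta}(f)=\int \Phi(gt)\,d\bar g=\rho_{(\xi\circ\theta)\xi^{-1}}(t)\,{\rm{TO}}_{\delta\theta}(f).
$$

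Finally, since $\xi\circ\theta\neq\xi$ the character $(\xi\circ\theta)\xi^{-1}$ is nontrivial on $\torus(k_r)\cong I_r/I_r^+$, so I may choose $t\in I_r$ with $\rho_{(\xi\circ\theta)\xi^{-1}}(t)\neq 1$; the displayed identity then forces ${\rm{TO}}_{\delta\theta}(f)=0$. The conceptual heart of the argument is this orthogonality-type symmetry, so the main obstacle I anticipate is bookkeeping rather than ideas: verifying carefully that $\theta$ descends to the Frobenius on $I_r/I_r^+$ (so that $\rho_\xi\circ\theta=\rho_{\xi\circ\theta}$ with the correct normalization) and that right translation by $t$ genuinely preserves the quotient measure on $G_{\delta\theta}^\circ(F)\backslash G(F_r)$. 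The $\theta$-semisimplicity hypothesis enters only to guarantee that the twisted orbital integral is well defined, with the twisted orbit closed and $G_{\delta\theta}^\circ(F)$ its stabilizer; the vanishing mechanism is otherwise insensitive to the particular $\delta$.
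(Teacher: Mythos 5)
Your argument is correct, but note first that the paper does not actually prove this lemma: its ``proof'' is a citation to Lemma 10.0.4 of \cite{haines2012}, so there is no in-paper argument to compare against; what you have written is the standard vanishing argument, supplied in full, and it is in substance the argument behind the cited result. Both pillars are sound: (i) since $\torus$ is split, hence connected, Lang's theorem gives $\ker\bigl(N_r : \torus(k_r)\to\torus(\resfield)\bigr)=\{t\,\theta(t)^{-1}\}$ \emph{and} $N_r$ is surjective on $k_r$-points, so $\xi$ factors as $\chi\circ N_r$ exactly when $\xi\circ\theta=\xi$ --- do record the surjectivity explicitly, since triviality on the kernel alone only produces a character of the image of $N_r$; (ii) right translation by $t\in I_r$ preserves the quotient measure because $G(F_r)$ and $G_{\delta\theta}^\circ(F)$ are unimodular, yielding $\mathrm{TO}_{\delta\theta}(f)=\rho_{(\xi\circ\theta)\xi^{-1}}(t)^{\pm 1}\,\mathrm{TO}_{\delta\theta}(f)$ and hence vanishing. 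One small slip: with the paper's convention $f(xgy)=\rho_\xi(x)^{-1}f(g)\rho_\xi(y)^{-1}$, taking $x=t^{-1}$ and $y=\theta(t)$ gives $\Phi(gt)=\rho_\xi(t)\,\rho_\xi(\theta(t))^{-1}\,\Phi(g)$, the inverse of the factor you wrote; this is immaterial, since a character is nontrivial if and only if its inverse is. You are also right that $\theta$-semisimplicity enters only through convergence of the integral (the twisted orbit being closed), and this is genuinely needed: the identity $c=\lambda c$ with $\lambda\neq 1$ forces $c=0$ only when $c$ is finite. The remaining bookkeeping point, $\rho_\xi\circ\theta=\rho_{\xi\circ\theta}$, holds because $I_r$, $I_r^+$, and the isomorphism $I_r/I_r^+\cong\torus(k_r)$ of Proposition~\ref{prop::iwahori-prop-quotient} are all $\theta$-stable or $\theta$-equivariant, the relevant data being defined over $\mathcal{O}_F$.
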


In light of this lemma, we define a new function
$$
\phironeaug = [I_r : I_r^+]^{-1} q^{r\ell(t_\mu)/2} \sum_{\chi \in \torus(k_F)^\vee} Z_{V_\mu} \ast e_{\chi_r}
$$
whose twisted orbital integrals satisfy ${\rm{TO}}_{\delta\theta}(\phirone) = {\rm{TO}}_{\delta\theta}(\phironeaug)$. \textbf{As we shall see, the main theorem is a combinatorial formula for $\phironeaug$ rather than $\phirone$.} But because the twisted orbital integrals of these functions agree, this formula may as well be a formula for the test function.

\begin{unnumrmk}
It is possible to give a definition of $\phi_{r,1}^\prime$ that does not invoke LLC+ by using the LLC for Tori. Instead of using the distribution $Z_{V_\mu}$ to define functions $Z_{V_\mu} \ast e_{\chi_r}$ in the center of $\mathcal{H}(G_r, I_r, \rho_{\chi_r})$, we can define $Z_{V_\mu} \ast e_{\chi_r}$ to be the function in the center of that Hecke algebra which acts by semisimple trace of Frobenius on the Bernstein block of $\tilde{\chi}_r$. See also Remark~\ref{rmk::clarify-dependence-on-llc+}.
\end{unnumrmk}

Roche's theory of Hecke algebra isomorphisms shows us how to rewrite a function in the center of $\mathcal{H}(G_r, I_r, \rho_{\chi_r})$ as a sum of \emph{Bernstein functions} in the center an Iwahori-Hecke algebra associated to an endoscopic group $H_{\chi_r}$; however, we must make some assumptions about $G$ in order to apply this theory without making restrictions to ${\rm{char}}(\resfield)$. These are explained in Remark~\ref{rmk::roche-assumptions}. Haines's formula for Bernstein functions attached to dominant minuscule cocharacters leads to a more concrete formula for $\phironeaug$ by introducing Kazhdan-Lusztig $\widetilde{R}$-polynomials to the expression. The end result of Chapter 2, Proposition~\ref{prop::first-explicit-formula}, is an explicit formula for the coefficients $\phironeaug(I_r^+swI_r^+)$, with $\gamma_{N_r s}$ as in Lemma~\ref{lemma::defin-of-gamma-nrs}:

\begin{unnumprop}
Given a pair $(s,w) \in \torus(k_r) \times \widetilde{W}$, the coefficient $\phironeaug (I_r^+ sw I_r^+)$ can be rewritten as a sum over endoscopic elements in $\dualtorus(\valfield)$ which arise from depth-zero characters $\chi \in \torus(\resfield)^\vee$:
$$
\phironeaug (I_r^+ sw I_r^+) = [I_r: I_r^+]^{-1} \sum_{\kapchi \in K_{q-1}} \gamma_{N_r s}(\kapchi)^{-1} q^{r\ell(w,t_{\lambda(w)})/2} \widetilde{R}_{w,t_{\lambda(w)}}^\chi(Q_r).
$$
\end{unnumprop}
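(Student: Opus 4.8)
The plan is to begin from the definition
$$
\phironeaug = [I_r : I_r^+]^{-1} q^{r\ell(t_\mu)/2} \sum_{\chi \in \torus(\resfield)^\vee} Z_{V_\mu} \ast e_{\chi_r},
$$
evaluate both sides at the double coset $I_r^+ s w I_r^+$, and treat each summand $Z_{V_\mu} \ast e_{\chi_r}$ in turn. Each such summand is a central element of the Hecke algebra $\mathcal{H}(G_r, I_r, \rho_{\chi_r})$, where $\rho_{\chi_r}$ extends $\chi_r = \chi \circ N_r$ to $I_r$ trivially on $I_r^+$. The first task is to separate the dependence on the torus part $s$ from the dependence on the affine Weyl part $w$. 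Because $Z_{V_\mu} \ast e_{\chi_r}$ transforms by $\rho_{\chi_r}$ on both sides and $\rho_{\chi_r}$ is trivial on $I_r^+$, its value on $I_r^+ s w I_r^+$ should pull out a scalar depending only on $s$, namely the value of the relevant character at $s$. Invoking Lemma~\ref{lemma::defin-of-gamma-nrs} together with the identification of $\kapchi$ from Definition~\ref{endoscopic-element-definition}, I would rewrite this scalar as $\gamma_{N_r s}(\kapchi)^{-1}$, which is the source of that factor in the final formula.

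Next I would transport the remaining $w$-dependent quantity to the endoscopic group. Under the hypotheses recorded in Remark~\ref{rmk::roche-assumptions}, Roche's theory of Hecke-algebra isomorphisms identifies the center of $\mathcal{H}(G_r, I_r, \rho_{\chi_r})$ with the center of the Iwahori-Hecke algebra of the endoscopic group $H_{\chi_r}$, and under this identification $Z_{V_\mu} \ast e_{\chi_r}$ corresponds to the central function that acts by semisimple trace of Frobenius on the relevant Bernstein block, i.e.\ the Bernstein function attached to the minuscule cocharacter $\mu$. I would then apply Haines's explicit formula for Bernstein functions attached to dominant minuscule cocharacters, which expands this Bernstein function in the standard basis with coefficients of the shape $q^{r\ell(w, t_{\lambda(w)})/2}\widetilde{R}^\chi_{w, t_{\lambda(w)}}(Q_r)$; the superscript $\chi$ records that the Kazhdan--Lusztig $\widetilde{R}$-polynomial is computed in the affine Weyl group of $H_{\chi_r}$, and the overall power $q^{r\ell(t_\mu)/2}$ from the definition of $\phironeaug$ is absorbed into this normalized coefficient.

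Finally I would reindex the sum. By Proposition~\ref{prop::dz-endoscopic-elements-equal-kernel} the assignment $\chi \mapsto \kapchi$ carries $\torus(\resfield)^\vee$ bijectively onto the kernel $K_{q-1}$ of $\kappa \mapsto \kappa^{q-1}$ on $\dualtorus(\valfield)$, so the sum over depth-zero characters becomes a sum over $\kapchi \in K_{q-1}$. Collecting the $s$-scalar $\gamma_{N_r s}(\kapchi)^{-1}$, the transported Bernstein coefficient $q^{r\ell(w, t_{\lambda(w)})/2}\widetilde{R}^\chi_{w, t_{\lambda(w)}}(Q_r)$, and the surviving prefactor $[I_r : I_r^+]^{-1}$, this yields the stated identity.

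The step I expect to be the main obstacle is the clean factorization in the first paragraph: one must verify that the value of $Z_{V_\mu} \ast e_{\chi_r}$ on $I_r^+ s w I_r^+$ really does split as (a character value at $s$) times (the $w$-coefficient of the transported Bernstein function), and that Roche's isomorphism is compatible both with this factorization and with the length functions involved. In particular, matching $\ell(w, t_{\lambda(w)})$ in $G_r$ against the length used to define the $\widetilde{R}$-polynomial in $H_{\chi_r}$, and confirming that the normalizing powers of $q$ agree under the isomorphism, is where the careful work lies; the reindexing and the extraction of $\gamma_{N_r s}$ are comparatively formal once these compatibilities are in place.
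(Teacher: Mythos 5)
Your outline follows the paper's own proof of Proposition~\ref{prop::first-explicit-formula} almost step for step (definition of $\phironeaug$, transport through $\Psi_{\breve{\chi}_r}$, Haines's Bernstein-function formula, reindexing via Proposition~\ref{prop::dz-endoscopic-elements-equal-kernel} and Lemma~\ref{lemma::defin-of-gamma-nrs}), and the points you flag as delicate do resolve exactly as you suspect: the $s$-dependence extracts as $[I_r^+ s w I_r^+]_{\breve{\chi}_r}(sw) = \chi_r^{-1}(s)$, which requires fixing the $\varpi$-canonical extension $\breve{\chi}_r$ and the two measure normalizations ${\rm{vol}}(I_r^+) = 1$ and ${\rm{vol}}(I_{H_r}) = 1$; and the length bookkeeping works because Lemma~\ref{lemma::hecke-algebra-mapping-coefficients} sends $q^{-r\ell(w)/2}[I_r n_w I_r]_{\breve{\chi}_r}$ to $q^{-r\ell_\chi(w)/2}[I_{H_r} n_w I_{H_r}]$, so the $\ell_\chi$-powers cancel and only the ambient difference $\ell(w, t_{\lambda(w)})$ survives, using $\ell(t_\mu) = \ell(t_{\lambda(w)})$ for all $\lambda(w) \in W\mu$.

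There is, however, one genuine gap in your middle step. You assert that $Z_{V_\mu} \ast e_{\chi_r}$ corresponds under $\Psi_{\breve{\chi}_r}$ to \emph{the} Bernstein function attached to $\mu$. That is false in general: $\mu$ is dominant and minuscule with respect to $\Phi$, but typically not with respect to $\Phi_\chi$, and the image is the finite sum $\sum_{\mu_\chi} z_{\mu_\chi, r}$ over the dominant (for $\Phi_\chi$) representatives $\mu_\chi$ of the $W_\chi$-orbits of those $\nu \in W\mu$ with $\nu(\kapchi) = 1$ --- this is Lemma~\ref{lemma::image-of-phi-r-chi}(2), which rests on the orbit decomposition of ${\rm{Wt}}(\chi)$ in Lemma~\ref{lemma::chi-weights-disjoint-union} and is proved by matching scalars on the unramified principal series of $H_{\chi_r}$. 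To then read off a \emph{single} coefficient $q^{r\ell(w,t_{\lambda(w)})/2}\widetilde{R}^\chi_{w,t_{\lambda(w)}}(Q_r)$ at a fixed $w$, you need to know that the supports ${\rm{Adm}}_{H_{\chi_r}}(\mu_\chi^i)$ of the several Bernstein functions are pairwise disjoint in $\widetilde{W}$ (Lemma~\ref{lemma:mu-chi-admissible-sets-disjoint}, via uniqueness of the length-zero components $\sigma_\lambda$), so that at most one summand contributes at $w$ and the translation appearing in the $\widetilde{R}$-polynomial is forced to be $t_{\lambda(w)}$. Without the orbit decomposition and this disjointness, the single-polynomial form of the stated coefficient is not justified; your proposal never mentions either ingredient, and this is precisely the content that makes the paper's proof go through.
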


The formula of Proposition~\ref{prop::first-explicit-formula} defines an element of the Hecke algebra $\mathcal{H}(G_r, I_r^+)$, and so there is no doubt that the function exists, subject to the various hypotheses in place. On the other hand, the purpose for considering this function involves the conjectural existence of a distribution $Z_{V_\mu}$ in the stable Bernstein center. When $G=GL_n$, this distribution is known to exist and embeds into $\mathfrak{Z}(G)$. Hence in at least one important example of a split connected reductive group with connected center, the function $\phirone$ appearing in the Test Function Conjecture exists, and its twisted orbital integrals agree with those of the function $\phironeaug$, whose coefficients are specified by the formula in Proposition~\ref{prop::first-explicit-formula}.

Chapter 3 begins the process of simplifying this formula. The first simplification comes from studying the set of endoscopic elements $\kapchi$ such that the functions $Z_{V_\mu} \ast e_{\chi_r}(w) \neq 0$ for a fixed $w \in \widetilde{W}$. An element $\kappa$ in $\dualtorus(\valfield)$ is ``relevant'' to $w = t_\lambda \bar{w}$ if $\lambda(\kappa) = 1$ and $\bar{w}\kappa = \kappa$. Then we prove in Proposition~\ref{prop::relevant-group}:

\begin{unnumprop}
The elements $\kappa \in \dualtorus(\valfield)$ relevant to a fixed $w \in \widetilde{W}$ form a closed subgroup called the $\textbf{relevant subgroup}$ $S_w$.
\end{unnumprop}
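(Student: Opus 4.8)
The plan is to exhibit $\relgrp$ as the common kernel of two homomorphisms of algebraic groups, so that both the subgroup property and closure come for free. Recall that $\dualtorus(\valfield)$ is an algebraic torus over $\valfield$; in particular it is abelian, and the finite Weyl group $W$ acts on it by algebraic automorphisms dual to its action on $\torus$. Writing $w = \wtrans\,\wbar$ with $\lambda = \lambda(w) \in \cochars{\torus}$ and $\wbar \in W$, the translation cocharacter $\lambda$ is by definition an element of $\cochars{\torus} = \chars{\dualtorus}$, hence a character
$$
\lambda \colon \dualtorus(\valfield) \longrightarrow \valfield^\times,
$$
and the two relevance conditions read $\lambda(\kappa) = 1$ and $\wbar(\kappa) = \kappa$.

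First I would dispose of the condition $\lambda(\kappa)=1$: since $\lambda$ is a homomorphism of algebraic groups, the locus $\{\kappa : \lambda(\kappa) = 1\}$ is exactly $\ker\lambda$, a Zariski-closed subgroup of $\dualtorus(\valfield)$. For the second condition I would introduce the ``twisted difference'' map
$$
f_{\wbar} \colon \dualtorus(\valfield) \longrightarrow \dualtorus(\valfield), \qquad \kappa \longmapsto \wbar(\kappa)\,\kappa^{-1}.
$$
The one point that genuinely requires checking---and the only place where commutativity of $\dualtorus$ is used---is that $f_{\wbar}$ is itself a group homomorphism: for $\kappa_1,\kappa_2 \in \dualtorus(\valfield)$ one has $f_{\wbar}(\kappa_1\kappa_2) = \wbar(\kappa_1)\wbar(\kappa_2)\kappa_2^{-1}\kappa_1^{-1}$, which equals $f_{\wbar}(\kappa_1)f_{\wbar}(\kappa_2)$ precisely because $\dualtorus(\valfield)$ is abelian. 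As $\wbar$ acts algebraically, $f_{\wbar}$ is a morphism of algebraic groups, so $\{\kappa : \wbar(\kappa) = \kappa\} = \ker f_{\wbar}$ is the fixed-point subgroup $\dualtorus(\valfield)^{\wbar}$, again Zariski-closed.

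Finally I would conclude by intersecting: $\relgrp = \ker\lambda \cap \ker f_{\wbar}$ is an intersection of two closed subgroups, hence itself a closed (indeed algebraic) subgroup of $\dualtorus(\valfield)$, automatically containing the identity and stable under multiplication and inversion. I do not expect any real obstacle beyond the commutativity observation above; alternatively one can package both conditions into the single homomorphism $\kappa \mapsto \bigl(\lambda(\kappa),\,\wbar(\kappa)\kappa^{-1}\bigr)$ into $\valfield^\times \times \dualtorus(\valfield)$ and read off $\relgrp$ as its kernel in one step.
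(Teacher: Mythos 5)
Your proof is correct and takes essentially the same approach as the paper: both realize $S_w$ as the intersection of $\ker\lambda$ with the fixed-point set of $\wbar$, each of which is Zariski-closed in $\dualtorus(\valfield)$. The only difference is cosmetic --- the paper verifies the subgroup axioms by direct element-wise computation, while you get them for free by recognizing the fixed-point locus as the kernel of the twisted-difference homomorphism $\kappa \mapsto \wbar(\kappa)\kappa^{-1}$, which is a homomorphism precisely because $\dualtorus(\valfield)$ is abelian.
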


We subsequently define a subgroup $S_{w,J} \subseteq S_w$ for any root sub-system $J$ of the ambient system $\Phi(G,T)$ in Definition~\ref{defin::dzrelgrpJ}. The $S_{w,J}$ are diagonalizable algebraic subgroups of $\dualtorus$ defined over $\mathbb{C}$, hence it is natural to consider their character groups $\chars{S_{w,J}}$. Section~\ref{section::lattices-in-char-groups} culminates in the definition of a lattice $L_{w,J} \subset \chars{\dualtorus}$ such that $\chars{S_{w,J}} = \chars{\dualtorus}/L_{w,J}$.

Whereas the groups $S_{w,J}$ are (infinite) algebraic groups, the groups needed for the formula are finite subgroups $\dzrelgrpJ = S_{w,J} \cap K_{q-1}$ of $\dualtorus(\valfield)$. In Section~\ref{section::finite-critical-grps}, we define a finite group $A_{w,J, k_F} \subset \torus(k_F)$ using the lattice $L_{w,J}$. Together, this data describes what happens to a certain sum that will appear in the proof of the main theorem. This is Proposition~\ref{prop::sum-over-group}:

\begin{unnumprop}
Let $s \in \torus(k_r)$, and define $\gamma_{N_r s}$ as above. Then
$$\sum_{\kapchi \in \dzrelgrpJ} \gamma_{N_r s}(\kapchi)^{-1} = \begin{cases}
0, & {\rm{if}}\ N_r(s) \notin A_{w, J, k_F} \\
\vert \dzrelgrpJ \vert, & {\rm{otherwise}}.
\end{cases}
$$
\end{unnumprop}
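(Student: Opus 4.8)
The plan is to read the left-hand side as a sum of character values over the finite abelian group $\dzrelgrpJ$ and to invoke the orthogonality relations, reducing everything to the question of when $\gamma_{N_r s}$ restricts to the trivial character on $\dzrelgrpJ$. The essential input is a perfect duality between $\torus(\resfield)$ and $K_{q-1}$, under which the single lattice $L_{w,J}$ simultaneously cuts out $\dzrelgrpJ$ inside $K_{q-1}$ and $\critgrpJ$ inside $\torus(\resfield)$.

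First I would recall from Lemma~\ref{lemma::defin-of-gamma-nrs} that, with $N_r(s) \in \torus(\resfield)$ fixed, the assignment $\kapchi \mapsto \gamma_{N_r s}(\kapchi)$ is a homomorphism $K_{q-1} \to \valfield^\times$. Concretely it is the value of the canonical pairing between $\torus(\resfield) \cong \cochars{\torus}/(q-1)\cochars{\torus}$ and $K_{q-1} = \mathrm{Hom}(\cochars{\torus}, \mu_{q-1}) \cong \chars{\torus}/(q-1)\chars{\torus}$, the latter pairing being the reduction modulo $q-1$ of the tautological pairing $\cochars{\torus} \times \chars{\torus} \to \mathbb{Z}$ composed with a fixed isomorphism $\resfield^\times \cong \mu_{q-1}$. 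Since $\gamma_{N_r s}^{-1}$ is then a character of the finite abelian group $\dzrelgrpJ$, the orthogonality relation gives that $\sum_{\kapchi \in \dzrelgrpJ} \gamma_{N_r s}(\kapchi)^{-1}$ equals $\vert \dzrelgrpJ \vert$ if $\gamma_{N_r s}$ is trivial on $\dzrelgrpJ$ and vanishes otherwise. It therefore remains only to prove that $\gamma_{N_r s}$ is trivial on $\dzrelgrpJ$ if and only if $N_r(s) \in \critgrpJ$.

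For this final equivalence I would unwind both definitions against the pairing. From $\chars{\relgrpJ} = \chars{\dualtorus}/L_{w,J}$ with $L_{w,J} \subseteq \chars{\dualtorus} = \cochars{\torus}$, a point $\kappa \in \dualtorus(\valfield)$ lies in $\relgrpJ(\valfield)$ exactly when $\kappa(\lambda) = 1$ for all $\lambda \in L_{w,J}$; intersecting with $K_{q-1}$ and noting that any $\kappa \in K_{q-1}$ automatically kills $(q-1)\cochars{\torus}$, one sees that $\dzrelgrpJ$ is precisely the annihilator in $K_{q-1}$ of the image $\bar{L}_{w,J}$ of $L_{w,J}$ in $\torus(\resfield)$. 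By its definition via the lattice $L_{w,J}$, the group $\critgrpJ$ is exactly this image $\bar{L}_{w,J}$ (equivalently, the full annihilator of $\dzrelgrpJ$ in $\torus(\resfield)$). Because the reduced pairing on $\cochars{\torus}/(q-1)\cochars{\torus}$ and $\chars{\torus}/(q-1)\chars{\torus}$ is nondegenerate, the double-annihilator identity for subgroups of a finite abelian group yields $(\dzrelgrpJ)^{\perp} = (\bar{L}_{w,J}^{\perp})^{\perp} = \bar{L}_{w,J} = \critgrpJ$. Hence $N_r(s)$ pairs trivially with all of $\dzrelgrpJ$, i.e. $\gamma_{N_r s}$ is trivial on $\dzrelgrpJ$, if and only if $N_r(s) \in \critgrpJ$, which is the desired dichotomy.

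The step I expect to be the main obstacle is not the orthogonality itself but the verification that all of these identifications are compatible: that the pairing $\torus(\resfield) \times K_{q-1} \to \mu_{q-1}$ is genuinely perfect after reduction modulo $q-1$, that $\gamma_{N_r s}$ really is this pairing evaluated against $N_r(s)$ (the content of Lemma~\ref{lemma::defin-of-gamma-nrs}), and that $\critgrpJ$ has been defined as exactly the image $\bar{L}_{w,J}$, so that the clean equality $(\dzrelgrpJ)^{\perp} = \critgrpJ$ holds. Once these compatibilities are in place, the dichotomy follows formally from character orthogonality.
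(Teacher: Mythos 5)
Your proof is correct and takes essentially the same route as the paper's: character orthogonality over the finite abelian group $\dzrelgrpJ$ (which the paper establishes by the standard translation trick with a $\kappa_0$ satisfying $\gamma_{N_r s}(\kappa_0)\neq 1$), combined with the duality between $\dzrelgrpJ$ and the finite critical group $\critgrpJ$. The only difference is one of explicitness: the paper asserts the equivalence that $\gamma_{N_r s}$ is nontrivial on $\dzrelgrpJ$ if and only if $N_r(s)\notin \critgrpJ$, leaning implicitly on the preceding lemma (that $\kapchi\in\dzrelgrpJ$ iff $\chi$ is trivial on $\critgrpJ$) together with unstated finite duality, whereas you supply the harder ``only if'' direction in full via the perfect pairing $\torus(\resfield)\times K_{q-1}\to\mu_{q-1}$ and the double-annihilator identity, which is a legitimate and complete justification of the step the paper glosses over.
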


The second simplification of the formula for $\phironeaug$ comes from the theory of the $\widetilde{R}$-polynomials, defined by Kazhdan and Lusztig, using a formula for these polynomials due to Dyer based on the Bruhat graph and reflection orderings. This is the content of Chapter 4.  Section~\ref{section::reflection-orderings} defines the notion of a reflection ordering $\prec$ on the positive roots of the root system $\Phi$ associated to the Weyl group $W$. The set of paths $u\stackrel{\Delta}{\longrightarrow} v$, for $u$ and $v$ in $W$, through the Bruhat graph whose edges are increasing with respect to $\prec$ is denoted $B_\Phi^\prec(u,v)$. This information determines the $\widetilde{R}$-polynomials according to Theorem~\ref{dyer-R-polynomial-formula}:

\begin{unnumthm}{\rm{(Dyer)}}
Let $\widetilde{W}_{\chi}$ be the extended affine Weyl group of $H_{\chi_r}$, and let $\prec$ be a reflection ordering on $W_{\chi, \rm{aff}}$. Let $Q_r = q^{-r/2} - q^{r/2}$. For any $u, v \in \widetilde{W}_{\chi}$ such that $u \leq_\chi v$ in Bruhat order,
$$\widetilde{R}^\chi_{u,v} (Q_r) = \sum_{\Delta \in B_{\Phi_{\chi, \rm{aff}}}^\prec(u, v)} Q_r^{\ell(\Delta)}.$$
\end{unnumthm}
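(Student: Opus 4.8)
The plan is to reduce the statement to Dyer's combinatorial formula for $\widetilde{R}$-polynomials in an ordinary Coxeter system and then transport that formula across the decomposition $\widetilde{W}_{\chi} = W_{\chi,\mathrm{aff}} \rtimes \Omega_\chi$, where $\Omega_\chi$ is the finite group of length-zero elements stabilizing the base alcove. In the Coxeter setting, Dyer's theorem asserts that for any reflection ordering on the reflections of a Coxeter group and any $x \leq y$, the generating function $\sum_{\Delta} Q^{\ell(\Delta)}$ over $\prec$-increasing Bruhat-graph paths $x \to y$ is independent of the chosen ordering and equals $\widetilde{R}_{x,y}(Q)$ as a polynomial in $Q$. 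I would first recall the proof so as to pin down conventions: one checks that the path-counting generating function satisfies the defining two-term recursion of the $\widetilde{R}$-polynomials, the reflection-ordering axiom on rank-two reflection subgroups being exactly what makes the edge-by-edge decomposition of paths compatible with that recursion, and the independence from $\prec$ emerging from the same analysis. Since the asserted identity is the equality of a polynomial in $Q$ with a path sum, evaluating at the specific value $Q = Q_r$ is harmless and there is no sign or parity subtlety to address.

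Next I would confirm that the data $(W_{\chi,\mathrm{aff}}, \Phi_{\chi,\mathrm{aff}}, \prec)$ meet Dyer's hypotheses: the affine Weyl group $W_{\chi,\mathrm{aff}}$ of the endoscopic group $H_{\chi_r}$ is a genuine Coxeter group, $\Phi_{\chi,\mathrm{aff}}$ is its affine root system, and $\prec$ is a bona fide reflection ordering on the positive affine roots. With this in place, Dyer's theorem applies verbatim to any pair $x \leq y$ lying in $W_{\chi,\mathrm{aff}}$.

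The substantive step is the passage to the extended group. Writing $u = w_1\omega$ and $v = w_2\omega'$ with $w_i \in W_{\chi,\mathrm{aff}}$ and $\omega,\omega' \in \Omega_\chi$, I note that the reflections of $\widetilde{W}_\chi$ lie in $W_{\chi,\mathrm{aff}}$, so both $\widetilde{R}^\chi_{u,v}$ and $B^\prec_{\Phi_{\chi,\mathrm{aff}}}(u,v)$ vanish unless $\omega = \omega'$; hence I may assume $v = w_2\omega$. Conjugation by $\omega$ is a diagram automorphism of the Coxeter system: it permutes the affine simple reflections, sends reflections to reflections, and preserves length. Right multiplication $w \mapsto w\omega$ therefore identifies the Bruhat graph on the coset $W_{\chi,\mathrm{aff}}\,\omega$ with that on $W_{\chi,\mathrm{aff}}$, relabeling each edge's reflection by $\omega$-conjugation, and it matches $\leq_\chi$ with ordinary Bruhat order and $\widetilde{R}^\chi_{u,v}$ with $\widetilde{R}_{w_1,w_2}$. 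The one point demanding care is that this relabeling carries $\prec$ to its $\omega$-conjugate $\prec^\omega$, which may differ from $\prec$; here the ordering-independence half of Dyer's theorem is essential, as it permits computing $\widetilde{R}_{w_1,w_2}$ with respect to $\prec^\omega$. Then $\prec$-increasing paths $u \to v$ correspond bijectively and length-preservingly to $\prec^\omega$-increasing paths $w_1 \to w_2$, and Dyer's theorem gives $\widetilde{R}^\chi_{u,v}(Q_r) = \widetilde{R}_{w_1,w_2}(Q_r) = \sum_\Delta Q_r^{\ell(\Delta)}$, as claimed. I expect the main obstacle to be the clean bookkeeping of the $\Omega_\chi$-action — verifying that it preserves reflection orderings and that the extended Bruhat order and $\widetilde{R}$-polynomials restrict to a single $\Omega_\chi$-coset as asserted — rather than any new combinatorics beyond Dyer's.
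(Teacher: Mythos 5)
Your proposal is correct, but it takes a visibly different route from the paper at the two points where any work is actually required, so a comparison is worthwhile. The paper's proof of this theorem is a bare citation of Theorem 5.3.4 of Bj{\"o}rner--Brenti applied to $\mathcal{H}(H_{\chi_r}, I_{H_r})$ viewed as a twisted affine Hecke algebra; the real content sits just before the citation, where the paper reconciles conventions: its $\widetilde{R}^\chi$-polynomials are defined by the Hecke inversion formula in the variable $Q_r = q^{-r/2} - q^{r/2}$, the \emph{negative} of Bj{\"o}rner--Brenti's variable $\hat{Q}_r = q^{r/2} - q^{-r/2}$, and the paper devotes a lemma to proving $(-1)^{\ell(u)}(-1)^{\ell(v)}\widetilde{R}_{u,v}(Q_r) = \widetilde{R}_{u,v}(\hat{Q}_r)$, using the fact that all path lengths in $B^{\prec}(u,v)$ share the same parity. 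So your blanket claim that ``there is no sign or parity subtlety to address'' is too quick as stated; it becomes true only once you carry out the convention check you promise. Concretely, in the paper's normalization one computes $\tilde{T}_{s,r}^{-1} = \tilde{T}_{s,r} + Q_r$, so the Hecke-defined $\widetilde{R}^\chi$ satisfies $\widetilde{R}^\chi_{u,v} = \widetilde{R}^\chi_{us,vs} + Q_r\,\widetilde{R}^\chi_{u,vs}$ for $s \in D_R(v)\setminus D_R(u)$ (and $\widetilde{R}^\chi_{u,v} = \widetilde{R}^\chi_{us,vs}$ for $s \in D_R(u) \cap D_R(v)$) --- formally the same recursion, with the same initial conditions, that Bj{\"o}rner--Brenti's $\widetilde{R}$ satisfies in $\hat{Q}_r$ --- whence the two families coincide as abstract polynomials and evaluation at $Q_r$ is indeed harmless, exactly as you assert; your recursion-matching thus replaces the paper's sign-and-parity lemma and is arguably cleaner, since no sign ever appears. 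On the passage from $W_{\chi,\mathrm{aff}}$ to $\widetilde{W}_\chi$, you are more careful than the paper, which leaves this step implicit in the phrase ``twisted affine Hecke algebra'': your reduction to a single $\Omega_\chi$-coset (both sides vanish unless the length-zero components agree), and especially your observation that translation by $\omega \in \Omega_\chi$ conjugates the reflection ordering to $\prec^{\omega}$, so that the ordering-independence half of Dyer's theorem is genuinely needed, are correct and fill a gap the paper glosses over. One bookkeeping refinement: the twisted tensor product structure first yields $\widetilde{R}^\chi_{x\omega,\,w\omega} = \widetilde{R}_{\omega^{-1}x\omega,\,\omega^{-1}w\omega}$, and one then invokes invariance of $\widetilde{R}$ under the diagram automorphism induced by $\omega$ to land on $\widetilde{R}_{x,w}$ --- the same mechanism you already deploy for the conjugated ordering.
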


The main objective of Chapter 4 is to rewrite this formula for use in the proof of the main theorem. Suppose we start with an element $w$ in the extended affine Weyl group $\widetilde{W}$. Such an element has the form $w = t_\lambda \bar{w}$ where $t_\lambda$ is the translation element for the coweight $\lambda$ and $\bar{w}$ is an element of the finite Weyl group. When we want to emphasize that a translation element is the ``translation part'' of some $w \in \widetilde{W}$, we write $t_{\lambda(w)}$. Now suppose further that $w$ is $\mu$-admissible: Haines and Pettet~\cite{haines-pettet2002} showed that such elements satisfy $w \leq t_{\lambda(w)}$ in the Bruhat order on $\widetilde{W}$. Proposition~\ref{prop::finite-reflections-in-interval} shows that for any $\prec$-increasing path from $w$ to $t_{\lambda(w)}$, all edges of the path correspond to \emph{finite} reflections:

\begin{unnumprop}
Let $\mu$ be a dominant minuscule coweight of $\Phi$, and let $(W,S)$ be the finite Weyl group of $\Phi$ inside the affine Weyl group $(W_{\rm aff}, S_{\rm aff})$. Let $T$ be the set of reflections in $W$.

Consider a $\mu$-admissible element $w \leq t_{\lambda (w)}$.  There exists a length-zero element $\sigma$ in $\widetilde{W}$ such that $w, t_{\lambda(w)} \in \sigma W_{\rm{aff}}$. Let $w \stackrel{\Delta}\longrightarrow t_{\lambda (w)}$ be any path in the Bruhat graph $\Omega_{(W_{\rm{aff}}, S_{\rm{aff}})}$. Each reflection in the edge set $E(\Delta) = \{t_1, \ldots t_n\}$ belongs to $T$.
\end{unnumprop}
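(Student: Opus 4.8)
The plan is to pass to the action of $\widetilde{W}$ on the apartment $V = \cochars{\torus}\otimes\mathbb{R}$ and reduce the claim to a single statement about the image of the origin. Write each $x\in\widetilde{W}$ as $x=t_{x(0)}\bar{x}$, so that $x(0)\in\cochars{\torus}$ is the image of the origin; note $w(0)=t_{\lambda(w)}(0)=\lambda(w)$, since $\bar{w}$ fixes $0$. Along an increasing path $w=x_0\to x_1\to\cdots\to x_n=t_{\lambda(w)}$ in the Bruhat graph we have $x_i=x_{i-1}t_i$ for a reflection $t_i=s_{\beta,k}$, and since $s_{\beta,k}(0)=k\beta^\vee$ a direct computation gives
\[
x_i(0)-x_{i-1}(0)=x_{i-1}\big(s_{\beta,k}(0)\big)-x_{i-1}(0)=k\,\bar{x}_{i-1}(\beta^\vee).
\]
As $\bar{x}_{i-1}(\beta^\vee)\neq 0$, this difference vanishes exactly when $k=0$, i.e.\ exactly when $t_i=s_{\beta,0}=s_\beta\in T$ is a finite reflection. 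Hence $E(\Delta)\subseteq T$ is equivalent to $x\mapsto x(0)$ being constant along the path; and because the path is increasing, each $x_i\in[w,t_{\lambda(w)}]$. It therefore suffices to prove the geometric statement $(\star)$: every $x\in[w,t_{\lambda(w)}]$ satisfies $x(0)=\lambda(w)$.

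The heart of the argument is a shared-vertex observation. The origin is a special vertex of the fundamental alcove $\mathbf{a}$, so $\lambda(w)=w(0)=t_{\lambda(w)}(0)$ is a common vertex of the alcoves $w\mathbf{a}$ and $t_{\lambda(w)}\mathbf{a}$. I claim every affine hyperplane $H$ separating these two alcoves passes through $\lambda(w)$: the point $\lambda(w)$ lies in the closure of both alcoves, so it cannot lie strictly on either side of $H$, forcing $\lambda(w)\in H$. Now fix $x\in[w,t_{\lambda(w)}]$ and extend $w\le x\le t_{\lambda(w)}$ to a maximal chain in the interval. Each covering step increases length by one, so it passes between wall-adjacent alcoves; since every step strictly increases the number of walls separating the current alcove from $\mathbf{a}$, no wall is crossed twice, and the resulting gallery from $w\mathbf{a}$ to $t_{\lambda(w)}\mathbf{a}$ is minimal and crosses precisely the separating hyperplanes. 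By the previous observation all of these pass through $\lambda(w)$.

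It remains to propagate the vertex along this minimal gallery. If $y_{j}=r_{j}\,y_{j-1}$ is a covering step, where $r_{j}$ is the reflection in the crossed wall $H_j\ni\lambda(w)$, then $r_j(\lambda(w))=\lambda(w)$ and $y_j(0)=r_j\big(y_{j-1}(0)\big)$; starting from $y_0=w$ with $w(0)=\lambda(w)$, induction gives $y_j(0)=\lambda(w)$ for all $j$, and in particular $x(0)=\lambda(w)$. This establishes $(\star)$, and together with the first paragraph it shows that every edge reflection of every path is finite, i.e.\ $E(\Delta)\subseteq T$. Note that minusculeness and admissibility enter \emph{only} through the hypothesis $w\le t_{\lambda(w)}$ supplied by Haines--Pettet; the rest of the argument is insensitive to $\mu$.

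The step I expect to require the most care is the claim that a maximal chain in $[w,t_{\lambda(w)}]$ realizes a \emph{minimal} gallery. This rests on the standard equivalences ``covering relation $\leftrightarrow$ wall-adjacent alcoves'' and ``$\ell(x)$ equals the number of walls separating $x\mathbf{a}$ from $\mathbf{a}$'', together with the observation that a strictly length-increasing gallery can cross no wall twice, since a second crossing would move the alcove back toward $\mathbf{a}$ and decrease the length. I would state and verify these facts about the alcove model of $W_{\mathrm{aff}}$ explicitly, because the reduction to the separating hyperplanes through $\lambda(w)$ relies entirely on the gallery being non-backtracking.
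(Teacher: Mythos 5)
Your opening reduction is sound: the computation $x_i(0)-x_{i-1}(0)=k\,\bar{x}_{i-1}(\beta^\vee)$ is correct, an edge is finite exactly when the image of the origin is unchanged, every vertex of an increasing path lies in $[w,t_{\lambda(w)}]$, and any hyperplane separating the closures of $w\mathcal{C}$ and $t_{\lambda(w)}\mathcal{C}$ must pass through their common point $\lambda(w)$ --- all of this matches the geometric mechanism of the paper's proof. The genuine gap is in the step you yourself flagged: the ``standard equivalence'' you invoke, covering relation $\leftrightarrow$ wall-adjacent alcoves, is false. Wall-adjacency of alcoves corresponds to coverings in the \emph{weak} order (right multiplication by a simple reflection), not in Bruhat order. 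A Bruhat covering $y \lessdot ry$ only says $r$ is a reflection with $\ell(ry)=\ell(y)+1$; the hyperplane $H_r$ need not be a wall of the alcove $y\mathcal{C}$. Already in the finite parabolic of type $A_2$ one has the covering $s_1 \lessdot s_2s_1$, where the chambers $s_1\mathcal{C}$ and $s_2s_1\mathcal{C}$ are \emph{not} adjacent: they lie on opposite sides of all three hyperplanes $H_{\alpha_1}$, $H_{\alpha_2}$, $H_{\alpha_1+\alpha_2}$, while the connecting reflection is $s_2$. Consequently, the alcove sequence arising from a maximal chain in $[w,t_{\lambda(w)}]$ is not a gallery at all; the argument ``strictly increasing length $\Rightarrow$ no wall crossed twice $\Rightarrow$ minimal gallery $\Rightarrow$ crossed walls $=$ separating walls'' never gets off the ground. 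Indeed, as the example shows, a single Bruhat edge can change sides with respect to several hyperplanes simultaneously, so even the containment ``crossed walls $\subseteq$ separating walls'' cannot be obtained this way. Since everything downstream (all crossed walls contain $\lambda(w)$, hence the reflections fix $\lambda(w)$, hence $y_j(0)$ is constant) depends on this step, your proof of $(\star)$ is incomplete.

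For comparison, the paper never passes to galleries or maximal chains. Working directly with the given path $w < wt_1 < \cdots < wt_1\cdots t_n = t_{\lambda(w)}$, it argues that each hyperplane $H_i$ crossed between consecutive path alcoves weakly separates $w\mathcal{C}$ from $t_{\lambda(w)}\mathcal{C}$, and since $\lambda(w)$ lies in the closure of both end alcoves this forces $\lambda(w)\in H_i$; thus each conjugated reflection ${}^{wt_1\cdots t_{i-1}}t_i$ fixes $\lambda(w)$. It then converts this fixed-point statement algebraically, using $w^{-1}=\bar{w}^{-1}t_{-\lambda}$, into $t_1\cdots t_i\cdots t_1(0)=0$, so each $t_1\cdots t_i\cdots t_1$ has trivial translation part and lies in $W$, and an easy induction yields $t_i\in T$. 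To repair your argument you need a per-edge statement of this kind (or an induction showing that every alcove along the path keeps $\lambda(w)$ in its closure), rather than a minimal-gallery argument, which is unavailable for Bruhat chains.
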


As a consequence of this proposition, we see that $B_{\Phi(G,T)_{\rm{aff}}}^\prec(w, t_{\lambda(w)})$ has a $\prec$-preserving bijection to a certain interval $B_{\Phi(G,T)}^\prec(w_\lambda^{-1}\wbar, w_\lambda^{-1})$ whose members are paths with edges only in the \emph{finite} Weyl group. See Proposition~\ref{bruhat-interval-isomorphism}. Moreover, for each path $\Delta \in B_{\Phi(G,T)}^\prec(w, t_{\lambda(w)})$, Lemma~\ref{path-root-system-lemma} shows how to construct a root system $J_\Delta \subset \Phi(G,T)$.

All of this comes together in a ``stratified'' version of Dyer's formula, presented in Corollary~\ref{cor::finite-interval-r-polynomial}, which applies to the polynomials $\widetilde{R}_{w, t_{\lambda(w)}}^J(Q_r)$, defined in Chapter~3 (see discussion following Lemma~\ref{lemma::equality-on-strata}):

\begin{unnumcor}
Let $w = t_\lambda \bar{w} \in {\rm{Adm}}_{G_r}(\mu)$ and $J \subseteq \Phi$. Then
$$
\widetilde{R}_{w,t_\lambda}^J (Q_r) = \sum_{J^\prime \subseteq J}\; \sum_{\substack{\Delta \in B_\Phi^\prec (w_\lambda^{-1}\wbar, w_\lambda^{-1})\\ J_\Delta = J^\prime}} Q_r^{\ell(\Delta)}.
$$
\end{unnumcor}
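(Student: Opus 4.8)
The plan is to deduce the stratified formula directly from Dyer's theorem by tracking how passage from the ambient affine system to a root sub-system interacts with the path combinatorics. I would begin by recalling that, by the definition of the stratified polynomials given in the discussion following Lemma~\ref{lemma::equality-on-strata}, one has $\widetilde{R}_{w,t_\lambda}^J(Q_r) = \widetilde{R}_{w,t_\lambda}^\chi(Q_r)$ for any depth-zero character $\chi$ whose endoscopic group $H_{\chi_r}$ has finite root system $\Phi_\chi = J$; the content of the equality-on-strata lemma is exactly that the right-hand side depends only on the stratum $J$ and not on the particular $\chi$. I would then invoke Dyer's formula (Theorem~\ref{dyer-R-polynomial-formula}) for this endoscopic group, writing $\widetilde{R}_{w,t_\lambda}^\chi(Q_r)$ as the sum of $Q_r^{\ell(\Delta)}$ over $\prec$-increasing Bruhat paths $\Delta$ from $w$ to $t_\lambda$ inside the affine system $\Phi_{\chi,\mathrm{aff}}$, which here is the affinization $J_{\mathrm{aff}}$ of $J$.

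The next step converts this affine sum into a finite one. Since $w = t_\lambda\wbar$ is $\mu$-admissible, Proposition~\ref{prop::finite-reflections-in-interval} applies inside $J_{\mathrm{aff}}$ and shows that every edge of such a path corresponds to a finite reflection; the bijection of Proposition~\ref{bruhat-interval-isomorphism} then carries $B_{J_{\mathrm{aff}}}^\prec(w,t_\lambda)$ onto the finite interval $B_J^\prec(w_\lambda^{-1}\wbar, w_\lambda^{-1})$ by a $\prec$-preserving, length-preserving bijection. I would emphasize that the endpoints $w_\lambda^{-1}\wbar$ and $w_\lambda^{-1}$ are determined by $w$ and $\lambda = \lambda(w)$ alone, hence are the same finite Weyl elements for every $J$, so that as $J$ varies these paths all lie in the single ambient interval $B_\Phi^\prec(w_\lambda^{-1}\wbar, w_\lambda^{-1})$ and differ only in which reflections they use.

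Finally I would identify $B_J^\prec(w_\lambda^{-1}\wbar, w_\lambda^{-1})$ with the set of paths $\Delta \in B_\Phi^\prec(w_\lambda^{-1}\wbar, w_\lambda^{-1})$ satisfying $J_\Delta \subseteq J$. The key inputs are that a reflection ordering on $\Phi$ restricts to a reflection ordering on the reflection sub-system $J$ (so that ``$\prec$-increasing'' is unambiguous and preserved under the inclusion), together with Lemma~\ref{path-root-system-lemma}, which attaches to each path the root system $J_\Delta$ generated by its edge reflections: a path uses only reflections of $J$ precisely when $J_\Delta \subseteq J$. Partitioning the resulting sum according to the value $J_\Delta = J'$ then yields the claimed double sum over $J' \subseteq J$.

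The main obstacle I anticipate is the identification in the last paragraph, namely that restricting to the sub-system $J$ is compatible with both the reflection ordering and the assignment $\Delta \mapsto J_\Delta$. Confirming that the $\prec$-increasing paths in $\Phi$ with all edges in $J$ are exactly the $\prec$-increasing paths in $J$ requires the behaviour of reflection orderings under passage to reflection subgroups, and one must check that the system $J_\Delta$ produced by Lemma~\ref{path-root-system-lemma} is insensitive to whether $\Delta$ is read inside $J$ or inside the full $\Phi$. A secondary point, which the stratification set-up of Chapter~3 must supply, is the membership $w \in \widetilde{W}_\chi$ together with the transfer of $\mu$-admissibility into each endoscopic extended affine Weyl group, since this is what legitimizes applying Proposition~\ref{prop::finite-reflections-in-interval} inside $J_{\mathrm{aff}}$ in the first place.
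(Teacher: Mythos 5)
Your proposal is correct and takes essentially the same route as the paper: the paper likewise applies Dyer's theorem with respect to the reflection subgroup $(W_J,\Sigma_J)$ and stratifies by $J_\Delta$ (Proposition~\ref{stratified-r-polynomial-formula}, resting on Lemma~\ref{path-root-system-lemma}), passes between $B_J^\prec$ and $B_\Phi^\prec$ stratum-by-stratum via Lemma~\ref{invariance-of-paths} --- whose proof is exactly the full-subgraph property of reflection subgroups (Theorem~\ref{thm::bruhat-graph-of-reflection-subgroups}) that you identified as the main obstacle --- and converts affine paths to finite ones using Propositions~\ref{prop::finite-reflections-in-interval} and \ref{bruhat-interval-isomorphism}. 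The only difference is the order of operations (you apply the interval isomorphism before enlarging $B_J^\prec$ to $B_\Phi^\prec$, the paper after), which is immaterial because the bijection of Proposition~\ref{bruhat-interval-isomorphism} preserves edge sets and hence the assignment $\Delta \mapsto J_\Delta$.
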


Finally, in Chapter 5 we come to the main result. Several lemmas employ the results from Chapters 3 and 4 to rewrite our original formula for $\phironeaug(I_r^+ sw I_r^+)$. The proof of the theorem requires the following assumptions on $G$ (see Remark~\ref{rmk::roche-assumptions}),
\begin{enumerate}
\item $G$ is a split connected reductive group with connected center,
\item The derived group $G_{\rm{der}}$ is simply-connected, and
\item $W_\chi = W_\chi^\circ$.
\end{enumerate}
For $w \in \widetilde{W}$, $s \in \torus(k_r)$ and $J \subseteq \Phi$, define a symbol $\delta (s, w, J)$ by
$$
\delta(s, w, J) = \begin{cases}
\ 0, & {\rm{if}}\ w\notin {\rm{Adm}}_{G_r}(\mu) \\
\ 0, & {\rm{if}}\ w \in {\rm{Adm}}_{G_r}(\mu)\ {\rm{and}}\ N_r(s) \notin A_{w, J, k_F} \\
\ 1, & {\rm{if}}\ w \in {\rm{Adm}}_{G_r}(\mu)\ {\rm{and}}\ N_r(s) \in A_{w, J, k_F}.
\end{cases}
$$
Let $S_{w, J_\Delta}^{\rm{tors}}$ be the torsion subgroup of $S_{w, J_\Delta}$. Then Theorem~\ref{main-theorem} ties everything together:

\begin{maintheorem}
Let $w \in \widetilde{W}$ and $s \in \torus(k_r)$. Let $d$ be the rank of $T$. Fix a reflection ordering $\prec$ on $\Phi$, and set $c(\Delta) = \left[\ell(w, t_\mu) - \ell(\Delta)\right]/2$. The coefficient of $\phironeaug$ for the $I_r^+$-double coset of $(s,w)$ is given by
$$
(-1)^d \sum_{\Delta \in B_\Phi^{\prec} (w_\lambda^{-1}\wbar, w_\lambda^{-1})} \delta(s, w,J_\Delta) \vert S_{w,J_\Delta}^{\rm{tors}} \cap K_{q-1}\vert (q-1)^{d-{\rm{rank}}(J_\Delta) -1} q^{r c(\Delta)} (1-q^r)^{\ell(\Delta)-d}.
$$
\end{maintheorem}

Corollary~\ref{cor::drinfeld-case} shows how to recover the formula for the Drinfeld case obtained by Haines and Rapoport as a special case of the Main Theorem. Section~\ref{section::implementation-remarks} follows this with remarks on using the formula for calculations and a description of how the formula can be implemented in software. We conclude with two sections that discuss features of some data gathered by computer for cases where $G$ is a general linear group or a general symplectic group. Tables located in the Appendix present the data in full.


\subsection{Preliminaries}
\label{section::preliminaries}

We conclude this Introduction by introducing some terms and notation concerning reductive algebraic groups defined over non-archimedean local fields. As the main purpose of this section is to introduce notation and supplementary facts, almost all of the details are left out, but references are given.

Several unrelated concepts will be represented by similar symbols in this paper, for example, we use $W$ for a finite Weyl group and $W_F$ for a Weil group of a local field $F$; we use $T$ for both a split maximal torus of $G$ and the set of reflections in a Weyl group; and an Iwahori subgroup of an algebraic group $G$ defined over $F$ is denoted $I$, while the inertia subgroup of a Galois group $\rm{Gal}(\bar{F}/F)$ is denoted $I_F$.

\subsubsection{$p$-adic fields}

The following material on $p$-adic fields and related ideas has been drawn from the book by Serre~\cite{serre1979} and the article by Tate~\cite{tate1979} found in the Corvallis proceedings.

The symbol $F$ shall always refer to a non-archimedean local field, which is sometimes also referred to as a \textbf{$p$-adic field}. Let $\mathcal{O}_F$ and $k_F$ denote its ring of integers and its residue field, respectively. The cardinality of $k_F$ shall be denoted $q$. Fix an algebraic closure $\bar{F} \supset F$. The \textbf{Galois group} ${\rm{Gal}}(\bar{F}/F)$ is the profinite topological group of automorphisms of $\bar{F}$ which fix $F$.

Let $\varpi$ be a uniformizer of $\mathcal{O}_F$. Given an element $x = u\varpi^n$, for $u$ a unit, define ${\rm{val}}(x) = n$. A standard convention is to set ${\rm{val}}(0) = \infty$. Thus we get a map ${\rm{val}}: \mathcal{O}_F \rightarrow \{\mathbb{N}, \infty\}$.

For each $r \in \mathbb{N}$, there exists a degree $r$ \textbf{unramified extension} $F_r \supset F$ (see~\cite{serre1979}, III.5), which is unique inside $\bar{F}$. Given such an extension, the algebraic integers inside $F_r$ are denoted $\mathcal{O}_r$ and its residue field is written $k_r$. Its Galois group ${\rm{Gal}}(F_r/F)$ is isomorphic to $\mathbb{Z}/r\mathbb{Z}$. Define the \textbf{norm map} $N_r : F_r \rightarrow F$ by
$$
N_r (z) = \prod_{i = 0}^{r-1} \theta^i(z),
$$
where $\theta$ is a generator of the cyclic group ${\rm{Gal}}(F_r/F)$.

The following definition of a \textbf{Weil group} is drawn from \cite{tate1979}, Section 1.4.1. A Weil group for $F$ is a group $W_F$ embedded in ${\rm{Gal}}(\bar{F}/F)$ whose closure is the Galois group itself. Let $\hat{k} = \cup_{E/F} k_E$ be the union of all residue fields for finite extensions $E/F$. Then $W_F$ consists of the elements which act by Frobenius on $\hat{k}$, i.e., $x \mapsto x^{q^n}$, for $x \in \hat{k}$ and some $n \in \mathbb{Z}$. The Weil group is generated by a \textbf{geometric Frobenius element} $\Phi_F$ and fits into a short exact sequence
$$
1 \longrightarrow I_F \longrightarrow W_F \longrightarrow \mathbb{Z} \longrightarrow 1,
$$
where $I_F$ is the \textbf{inertia subgroup}. It is a fact that a Weil group exists for all $p$-adic fields, and moreover, this group is unique up to isomorphism. One of the consequences of Local Class Field Theory (see \cite{serre1979}, XIII) is the existence of a \textbf{reciprocity map} $\tau_F : W_F \rightarrow F^\ast$. For any extension $E/F$, there is an isomorphism $r_E: E^\ast \rightarrow W_E^{\rm{ab}}$.

Tate goes on to define the \textbf{Weil-Deligne group} $W_F^\prime$ associated to $W_F$ in~\cite{tate1979}, Definition 4.1.1. The representation theory of Weil-Deligne groups is a major feature of the Local Langlands Conjecture, but we will not need the details in what follows.

\subsubsection{Root systems}

We briefly introduce the notions of a root system and its Weyl group, the latter of which is a type of Coxeter group. Much more will be said about Coxeter groups in Section~\ref{section::coxeter-group-background}. There are many excellent references for this subject, such as Humphreys~\cite{humphreys1990}, Bourbaki~\cite{bourbaki4-6}, and the more recent book by Bj{\"o}rner and Brenti\cite{bjorner-brenti2005}, which focuses on combinatorics.

Let $V$ be a Euclidean space endowed with an inner product $\langle\ ,\ \rangle$. Given a vector $\alpha \in V$, define the reflection $s_\alpha$ with respect to the hyperplane in $V$ orthogonal to $\alpha$. That is,
$$
s_\alpha (x) = x - \frac{2\langle x, \alpha\rangle}{\langle \alpha,\alpha\rangle}\alpha
$$
A \textbf{root system} $\Phi$ is a finite set of vectors $\alpha \in V$ such that $\Phi \cap \mathbb{R}\alpha = \{\alpha, -\alpha\}$ and $s_\alpha (\Phi) = \Phi$. A root system can be partitioned into a disjoint union of positive and negative roots, written $\Phi = \Phi^+ \cup \Phi^-$, by choosing a basis $\Delta \subset \Phi$ whose corresponding reflections generate the group $W = \langle s_\alpha \mid \alpha \in \Phi \rangle$, which is called the \textbf{Weyl group} of $\Phi$. All Coxeter groups admit a partial ordering called the Bruhat order. Furthermore, Coxeter groups have a length function $\ell$. A difference of lengths $\ell(v) - \ell(u)$ is sometimes written $\ell(u,v)$ as in~\cite{bjorner-brenti2005}.

Given a root $\alpha \in \Phi$, define its \textbf{coroot} $\alpha^\vee$ by
$$
\alpha^\vee = \frac{2\alpha}{\langle \alpha, \alpha\rangle}.
$$
The set of such coroots forms the \textbf{dual root system} $\Phi^\vee$.

A \textbf{weight} of $\Phi$ is a vector $\lambda \in V$ such that $\langle \lambda, \alpha^\vee\rangle \in \mathbb{Z}$ for all $\alpha \in \Phi$. The set of weights forms a lattice, which we sometimes denote $X$. A $\textbf{coweight}$ of $\Phi$ is $\eta \in V$ such that $\langle\eta, \alpha\rangle \in \mathbb{Z}$ for all $\alpha \in \Phi$; these too form a lattice, sometimes denoted $Y$.

Given an irreducible root system $\Phi$, there is an associated \textbf{affine root system} $\Phi_{\rm{aff}}$ which has a corresponding Coxeter group $W_{\rm{aff}}$ called the \textbf{affine Weyl group}. Suppose $(W,S)$ is the (finite) Weyl group of $\Phi$. Let $\alpha_0$ denote the highest root of $\Phi$, and let $s_0 = t_{\alpha_0^\vee} s_{\alpha_0}$. Then the set of Coxeter generators $S_{\rm{aff}}$ of the affine Weyl group is the union of $S \cup \{s_0\}$.

The affine Weyl group can be further enlarged to the \textbf{extended affine Weyl group} $\widetilde{W}$, defined as the semidirect product $\widetilde{W} = Y \rtimes W$. See Macdonald's book~\cite{macdonald2003}, Section 2.1, for additional details.

Reflections define orthogonal hyperplanes in $V$ which chop the space up into \textbf{alcoves}, as described in \cite{bourbaki4-6}, Chapter 5. A choice of basis for $\Phi$ determines a Weyl chamber of $V$, and we choose the \textbf{fundamental alcove} $\mathcal{C}$ as the alcove in this chamber whose closure contains the origin. In the situation laid out in Section~\ref{section::summary-of-thesis}, consider the apartment corresponding to $T$ in the Bruhat-Tits building of $G$. (See \cite{tits1979} for more information about buildings. ) Our choice of Borel subgroup $B \supset T$  determines a basis of the root system $\Phi(G, T)$, and $\mathcal{C}$ is the unique alcove in the $B$-positive Weyl chamber inside the apartment whose closure contains the origin. The affine Weyl group is generated by reflections through the walls of $\mathcal{C}$. Let $\Omega[\mathcal{C}]$ denote the subgroup of $\widetilde{W}$ which stabilizes $\mathcal{C}$. Then we get a second realization of $\widetilde{W}$ as the semidirect product $W_{\rm{aff}} \rtimes \Omega[\mathcal{C}]$.

\subsubsection{Reductive algebraic groups over $p$-adic fields}

So far we have encountered Galois and Weil groups, along with several variants of Weyl groups. We conclude these preliminaries by giving some definitions pertaining to linear algebraic groups, i.e., Zariski closed subgroups of a general linear group viewed as an algebraic variety, focusing on the case of (split) reductive groups defined over local fields. For the general theory of linear algebraic groups, see for example the books by Borel~\cite{borel1991} and Humphreys~\cite{humphreys1975}. The structure theory of reductive groups over local fields is highly developed; see the survey by Tits~\cite{tits1979} as a starting point.

Let $G$ denote a connected reductive algebraic group that is split over $F$. Its group of $F$-rational points, denoted $G(F)$, has a neighborhood basis of compact open subgroups; and moreover, $G(F)$ is unimodular, hence we may speak of a choice of Haar measure on $G(F)$.

As in Section~\ref{section::summary-of-thesis}, we fix a Borel subgroup $B$ and let $T$ denote the split maximal torus inside $B$. The pair $(G,T)$ determines a root system $\Phi = \Phi(G,T)$ whose positive roots are denoted $\Phi^+$. Let $U$ denote the unipotent radical of $B$. Then
$$
B = TU = T \prod_{\alpha \in \Phi^+} U_\alpha,
$$
where $U_\alpha \subset G(\mathcal{O}_F)$ is normalized by $T$; see \cite{tits1979} Section 1. The \textbf{Iwahori subgroup} with respect to this configuration is the subgroup of $G(\mathcal{O}_F)$ that maps onto $B(\resfield)$ via the ``mod $\varpi$'' map. The unique pro-unipotent subgroup of $I$ is called its \textbf{pro-unipotent radical} $I^+$. The subgroup $I^+$ is a pro-$p$ group.

The \textbf{character group} of $\torus$ is $\chars{\torus} = {\rm{Hom}}_F(\torus, \mathbb{G}_m)$. This group can be thought of as the weight lattice of $\Phi(G,T)$. On the other hand, the \textbf{cocharacter group} $\cochars{\torus} = {\rm{Hom}}_F(\mathbb{G}_m, \torus)$ is the coweight lattice of $\Phi(G,T)$. A cocharacter $\lambda \in X_* (T)$ is \emph{dominant} if $\langle \lambda, \alpha \rangle \ge 0$ for all $\alpha \in \Phi^+$, and it is \emph{minuscule} if $\langle \lambda, \alpha \rangle \in \{ -1, 0, 1\}$. (See also~\cite{bourbaki4-6}, VI.1 Exercise 24.) Throughout this paper, the letter $\mu$ will be reserved for a dominant, minuscule cocharacter in $\cochars{\torus}$ with respect to $\Phi(G,T)$. The set of dominant cocharacters is written $\cochars{\torus}_{\rm{dom}}$.

Our torus $T$ determines a unique \textbf{dual torus} $\dualtorus$ defined over $\mathbb{C}$ whose character group $\chars{\dualtorus}$ is the free abelian group $\cochars{T}$.

Taking the view of a split connected algebraic group $G$ over $F$ as an $\mathcal{O}_F$-affine group scheme, we obtain data related to the above for each unramified extension $F_r/F$. We write $G_r$ for $G(F_r)$, $I_r$ for the corresponding Iwahori subgroup, etc.

Given an unramified extension $F_r/F$, there is a norm $N_r : F_r \rightarrow F$ as described above. The Galois action of ${\rm{Gal}}(F_r/F)$ on $F_r$ can be extended to an action of the group on $\torus(\mathcal{O}_r)$. The resulting map is also written $N_r : \torus(\mathcal{O}_r) \rightarrow \torus(\mathcal{O}_F)$. Observe that for each character $\xi : \torus(\mathcal{O}_F) \rightarrow \valfield^\times$, we may use the norm to get a new character $\xi_r = \xi \circ N_r$. This is an important type of character on $T(\mathcal{O}_r)$.

\section{Test functions with pro-$p$ Iwahori level structure}
\label{chapter::test-functions}

As stated in the Introduction, our main result is a new formula for the coefficients of a function $\phironeaug$ whose twisted orbital integrals agree with those of the test function $\phirone$ with pro-$p$ Iwahori level structure, at least in the cases of general linear groups and general symplectic groups. This chapter summarizes the results needed to define test functions as they arise from the Bernstein center, before going on to develop a first explicit formula for $\phironeaug$ using various data about depth-zero characters. Subsequent chapters will translate this version of the formula into something based on the combinatorics of Coxeter groups.



\subsection{Background on representation theory}
\label{section::background-rep-theory}

This section collects definitions and results concerning smooth representations of reductive algebraic groups defined over a non-archimedean local field. We also give some background on  representations of Weil groups, which involves defining the Langlands dual group of $G$, in order to explain the Local Langlands Correspondence.

For reference, we recommend the Fields Institute book~\cite{cunningham-nevins2009}, which introduces smooth representations of $p$-adic groups, representations of Weil groups, and the Local Langlands Correspondence in a single volume.

\subsubsection{Smooth representations of $p$-adic groups}

In this section, let $G$ be a connected reductive algebraic group defined over a $p$-adic field $F$. Let $V$ be a complex vector space. 

\begin{defin}
A \textbf{smooth representation} of $G(F)$ is a homomorphism \\ 
${\pi: G(F) \rightarrow Aut(V)}$
such that for every $v \in V$ there exists a compact open subgroup $K \subset G(F)$ such that $v \in V^K$, where
$$
V^K = \{v \in V \mid \pi(k)\cdot v = v, \forall k \in K\}.
$$
The category of smooth representations of $G$ is denoted $\mathfrak{R}(G)$.
\end{defin}

Let $C_c^\infty (G)$ denote the space of locally constant, compactly supported functions $f : G(F) \rightarrow \mathbb{C}$. If $K$ is a compact open subgroup of $G(F)$, let $\mathcal{H}(G,K)$ denote the $K$-biinvariant functions in $C_c^\infty$, that is, $f \in \mathcal{H}(G,K)$ satisfies
$$
f(k_1 x k_2) = f(x),\ {\rm{for}}\ k_1, k_2 \in K\ {\rm{and}}\ x \in G.
$$
The \textbf{Hecke algebra} $\mathcal{H}(G)$ is the union $\cup_K \mathcal{H}(G,K)$ ranging over all compact open subgroups $K$ of $G(F)$, equipped with a convolution integral:
$$
(f \ast h)(x) = \int_G f(g)h(g^{-1}x) dg,
$$
where $dx$ is a fixed normalization of Haar measure. It is well-known that $\mathfrak{R}(G)$ is equivalent to the category of non-degenerate left $\mathcal{H}(G)$-modules.

\begin{defin}
Let $G$ be a connected reductive group with maximal torus $\torus$, and let $I$ be the Iwahori subgroup as specified above. The \textbf{Iwahori-Hecke algebra} $\mathcal{H}(G, I)$ is the set of functions $f \in C_c^\infty(G)$ which are invariant on $I - I$-double cosets, with an algebra structure given by convolution.
\end{defin}

Let $P = MN$ be a Levi decomposition of a parabolic subgroup $P$ of $G(F)$. Given a representation $(\sigma,V)$ of $M$, we have an \textbf{induced representation} $\rm{Ind}_P^G (\sigma)$ in $\mathfrak{R}(G)$. The representation space of $\rm{Ind}_P^G (\sigma)$ is
$$
\{f: G \rightarrow V \mid f(hg) = \sigma(h) f(g),\: \forall h \in P, g \in G\}
 $$
The case where $P$ is a Borel subgroup, in which case it is denoted $B$, is particularly important. Here $B = TU$ for a maximal torus $\torus$, and so forming induced representations is a method for producing smooth representations of $G$ from characters on a torus. In fact, we generally consider \emph{normalized} induced representations, where $\sigma$ is twisted by a square-root of the modulus function $\delta_P$. We write $i_P^G (\sigma) = \rm{Ind}_P^G(\delta_P^{1/2}\sigma)$.

The category $\mathfrak{R}(G)$ can be further understood in terms of induced representations arising from supercuspidal representations of Levi subgroups. (This is part of Harish-Chandra's philosophy of cusp forms.) Much of the following terminology comes from the theory of Bushnell-Kutzko types~\cite{bushnell-kutzko1997}, though our presentation mostly follows~\cite{roche2009}, Section 1.7.

Let $M$ be a Levi subgroup of $G$, and let $\sigma$ be a supercuspidal representation of $M$. The pair $(M, \sigma)$ is called a \textbf{cuspidal pair}. There is a conjugacy relation on cuspidal pairs: Given $g \in G$, let $L = gMg^{-1}$ and define ${^g}\sigma$ by ${^g}\sigma(x) = \sigma(g^{-1}x g)$; then the resulting cuspidal pair $(L, {^g}\sigma)$ belongs to $(M, \sigma)_G$, the $G$-conjugacy class of $(M, \sigma)$. It turns out to be more useful to consider the coarser \textbf{inertial equivalence} relation: The pairs $(M, \sigma)$ and $(L, \tau)$ are equivalent if there exists $g \in G$ such that $L = gMg^{-1}$ and $\tau \cong {^g}\sigma \otimes \eta$ for some unramified character $\eta$ on the group $L$. Let $\mathfrak{s} = [M, \sigma]_G$ denote an inertial equivalence class, and call the set of all inertial equivalence classes for $G$, denoted $\mathfrak{B}(G)$, the \textbf{Bernstein spectrum}.

Each inertial equivalence class $\mathfrak{s}$ gives rise to a full subcategory $\mathfrak{R}_{\mathfrak{s}} (G)$ of $\mathfrak{R}(G)$. The objects are described in terms of subquotients of induced representations. Specifically, let $\Pi$ be a smooth representation of $G$. Then $\Pi \in \mathfrak{R}_{\mathfrak{s}} (G)$ if and only if every irreducible subquotient $\pi$ of $\Pi$ has inertial support equal to $\mathfrak{s}$, i.e., if there is a cuspidal pair $(M, \sigma) \in \mathfrak{s}$ such that $\pi$ is a subquotient of $i_P^G(\sigma\eta)$ for $P = MN$ a Levi decomposition and $\eta$ an unramified character on $M$.

\begin{thm} \emph{(Bernstein Decomposition)}
The category of smooth representations of $G$ decomposes as
$$
\mathfrak{R}(G) = \prod_{\mathfrak{s} \in \mathfrak{B}(G)} \mathfrak{R}_{\mathfrak{s}} (G).
$$
\end{thm}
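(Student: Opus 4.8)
The plan is to reduce the decomposition to two separable claims: first, that the irreducible objects of $\mathfrak{R}(G)$ are partitioned by their inertial support among the classes $\mathfrak{s} \in \mathfrak{B}(G)$; and second, that an arbitrary smooth representation splits functorially as a direct sum $V = \bigoplus_{\mathfrak{s}} V_{\mathfrak{s}}$ with $V_{\mathfrak{s}} \in \mathfrak{R}_{\mathfrak{s}}(G)$. The asserted product decomposition of categories is exactly equivalent to these two statements together with the orthogonality relation $\mathrm{Hom}_G(V_{\mathfrak{s}}, V_{\mathfrak{s}'}) = 0$ for $\mathfrak{s} \neq \mathfrak{s}'$. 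I would isolate these pieces at the outset so that the purely categorical content is separated from the analytic input.

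For the partition of irreducibles, the starting point is Jacquet's subrepresentation theorem: every irreducible $\pi \in \mathfrak{R}(G)$ embeds into $i_P^G(\sigma)$ for some cuspidal pair $(M, \sigma)$, obtained by choosing $M$ minimal such that the normalized Jacquet module $r_P^G(\pi)$ is nonzero and $\sigma$ an irreducible, hence supercuspidal, quotient of it. I would then invoke the geometric lemma of Bernstein--Zelevinsky, which computes Jacquet modules of parabolically induced representations and forces the cuspidal pair $(M, \sigma)$ to be determined up to $G$-conjugacy. This makes the cuspidal support, and a fortiori the inertial support $[M, \sigma]_G$, a well-defined invariant of $\pi$, yielding the desired map from irreducibles to $\mathfrak{B}(G)$ and the partition.

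Orthogonality across blocks is then formal: any nonzero morphism would have nonzero image, producing a common irreducible subquotient and forcing equality of inertial supports, so once the splitting is established all $\mathrm{Hom}$ (and indeed all $\mathrm{Ext}$) groups vanish between distinct blocks. The real content, and where I expect the main obstacle, is the functorial splitting itself. I would produce it by constructing for each $\mathfrak{s}$ a central idempotent $e_{\mathfrak{s}} \in \berncent$ acting as the identity on $\mathfrak{R}_{\mathfrak{s}}(G)$ and as zero on every other block, so that projection by $e_{\mathfrak{s}}$ carves out the summand $V_{\mathfrak{s}}$. The hard part is establishing that these idempotents exist, equivalently that $\mathfrak{R}_{\mathfrak{s}}(G)$ is a genuine direct factor and not merely a Serre subcategory closed under subquotients. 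This rests on Bernstein's finiteness theorems, namely that $\mathfrak{R}(G)$ is locally Noetherian and that each block admits a finitely generated projective progenerator. The cleanest route is Bernstein's second adjointness theorem, which makes $r_P^G$ both a left and a right adjoint of $i_P^G$ up to passage to the opposite parabolic, and from which one shows that parabolic induction realizes the support variety $\Omega_{\mathfrak{s}}$ as one of finitely many disjoint connected affine components of the Bernstein variety, the idempotents being the indicator functions of those components. When a Bushnell--Kutzko type $(J, \lambda)$ for $\mathfrak{s}$ is available, one can instead read the block off directly as the category of modules over the Hecke algebra $\mathcal{H}(G, \lambda)$, the approach most in keeping with the Roche-style Hecke-algebra isomorphisms used later in this paper.
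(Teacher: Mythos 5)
The paper supplies no argument for this theorem: its ``proof'' is a one-line attribution to Bernstein together with a pointer to Roche's survey (Theorem 1.7.3.1 of the cited reference). Your sketch is therefore not diverging from an argument in the paper; it reconstructs the standard Bernstein--Deligne proof that the citation defers to, and it is correct in outline. Jacquet's subrepresentation theorem plus the Bernstein--Zelevinsky geometric lemma do make the cuspidal (hence inertial) support a well-defined invariant of irreducibles, orthogonality between blocks is the formal subquotient argument you give, and you correctly identify the functorial splitting $V = \bigoplus_{\mathfrak{s}} V_{\mathfrak{s}}$ as the sole point of real content.

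One caveat on that splitting step: producing central idempotents $e_{\mathfrak{s}} \in \mathfrak{Z}(G)$ is circular as literally stated, since the identifications $\mathfrak{Z}(G) \cong \varprojlim_J \mathcal{Z}(G,J) \cong \mathbb{C}[\mathfrak{X}_G]$ (which this paper records in Section 2) are normally established \emph{after}, or together with, the decomposition; the idempotents are then read off as indicator functions of unions of components, not used to prove the theorem. The noncircular version of your argument works level by level: for each compact open $K$ one proves that only finitely many classes $\mathfrak{s}$ meet the subcategory generated by $K$-fixed vectors (Bernstein's stabilization/finiteness theorem, or second adjointness in treatments where it is proved independently), obtains idempotents in the center of $\mathcal{H}(G,K)$, checks compatibility as $K$ shrinks, and concludes by smoothness since every vector is $K$-fixed for some $K$. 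Relatedly, your phrase ``finitely many disjoint connected affine components of the Bernstein variety'' is only accurate at fixed level: $\mathfrak{X}_G$ has infinitely many components, one per $\mathfrak{s}$. With these adjustments your outline agrees with the proof in the literature the paper cites; your closing remark that the Bushnell--Kutzko/Hecke-algebra formulation is the variant closest to the Roche-style isomorphisms used later in the paper is apt.
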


\begin{proof}
This result is originally due to Bernstein. See also~\cite{roche2009}, Theorem 1.7.3.1.
\end{proof}

\subsubsection{The Local Langlands Correspondence}
\label{section::LLC}

The primary reference for this section is Borel's article on automorphic L-functions~\cite{borel1979} from the Corvallis proceedings.

The \textbf{L-group} of a connected reductive group $G$ is ${^L}G = \widehat{G} \rtimes W_F$, where $\widehat{G}$ is the connected reductive group defined over $\mathbb{C}$ determined by the dual root datum $(\cochars{\torus}, \Phi^\vee, \chars{\torus}, \Phi)$. When $G$ is split, the action of the Weil group on $\widehat{G}$ is trivial; so in this case we may use $\widehat{G}$ in place of ${^L}G$.

In the representation theory of the Weil-Deligne group $W_F^\prime$, L-groups play the role of automorphisms of a linear space, that is, we consider homomorphisms \\
$\varphi: W_F^\prime \rightarrow  {^L}G$.  More specifically, we consider ``admissible'' homomorphisms in the sense specified in \cite{haines2014}, Section 4. Following the discussion in \cite{haines2014} Section 5.1, we restrict an admissible homomorphism $\varphi$ on $W_F^\prime$ along the proper embedding $W_F \hookrightarrow W_F^\prime$ to get an admissible homomorphism $\lambda$ on the Weil group, where here ``admissible'' is in the sense of the footnote on p. 131 of \cite{haines2014}. The $\widehat{G}$-conjugacy class of an admissible homomorphism $\lambda$ on $W_F$, denoted $(\lambda)_{\widehat{G}}$, is called an \textbf{infinitesimal character}.

The \emph{Local Langlands Correspondence} (LLC) predicts a finite-to-one relationship between the set of $\widehat{G}$-conjugacy classes of admissible homomorphisms of the Weil-Deligne group into the $L$-group, written $\Phi(G/F)$ and the set of smooth irreducible representations of $G(F)$, written $\Pi(G/F)$, satisfying desiderata given in \cite{borel1979}, which we will not recall here. Given $\pi \in \Pi(G/F)$, its \textbf{Langlands parameter} in $\Phi(G/F)$ is denoted $\varphi_\pi$.

The LLC is a theorem in several cases important for our present purpose. First, it is a theorem for all tori, as we recall in Section~\ref{section::llc-for-tori}. Also, in a major breakthrough, Harris and Taylor proved the LLC for $GL_n$, which is among the split connected reductive groups with connected center being considered here.


\subsection{Test functions via the Bernstein center}

The precise statement of the Test Function Conjecture of Haines and Kottwitz relies on a significant amount of machinery from the study of bad reduction of Shimura varieties that will not be covered here; however, we will provide enough detail to define the test function $\phi_r$ at the heart of the conjecture. The primary reference for this section is~\cite{haines2014}.

Although the Bernstein center is initially defined in categorical terms, there are three concrete ways to describe it, each of which will play into the present approach to test functions. The first part of this section explains each of these alternative descriptions. Then we apply this theory to define the test functions. The definition relies on the LLC+ conjecture in order to embed a distribution in the \emph{stable} Bernstein center $\mathfrak{Z}^{\rm{st}}(G)$ as an element of the usual Bernstein center $\mathfrak{Z}(G)$.

We emphasize that our present objective is to study a test function $\varphi_r$ in the context of the Test Function Conjecture and not to explain the conjecture itself. Several important concepts and objects are mentioned in this section with only enough exposition to lead us to a definition of $\varphi_r$. The reader is encouraged to read the relevant parts of \cite{haines2014} for the full story.

\subsubsection{The Bernstein Center}

\begin{defin}
The \textbf{Bernstein center} $\berncent$ of a connected reductive algebraic group $G$ defined over a $p$-adic field is the center of the category $\mathfrak{R}(G)$, i.e., the endomorphism ring of the identity functor. An element $\xi \in \berncent$ is a family of morphisms $\xi_A: A \rightarrow A$ such that for any morphism $f: A\rightarrow B$ the following diagram commutes
{
\large
\[
\xymatrix{
A \ar[d]_{\xi_A} \ar[r]^f & B \ar[d]^{\xi_B} \\
A \ar[r]^f & B
}
\]
}
\end{defin}
\smallskip

The first concrete realization of $\berncent$ is as an algebra of certain distributions. A \textbf{distribution} is a linear map $D: C_c^\infty(G) \rightarrow \valfield$. Given $f \in C_c^\infty(G)$, one can define a new function $D \ast f$; see \cite{haines2014} Section 3.1. If $D$ is ``essentially compact,'' then $D \ast f \in C_c^\infty(G)$. Lemma 4.1 and Corollary 4.2 of \cite{haines2014} show that the set of $G$-invariant, essentially compact distributions on $G$ form a commutative and associative $\valfield$-algebra $(\mathcal{D}(G)_{\rm{ec}}^G, \ast)$.

Second, $\berncent$ is isomorphic to an inverse limit of centers of Hecke algebras. Given a compact open subgroup $J$ of $G$, consider the center $\mathcal{Z}(G, J)$ of the Hecke algebra $\mathcal{H}(G, J)$. This is an algebra under convolution, and we choose a Haar measure $dx_J$ such that $\rm{vol}_{dx_J} (J) = 1$. Let $1_J$ denote the characteristic function of the subgroup $J$. For $J^\prime \subset J$, there is a corresponding morphism of algebras $Z(G, J^\prime) \rightarrow Z(G, J)$ given by $z_{J^\prime} \mapsto z_{J^\prime} \ast_{dx_{J^\prime}} 1_J$. So we can form the inverse limit $\varprojlim_J \mathcal{Z}(G, J)$; it is a fact that $\mathfrak{Z}(G) \cong \varprojlim_J \mathcal{Z}(G, J)$.

The final realization of the Bernstein center uses the inertial equivalence classes defined in Section~\ref{section::background-rep-theory}. For $\mathfrak{s} = [M, \sigma]_G \in \mathfrak{B}(G)$, let
$$
\mathfrak{X}_\mathfrak{s} = \{ (L, \tau)_G \mid (L, \tau)_G \sim (M, \sigma)_G\},
$$
that is, the set of $G$-conjugacy classes of cuspidal pairs encompassed by the inertial equivalence class $\mathfrak{s}$. Now define a disjoint union
$$
\mathfrak{X}_G = \coprod_{\mathfrak{s} \in \mathfrak{B}(G)} \mathfrak{X}_{\mathfrak{s}}.
$$
This set can be given a variety structure. The Bernstein center is isomorphic to the ring of regular functions $\valfield[\mathfrak{X}_G]$.

\begin{thm}
In summary, the Bernstein center $\berncent$ satisfies the following isomorphisms,
$$
\berncent \cong (\mathcal{D}(G)_{\rm{ec}}^G, \ast) \cong \varprojlim_J \mathcal{Z}(G, J) \cong \valfield[\mathfrak{X}_G].
$$
\end{thm}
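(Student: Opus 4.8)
The plan is to recognize these three isomorphisms as foundational results of Bernstein and to prove the theorem by exhibiting the natural comparison maps between the four descriptions and checking that they are mutually inverse. The essential inputs are already in place: the equivalence between $\mathfrak{R}(G)$ and the category of nondegenerate left $\mathcal{H}(G)$-modules, the algebra structure on $(\mathcal{D}(G)_{\rm{ec}}^G, \ast)$ from Lemma~4.1 and Corollary~4.2 of \cite{haines2014}, and the Bernstein Decomposition proved above.

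First I would establish $\berncent \cong \varprojlim_J \mathcal{Z}(G,J)$. Given $\xi = (\xi_A)$ in the Bernstein center and a compact open subgroup $J$, the endomorphism $\xi_V$ of any smooth representation $(\pi, V)$ commutes with the $G$-action, hence preserves the subspace $V^J$ of $J$-fixed vectors. Since $V^J$ is naturally an $\mathcal{H}(G,J)$-module and $\xi$ is a natural transformation, the induced operator on $V^J$ is convolution by an element $z_J$ that naturality forces to lie in the center $\mathcal{Z}(G,J)$. For $J' \subset J$ one has $V^J \subseteq V^{J'}$, and comparing the two computations of $\xi_V$ on a vector of $V^J$ gives $z_{J'} \ast 1_J = z_J$, so the family $(z_J)$ is compatible under the transition maps $z_{J'} \mapsto z_{J'} \ast 1_J$ and defines an element of the inverse limit. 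Conversely a compatible family $(z_J)$ recovers $\xi$ via $\xi_V(v) = z_J \cdot v$ for any $J$ with $v \in V^J$, and compatibility makes this independent of $J$ and natural in $V$; these assignments are inverse to one another.

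Next, for $\berncent \cong (\mathcal{D}(G)^G_{\rm{ec}}, \ast)$, I would send a $G$-invariant essentially compact distribution $D$ to the family of operators $f \mapsto D \ast f$. Essential compactness guarantees $D \ast f \in C_c^\infty(G)$, so the operator is defined on every smooth module, and $G$-invariance makes it a natural endomorphism of the identity functor, hence an element of $\berncent$; the cited results of \cite{haines2014} match the convolution product with composition of these endomorphisms. The inverse recovers $D$ from the action of $\xi$ on $\mathcal{H}(G)$ regarded as the regular representation.

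Finally, the identification $\berncent \cong \valfield[\mathfrak{X}_G]$ is the deepest step, and I expect it to be the main obstacle. Since the center of a product of abelian categories is the product of the centers, the Bernstein Decomposition $\mathfrak{R}(G) = \prod_{\mathfrak{s}} \mathfrak{R}_{\mathfrak{s}}(G)$ reduces the claim to a single block: the center of $\mathfrak{R}_{\mathfrak{s}}(G)$ is $\valfield[\mathfrak{X}_{\mathfrak{s}}]$, after which one assembles the pieces over $\mathfrak{s} \in \mathfrak{B}(G)$ to obtain $\valfield[\mathfrak{X}_G]$. For each $\mathfrak{s} = [M,\sigma]_G$ this requires Bernstein's structure theory realizing $\mathfrak{R}_{\mathfrak{s}}(G)$ as modules over an algebra whose center is the coordinate ring of the variety $\mathfrak{X}_{\mathfrak{s}}$ of cuspidal pairs, itself a torus of unramified twists of $(M,\sigma)$ modulo a finite stabilizer. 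Proving that the block's center is exactly this coordinate ring, and not merely a subring, is the technical heart; rather than reprove it I would invoke the full development in \cite{haines2014} and \cite{roche2009}.
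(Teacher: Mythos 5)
Your proposal is correct and takes essentially the same route as the paper, whose entire proof of this theorem is the citation ``This is all in \cite{haines2014} Section 3'': the three comparison maps you construct, the blockwise reduction via the Bernstein Decomposition, and the deferral of the center-of-a-block computation to \cite{haines2014} and \cite{roche2009} are exactly the standard arguments that citation encapsulates. The only step to tighten is the first isomorphism, where to produce $z_J$ you should evaluate $\xi$ on the universal module $\mathcal{H}(G) \ast 1_J$, setting $z_J = \xi(1_J)$ and using naturality with respect to right convolutions to get centrality and the compatibility $z_{J'} \ast 1_J = z_J$, since naturality applied to an arbitrary $V$ alone does not immediately exhibit the induced operator on $V^J$ as convolution by an element of $\mathcal{H}(G,J)$.
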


\begin{proof}
This is all in \cite{haines2014} Section 3.
\end{proof}

Recall that given a distribution $Z \in \berncent$ and a subgroup $J \subset G$, the element $Z \ast 1_J$ belongs to the Hecke algebra $\mathcal{H}(G,J)$. As such, we can study the action of $Z \ast 1_J$ on $\pi^J$ for a representation $\pi \in \mathfrak{R}(G)$ viewed as a $\mathcal{H}(G)$-module.

The inertial equivalence class $\mathfrak{s}$ supporting a smooth representation $\pi \in \mathfrak{R}(G)$ is a point in the variety $\mathfrak{X}_G$. Viewing $Z \in \mathfrak{Z}(G)$ as a regular function on this variety, we may define a scalar $Z(\pi)$ as the value of $Z$ at $\mathfrak{s}$.

\begin{prop}
Let $\pi$ be a finite-length smooth representation. For every compact open subgroup $J \subset G$, $Z \ast 1_J$ acts on $\pi^J$ by $Z(\pi)$.
\end{prop}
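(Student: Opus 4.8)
The plan is to push the abstract element $Z$ through the three descriptions of the Bernstein center furnished by the preceding theorem, so that the convolution operator $Z \ast 1_J$ on $\pi^J$ can be recognised as the restriction of a single canonical $G$-equivariant endomorphism of $\pi$, whose scalar value is then read off. First I would invoke the categorical description: as an element of $\berncent = \mathrm{End}(\mathrm{id}_{\mathfrak{R}(G)})$, the element $Z$ assigns to $\pi$ a morphism $Z_\pi : \pi \to \pi$, and the commuting square in the definition of the Bernstein center is exactly the statement that $Z_\pi$ commutes with the $G$-action, so $Z_\pi \in \mathrm{End}_G(\pi)$.

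Next I would identify $Z \ast 1_J$ acting on $\pi^J$ with $Z_\pi$ restricted to $\pi^J$. Because we have normalised $\mathrm{vol}(J) = 1$, the function $1_J$ is idempotent and $\pi(1_J)$ is the projection of $\pi$ onto $\pi^J$; under the distribution realisation $(\mathcal{D}(G)_{\rm{ec}}^G, \ast)$ the essentially compact distribution $Z$ acts on a vector $v \in \pi^J$ precisely by $\pi(Z \ast 1_J)\,v$, and naturality guarantees this is independent of the chosen $J$ with $v \in \pi^J$. Thus on $\pi^J$ the operator $Z \ast 1_J$ agrees with $Z_\pi$, reducing the claim to showing that $Z_\pi$ is the scalar $Z(\pi)$.

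For irreducible $\pi$ this is immediate: $Z_\pi \in \mathrm{End}_G(\pi) = \valfield\cdot\mathrm{id}$ by Schur's lemma, so $Z_\pi$ is a scalar. That this scalar equals $Z(\pi)$ is where the regular-function realisation $\berncent \cong \valfield[\mathfrak{X}_G]$ enters: the action of $\berncent$ on the block $\mathfrak{R}_{\mathfrak{s}}(G)$ factors through evaluation at the point of $\mathfrak{X}_{\mathfrak{s}}$ determined by the cuspidal support of $\pi$, and this evaluation is by construction the value $Z(\pi)$. Equivalently, at level $J$ the component of $Z$ under $\berncent \cong \varprojlim_J \mathcal{Z}(G,J)$ is $Z \ast 1_J \in \mathcal{Z}(G,J)$, and the centre of $\mathcal{H}(G,J)$ acts by a scalar on the simple $\mathcal{H}(G,J)$-module $\pi^J$ (when it is non-zero); matching the two realisations forces that scalar to be $Z(\pi)$.

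I expect the main obstacle to be the bookkeeping that makes the previous paragraph precise, namely verifying that the scalar produced by Schur's lemma genuinely coincides with the value of the regular function $Z$ at the cuspidal-support point; this amounts to unwinding the construction of the isomorphism $\berncent \cong \valfield[\mathfrak{X}_G]$ rather than performing any computation internal to $\pi$. A secondary point of care is the passage from irreducible to finite-length $\pi$: the quantity $Z(\pi)$ is only meaningful once every Jordan--H\"older constituent of $\pi$ carries the same point of $\mathfrak{X}_G$, and one then concludes by applying the naturality of $Z_\pi$ along a composition series of $\pi$, so that $Z \ast 1_J$ acts as $Z(\pi)$ on each constituent of $\pi^J$.
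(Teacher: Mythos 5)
The paper supplies no argument for this proposition; its proof is the single line ``This statement is \cite{haines2014} Corollary 4.3(a).'' Measured against the standard argument behind that citation, your treatment of the irreducible case is correct and is the expected one: naturality of $Z$ gives $Z_\pi \in \mathrm{End}_G(\pi)$; with $\mathrm{vol}(J)=1$ the idempotent $1_J$ projects $\pi$ onto $\pi^J$ and the distribution realization identifies $Z \ast 1_J$ on $\pi^J$ with $Z_\pi$; Schur's lemma produces a scalar; and unwinding $\mathfrak{Z}(G) \cong \mathbb{C}[\mathfrak{X}_G]$ identifies that scalar with the value of the regular function at the supercuspidal support of $\pi$. (You are also right to evaluate at the cuspidal-support point of $\mathfrak{X}_{\mathfrak{s}}$ rather than at the inertial class $\mathfrak{s}$ itself, which is what the paper's looser phrase ``the value of $Z$ at $\mathfrak{s}$'' must mean.)

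The genuine gap is your final reduction from finite length to irreducible. Knowing that $Z \ast 1_J$ acts by $Z(\pi)$ on every Jordan--H\"older constituent of $\pi^J$ only shows that $Z \ast 1_J - Z(\pi)$ is \emph{nilpotent} on $\pi^J$; naturality along a composition series cannot upgrade this to scalar action, and scalar action can genuinely fail for non-semisimple $\pi$ even when all constituents lie over a single point of $\mathfrak{X}_G$. Take $G = GL_1(F) = F^\times$ and $\pi(x) = \chi(x)\begin{pmatrix} 1 & \mathrm{val}(x) \\ 0 & 1\end{pmatrix}$: this is smooth of length two with both constituents equal to $\chi$. Its Bernstein block is equivalent to the category of $\mathbb{C}[t,t^{-1}]$-modules ($t$ recording the action of $\varpi$), the center of the block is $\mathbb{C}[t,t^{-1}]$, and $f(t)$ acts on $\pi$ by $f(A)$ with $A = \chi(\varpi)\begin{pmatrix} 1 & 1 \\ 0 & 1\end{pmatrix}$, i.e.\ by $f(\chi(\varpi))\,\mathrm{id}$ plus a nilpotent term with coefficient $\chi(\varpi)f'(\chi(\varpi))$, which is nonzero for generic $f$. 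So the statement read for arbitrary finite-length $\pi$ is false as a claim about scalar action, and your composition-series step cannot be repaired as written.

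What actually carries the finite-length cases is a different mechanism: by the very construction of the Bernstein--Deligne isomorphism, $Z$ acts on each full induced representation $i_P^G(\sigma \otimes \eta)$, with $\sigma$ supercuspidal and $\eta$ unramified, by the scalar value of the regular function at $(M, \sigma\otimes\eta)_G$; scalar action then passes to subrepresentations, quotients, and finite direct sums of such induced representations---though not, as the example shows, to arbitrary extensions. This covers every representation to which the proposition is applied in this paper (the principal series $i_{B_r}^{G_r}(\tilde{\chi}_r^\varpi \eta)$ appearing in Lemma~\ref{lemma::image-of-phi-r-chi} and Remark~\ref{rmk::clarify-dependence-on-llc+}), and it is the step you should substitute for the appeal to a composition series.
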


\begin{proof}
This statement is \cite{haines2014} Corollary 4.3(a).
\end{proof}

\subsubsection{Definition of a test function}

Test functions may be defined in terms of distributions coming from the Bernstein center. We construct a particular distribution $Z_{V_\mu}$ in the \emph{stable} Bernstein center $\mathfrak{Z}^{\rm{st}}(G)$ attached to a representation $(r_\mu, V_\mu)$ of the Langlands dual group ${^L}G$ determined by a dominant minuscule cocharacter $\mu$ in $\cochars{\torus}$. The test function is obtained by convolving this distribution with the characteristic function of the level structure subgroup of $G$.

Recall from Section~\ref{section::LLC} that restricting an admissible homomorphism $\varphi$ on $W_F^\prime$ gives us an admissible homomorphism $\lambda$ on $W_F$.

\begin{defin}
\label{defin::semisimple-trace}
Let $(r,V)$ be a complex, finite-dimensional representation of ${^L}G$. Given a geometric Frobenius element $\Phi \in W_F$ and an admissible homomorphism $\lambda: W_F \rightarrow {^L}G$, define the \textbf{semisimple trace} by
$$
{\rm{tr}}^{\rm{ss}}(\lambda(\Phi), V) = {\rm{tr}}(r\lambda(\Phi), V^{r\lambda (I_F)}).
$$
\end{defin}

An infinitesimal character $(\lambda)_{\widehat{G}}$ defines an element of a certain variety $\mathfrak{Y}$ (see~\cite{haines2014} Chapter 5). Assuming the LLC+ conjecture \cite{haines2014}, Section 5.2, the Bernstein variety $\mathfrak{X}_G$ has a quasi-finite surjection onto $\mathfrak{Y}$ when $G$ is split.

\begin{prop}
\label{prop::reg-func-semisimple-trace}
The map $\lambda \mapsto {\rm{tr}}^{\rm{ss}}(\lambda(\Phi), V)$ defines an element $Z_V \in \mathfrak{Z}^{\rm{st}}(G)$ as a regular function on $\mathfrak{Y}$ given by
$$
Z_V((\lambda)_{\widehat{G}}) = {\rm{tr}}^{\rm{ss}}(\lambda(\Phi), V)
$$
\end{prop}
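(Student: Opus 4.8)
The plan is to verify three things in turn: that the assignment $(\lambda)_{\widehat{G}} \mapsto {\rm{tr}}^{\rm{ss}}(\lambda(\Phi), V)$ is well defined on $\widehat{G}$-conjugacy classes, that the resulting function is regular on $\mathfrak{Y}$, and finally to invoke the identification of $\stabberncent$ with the regular functions on $\mathfrak{Y}$ from \cite{haines2014}, Chapter 5, so as to conclude that $Z_V$ is an element of the stable Bernstein center.

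First I would establish well-definedness. If $g \in \widehat{G}$ and ${}^g\lambda = g\lambda(-)g^{-1}$, then $r({}^g\lambda)(\Phi) = r(g)\, r\lambda(\Phi)\, r(g)^{-1}$, while the inertia-invariant subspace transforms as $V^{r({}^g\lambda)(I_F)} = r(g)\, V^{r\lambda(I_F)}$. Since the trace of an operator restricted to a subspace is unchanged under simultaneous conjugation of the operator and the subspace, the value of ${\rm{tr}}^{\rm{ss}}$ depends only on the conjugacy class $(\lambda)_{\widehat{G}}$, hence on the corresponding point of $\mathfrak{Y}$.

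The main work is regularity, and here I would trade the moving subspace $V^{r\lambda(I_F)}$ for a fixed ambient trace by means of the inertia projector. Because $\lambda$ is admissible, its restriction to $I_F$ has finite image (a continuous homomorphism from the profinite group $I_F$ to the Lie group $\widehat{G}$ has open kernel), so $P_\lambda = |\lambda(I_F)|^{-1} \sum_{\sigma \in \lambda(I_F)} r(\sigma)$ is the projection of $V$ onto $V^{r\lambda(I_F)}$. As $\Phi$ normalizes $I_F$, the element $\lambda(\Phi)$ normalizes $\lambda(I_F)$, so $r\lambda(\Phi)$ commutes with $P_\lambda$ and preserves $V^{r\lambda(I_F)}$; consequently
$$
{\rm{tr}}^{\rm{ss}}(\lambda(\Phi), V) = {\rm{tr}}\big(r\lambda(\Phi)\, P_\lambda,\, V\big),
$$
a trace now taken over all of $V$. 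I would then appeal to the structure of $\mathfrak{Y}$ in \cite{haines2014}, Chapter 5: the \emph{inertial type}, namely the $\widehat{G}$-conjugacy class of the restriction $\lambda|_{I_F}$, is a discrete invariant, since continuous finite-image homomorphisms of $I_F$ into $\widehat{G}$ are rigid and so fall into a discrete set of conjugacy classes. Thus the inertial type is constant on each connected component, and $\mathfrak{Y}$ is the disjoint union of the loci of fixed inertial type. On each such locus both the projector $P_\lambda$ and the scalar $|\lambda(I_F)|$ are constant, and the only datum varying algebraically is the Frobenius-semisimple element $s = \lambda(\Phi)$; the function $s \mapsto {\rm{tr}}(r(s)\, P_\lambda, V)$ is a polynomial in the matrix entries of $r(s)$, hence regular. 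Regularity on every component yields regularity of $Z_V$ on all of $\mathfrak{Y}$, and the identification $\stabberncent \cong \valfield[\mathfrak{Y}]$ then upgrades this regular function to the asserted element of the stable Bernstein center.

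The step I expect to be the main obstacle is precisely this regularity argument, because $\dim V^{r\lambda(I_F)}$ jumps as $\lambda$ varies and one cannot naively parametrize the invariant subspace. The crux is to justify, using the precise variety structure on $\mathfrak{Y}$ recorded in \cite{haines2014}, that such jumps occur only across distinct connected components indexed by the discrete inertial type, so that the projector $P_\lambda$ is locally constant and the semisimple trace is manifestly a regular function of the Frobenius-semisimple parameter on each piece.
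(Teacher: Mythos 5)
Your proposal is correct, but it is worth noting that it proves considerably more than the paper does: the paper's entire proof of Proposition~\ref{prop::reg-func-semisimple-trace} is a citation to Proposition 4.28 of \cite{haines2014}, whereas you reconstruct the underlying argument. Your three steps are sound. Well-definedness under $\widehat{G}$-conjugacy is as you say; the inertia projector trick is valid because an admissible $\lambda$ is continuous, so $\lambda(I_F)$ is finite (no-small-subgroups argument), $P_\lambda$ averages a finite group, and $\lambda(\Phi)$ normalizes $\lambda(I_F)$, giving ${\rm{tr}}^{\rm{ss}}(\lambda(\Phi), V) = {\rm{tr}}(r\lambda(\Phi)P_\lambda, V)$ on all of $V$. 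The one place where your phrasing slightly overstates what an abstract argument can deliver is the claim that ``rigidity'' of finite-image homomorphisms of $I_F$ forces the inertial type to be locally constant: as stated, this is not a theorem you can prove in a vacuum, because it presupposes a topology/variety structure on $\mathfrak{Y}$. In \cite{haines2014}, Section 5.1, this is built into the definition: each connected component of $\mathfrak{Y}$ is by construction the set of unramified twists of a fixed parameter $\lambda_0$, modulo a finite group, so on a component $\lambda|_{I_F} = \lambda_0|_{I_F}$ literally, $P_\lambda$ is constant (not merely locally constant), and only $s = \lambda(\Phi)$ varies in a torus; your trace is then a regular (Laurent-polynomial) function of $s$, and it descends through the finite quotient precisely because of the conjugation-invariance you established in the first step. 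Finally, the concluding ``upgrade'' is by definition rather than by a theorem: in \cite{haines2014} the stable Bernstein center $\stabberncent$ \emph{is} defined as the ring of regular functions on $\mathfrak{Y}$, so once regularity is in hand there is nothing left to invoke. In short, your argument is essentially the proof of the cited result, correctly unpacked, with the caveat that its crux (constancy of $P_\lambda$ on components) should be attributed to the definition of the variety structure on $\mathfrak{Y}$ rather than to an independent rigidity principle.
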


\begin{proof}
This is part of statement is Proposition 4.28 of~\cite{haines2014}.
\end{proof}

\begin{rmk}
If our split connected reductive group $G$ defined over $F$ satisfies the LLC+ conjecture, then there is an injective homomorphism $\mathfrak{Z}^{\rm{st}}(G) \rightarrow \mathfrak{Z}(G)$. In this case, the distributions $Z_V$ may be viewed as elements of $\mathfrak{Z}(G)$.
\end{rmk}

\begin{thm} \emph{(The Theorem of the Highest Weight)} Let $G$ be a linear reductive group over an algebraically closed field $K$. An irreducible, finite-dimensional $K$-representation has a unique $\textbf{highest weight}$. Every dominant weight of the root system of $G$ is the highest weight of such a representation, which is unique up to isomorphism.
\end{thm}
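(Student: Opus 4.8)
The plan is to fix a maximal torus $T$ and a Borel subgroup $B = TU$ containing it, with $U$ its unipotent radical, and to work in the character lattice $X^*(T)$ equipped with the dominance order, where $\mu \le \lambda$ means that $\lambda - \mu$ is a non-negative integral combination of positive roots. There are three assertions to establish: (i) every irreducible finite-dimensional representation $V$ has a unique \emph{highest weight}, which is dominant; (ii) the highest weight determines $V$ up to isomorphism; and (iii) every dominant weight occurs as such a highest weight. I would treat these in turn, the first two by soft structural arguments and the last by an explicit construction.

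For (i) I would first produce a highest weight vector. Since $K$ is algebraically closed and $U$ is unipotent, the Lie--Kolchin theorem guarantees $V^U \neq 0$ whenever $V \neq 0$. As $T$ normalizes $U$ it acts on $V^U$, which therefore decomposes into $T$-weight spaces; any nonzero $T$-eigenvector $v \in V^U$ spans a line on which all of $B = TU$ acts through a single character $\lambda \in X^*(T)$. Because $V$ is irreducible, $v$ generates $V$ under $G$; pushing $v$ down by the root subgroups for the negative roots shows that every weight $\mu$ of $V$ satisfies $\mu \le \lambda$, so $\lambda$ is the unique maximal weight and hence is well defined. Finally, restricting to the rank-one subgroup attached to each simple root $\alpha$ and invoking the representation theory of $SL_2$ (resp.\ $PGL_2$) forces $\langle \lambda, \alpha^\vee\rangle \ge 0$, so $\lambda$ is dominant.

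For (ii), suppose $V$ and $V'$ are irreducible with a common highest weight $\lambda$ and highest weight vectors $v, v'$. Inside $V \oplus V'$ let $M$ be the $G$-submodule generated by $(v,v')$. Each projection $M \to V$ and $M \to V'$ is a surjection of $G$-modules, so by irreducibility its kernel is either $0$ or the entire complementary factor. Because $M$ is cyclic on a single highest weight vector, its $\lambda$-weight space is the line $K\,(v,v')$; in particular it is one-dimensional, which rules out $M$ containing either factor outright. Hence both kernels vanish, both projections are isomorphisms, and $V \cong M \cong V'$.

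The main obstacle is (iii), the surjectivity of the highest-weight assignment onto all dominant weights, since no na\"ive eigenvector argument suffices and the construction must be valid in arbitrary characteristic. I would use the algebraic induction $H^0(\lambda) := \mathrm{Ind}_B^G(K_\lambda)$, realized as the space of global sections of the line bundle $\mathcal{L}_\lambda$ on the flag variety $G/B$, with $G$ acting by translation. Three points must be checked: that this module is finite-dimensional (as $G/B$ is projective), that it is \emph{nonzero} precisely when $\lambda$ is dominant, and that its $\lambda$-weight space is one-dimensional with $\lambda$ as the top weight. The nonvanishing for dominant $\lambda$ is the genuinely hard input---this is the Borel--Weil theorem in characteristic zero and Kempf's vanishing theorem in general---and I would quote it rather than reprove it. Granting it, the unique irreducible submodule (socle) of $H^0(\lambda)$ must contain the top weight line, hence is an irreducible representation with highest weight $\lambda$, which completes the construction.
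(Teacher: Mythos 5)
Your proof is correct in outline, but note first that the paper does not actually prove this theorem: it simply cites Theorem 31.3 of \cite{humphreys1975}, stated there for semisimple groups, with a one-sentence remark that the argument extends to reductive groups. What you have written is essentially the standard structural argument made explicit, and it buys two things the bare citation does not. By working throughout in the character lattice $X^*(T)$ of a maximal torus of the reductive group $G$ (rather than in the abstract weight lattice of the root system), your formulation handles the reductive case directly---the classification is by dominant $\lambda \in X^*(T)$---which is exactly the point the paper waves at; indeed the paper's phrase ``dominant weight of the root system'' is slightly loose for reductive $G$, and your version is the correct one. Moreover your argument is uniform in all characteristics: Kolchin's fixed-point theorem produces the highest weight vector; the cyclic-submodule argument in $V \oplus V'$ gives uniqueness, where the one-dimensionality of the $\lambda$-weight space of $M$ that you use is justified by the standard fact that a module generated by a $B$-eigenvector of weight $\lambda$ is spanned by its $U^-$-translates (density of the big cell), so all other weights are strictly lower; and the socle of $H^0(\lambda) = \mathrm{Ind}_B^G(K_\lambda)$ gives existence, since the $U$-invariants of $H^0(\lambda)$ form a single line of weight $\lambda$, forcing every nonzero submodule to contain it and the socle to be simple. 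One correction of attribution: the nonvanishing $H^0(\lambda) \neq 0$ for dominant $\lambda$ is not Kempf's vanishing theorem---Kempf's theorem concerns the vanishing of the \emph{higher} cohomology of $\mathcal{L}_\lambda$ on $G/B$---and is in fact comparatively elementary (one exhibits an explicit section on the big cell and checks that it extends; see Jantzen, Part II, 2.6), so the genuinely hard input you flag is not actually needed. Since the paper only ever applies the theorem to the dual group $\widehat{G}$ over $\mathbb{C}$, the characteristic-zero case (classical Cartan theory or Borel--Weil) would already suffice for its purposes, but your argument is the more general one.
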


\begin{proof}
See~\cite{humphreys1975}, Theorem 31.3. Although the reference states the theorem for semisimple groups, the argument can be applied to reductive groups.
\end{proof}

Thus, given a dominant minuscule cocharacter $\mu \in \cochars{\torus}$, there exists a highest weight representation $(r_\mu, V_{\mu}) \in {\rm{Rep}}(\widehat{G})$, which is unique up to isomorphism.

\begin{defin}
\label{defin::test-function}
Let $G$ be a split connected reductive group defined over a non-archimedean local field $F$ with split maximal torus $\torus$. Consider a degree $r$ unramified extension $F_r/F$. Denote the $F_r$-rational points by $G_r$. The residue field of $F_r$ has cardinality $q^r$. Let $\mu$ be a dominant cocharacter of $\torus$ and $K_r$ a compact open subgroup of $G_r$. Let $t_\mu$ denote the translation element in  $\widetilde{W}$ corresponding to $\mu$. Finally, define the \textbf{test function with $K_r$-level structure} for $(G_r, \mu)$ to be
$$
\phi_r = q^{r\ell(t_\mu)/2}(Z_{V_\mu} \ast 1_{K_r})
$$
\end{defin}

Observe that $\phi_r$ lies in $\mathcal{Z}(G_r,K_r)$ by the theory of the Bernstein center. The terminology ``$K_r$-level structure'' routinely appears in articles on the theory of Shimura varieties with bad reduction at a place dividing $p$. The group we have called $K_r$ corresponds to the compact open subgroup $K_p \subset G(\mathbb{Q}_p)$, which is a part of a Shimura datum. As described in the Introduction, the Test Function Conjecture~\cite{haines2014}, Conjecture 4.30, predicts that $\phi_r$ can be used to prove a certain formula for the semi-simple Lefschetz number via the Langlands-Kottwitz method.

\begin{defin}
The \textbf{test function with $I_r^+$-level structure} is 
$$
\phirone = q^{r\ell(t_\mu)/2}(Z_{V_\mu} \ast 1_{I_r^+}).
$$
\end{defin}


\subsection{Data associated to a depth-zero character}
\label{section::depth-zero-characters}

We now begin preparations to rewrite $\phirone$ via depth-zero characters on $\torus(\mathcal{O}_r)$. As we shall see in the next section, $\phirone$ can be expressed as a sum indexed by the group of depth-zero characters $\xi$ on $\torus(\mathcal{O}_r)$ by considering certain idempotents in the Hecke algebra $\mathcal{H}(G_r)$.

\begin{rmk}
\label{rmk::warning-on-different-weyl-grp-notation}
Many of the definitions and results in this section can be found in {\rm{\cite{roche1998}}}; however, we have followed the notation used in {\rm{\cite{haines-rapoport2012}}}. The danger for confusion is mostly with variants of Weyl groups. Specifically, we use $W$ for the finite Weyl group and $\widetilde{W}$ for the extended affine Weyl group, whereas Roche denotes these groups $\overline{W}$ and $W$ respectively.
\end{rmk}

\subsubsection{First properties}
\label{section::first-properties-dz-chars}

\begin{defin}
Let $\torus$ be a split maximal torus in $G$. A \textbf{depth-zero character} on $\torus(\algintF)$ is a smooth character $\chi: \torus(\algintF) \rightarrow \valfield^{\times}$ that factors through $\torus(\resfield)$. The resulting character $\torus(k_F) \rightarrow \valfield^{\times}$ is also denoted $\chi$. Similarly, a depth-zero character on $\torus(\algintR)$ is a character $\xi$ that factors through $\torus(k_r)$. Let $\torus(\resfield)^\vee$ and $\torus(k_r)^\vee$ denote the sets of depth-zero characters on the groups $\torus(\mathcal{O}_F)$ and $\torus(\mathcal{O}_r)$, respectively.
\end{defin}

Next, we will associate a root system $\Phi_\chi$, called a \textbf{$\chi$-root system}, to a character $\chi$ on $\torus(\mathcal{O}_F)$. For this application, $\chi$ need not have depth zero.

\begin{prop}
The set $\Phi_\chi = \{ \alpha \in \Phi \mid {\chi \circ \alpha^\vee\vert}_{{\mathcal{O}_F^\times}} = 1\}$ is a root system. 
\end{prop}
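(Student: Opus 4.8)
The claim is that $\Phi_\chi = \{\alpha \in \Phi \mid \chi \circ \alpha^\vee|_{\mathcal{O}_F^\times} = 1\}$ is a root system inside the ambient system $\Phi = \Phi(G,T)$. My plan is to verify directly that $\Phi_\chi$ inherits the defining axioms of a root system from $\Phi$, as they are stated in the Preliminaries: namely that $\Phi_\chi$ is a finite set of nonzero vectors in $V$, that $\Phi_\chi \cap \mathbb{R}\alpha = \{\alpha, -\alpha\}$ for each $\alpha \in \Phi_\chi$, and that $\Phi_\chi$ is stable under the reflections $s_\beta$ for $\beta \in \Phi_\chi$. The finiteness is immediate since $\Phi_\chi \subseteq \Phi$ and $\Phi$ is finite, and the condition $\Phi_\chi \cap \mathbb{R}\alpha = \{\alpha, -\alpha\}$ follows from the same condition for $\Phi$ together with the observation that $\alpha \in \Phi_\chi$ forces $-\alpha \in \Phi_\chi$ (since $(-\alpha)^\vee = -\alpha^\vee$ and $\chi \circ (-\alpha^\vee) = (\chi \circ \alpha^\vee)^{-1}$, which is trivial on $\mathcal{O}_F^\times$ exactly when $\chi \circ \alpha^\vee$ is).

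The essential content, therefore, is the reflection-stability axiom: I must show that if $\alpha, \beta \in \Phi_\chi$, then $s_\beta(\alpha) \in \Phi_\chi$. We already know $s_\beta(\alpha) \in \Phi$ because $\Phi$ is a root system, so the work is to verify the defining condition $\chi \circ (s_\beta(\alpha))^\vee|_{\mathcal{O}_F^\times} = 1$. The key tool is the standard identity relating the coroot of a reflected root to the original coroots:
$$
(s_\beta(\alpha))^\vee = s_{\beta^\vee}(\alpha^\vee) = \alpha^\vee - \langle \alpha, \beta^\vee\rangle\, \beta^\vee,
$$
where $\langle \alpha, \beta^\vee\rangle = \frac{2\langle \alpha,\beta\rangle}{\langle\beta,\beta\rangle} \in \mathbb{Z}$ is the Cartan integer. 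This is a general fact about root systems; because the integer pairing takes values in $\mathbb{Z}$, the reflected coroot is an integer linear combination of $\alpha^\vee$ and $\beta^\vee$ inside the cocharacter lattice $X_*(T)$.

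With this identity in hand, the verification is a short computation in the character group. Writing $m = \langle\alpha,\beta^\vee\rangle \in \mathbb{Z}$, and using that $\chi \circ (-)$ turns addition of cocharacters into multiplication of characters, I compute
$$
\chi \circ (s_\beta(\alpha))^\vee = \chi \circ \bigl(\alpha^\vee - m\,\beta^\vee\bigr) = (\chi \circ \alpha^\vee)\,(\chi \circ \beta^\vee)^{-m}.
$$
Restricting to $\mathcal{O}_F^\times$, both factors $\chi\circ\alpha^\vee$ and $\chi\circ\beta^\vee$ are trivial by the hypothesis that $\alpha, \beta \in \Phi_\chi$, so the product is trivial and hence $s_\beta(\alpha) \in \Phi_\chi$. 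This establishes stability. Notably, this argument nowhere uses that $\chi$ has depth zero, consistent with the remark preceding the statement that $\chi$ need not be depth-zero here.

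The step I expect to require the most care is the coroot identity $(s_\beta(\alpha))^\vee = \alpha^\vee - \langle\alpha,\beta^\vee\rangle\beta^\vee$, since it is the one place where the duality between roots and coroots must be invoked correctly; the cleanest route is to recall that $\alpha \mapsto \alpha^\vee$ is $W$-equivariant in the sense that $(w\alpha)^\vee = w(\alpha^\vee)$ for any $w \in W$, applied to $w = s_\beta$, and then expand $s_\beta(\alpha^\vee)$ using the reflection formula. Everything else is bookkeeping in the lattice $X_*(T)$ together with the elementary fact that composition with $\chi$ converts the additive group structure of cocharacters into the multiplicative structure of characters.
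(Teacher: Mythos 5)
Your proof is correct, but it takes a genuinely different route from the paper's. The paper does not verify the root-system axioms directly: it invokes the Bourbaki criterion that a \emph{closed}, \emph{symmetric} subset of a root system is itself a root system, and then checks closedness ($\alpha, \beta \in \Phi_\chi$, $\alpha + \beta \in \Phi$ implies $\alpha + \beta \in \Phi_\chi$) and symmetry ($\alpha \in \Phi_\chi$ implies $-\alpha \in \Phi_\chi$) by short computations with $\chi$. You instead verify reflection-stability directly via the $W$-equivariance of the coroot map. Each approach buys something: the paper's is shorter on the page but outsources real content to Bourbaki, and its closedness computation tacitly uses $(\alpha+\beta)^\vee = \alpha^\vee + \beta^\vee$, which fails in non-simply-laced types (relevant here, since the paper's main examples include $GSp_{2n}$); the correct general fact is $(\alpha+\beta)^\vee = p\,\alpha^\vee + q\,\beta^\vee$ with $p, q$ nonnegative integers, which still yields triviality of $\chi \circ (\alpha+\beta)^\vee$, so the paper's conclusion is unharmed. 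Your argument sidesteps that subtlety entirely: only the integrality of the Cartan number enters, and reflection-stability is exactly the axiom the paper's Preliminaries use to define a root system, so no external criterion is needed. One small correction to your key identity: the coefficient should be the Cartan integer $\langle \alpha^\vee, \beta\rangle = \frac{2\langle\alpha,\beta\rangle}{\langle\alpha,\alpha\rangle}$, i.e.
$$
\bigl(s_\beta(\alpha)\bigr)^\vee = s_\beta(\alpha^\vee) = \alpha^\vee - \frac{2\langle\alpha,\beta\rangle}{\langle\alpha,\alpha\rangle}\,\beta^\vee,
$$
not $\langle\alpha,\beta^\vee\rangle = \frac{2\langle\alpha,\beta\rangle}{\langle\beta,\beta\rangle}$; the two differ when $\alpha$ and $\beta$ have different lengths. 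Since your argument uses only that the coefficient is an integer, this slip does not affect the validity of the proof, but the displayed identity should be stated correctly.
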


\noindent This statement also appears in Roche~\cite{roche1998} and Goldstein's thesis~\cite{goldstein1990}.

\begin{proof}
We say that a subset $J$ of a root system $\Phi$ is \emph{closed} if it satisfies the following condition: If $\alpha, \beta \in J$ and $\alpha + \beta \in \Phi$, then $\alpha + \beta \in J$. A subset $J$ of $\Phi$ is \emph{symmetric} if $\alpha \in J$ implies $-\alpha \in J$. Following Bourbaki~\cite{bourbaki4-6}, it suffices to show that $\Phi_\chi$ is a closed, symmetric subset of $\Phi$.

\emph{Closed:} Suppose $\alpha, \beta \in \Phi_\chi$ and $\alpha + \beta \in \Phi$. By direct calculation,
$$
\chi \circ (\alpha+\beta)^\vee(x) = \left(\chi(\alpha^\vee(x))\right)\left(\chi(\beta^\vee(x))\right) = 1.
$$

\emph{Symmetric:} Suppose $\alpha \in \Phi_\chi$. Then $\chi \circ (-\alpha^\vee) (x) = \chi(\alpha^\vee(x)^{-1}) = 1$.
\end{proof}

\begin{lemma}
Let $F_r/F$ be an unramified extension of local fields. The norm map $N_r : F_r \rightarrow F$ restricts to a surjective map $N_r : \mathcal{O}_r^\times \rightarrow \mathcal{O}_F^\times$.
\end{lemma}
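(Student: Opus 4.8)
The plan is to prove surjectivity by a successive-approximation argument along the standard filtrations of the unit groups, exploiting that $F_r/F$ is unramified so that the uniformizer $\varpi$ of $\mathcal{O}_F$ is also a uniformizer of $\mathcal{O}_r$. First I would record that $N_r$ genuinely lands in $\mathcal{O}_F^\times$: each conjugate $\theta^i$ preserves $\mathcal{O}_r^\times$, so for a unit $z$ the product $N_r(z) = \prod_{i=0}^{r-1}\theta^i(z)$ is a product of units lying in $F$, hence in $\mathcal{O}_F^\times$. Thus the only real content is surjectivity.

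Next I would set up the filtrations $U^{(0)} = \mathcal{O}_F^\times \supset U^{(1)} = 1 + \varpi\mathcal{O}_F \supset U^{(2)} = 1 + \varpi^2\mathcal{O}_F \supset \cdots$ and analogously $V^{(n)} = 1 + \varpi^n\mathcal{O}_r$ with $V^{(0)} = \mathcal{O}_r^\times$. Because $\varpi$ is a common uniformizer, $N_r(V^{(n)}) \subseteq U^{(n)}$ for every $n \geq 0$, and I would identify the induced maps on successive quotients. On the bottom quotient $V^{(0)}/V^{(1)} \cong k_r^\times \to U^{(0)}/U^{(1)} \cong k_F^\times$ the induced map is the finite-field norm $N_{k_r/k_F}$, which is surjective: $k_r^\times$ is cyclic of order $q^r-1$ and $N_{k_r/k_F}$ is raising to the power $(q^r-1)/(q-1)$, whose image is the unique subgroup of order $q-1$, namely $k_F^\times$. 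On the higher quotients $V^{(n)}/V^{(n+1)} \cong k_r \to U^{(n)}/U^{(n+1)} \cong k_F$ for $n \geq 1$, expanding $N_r(1+\varpi^n a) = \prod_{i=0}^{r-1}(1 + \varpi^n \theta^i(a))$ modulo $\varpi^{n+1}$ shows the induced map is the trace $\mathrm{Tr}_{k_r/k_F}$, which is surjective because $k_r/k_F$ is separable and so the trace form is nondegenerate.

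With these two surjectivity statements in hand, I would lift an arbitrary $u \in \mathcal{O}_F^\times$ by approximation. Using the norm on $k_r^\times$, first choose $v_0 \in \mathcal{O}_r^\times$ with $u\,N_r(v_0)^{-1} \in U^{(1)}$. Inductively, given $v_n$ with $u\,N_r(v_n)^{-1} = 1 + \varpi^{n+1}b \in U^{(n+1)}$, the trace surjectivity at level $n+1$ produces $w \in V^{(n+1)}$ with $N_r(w) \equiv 1+\varpi^{n+1}b \pmod{U^{(n+2)}}$; setting $v_{n+1} = v_n w$ gives $u\,N_r(v_{n+1})^{-1} \in U^{(n+2)}$. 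The corrections satisfy $w \in 1 + \varpi^{n+1}\mathcal{O}_r$, so $(v_n)$ is Cauchy and converges in the complete ring $\mathcal{O}_r$ to a unit $v$, and continuity of $N_r$ yields $N_r(v) = u$.

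I expect the main obstacle to be the bookkeeping identifying the induced maps on the graded pieces: specifically, checking that for $n \geq 1$ the cross terms in the expansion of $N_r(1+\varpi^n a)$ have valuation $\geq n+1$ (which uses $2n \geq n+1$), so that only the linear term $\varpi^n\sum_i \theta^i(a) = \varpi^n\,\mathrm{Tr}_{F_r/F}(a)$ survives modulo $\varpi^{n+1}$, together with the fact that reduction mod $\varpi$ intertwines $\mathrm{Tr}_{F_r/F}$ with $\mathrm{Tr}_{k_r/k_F}$ in the unramified case. Once the filtration quotients are pinned down, the remaining lifting and convergence steps are formal.
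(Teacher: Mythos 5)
Your argument is correct and is precisely the proof the paper points to: the paper disposes of this lemma by citing Serre's \emph{Local Fields}, Proposition V.2.3 and its corollary, and that reference establishes surjectivity by exactly your method --- the unit filtrations $U^{(n)}, V^{(n)}$, the finite-field norm $N_{k_r/k_F}$ on the bottom graded piece, the trace $\mathrm{Tr}_{k_r/k_F}$ (surjective by separability) on the higher pieces, and a successive-approximation limit using completeness. Your bookkeeping on the cross terms (valuation $\geq 2n \geq n+1$) and the identification of the induced maps are all sound, so this is a faithful, complete write-out of the cited proof rather than a new route.
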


\begin{proof}
See \cite{serre1979}, Proposition V.2.3 and its corollary.
\end{proof}

Recall that the norm induces a map $N_r : \torus(\mathcal{O}_r) \rightarrow \torus(\mathcal{O}_F)$. Given $\chi \in \torus(\resfield)^\vee$ and an unramified extension $F_r/F$, define a new character $\chi_r = \chi \circ N_r$ on $\torus(\mathcal{O}_r)$.

\begin{lemma}
The character $\chi_r = \chi \circ N_r : \torus(\mathcal{O}_r) \longrightarrow \mathbb{C}^\times$ has depth zero.
\end{lemma}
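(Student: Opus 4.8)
The plan is to unwind the definition of depth zero and then reduce the claim to a statement about how the norm interacts with the congruence filtration on the torus. By definition a character $\xi$ on $\torus(\algintR)$ has depth zero precisely when it factors through the reduction map $\torus(\algintR) \to \torus(k_r)$, equivalently when it is trivial on the kernel $\torus(\algintR)^+ := \ker\big(\torus(\algintR) \to \torus(k_r)\big)$. Since $\chi$ is assumed to have depth zero, it is trivial on $\torus(\algintF)^+ := \ker\big(\torus(\algintF) \to \torus(\resfield)\big)$. Consequently, to show that $\chi_r = \chi \circ N_r$ has depth zero it suffices to prove the inclusion $N_r\big(\torus(\algintR)^+\big) \subseteq \torus(\algintF)^+$: granting this, $\chi_r$ is trivial on $\torus(\algintR)^+$ and therefore factors through $\torus(k_r)$.

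To establish the inclusion I would pass to coordinates. Because $\torus$ is split we may fix an isomorphism $\torus \cong \mathbb{G}_m^d$; this identifies $\torus(\algintR) \cong (\algintR^\times)^d$, the reduction map with coordinatewise reduction, and the norm $N_r$ on $\torus$ with the coordinatewise field norm $N_r : \algintR^\times \to \algintF^\times$. Under this identification $\torus(\algintR)^+ \cong (1 + \varpi\algintR)^d$ and $\torus(\algintF)^+ \cong (1 + \varpi\algintF)^d$, so the problem collapses to the single assertion $N_r(1 + \varpi\algintR) \subseteq 1 + \varpi\algintF$.

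For this last step I would argue directly from the definition $N_r(z) = \prod_{i=0}^{r-1} \theta^i(z)$. Since $F_r/F$ is unramified, a uniformizer $\varpi$ of $F$ remains a uniformizer of $F_r$, it is fixed by $\theta$, and each power $\theta^i$ preserves $\algintR$ and the ideal $\varpi\algintR$. Hence for $z \in 1 + \varpi\algintR$ every factor $\theta^i(z)$ again lies in $1 + \varpi\algintR$, so their product lies in $1 + \varpi\algintR$; as the product is visibly $\theta$-invariant it lies in $F$, and therefore in $(1 + \varpi\algintR) \cap F = 1 + \varpi\algintF$. This yields the required inclusion and completes the argument.

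The argument is essentially routine, and the only point deserving attention — rather than a genuine obstacle — is keeping the two congruence filtrations distinct and confirming that in the split case the torus norm is computed coordinatewise and is thus compatible with reduction. Once the coordinatewise description is in place this compatibility is immediate; note also that the surjectivity lemma recorded above is the harder companion fact, whereas the filtration inclusion needed here is the easy direction.
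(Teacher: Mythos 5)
Your proof is correct, and it rests on the same underlying fact as the paper's: compatibility of the norm with reduction modulo $\varpi$. The difference is in how that fact is handled. The paper simply cites Serre (Section V.2) for the statement that $N_r : \algintR^\times \rightarrow \algintF^\times$ descends to a norm $N_r : k_r \rightarrow k_F$, assembles the resulting commutative square on torus points, and concludes that $\chi \circ N_r$ factors through $\torus(k_r)$ by following the lower route. You instead work on the kernel side of the same square: you reduce the claim to the inclusion $N_r\big(1 + \varpi\algintR\big) \subseteq 1 + \varpi\algintF$ and verify it by hand from $N_r(z) = \prod_i \theta^i(z)$, using that $F_r/F$ is unramified (so $\varpi$ remains a uniformizer and is $\theta$-fixed), that the principal units form a group, and that a $\theta$-invariant element of $1 + \varpi\algintR$ lies in $(1+\varpi\algintR) \cap F = 1 + \varpi\algintF$. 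These are dual formulations of one statement — ``the norm descends to the quotient'' versus ``the norm preserves the kernel of reduction'' — so your argument buys self-containedness at the cost of a few lines, while the paper's citation is shorter and also makes the residue-field norm $N_r : \torus(k_r) \rightarrow \torus(k_F)$ explicitly available, which it reuses later (e.g.\ in the definition of $\gamma_{N_r s}$ and the finite critical groups). One small point in your favor: you correctly note that passing to coordinates via a split $\torus \cong \mathbb{G}_m^d$ is what makes the norm act coordinatewise; the paper makes this identification explicitly only in the proof of Proposition~\ref{prop::root-system-equality}, where the analogous diagram appears.
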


\begin{proof}
The norm $N_r : \mathcal{O}_r^\times \rightarrow \mathcal{O}_F^\times$ descends to $N_r : k_r \rightarrow k_F$ as explained in \cite{serre1979}, Section V.2. This induces a commutative diagram on points of the torus:
{
\[
\xymatrix{
\torus(\mathcal{O}_r) \ar[d] \ar[r]^{N_r} & \torus(\mathcal{O}_F) \ar[d] \\
\torus(k_r) \ar[r]^{N_r} & \torus(k_F)
}
\]
}
Because $\chi$ is depth zero, it factors through $\torus(k_F)$. Therefore $\chi \circ N_r : \torus(\mathcal{O}_r) \rightarrow \mathbb{C}^\times$ factors through $\torus(k_r)$ by composing the induced map $\chi : \torus(k_F) \rightarrow \mathbb{C}^\times$ with the lower route through the above diagram.
\end{proof}

\begin{prop}
\label{prop::root-system-equality}
Let $\chi : \torus(\mathcal{O}_F) \rightarrow \mathbb{C}^\times$ be a depth-zero character, and let $\chi_r$ be the associated depth-zero character on $\torus(\mathcal{O}_r)$. Then $\Phi_\chi = \Phi_{\chi_r}$ as subsystems of the ambient root system $\Phi = \Phi(G,T)$.
\end{prop}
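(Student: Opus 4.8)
The goal is to show that the two $\chi$-root systems coincide: $\Phi_\chi = \Phi_{\chi_r}$, where by definition
$$
\Phi_\chi = \{\alpha \in \Phi \mid (\chi \circ \alpha^\vee)|_{\mathcal{O}_F^\times} = 1\}, \qquad
\Phi_{\chi_r} = \{\alpha \in \Phi \mid (\chi_r \circ \alpha^\vee)|_{\mathcal{O}_r^\times} = 1\}.
$$
Since both are subsets of the same ambient $\Phi$, and membership is decided root-by-root, the plan is simply to prove that for a fixed $\alpha \in \Phi$, the condition defining $\Phi_\chi$ holds if and only if the condition defining $\Phi_{\chi_r}$ holds. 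The natural tool is the commutative diagram relating the norm maps on the torus and on the one-parameter subgroup $\alpha^\vee : \mathbb{G}_m \to \torus$.

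First I would unwind $\chi_r = \chi \circ N_r$ explicitly along the coroot. For $x \in \mathcal{O}_r^\times$ we have $\chi_r(\alpha^\vee(x)) = \chi(N_r(\alpha^\vee(x)))$, and because $\alpha^\vee$ is a homomorphism of algebraic groups defined over $F$, it commutes with the Galois action; hence $N_r(\alpha^\vee(x)) = \alpha^\vee(N_r(x))$. Therefore
$$
(\chi_r \circ \alpha^\vee)(x) = \chi\bigl(\alpha^\vee(N_r(x))\bigr) = (\chi \circ \alpha^\vee)(N_r(x)).
$$
This identity reduces the comparison of the two restrictions to understanding the image of $N_r : \mathcal{O}_r^\times \to \mathcal{O}_F^\times$. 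That is where the lemma stated just above this proposition enters: the norm $N_r : \mathcal{O}_r^\times \to \mathcal{O}_F^\times$ is \emph{surjective}.

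The two implications then follow. For the forward direction, suppose $\alpha \in \Phi_\chi$, so $(\chi \circ \alpha^\vee)|_{\mathcal{O}_F^\times} = 1$. Then for every $x \in \mathcal{O}_r^\times$ we have $N_r(x) \in \mathcal{O}_F^\times$, and the displayed identity gives $(\chi_r \circ \alpha^\vee)(x) = (\chi \circ \alpha^\vee)(N_r(x)) = 1$; hence $\alpha \in \Phi_{\chi_r}$. For the reverse direction, suppose $\alpha \in \Phi_{\chi_r}$, so $(\chi_r \circ \alpha^\vee)(x) = 1$ for all $x \in \mathcal{O}_r^\times$. Let $y \in \mathcal{O}_F^\times$ be arbitrary; by surjectivity of the norm there is $x \in \mathcal{O}_r^\times$ with $N_r(x) = y$, and then $(\chi \circ \alpha^\vee)(y) = (\chi \circ \alpha^\vee)(N_r(x)) = (\chi_r \circ \alpha^\vee)(x) = 1$. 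Thus $(\chi \circ \alpha^\vee)|_{\mathcal{O}_F^\times} = 1$ and $\alpha \in \Phi_\chi$.

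The only genuine subtlety—and the step I would flag as the main obstacle—is justifying $N_r(\alpha^\vee(x)) = \alpha^\vee(N_r(x))$, i.e. that the coroot $\alpha^\vee$ intertwines the norm maps. This is exactly the assertion that the Galois action $\theta$ used to define $N_r$ on $\torus(\mathcal{O}_r)$ is compatible with the cocharacter $\alpha^\vee$, which holds because $\torus$ is \emph{split} over $F$ (so $\theta$ acts only on the coordinates $\mathbb{G}_m(\mathcal{O}_r) = \mathcal{O}_r^\times$ and fixes the cocharacter lattice $\cochars{\torus}$ pointwise). Once this compatibility is in hand, the whole proposition is a short formal consequence of the surjectivity lemma, so I would spend most of the written proof making that intertwining explicit and leave the two set-inclusion arguments terse.
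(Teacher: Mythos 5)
Your proposal is correct and follows essentially the same route as the paper's own proof: both rest on the commutativity of $\alpha^\vee$ with the norm maps (justified by the coordinate-wise Galois action on the split torus), yielding $(\chi_r \circ \alpha^\vee)(z) = (\chi \circ \alpha^\vee)(N_r(z))$, and then both invoke the surjectivity of $N_r : \mathcal{O}_r^\times \rightarrow \mathcal{O}_F^\times$ to get the two inclusions. Your explicit flagging of the intertwining step as the genuine content matches where the paper, too, places its emphasis.
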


\begin{proof}
For any $\alpha \in \Phi$, consider the cocharacter $\alpha^\vee$ in $\cochars{T}$. Since $G$ is split, the maximal torus $\torus$ is $F$-isomorphic to a direct product $\mathbb{G}_m \times \cdots \times \mathbb{G}_m$ of rank equal to ${\rm{rank}}(\torus)$. Galois groups act coordinate-wise on $\torus(F_r)$ and $\torus(F)$, and the following diagram commutes:
{
\[
\xymatrix{
F_r^\times \ar[d]_{\alpha^\vee} \ar[r]^{N_r} & F^\times \ar[d]^{\alpha^\vee} \\
\torus(F_r) \ar[r]^{N_r} & \torus(F)
}
\]
}
Therefore, we have an equality,
$$
(\chi_r \circ \alpha^\vee)(z) = (\chi \circ \alpha^\vee) (N_r(z)),
$$
for any $\alpha \in \Phi$ and $z \in \mathcal{O}_r^\times$.

The norm map $N_r : F_r \rightarrow F$ restricts to a surjection $N_r : \mathcal{O}_r^\times \rightarrow \mathcal{O}_F^\times$. It is then clear that $\chi_r(\alpha^\vee(z)) = 1$ if and only if $\chi(\alpha^\vee(N_r (z))) = 1$.

So, if $\alpha \in \Phi_\chi$, we can conclude that $\chi_r(\alpha^\vee(z)) = 1$ for all $z \in k_r$, i.e., that $\alpha \in \Phi_{\chi_r}$. Similar logic gives the reverse inclusion.
\end{proof}

\begin{prop}
\label{prop::iwahori-prop-quotient}
Let $\torus$ be the split maximal torus in a fixed Borel subgroup $B$ of $G$. Let $I_r$ be the Iwahori subgroup of $G(\mathcal{O}_r)$ that maps onto $B(k_r)$ modulo $\varpi$. Then there is an isomorphism ${\torus(k_F) \cong I/I^+}$. Similarly, $\torus(k_r) \cong I_r/I_r^+$ for the analogous subgroups of $G(F_r)$.
\end{prop}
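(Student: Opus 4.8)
The plan is to realize both $I/I^+$ and $\torus(k_F)$ as quotients attached to the reduction map. Let $\mathrm{red}\colon G(\mathcal{O}_F) \to G(k_F)$ denote the ``mod $\varpi$'' homomorphism, which is surjective because $G$ is smooth over $\mathcal{O}_F$. By definition $I = \mathrm{red}^{-1}(B(k_F))$, so $\mathrm{red}$ restricts to a surjection $I \twoheadrightarrow B(k_F)$ whose kernel is the principal congruence subgroup $G_1 = \ker(\mathrm{red})$, a pro-$p$ group. First I would set $N = \mathrm{red}^{-1}(U(k_F)) \subseteq I$, where $U$ is the unipotent radical of $B$, and argue that $N = I^+$; granting this, the proposition reduces to a routine quotient computation.

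For that computation I would use that $B = TU$ is a semidirect product of algebraic groups over $\mathcal{O}_F$ with $U$ normal, so on $k_F$-points $B(k_F) = \torus(k_F) \ltimes U(k_F)$ functorially; in particular the projection $B(k_F) \twoheadrightarrow \torus(k_F)$ with kernel $U(k_F)$ is surjective. Composing $\mathrm{red}|_I$ with this projection gives a surjection $I \twoheadrightarrow \torus(k_F)$ whose kernel is exactly $N$, and the isomorphism theorem then yields
$$
I/N \cong \torus(k_F).
$$

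The main obstacle — and essentially the only content — is the identification $N = I^+$. Here I would exploit that $\torus(k_F) \cong (k_F^\times)^{\dim \torus}$ has order $(q-1)^{\dim \torus}$, which is prime to $p$. On one hand $N$ is pro-$p$, being an extension of the finite $p$-group $U(k_F)$ by the pro-$p$ group $G_1$. On the other hand, any pro-$p$ subgroup $P \subseteq I$ maps under $I \twoheadrightarrow \torus(k_F)$ to a pro-$p$ subgroup of the prime-to-$p$ group $\torus(k_F)$, hence to the trivial subgroup, so $P \subseteq N$. Thus $N$ is the unique maximal (normal) pro-$p$ subgroup of $I$, which is precisely the pro-unipotent radical $I^+$. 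The statement for $F_r$ follows by running the identical argument with $F$, $\mathcal{O}_F$, $k_F$ replaced by $F_r$, $\mathcal{O}_r$, $k_r$: the residue field $k_r$ still has characteristic $p$, so $\torus(k_r)$ again has order $(q^r - 1)^{\dim \torus}$ prime to $p$, and the prime-to-$p$ argument applies verbatim to give $I_r/I_r^+ \cong \torus(k_r)$.
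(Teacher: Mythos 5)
Your proof is correct, but it takes a different route from the paper. The paper disposes of this proposition in one line by invoking the Iwahori factorization of $I$ (resp. $I_r$) into a product of torus elements and unipotent elements, citing Goldstein's thesis; the isomorphism $I/I^+ \cong \torus(\resfield)$ is then read off from that explicit product decomposition. You instead avoid any factorization: you realize $I/N \cong \torus(\resfield)$ directly from the surjectivity of the reduction map (smoothness of the Chevalley model plus Hensel's lemma) and the semidirect product $B(\resfield) = \torus(\resfield) \ltimes U(\resfield)$, and then identify $N = \mathrm{red}^{-1}(U(\resfield))$ with $I^+$ by a purely group-theoretic maximality argument — $N$ is pro-$p$ (extension of the $p$-group $U(\resfield)$ by the pro-$p$ principal congruence kernel), while every pro-$p$ subgroup of $I$ has trivial image in the prime-to-$p$ group $\torus(\resfield)$ and so lands in $N$, making $N$ the unique maximal pro-$p$ subgroup. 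This buys you a self-contained argument that needs no structure theory beyond smoothness and the order of $\torus(\resfield)$, and it transfers verbatim to $F_r$ as you note; the one point to make explicit is that you are using the characterization of the pro-unipotent radical $I^+$ as the maximal pro-$p$ (normal) subgroup of $I$, which is consistent with the paper's definition (it calls $I^+$ the \emph{unique} pro-unipotent subgroup and records that it is pro-$p$) but is itself a fact one would otherwise extract from the same Iwahori factorization the paper cites. The paper's citation-based proof is shorter and gives the stronger explicit product decomposition, which is used implicitly elsewhere; your argument is the more elementary verification of just this quotient statement.
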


\begin{proof}
The isomorphism is a consequence of the factorization of $I$ (resp. $I_r$) into a product of torus elements and unipotent elements. See for example  Goldstein's thesis \cite{goldstein1990}, Chapter 2.
\end{proof}

Using $\torus(\resfield) \cong I/I^+$, we can extend a depth-zero character $\chi$ on $\torus(\resfield)$ to a character $\rho_\chi$ on $I$ which is trivial on $I^+$. There is a character $\rho_{\chi_r}$ on $I_r$ similarly derived from $\chi_r$.

We conclude these opening remarks by defining some variants of the Weyl group associated to a depth-zero character. What follows is essentially reproduced from \cite{haines-rapoport2012} Section 9.1.

Let $N_G(T)$ be the normalizer of $T(F)$ in $G(F)$. Recall that the Weyl group of $\Phi$ is $W = N_G(T)/T(F)$, and its extended affine Weyl group is $\widetilde{W} = N_G(T)/T(\mathcal{O}_F)$. The groups $N_G(T)$, $W$ and $\widetilde{W}$ all act on the depth-zero characters by conjugation, e.g., for $w \in W$ and $t \in \torus(\mathcal{O}_F)$, define the $W$-action by ${^w}\chi(t) = \chi(w^{-1}t w)$. Let
$$
W_\chi = \{ w \in W \mid {^w}\chi = \chi\}.
$$

From the definitions, we have surjections $N_G(T) \rightarrow \widetilde{W} \rightarrow W$ and that $W_\chi$ is a subgroup of $W$. Let $\widetilde{W}_\chi$ be the preimage of $W_\chi$ in $\widetilde{W}$, and let $N_\chi$ be the preimage of $W_\chi$ in $N_G(T)$.

Let $\Phi_{\chi, {\rm{aff}}} = \{ a = \alpha + k \mid \alpha \in \Phi_\chi, k \in \mathbb{Z}\}$ be the affine root system arising from $\Phi_\chi$. Let $W_\chi^\circ = \langle s_\alpha \mid \alpha \in \Phi_\chi \rangle$ and $W_{\chi, {\rm{aff}}} = \langle s_a \mid a \in \Phi_{\chi, {\rm{aff}}}\rangle$.

In conclusion, let us state Lemma 9.1.1 of \cite{haines-rapoport2012}, whose proof is due to Roche~\cite{roche1998}.

\begin{lemma}
\label{lemma::chi-weyl-groups}
\begin{enumerate}
\item The group $W_{\chi, {\rm{aff}}}$ is a Coxeter group, whose set of generators $S_{\chi, {\rm{aff}}}$ are the reflections associated to the simple roots of $\Phi_{\chi, {\rm{aff}}}$.
\item There is a canonical decomposition $\widetilde{W}_\chi = W_{\chi, {\rm{aff}}} \rtimes \Omega_\chi$, where $\Omega_\chi$ is the subset of $\widetilde{W}_\chi$ which fixes the base alcove of $\Phi_\chi$. The Bruhat order $\leq_\chi$ and length function $\ell_\chi$ of $W_{\chi, {\rm{aff}}}$ can be extended to $\widetilde{W}_\chi$ such that $\Omega_\chi$ consists of the length-zero elements.
\item If $W_\chi^\circ = W_\chi$, then $W_{\chi, {\rm{aff}}}$ (resp. $\widetilde{W}_\chi$) is the affine (resp. extended affine) Weyl group associated to $\Phi_\chi$.
\end{enumerate}
\end{lemma}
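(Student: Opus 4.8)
The plan is to reduce all three assertions to the standard structure theory of affine Weyl groups, applied not to the ambient system $\Phi$ but to the subsystem $\Phi_\chi$, together with careful bookkeeping of how the full stabilizer $W_\chi$ sits above the reflection subgroup $W_\chi^\circ = \langle s_\alpha \mid \alpha \in \Phi_\chi \rangle$. Since $\Phi_\chi$ has already been shown to be a genuine root system, essentially all of the geometry we need is available off the shelf; the only genuinely new feature is the possible gap between $W_\chi$ and $W_\chi^\circ$, which is precisely what the hypothesis in part (3) removes.

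For part (1), I would invoke the classical fact (Bourbaki~\cite{bourbaki4-6}, Ch.~V--VI, or Humphreys~\cite{humphreys1990}, \S4.2--4.6) that the group generated by the affine reflections $s_a$, $a \in \Phi_{\chi,{\rm{aff}}}$, of any root system is a Coxeter group. Concretely, the affine hyperplane arrangement determined by $\Phi_\chi$ in the real span $V_\chi$ of $\Phi_\chi$ is locally finite, $W_{\chi,{\rm{aff}}}$ acts properly on $V_\chi$ with fundamental domain a closed alcove, and the reflections in the walls of a fixed base alcove form a Coxeter generating set. Identifying these wall reflections with the reflections in the simple roots of $\Phi_{\chi,{\rm{aff}}}$ yields the asserted generating set $S_{\chi,{\rm{aff}}}$.

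For part (2), the first step is to verify that $W_\chi$ stabilizes $\Phi_\chi$: for $w \in W_\chi$ and $\alpha \in \Phi_\chi$ one computes $\chi\bigl((w\alpha)^\vee(x)\bigr) = ({}^{w^{-1}}\chi)\bigl(\alpha^\vee(x)\bigr) = \chi\bigl(\alpha^\vee(x)\bigr) = 1$, using that $(w\alpha)^\vee = w\cdot\alpha^\vee$ and that $W_\chi$ is a group. Hence $\widetilde{W}_\chi$ normalizes $W_{\chi,{\rm{aff}}}$ and acts on the alcove complex of $\Phi_\chi$. Because $W_{\chi,{\rm{aff}}}$ acts simply transitively on the alcoves of that complex, each $x \in \widetilde{W}_\chi$ factors uniquely as $x = w\sigma$ with $w \in W_{\chi,{\rm{aff}}}$ and $\sigma$ fixing the base alcove, giving the semidirect decomposition $\widetilde{W}_\chi = W_{\chi,{\rm{aff}}} \rtimes \Omega_\chi$. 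One then transports $\ell_\chi$ and $\leq_\chi$ from $W_{\chi,{\rm{aff}}}$ to $\widetilde{W}_\chi$ in the usual manner for extended affine Weyl groups (\cite{macdonald2003}, \S2.1), setting $\ell_\chi(w\sigma) = \ell_\chi(w)$, so that $\Omega_\chi$ is exactly the set of length-zero elements.

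Part (3) then follows by noting that when $W_\chi = W_\chi^\circ$ the finite part of $\widetilde{W}_\chi$ is literally the Weyl group of $\Phi_\chi$, so $W_{\chi,{\rm{aff}}}$ and $\widetilde{W}_\chi$ coincide with the affine and extended affine Weyl groups attached to $\Phi_\chi$ in the sense of the Preliminaries. I expect the main obstacle to lie in the bookkeeping of part (2) in the general case $W_\chi \supsetneq W_\chi^\circ$: one must check that $\Omega_\chi$ is a genuine complement to $W_{\chi,{\rm{aff}}}$ and consists of length-zero elements even though $W_\chi$ may contain elements that are not products of reflections from $\Phi_\chi$. This is precisely where I would lean on the detailed arguments of Roche~\cite{roche1998} and Haines--Rapoport~\cite{haines-rapoport2012}, Lemma~9.1.1, which carry out this analysis in full.
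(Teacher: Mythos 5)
Your proposal is correct, and it matches the paper in the only sense available: the paper offers no argument of its own, simply quoting Lemma 9.1.1 of Haines--Rapoport \cite{haines-rapoport2012} with the proof attributed to Roche \cite{roche1998}, and your sketch is a faithful reconstruction of that standard argument --- Bourbaki's affine reflection group theory applied to $\Phi_\chi$, the check that $W_\chi$ stabilizes $\Phi_\chi$ so that $\widetilde{W}_\chi$ permutes the alcoves of the $\Phi_{\chi,\rm{aff}}$-arrangement, and simple transitivity yielding the decomposition $\widetilde{W}_\chi = W_{\chi,{\rm{aff}}} \rtimes \Omega_\chi$. Deferring the delicate case $W_\chi \supsetneq W_\chi^\circ$ to Roche and Haines--Rapoport is exactly what the paper itself does.
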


\subsubsection{Hecke algebras and their isomorphisms}

Given a character $\xi$ on $\torus(k_r)$, extend to $\rho_\xi$ on $I_r$ using $\torus(k_r) \cong I_r / I_r^+$ as before. We define the subalgebra $\mathcal{H}(G_r, I_r, \rho_{\xi}) \subset \mathcal{H}(G_r)$ consisting of functions $f$ such that
$$
f(xgy) = \rho_{\xi}(x)^{-1} f(g) \rho_{\xi}(y)^{-1}
$$
where $x, y \in I_r$ and $g \in G_r$. Roche refers to such an $f$ as a \emph{$\rho_{\xi}^{-1}$-spherical function}. Iwahori and Matsumoto~\cite{iwahori-matsumoto1965} gave an explicit presentation for certain Iwahori-Hecke algebras, which generalizes to algebras such as $\mathcal{H}(G_r, I_r, \rho_{\xi})$ as described in works by Goldstein~\cite{goldstein1990}, Morris~\cite{morris1993}, and Roche~\cite{roche1998}. Roche introduced an approach to Hecke algebra isomorphisms using endoscopic groups, which is advantageous in the present situation; however, Goldstein's isomorphism would be sufficient as it specifically covers the case of depth-zero characters for split reductive groups.

We begin by introducing the Hecke algebra attached to a general Coxeter group. These groups will be denoted $(\mathcal{W}, \mathcal{S})$ to differentiate them from the finite Weyl group $(W,S)$ of $G$. The Hecke algebra is defined by making a parameter choice for the following general construction:

\begin{thm}
Let $(\mathcal{W},\mathcal{S})$ be a Coxeter system and $A$ a commutative ring with unity. There is a unique associated $A$-algebra $\mathcal{H}$ based on a free $A$-module $\mathcal{E}$ having basis $T_w$ for $w \in \mathcal{W}$, with parameters $a_s, b_s \in \mathcal{S}$, subject to the relations
$$
T_s T_w =\begin{cases}
T_{sw}, & {\rm{if}}\ \ell(sw) > \ell(w) \\
a_s T_w + b_s T_{sw}, & {\rm{otherwise}}.
\end{cases}
$$
\end{thm}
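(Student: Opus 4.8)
The plan is to treat the two assertions of the theorem separately: uniqueness of the multiplication, which follows formally from the relations, and existence, which I would establish by realizing $\mathcal{H}$ as a concrete algebra of operators on $\mathcal{E}$. For uniqueness, note first that in a Coxeter group one always has $\ell(sw) \neq \ell(w)$, so the two displayed cases are exhaustive and ``otherwise'' means $\ell(sw) < \ell(w)$. For a reduced word $w = s_1 \cdots s_k$, repeated application of the length-increasing case gives $T_w = T_{s_1} \cdots T_{s_k}$. Hence any product $T_u T_v$ can be rewritten as an $A$-linear combination of the $T_w$ by peeling off the letters of a reduced word for $u$ one at a time and applying the defining relation at each step; the structure constants are thereby completely determined, so at most one such algebra structure on $\mathcal{E}$ exists.

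For existence I would introduce, for each $s \in \mathcal{S}$, two $A$-linear endomorphisms of $\mathcal{E}$: the left operator $\lambda_s$ given by the stated formula, and the corresponding right operator $\rho_s$, defined by $\rho_s(T_w) = T_{ws}$ if $\ell(ws) > \ell(w)$ and $\rho_s(T_w) = a_s T_w + b_s T_{ws}$ otherwise. Let $\mathcal{H} \subseteq \mathrm{End}_A(\mathcal{E})$ be the subalgebra generated by $\{\lambda_s : s \in \mathcal{S}\}$; as a subalgebra of an endomorphism algebra it is automatically associative and unital. I would then consider the evaluation map $\varepsilon \colon \mathcal{H} \to \mathcal{E}$, $\varepsilon(h) = h(T_e)$, where $e$ is the identity of $\mathcal{W}$. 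Surjectivity is immediate, since $\lambda_{s_1} \cdots \lambda_{s_k}(T_e) = T_w$ for any reduced word $w = s_1 \cdots s_k$. The crux is injectivity, for which I would use the commutation lemma $\lambda_s \rho_t = \rho_t \lambda_s$ for all $s, t \in \mathcal{S}$. Granting this, every $h \in \mathcal{H}$ commutes with each $\rho_t$; since $T_w = \rho_{s_k} \cdots \rho_{s_1}(T_e)$ for a reduced word, we get $h(T_w) = \rho_{s_k} \cdots \rho_{s_1}\big(h(T_e)\big)$, so $h(T_e) = 0$ forces $h = 0$. Thus $\varepsilon$ is an $A$-module isomorphism. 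Transporting the associative product of $\mathcal{H}$ across $\varepsilon^{-1}$ and relabelling $\varepsilon^{-1}(T_w)$ as $T_w$ produces a free $A$-module with basis $\{T_w\}$; since $\varepsilon^{-1}(T_s) = \lambda_s$ and $\varepsilon(\lambda_s \varepsilon^{-1}(T_w)) = \lambda_s(T_w)$, the transported product obeys precisely the relations in the statement.

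The hard part, and the only place where genuine Coxeter combinatorics enters, is the commutation lemma. Evaluated on a basis vector $T_w$, the identity $\lambda_s \rho_t(T_w) = \rho_t \lambda_s(T_w)$ reduces to a finite case check organized by the signs of $\ell(sw) - \ell(w)$ and $\ell(wt) - \ell(w)$. Three of the four sign patterns are routine; the delicate case is when both $\ell(sw) < \ell(w)$ and $\ell(wt) < \ell(w)$, where one must further compare $\ell(swt)$ with the lengths of $sw$ and $wt$ and verify that the two iterated expressions agree, matching the $a_s$- and $b_s$-coefficients. Here I would invoke the exchange and deletion conditions to control the relative positions of $w$, $sw$, $wt$, and $swt$. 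This is the standard argument for the generic Hecke algebra of a Coxeter system, carried out for instance in \cite{bourbaki4-6} and \cite{humphreys1990} and specializing the original presentation of Iwahori and Matsumoto \cite{iwahori-matsumoto1965}; the only modification here is the bookkeeping for the two parameters $a_s, b_s$ rather than a single-parameter normalization.
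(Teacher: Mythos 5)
Your proposal is correct and is essentially the argument the paper relies on: the paper proves this theorem by citing \cite{humphreys1990}, Sections 7.1--7.3, and the proof there is exactly your construction---uniqueness via reduced-word expansion, existence via the left and right operators $\lambda_s$, $\rho_s$ on the free module, the commutation lemma $\lambda_s \rho_t = \rho_t \lambda_s$, and evaluation at $T_e$ to transport the product. Your identification of the delicate sign-pattern case in the commutation lemma (both $\ell(sw) < \ell(w)$ and $\ell(wt) < \ell(w)$) and of the exchange condition as the key tool matches that reference precisely.
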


\begin{proof}
See \cite{humphreys1990}, Sections 7.1-7.3.
\end{proof}

If we set $a_s = q-1$ and $b_s = q$ for all $s \in \mathcal{S}$, then we get the \textbf{Hecke algebra} $\mathcal{H}(\mathcal{W},\mathcal{S})$ as in \cite{humphreys1990}, Section 7.4. If $W_{\rm{aff}}$ is the affine Weyl group of $G$, then the Iwahori-Matsumoto isomorphism for the Iwahori-Hecke algebra is
$$
\mathcal{H}(G, I) \cong \mathcal{H}(W_{\rm{aff}}, S_{\rm{aff}})\ \tilde{\otimes}\ \mathbb{C}[\Omega],
$$
where the notation $\tilde{\otimes}$ refers to a twisted tensor product and whose multiplication on simple tensors is given by
$$
(T_w \otimes T_\sigma) \cdot (T_{w^\prime} \otimes T_{\sigma^\prime}) = T_w T_{\sigma w^\prime \sigma^{-1}} \otimes T_{\sigma \sigma^\prime}
$$
for $w, w^\prime \in W_{\rm{aff}}$ and $\sigma, \sigma^\prime \in \Omega$. The isomorphism $\widetilde{W} \cong W_{\rm{aff}} \rtimes \Omega$ enables us to view the simple tensors $T_w \otimes T_\sigma$ as a basis for $\mathcal{H}(G,I)$ indexed by $\widetilde{W}$.

Following Roche, there are two ways to generalize this isomorphism. One version of the Hecke algebra isomorphism (see \cite{roche1998}, Theorem 6.3) shows directly that
$$
\mathcal{H}(G_r, I_r, \rho_{\chi_r}) \stackrel{\sim}{\longrightarrow} \mathcal{H}(W_{\chi_r, {\rm{aff}}}, S_{\chi_r, {\rm{aff}}})\ \tilde{\otimes}\ \mathbb{C}[\Omega_{\chi_r}].
$$
However, we will follow the second approach, which defines an endoscopic group $H_{\chi_r}$ and shows that $\mathcal{H}(G_r, I_r, \rho_{\chi_r})$ is isomorphic to the Iwahori-Hecke algebra of this endoscopic group with a suitably chosen Iwahori subgroup $I_{H_r}$. Then the original Iwahori-Matsumoto isomorphism gives a presentation of the Hecke algebra in terms of a basis indexed by $\widetilde{W}_{\chi_r}$.

Before explaining Hecke algebra isomorphisms according to Roche, we make the following remark to clarify what assumptions are being made.

\begin{rmk}
\label{rmk::roche-assumptions}
The following will be enforced from now on:
\begin{enumerate}
\item As before, $G$ is a split connected reductive group with connected center,
\item The derived group $G_{\rm{der}}$ is simply-connected (see Section~\ref{section::lattices-in-char-groups}), and
\item $W_\chi = W_\chi^\circ$.
\end{enumerate}
Under these conditions, we may avoid the restrictions on ${\rm{char}}(\resfield)$ made by Roche in~\cite{roche1998} to prove Hecke algebra isomorphisms for characters with positive depth.  Other restrictions are needed to ensure $W_\chi = W_\chi^\circ$. The theory of Hecke algebra isomorphisms associated to depth-zero characters holds without any restriction on ${\rm{char}}(\resfield)$.

Proposition~\ref{prop::w-chi-equality} states that $W_\chi = W_\chi^\circ$ holds for general linear groups and general symplectic groups without restrictions, these two important examples satisfy the above criteria.
\end{rmk}



\begin{prop}
\label{prop::w-chi-equality}
Suppose $G$ is a split connected reductive group with connected center defined over $F$. If $G = GL_n$ or $G = GSp_{2n}$, then $W_\chi = W_\chi^\circ$ without restriction on residue characteristic.
\end{prop}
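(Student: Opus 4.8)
The plan is to compute both $W_\chi$ and $W_\chi^\circ$ explicitly in each of the two cases and verify that they agree. The conceptual reason to expect equality is that $W_\chi^\circ = \langle s_\alpha \mid \alpha \in \Phi_\chi\rangle$ is always a normal subgroup of $W_\chi$ (an element of $W_\chi$ permutes $\Phi_\chi$ and so conjugates its reflections among themselves), and $W_\chi/W_\chi^\circ$ is realized by the elements of $W_\chi$ stabilizing a positive system of $\Phi_\chi$; so the real content is to rule out such ``diagram-automorphism'' elements. For $G = GL_n$ this is immediate: writing $T(k_F) = (k_F^\times)^n$ with $W = S_n$ acting by permutation and $\chi = (\chi_1,\dots,\chi_n)$, the coroot $(\epsilon_i-\epsilon_j)^\vee$ evaluates $\chi$ to $\chi_i\chi_j^{-1}$, so $\Phi_\chi = \{\epsilon_i-\epsilon_j \mid \chi_i=\chi_j\}$ and $W_\chi^\circ$ is the Young subgroup $\prod_m S_{B_m}$ for the partition of $\{1,\dots,n\}$ into the fibers $B_m$ of $i\mapsto\chi_i$. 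Since $w\in W_\chi$ exactly when $w$ preserves these fibers, and distinct fibers carry distinct characters and so cannot be interchanged, $W_\chi$ equals the same Young subgroup. No hypothesis on $\mathrm{char}(k_F)$ enters here.

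For $G = GSp_{2n}$ the root system is of type $C_n$, and the feature I would exploit is the extra similitude coordinate: $T(k_F)\cong (k_F^\times)^{n+1}$, so $\chi = (\chi_1,\dots,\chi_n;\chi_0)$ with $\chi_0$ on the similitude factor $\gamma$. First I would compute the coroots and record that $\epsilon_i-\epsilon_j\in\Phi_\chi \iff \chi_i=\chi_j$, that $\epsilon_i+\epsilon_j-\gamma\in\Phi_\chi \iff \chi_i\chi_j=1$, and that the long root $2\epsilon_i-\gamma\in\Phi_\chi \iff \chi_i=1$. Next, writing $w\in W$ as a signed permutation $(\sigma,\eta)$, I would show that $w\in W_\chi$ is equivalent to two conditions: (i) $\chi_{\sigma(i)}=\chi_i^{\eta_i}$ for all $i$, and (ii) $\prod_{i:\eta_i=-1}\chi_i=1$. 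Condition (ii) records the effect of the sign changes on $\chi_0$ and is precisely what distinguishes $GSp_{2n}$ from $Sp_{2n}$; it is the reason the connectedness of the center is used.

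The core of the proof is a block analysis. Group the indices by the relation $i\sim j \iff \chi_i\in\{\chi_j,\chi_j^{-1}\}$; condition (i) forces $\sigma$ to preserve these blocks, so both groups decompose accordingly. Each block has a common character $\psi$ (with all $\chi_i\in\{\psi,\psi^{-1}\}$) and falls into one of three types. If $\psi^2\neq 1$ the block is of type $A$: identifying $\epsilon_i$ on the $\psi$-part with $\gamma-\epsilon_j$ on the $\psi^{-1}$-part turns its roots into a set of differences, so $W_\chi^\circ$ and the stabilizer coming from (i) both equal $S_{|C|}$, and a counting argument balancing sign changes between the $\psi$- and $\psi^{-1}$-parts shows (ii) holds automatically. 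If $\psi=1$ the block is of type $C$ and both groups equal $W(C_{|C|})$. If $\psi$ is the quadratic character the block is of type $D$: here $W_\chi^\circ = W(D_{|C|})$, but condition (i) alone is vacuous on signs and only yields the larger group $W(C_{|C|})$.

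I expect the type $D$ block to be the main obstacle, since that is the only place $W_\chi$ could a priori exceed $W_\chi^\circ$. The resolution is twofold. First, the quadratic character of $k_F^\times$ is unique, so all indices on which it occurs lie in one $\sim$-class; hence there is at most one type $D$ block. Second, condition (ii) then reads $\psi^{(\#\text{sign changes in that block})}=1$, forcing an even number of sign changes and cutting $W(C_{|C|})$ down to exactly $W(D_{|C|})=W_\chi^\circ$ there. Assembling the blocks gives $W_\chi=W_\chi^\circ$. Finally, I would note that in residue characteristic $2$ the group $k_F^\times$ has odd order, so no quadratic character and hence no type $D$ block occurs at all, which is why the conclusion holds with no restriction on $\mathrm{char}(k_F)$.
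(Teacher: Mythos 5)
Your proposal is correct, and it takes a genuinely different route from the paper: the paper offers no argument at all for this proposition, instead citing pages 395--397 of Roche's article (which impose restrictions on ${\rm char}(k_F)$) and deferring the unrestricted $GL_n$ and $GSp_{2n}$ cases to an unpublished manuscript of Haines and Stroh. You supply a self-contained computation, and its key steps check out against the paper's own coordinates. For $GSp_{2n}$ the coroots are $(\epsilon_i - \epsilon_j)^\vee = e_i - e_j$, $(\epsilon_i + \epsilon_j - \gamma)^\vee = e_i + e_j$, and $(2\epsilon_i - \gamma)^\vee = e_i$, matching the $GSp_4$/$GSp_6$ lists in Section 5.4, so your description of $\Phi_\chi$ is right; the sign change at index $i$ acts on the torus by $t_i \mapsto t_i^{-1} t_0$, which sends $\chi_i \mapsto \chi_i^{-1}$ and $\chi_0 \mapsto \chi_0 \chi_i$, yielding exactly your conditions (i) and (ii); no root of $\Phi_\chi$ joins distinct $\sim$-classes, so both $W_\chi$ and $W_\chi^\circ$ do factor over blocks; your type-$A$ bookkeeping works because $\sigma$ is a bijection preserving $|P|$, forcing the number of crossings $P \to N$ to equal the number $N \to P$, so (ii) is automatic there; and in a type-$D$ block (ii) cuts $W(C_m)$ down to $W(D_m) = W_\chi^\circ\vert_C$, with the $|C| = 1$ case being precisely the $GL_2$-versus-$SL_2$ phenomenon where the similitude coordinate (connected center) kills the lone sign change that survives for $Sp_{2n}$. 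The genuinely delicate point is the one you flag: cyclicity of $k_F^\times$ gives at most one quadratic character, hence at most one type-$D$ block. This matters more than it might appear --- if three or more quadratic characters with product $1$ could occur (as in a Klein four-group of characters), odd sign-change counts in several $D$-blocks could cancel in (ii) and the conclusion would fail --- so this is exactly where the structure of the residue field enters, and your observation that ${\rm char}(k_F) = 2$ makes $|k_F^\times|$ odd correctly explains the absence of any characteristic hypothesis. As for what each approach buys: Roche's treatment covers all split groups with connected center within a general framework but costs characteristic restrictions (and, in this paper, reliance on an unavailable manuscript for the unrestricted cases), whereas your computation is elementary, uniform in ${\rm char}(k_F)$, checkable line by line, and makes explicit both where the connectedness of the center is used (condition (ii)) and why the two listed groups are special (their character-level block combinatorics admit at most one $D$-block).
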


\begin{proof}
The proof for all split connected groups with connected center can be extracted from pages 395-397 of \cite{roche1998}, but this comes at the cost of some restrictions on ${\rm{char}}(\resfield)$. The cases of $GL_n$ and $GSp_{2n}$ are proved to be independent of such restrictions in an unpublished manuscript of Haines and Stroh.
\end{proof}

We are ready to give Roche's definition of the dual group $H_{\chi_r}$. In fact, Roche defines two groups $\tilde{H}_{\chi_r}$ and $H_{\chi_r}$; however, if $G$ has connected center and $W_\chi = W_\chi^\circ$, then $\tilde{H}_{\chi_r} = H_{\chi_r}$. See~\cite{roche1998}, Section 8, for the complete story.

Let $H_{\chi_r}$ be the split connected reductive group over $\mathcal{O}_r$ associated to the root datum $(\chars{T}, \Phi_{\chi_r}, \cochars{T}, \Phi_{\chi_r}^\vee)$. By Proposition~\ref{prop::root-system-equality}, $\Phi_{\chi_r} = \Phi_\chi$. Consequently, $W_\chi^\circ$ is the Weyl group for $H_{\chi_r}$, while $\widetilde{W}_\chi$ is its extended affine Weyl group by Lemma~\ref{lemma::chi-weyl-groups}. We may assume $T$ is the split maximal torus inside $H_{\chi_r}$, and there is an Iwahori subgroup $I_{H_r} \subset H_{\chi_r}$ determined by the positive roots $\Phi_{\chi}^+$. Thus we come to consider the Iwahori-Hecke algebra $\mathcal{H}(H_{\chi_r}, I_{H_r})$. When considering this algebra, we normalize Haar measure for the convolution integral such that ${\rm{vol}}(I_{H_r}) = 1$.


\begin{thm}
The algebras $\mathcal{H}(H_{\chi_r}, I_{H_r})$ and $\mathcal{H}(G_r, I_r, \rho_{\chi_r})$ are isomorphic via a family of support-preserving isomorphisms.
\end{thm}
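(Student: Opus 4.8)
The plan is to follow the standard strategy for Hecke algebra isomorphisms of Iwahori type: exhibit explicit bases of the two algebras indexed by the common group $\widetilde{W}_\chi$, declare the obvious support-preserving correspondence between them, and then check that the structure constants agree by verifying the defining relations on generators. First I would produce a working basis for $\mathcal{H}(G_r, I_r, \rho_{\chi_r})$. A $\rho_{\chi_r}^{-1}$-spherical function supported on a single double coset $I_r n I_r$ can be nonzero only when $n$ intertwines $\rho_{\chi_r}$ with itself, that is $\rho_{\chi_r}(x) = \rho_{\chi_r}(n^{-1}xn)$ for all $x \in I_r \cap n I_r n^{-1}$. Using that $\rho_{\chi_r}$ is trivial on $I_r^+$ and restricts to $\chi_r$ on $\torus(k_r) \cong I_r/I_r^+$ (Proposition~\ref{prop::iwahori-prop-quotient}), together with the Iwahori factorization and the very definition of $\Phi_{\chi_r}$, this intertwining condition holds exactly when the image of $n$ in $\widetilde{W}$ lies in $\widetilde{W}_\chi$. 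This yields a basis $\{T_w^{\chi_r} : w \in \widetilde{W}_\chi\}$, each element normalized to take value $1$ at a chosen representative of its supporting coset.

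On the other side, by Lemma~\ref{lemma::chi-weyl-groups} (invoking the running assumption $W_\chi = W_\chi^\circ$), $\widetilde{W}_\chi$ is the extended affine Weyl group of $H_{\chi_r}$, so the Iwahori-Matsumoto presentation furnishes a basis $\{T_w : w \in \widetilde{W}_\chi\}$ for $\mathcal{H}(H_{\chi_r}, I_{H_r})$ along with the decomposition $\widetilde{W}_\chi = W_{\chi,\mathrm{aff}} \rtimes \Omega_\chi$. I would then define the candidate isomorphism $T_w \mapsto c_w\, T_w^{\chi_r}$ for scalars $c_w$ to be pinned down, support-preserving by construction, and reduce the verification of multiplicativity to the defining relations of the presentation: the quadratic relation for each simple affine reflection $s \in S_{\chi,\mathrm{aff}}$, the braid relations, and the twisted action of the length-zero group $\Omega_\chi$.

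The heart of the argument is the quadratic relation. Computing $T_s^{\chi_r} \ast T_s^{\chi_r}$ via the convolution integral reduces to a rank-one calculation inside the subgroup generated by the root $\alpha$ underlying $s$: the self-convolution ranges over the $q^r$ left cosets of $I_r$ in $I_r s I_r$, and each cofactor carries a twist by $\rho_{\chi_r}$ along the image of $\alpha^\vee$ on $\algintR^\times$. The decisive point is that $\alpha \in \Phi_{\chi_r}$ means precisely that $\chi_r \circ \alpha^\vee$ is trivial on $\algintR^\times$, so all $q^r$ terms contribute with trivial character and the parameter comes out to $q^r$ rather than collapsing, giving $T_s^{\chi_r} \ast T_s^{\chi_r} = (q^r - 1)\,T_s^{\chi_r} + q^r\, T_e^{\chi_r}$, which matches the Iwahori-Matsumoto quadratic relation over $F_r$. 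Verifying that this forces a globally consistent choice of the $c_w$ (so that the braid relations and the $\Omega_\chi$-twist also transport correctly) completes the identification.

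I expect the rank-one convolution to be the main obstacle, both because one must set up the correct coset representatives in the Iwahori factorization and carefully track how $\rho_{\chi_r}$ evaluates on them, and because the constants $c_w$ must be chosen consistently so that \emph{every} structure constant transports, not just the quadratic one. The cleanest route is to establish the quadratic and braid relations on generators and then extend multiplicatively using that supports multiply correctly under length-additivity of $\ell_\chi$, which reduces the global consistency check to the generating relations rather than a coset-by-coset verification.
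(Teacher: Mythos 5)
Your overall architecture is the right one, and it is in fact the argument behind the paper's proof, which consists of a one-line citation to \cite{roche1998}, Theorem 8.2: Roche (like Goldstein and Morris before him) proceeds exactly as you propose, identifying the support with the double cosets indexed by $\widetilde{W}_{\chi_r}$, presenting both algebras by generators and relations over $\widetilde{W}_\chi = W_{\chi, {\rm{aff}}} \rtimes \Omega_\chi$, and matching the quadratic, braid, and length-zero-twist relations. However, your central rank-one computation contains a genuine gap. You assert that the self-convolution for $s \in S_{\chi, {\rm{aff}}}$ ranges over the $q^r$ left cosets of $I_r$ in $I_r s I_r$; the correct count is $q^{r\ell(s)}$, where $\ell$ is the length function of the \emph{ambient} $\widetilde{W}$, and $\ell(s) > 1$ is the generic situation, since a simple reflection of $W_{\chi, {\rm{aff}}}$ need not be simple in $W_{\rm{aff}}$. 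For instance, for $G = GL_4$ and a depth-zero $\chi$ with $\Phi_\chi = \{\pm\alpha_{13}, \pm\alpha_{24}\}$ (the system $J_{\Delta_1}$ appearing in Section~\ref{section::worked-example-gl4-1100}), the Coxeter generators of $W_\chi$ are $s_{\alpha_{13}} = s_{121}$ and $s_{\alpha_{24}} = s_{232}$, each of ambient length $3$, so the convolution runs over $q^{3r}$ cosets while the target relation in $\mathcal{H}(H_{\chi_r}, I_{H_r})$ has parameter $q^r$. The identity you need is therefore not that ``all terms contribute with trivial character'': the excess $q^{3r} - q^r$ terms must \emph{vanish}, and they do so only because of cancelling character sums of the form $\sum_{u \in k_r^\times} \chi_r(\beta^\vee(u)) = 0$ over the intermediate roots $\beta \in \Phi^+ \setminus \Phi_{\chi_r}$, where $\chi_r \circ \beta^\vee$ is nontrivial precisely because $\beta \notin \Phi_{\chi_r}$. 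Triviality of $\chi_r \circ \alpha^\vee$ on $\mathcal{O}_r^\times$ for $\alpha \in \Phi_{\chi_r}$, which you name as the decisive point, only explains why the surviving terms carry no twist; the heart of the computation in \cite{roche1998} is the vanishing for roots outside $\Phi_\chi$.

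A related slip occurs in your reduction of the braid relations: multiplication of double cosets in $\mathcal{H}(G_r, I_r, \rho_{\chi_r})$ is governed by additivity of the ambient length ($I_r n_u I_r \cdot I_r n_v I_r = I_r n_{uv} I_r$ when $\ell(uv) = \ell(u) + \ell(v)$), not of $\ell_\chi$, so you need the nontrivial lemma that $\ell_\chi$-additivity of products in $\widetilde{W}_\chi$ forces $\ell$-additivity; without it, ``supports multiply correctly under length-additivity of $\ell_\chi$'' does not follow. That the two lengths genuinely differ is visible in the normalization the paper records in Lemma~\ref{lemma::hecke-algebra-mapping-coefficients}, which sends $q^{-r\ell(w)/2}[I_r n_w I_r]_{\breve{\chi}_r}$ to $q^{-r\ell_\chi(w)/2}[I_{H_r} n_w I_{H_r}]$; in your notation this pins the scalars to $c_w = q^{r(\ell(w) - \ell_\chi(w))/2}$, which are not identically $1$. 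Finally, normalizing each basis element to take value $1$ at a \emph{chosen} representative leaves a cocycle ambiguity on the torus part (the value changes under $n_w \mapsto t n_w$ by $\chi_r(t)$); making the basis independent of the representative is exactly what the $W_{\chi_r}$-invariant extension $\breve{\chi}_r$ of Lemma~\ref{lemma::existence-breve-chi} accomplishes, and your relation checks would be substantially cleaner after incorporating it.
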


\begin{proof}
This is \cite{roche1998} Theorem 8.2.
\end{proof}

We make a specific choice of isomorphism among the family established by the theorem, following the presentation in Haines-Rapoport~\cite{haines-rapoport2012}, Section 9. Recall that $N_\chi$ is the inverse image of $\widetilde{W}_\chi$ arising from the surjective map $N \rightarrow \widetilde{W}$.

\begin{lemma}
\label{lemma::existence-breve-chi}
Let $\tilde{\chi}_r$ denote an extension of $\chi_r$ to $\torus(F_r)$. Then $\tilde{\chi}_r$ extends to a character $\breve{\chi}_r$ on $N_{\chi_r}$ if and only if $\tilde{\chi}_r$ is $W_{\chi_r}$-invariant.
\end{lemma}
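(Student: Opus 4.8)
The plan is to read the statement as an extension problem for the group extension $1 \to \torus(F_r) \to N_{\chi_r} \to W_{\chi_r} \to 1$, whose kernel $\torus(F_r)$ is abelian and already carries the character $\tilde\chi_r$, and whose quotient is the \emph{finite} reflection group $W_{\chi_r}$ (recall $N_{\chi_r}/\torus(F_r) = W_{\chi_r}$). The necessity direction is immediate: if $\breve\chi_r$ extends $\tilde\chi_r$, then for $w \in W_{\chi_r}$ with any lift $n \in N_{\chi_r}$ and any $t \in \torus(F_r)$, commutativity of $\valfield^\times$ gives $({}^w\tilde\chi_r)(t) = \breve\chi_r(n^{-1}tn) = \breve\chi_r(n)^{-1}\breve\chi_r(t)\breve\chi_r(n) = \tilde\chi_r(t)$, so $\tilde\chi_r$ is $W_{\chi_r}$-invariant.

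For sufficiency I would build $\breve\chi_r$ by hand from a presentation of $N_{\chi_r}$. Here the standing hypothesis $W_\chi = W_\chi^\circ$ (Remark~\ref{rmk::roche-assumptions}) is essential: it makes $W_{\chi_r}$ the Weyl group of $\Phi_{\chi_r}$, hence the Weyl group of $H_{\chi_r}$, so that $N_{\chi_r}$ is identified with $N_{H_{\chi_r}}(\torus)(F_r)$ (both have kernel $\torus(F_r)$ and quotient $W_{\chi_r}$ inside $N_{G_r}(\torus)$). I would then invoke Tits' presentation of the normalizer of a split maximal torus: $N_{\chi_r}$ is generated by $\torus(F_r)$ together with canonical representatives $\dot s_\alpha$ of the simple reflections $s_\alpha$, for $\alpha$ a basis of $\Phi_{\chi_r}$, subject to the torus relations, the conjugation relations $\dot s_\alpha\, t\, \dot s_\alpha^{-1} = {}^{s_\alpha}t$, the quadratic relations $\dot s_\alpha^2 = \alpha^\vee(-1)$, and the braid relations, which hold \emph{exactly} (not merely modulo $\torus$) for Tits' representatives. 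I would define $\breve\chi_r$ to equal $\tilde\chi_r$ on $\torus(F_r)$ and to send every $\dot s_\alpha \mapsto 1$, and then verify that each family of relations is respected, so that this assignment descends to a homomorphism by the universal property of the presentation.

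Checking the relations is where the definitions pay off, and the quadratic relation is the crux. The conjugation relations are respected precisely because $\tilde\chi_r$ is $W_{\chi_r}$-invariant (and in any case $\breve\chi_r(\dot s_\alpha)$ cancels); the braid relations are respected automatically, since both sides are sent to $1$; and for the quadratic relation the key point is that $-1 \in \mathcal{O}_r^\times$, so $\alpha^\vee(-1) \in \torus(\mathcal{O}_r)$ and $\tilde\chi_r(\alpha^\vee(-1)) = \chi_r(\alpha^\vee(-1)) = 1$, the last equality holding because $\alpha \in \Phi_{\chi_r}$ forces $\chi_r \circ \alpha^\vee$ to be trivial on $\mathcal{O}_r^\times$. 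Thus $\breve\chi_r(\dot s_\alpha)^2 = 1 = \tilde\chi_r(\alpha^\vee(-1))$, consistent with the choice $\breve\chi_r(\dot s_\alpha) = 1$.

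The main obstacle I anticipate is justifying that $N_{\chi_r}$ genuinely admits \emph{no further} relations than the Tits relations above; phrased cohomologically, this is the assertion that the obstruction class in $H^2(W_{\chi_r}, \valfield^\times)$ attached to $\tilde\chi_r$ and the factor set of the extension vanishes, and the explicit cochain $\dot s_\alpha \mapsto 1$ is exactly what trivializes it. Securing the identification $N_{\chi_r} = N_{H_{\chi_r}}(\torus)(F_r)$ (which again rests on $W_\chi = W_\chi^\circ$) lets me import Tits' structure theorem for the reductive group $H_{\chi_r}$ rather than proving a presentation from scratch. I would take care to confirm the two technical points the argument hinges on: that $\alpha^\vee(-1)$ really lands in $\torus(\mathcal{O}_r)$, where $\tilde\chi_r$ is pinned down by $\chi_r$, and that the braid relations among Tits' representatives are exact; granting these, $\breve\chi_r$ is a well-defined character of $N_{\chi_r}$ extending $\tilde\chi_r$, completing the ``if'' direction.
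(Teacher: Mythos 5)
Your argument is correct, and it is in substance the proof behind the paper's own treatment: the paper proves Lemma~\ref{lemma::existence-breve-chi} purely by citation to \cite{haines-rapoport2012}, Lemma 9.2.3, whose proof (going back to Roche~\cite{roche1998}) is exactly your scheme---necessity by conjugation inside the abelian group $\valfield^\times$, and sufficiency by sending Tits representatives to $1$, with the quadratic relation killed because $\alpha^\vee(-1) \in \torus(\mathcal{O}_r)$ and $\chi_r \circ \alpha^\vee$ is trivial on $\mathcal{O}_r^\times$ whenever $\alpha \in \Phi_{\chi_r}$. The one step you should tighten is the identification $N_{\chi_r} = N_{H_{\chi_r}}(\torus)(F_r)$: saying the two groups ``have the same kernel and quotient'' does not by itself identify them, since extensions of $W_{\chi_r}$ by $\torus(F_r)$ need not be isomorphic. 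What makes it work is that $\Phi_{\chi_r}$ is a closed, symmetric subsystem of $\Phi$ (established in the paper when showing $\Phi_\chi$ is a root system), so $H_{\chi_r}$ may be realized concretely as the subgroup of $G$ generated by $\torus$ and the root groups $U_{\pm\alpha}$ for $\alpha \in \Phi_{\chi_r}^+$, inheriting a pinning from $G$. Then $N_{H_{\chi_r}}(\torus)(F_r)$ is a subgroup of $N_{G_r}(\torus)$ containing $\torus(F_r)$ and surjecting onto $W_{\chi_r} = W_{\chi_r}^\circ$ (the surjectivity via Tits representatives over $\mathcal{O}_r$, the equality via the standing hypothesis of Remark~\ref{rmk::roche-assumptions}), and any subgroup of $N_{G_r}(\torus)$ with these two properties must equal the full preimage $N_{\chi_r}$. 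With that in place, Tits' presentation of the torus normalizer of the split group $H_{\chi_r}$ (exact braid relations, $n_\alpha^2 = \alpha^\vee(-1)$, conjugation relations, and no further relations) is legitimately available, and your verification of the three families of relations---conjugation via $W_{\chi_r}$-invariance, braid trivially, quadratic via $\tilde{\chi}_r(\alpha^\vee(-1)) = 1$---completes the sufficiency direction exactly as in the cited source.
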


\begin{proof}
This is \cite{haines-rapoport2012}, Lemma 9.2.3.
\end{proof}

For a fixed choice of uniformizer $\varpi$, Remark 9.2.4 loc. cit. defines a specific $W_{\chi_r}$-invariant extension of $\chi_r$, called the \emph{$\varpi$-canonical extension}, by
$$
\tilde{\chi}_r^\varpi (\nu(\varpi) t_0) = \chi(t_0),
$$
for all $\nu \in \cochars{\torus}$ and $t_0 \in \torus(\mathcal{O}_r)$. \textbf{From now on let $\breve{\chi}_r$ be the character on $N_{\chi_r}$ determined according to Lemma~\ref{lemma::existence-breve-chi}.}

For each $w \in \widetilde{W}_\chi$, fix a choice of $n_w \in N_\chi$ such that $n_w \mapsto w$ under the surjection $N_\chi \rightarrow \widetilde{W}_\chi$. Still following Haines-Rapoport, define $[I_r n_w I_r]_{\breve{\chi}_r}$ in $\mathcal{H}(G_r, I_r, \rho_{\chi_r})$ to be the ``unique element in $\mathcal{H}(G_r, I_r, \rho_{\chi_r})$ which is supported on $I_r n_w I_r$ and whose value at $n_w$ is $\breve{\chi}_r^{-1}(n_w)$. Note that $[I_r n_w I_r]_{\breve{\chi}_r}$ depends only on $w$, not on the choice of $n \in N_\chi$ mapping to $w \in \widetilde{W}_\chi$.'' (\cite{haines-rapoport2012}, p. 766.)

Recall that Haar measure on $\mathcal{H}(G_r, I_r, \rho_{\chi_r})$ is normalized so that ${\rm{vol}}(I_r^+) = 1$, while the measure on $\mathcal{H}(H_{\chi_r}, I_{H_r})$ is normalized so that ${\rm{vol}}(I_{H_r}) = 1$.

\begin{lemma}
\label{lemma::hecke-algebra-mapping-coefficients}
The isomorphism $\Psi_{\breve{\chi}_r}: \mathcal{H}(G_r, I_r, \rho_{\chi_r}) \stackrel{\sim}{\longrightarrow} \mathcal{H}(H_{\chi_r}, I_{H_r})$ maps
$$
q^{-r\ell(w)/2}[I_r n_w I_r]_{\breve{\chi}_r} \longmapsto q^{-r\ell_\chi(w)/2}[I_{H_r} n_w I_{H_r}].
$$
\end{lemma}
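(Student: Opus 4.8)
The plan is to leverage two properties of $\Psi_{\breve{\chi}_r}$: it is an algebra isomorphism, and by the preceding theorem (Roche's Theorem 8.2) it is support-preserving. Support-preservation means $\Psi_{\breve{\chi}_r}$ carries a function supported on the single double coset $I_r n_w I_r$ to one supported on $I_{H_r} n_w I_{H_r}$. Since the space of functions with the prescribed bi-equivariance supported on a single double coset is one-dimensional in each algebra, this yields scalars $c_w \in \valfield^\times$ with $\Psi_{\breve{\chi}_r}([I_r n_w I_r]_{\breve{\chi}_r}) = c_w\,[I_{H_r} n_w I_{H_r}]$, and the content of the lemma is exactly the identity $c_w = q^{r(\ell(w) - \ell_\chi(w))/2}$. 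I would phrase the whole argument in terms of the normalized elements $\Theta_w := q^{-r\ell(w)/2}[I_r n_w I_r]_{\breve{\chi}_r}$ and $\Theta'_w := q^{-r\ell_\chi(w)/2}[I_{H_r} n_w I_{H_r}]$, so that the assertion becomes the clean statement $\Psi_{\breve{\chi}_r}(\Theta_w) = \Theta'_w$ for every $w \in \widetilde{W}_\chi$.

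First I would determine the scalar on the Coxeter generators $s \in S_{\chi,\mathrm{aff}}$. By the theory of these $\rho$-spherical Hecke algebras (Goldstein, Roche) the element $[I_r n_s I_r]_{\breve{\chi}_r}$ satisfies a quadratic relation whose constant term is the index $[I_r n_s I_r : I_r] = q^{r\ell(s)}$, where $\ell(s)$ is the (possibly large) length of the reflection $s$ inside $\widetilde{W}$, while $[I_{H_r} n_s I_{H_r}]$ satisfies the Iwahori--Matsumoto relation $X^2 = (q^r - 1)X + q^r$ with constant term $q^{r} = q^{r\ell_\chi(s)}$. Applying the homomorphism $\Psi_{\breve{\chi}_r}$ to the quadratic relation for $[I_r n_s I_r]_{\breve{\chi}_r}$ and substituting $\Psi_{\breve{\chi}_r}([I_r n_s I_r]_{\breve{\chi}_r}) = c_s[I_{H_r} n_s I_{H_r}]$, the identity coefficient forces $c_s^2\, q^r = q^{r\ell(s)}$; the sign is fixed by evaluating both sides at $n_s$, giving $c_s = q^{r(\ell(s)-1)/2}$. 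This is precisely the case $w = s$ of the lemma, equivalently $\Psi_{\breve{\chi}_r}(\Theta_s) = \Theta'_s$, and it already displays the nontrivial power produced by the length discrepancy.

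Next I would propagate from generators to all of $\widetilde{W}_\chi$ using multiplicativity together with the decomposition $\widetilde{W}_\chi = W_{\chi,\mathrm{aff}} \rtimes \Omega_\chi$ of Lemma~\ref{lemma::chi-weyl-groups}. On the $H_{\chi_r}$-side the normalized basis is multiplicative along $\ell_\chi$-reduced words, $\Theta'_{w_1}\Theta'_{w_2} = \Theta'_{w_1 w_2}$ whenever $\ell_\chi(w_1 w_2) = \ell_\chi(w_1) + \ell_\chi(w_2)$, by Iwahori--Matsumoto for $\mathcal{H}(H_{\chi_r}, I_{H_r})$. On the $G_r$-side the product $[I_r n_{w_1} I_r]_{\breve{\chi}_r} * [I_r n_{w_2} I_r]_{\breve{\chi}_r} = [I_r n_{w_1 w_2} I_r]_{\breve{\chi}_r}$ holds whenever the $G$-lengths add, $\ell(w_1 w_2) = \ell(w_1) + \ell(w_2)$; this follows from the $BN$-pair product of Iwahori double cosets in $G_r$ and uses essentially that $\breve{\chi}_r$ is a genuine character on $N_{\chi_r}$ (Lemma~\ref{lemma::existence-breve-chi}), so that the two $\breve{\chi}_r$-weights combine correctly. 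Given a $\chi$-reduced factorization $w = s_1 \cdots s_k\,\sigma$ with $s_i \in S_{\chi,\mathrm{aff}}$ and $\sigma \in \Omega_\chi$, I would expand $\Theta'_w = \Theta'_{s_1}\cdots\Theta'_{s_k}\Theta'_\sigma$, apply $\Psi_{\breve{\chi}_r}^{-1}$, and use $\Psi_{\breve{\chi}_r}(\Theta_{s_i}) = \Theta'_{s_i}$ to reduce the claim for $w$ to the claims for the generators $s_i$ and for the single element $\sigma$.

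The main obstacle, and the step needing real care, is the reconciliation of the two length functions. The propagation above collapses to $\Theta_w$ only if the $G$-length is additive along a $\chi$-reduced word, i.e.\ $\ell(s_1 \cdots s_j) = \ell(s_1) + \cdots + \ell(s_j)$ at each stage, so that the unnormalized products remain single double cosets and the normalization powers $q^{-r\ell/2}$ and $q^{-r\ell_\chi/2}$ match term by term. I would establish this stepwise additivity from the inversion-set descriptions of $\ell$ and $\ell_\chi$, or invoke Roche's parameter computation for $\mathcal{H}(G_r, I_r, \rho_{\chi_r})$, which records exactly that the generator $s$ carries the index $q^{r\ell(s)}$. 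The genuinely delicate point is the factor $\Omega_\chi$: its elements are $\ell_\chi$-length-zero but typically have strictly positive $G$-length, so $c_\sigma = q^{r\ell(\sigma)/2}$ cannot be read from a quadratic relation; instead I would pin it down by computing the convolution $[I_r n_\sigma I_r]_{\breve{\chi}_r} * [I_r n_{\sigma^{-1}} I_r]_{\breve{\chi}_r}$ directly in $G_r$ (a constant multiple of the identity, since $\ell_\chi(\sigma) = 0$) and using the symmetry $c_\sigma = c_{\sigma^{-1}}$. I expect this $\Omega_\chi$-normalization, together with the bookkeeping between the measures $\mathrm{vol}(I_r^+) = 1$ and $\mathrm{vol}(I_{H_r}) = 1$, to be where the computation is most easily mishandled, whereas the generator step and the propagation are formal once the length compatibility is secured.
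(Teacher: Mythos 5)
The paper does not reprove this statement at all: it is quoted from Haines--Rapoport, Theorem 9.3.1 (with the computation going back to Roche), and the normalization in question is \emph{built into the construction} of the specific isomorphism $\Psi_{\breve{\chi}_r}$ attached to the $\varpi$-canonical extension $\breve{\chi}_r$. Your generator step is sound in outline: support-preservation does force $\Psi_{\breve{\chi}_r}([I_r n_s I_r]_{\breve{\chi}_r}) = c_s\,[I_{H_r} n_s I_{H_r}]$ for a scalar $c_s$, and comparing identity-coset coefficients in the two quadratic relations gives $c_s^2 q^r = q^{r\ell(s)}$. But your sign-fixing step, ``evaluating both sides at $n_s$,'' is not meaningful: $\Psi_{\breve{\chi}_r}$ is an abstract algebra isomorphism between spaces of functions on two \emph{different} groups, not composition with any map $H_{\chi_r} \to G_r$, so there is no common point at which to evaluate. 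Fixing the sign requires actually computing the linear coefficient of the quadratic relation in $\mathcal{H}(G_r, I_r, \rho_{\chi_r})$ --- a Gauss-sum computation, which is precisely where Goldstein, Morris and Roche do the real work --- and your appeal to ``the theory of $\rho$-spherical Hecke algebras'' for the shape of that relation comes uncomfortably close to assuming the presentation whose normalization you are trying to derive.

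The decisive gap is the $\Omega_\chi$ step. Your convolution argument yields only the product $c_\sigma c_{\sigma^{-1}} = q^{r\ell(\sigma)}$, and the symmetry $c_\sigma = c_{\sigma^{-1}}$ you invoke to split it is unjustified --- indeed it fails for other members of Roche's family. For any character $\omega$ of $\Omega_\chi$, the assignment $T_u \otimes T_\sigma \mapsto \omega(\sigma)\, T_u \otimes T_\sigma$ is a support-preserving algebra automorphism of $\mathcal{H}(W_{\chi,\mathrm{aff}}, S_{\chi,\mathrm{aff}})\ \tilde{\otimes}\ \mathbb{C}[\Omega_\chi]$ (the $\Omega_\chi$-components multiply in the twisted tensor product, so $\omega$ is multiplicative on them), and composing $\Psi_{\breve{\chi}_r}$ with it rescales $c_\sigma$ by $\omega(\sigma)$ while leaving invariant every identity your argument uses, including $c_\sigma c_{\sigma^{-1}}$, since $\omega(\sigma)\omega(\sigma^{-1}) = 1$. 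So the scalars on $\Omega_\chi$ are genuinely \emph{not} determined by support-preservation together with the algebra structure; they depend on the choice of isomorphism within the family, which is exactly why the specific extension $\breve{\chi}_r$ of Lemma~\ref{lemma::existence-breve-chi} must be fixed before the statement even makes sense. Any complete proof has to use the explicit construction of $\Psi_{\breve{\chi}_r}$ --- for instance its characterization via the action on $\rho_{\chi_r}$-isotypical vectors in $i_{B_r}^{G_r}(\tilde{\chi}_r^\varpi \eta)$ versus Iwahori-fixed vectors in $i_{B_{H_r}}^{H_{\chi_r}}(\eta)$, as in Proposition~\ref{prop::equivalence-of-repn-cat} --- rather than structural rigidity alone. (Your length-compatibility worry, by contrast, is the unproblematic part: additivity of the ambient length $\ell$ along $\ell_\chi$-reduced products in $\widetilde{W}_\chi$ is a known lemma of Roche and Morris, and your propagation step is fine once it is quoted.)
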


\begin{proof}
This is quoted from \cite{haines-rapoport2012}, Theorem 9.3.1, but the result comes from \cite{roche1998}.
\end{proof}

The isomorphism $\Psi_{\breve{\chi}_r}$ will be referred to as ``the'' Hecke algebra isomorphism between these algebras. It depends on the choice of $\varpi$ used to construct $\breve{\chi}_r$.

Let $\tilde{\chi} : T(F) \rightarrow \valfield^\times$ be an extension of $\chi$. There is a corresponding inertial equivalence class $\mathfrak{s}_\chi = [T(F), \tilde{\chi}]_G$, and hence a Bernstein component $\mathfrak{R}_{\mathfrak{s}_\chi} (G)$, which we refer to as a \emph{depth-zero principal series component} of $\mathfrak{R}(G)$. If $\chi = 1$, i.e., the trivial character, the inertial class is denoted $\mathfrak{s}_1$. The component $\mathfrak{R}_{\mathfrak{s}_1}(G)$ is the \emph{unramified principal series component}.

\begin{prop}
\label{prop::equivalence-of-repn-cat}
The isomorphism $\Psi_{\breve{\chi}_r}$ sets up an equivalence of categories $\mathfrak{R}_{\mathfrak{s}_\chi} (G_r) \cong \mathfrak{R}_{\mathfrak{s}_1} (H_{\chi_r})$ under which $i_{B_r}^{G_r}(\tilde{\chi}_r^\varpi \eta)$ corresponds to $i_{B_{H_r}}^{H_{\chi_r}}(\eta)$, where $\eta$ is an unramified character.
\end{prop}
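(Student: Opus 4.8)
The plan is to factor the desired equivalence through the module categories of the two Hecke algebras, using the theory of types together with the explicit isomorphism $\Psi_{\breve{\chi}_r}$ of Lemma~\ref{lemma::hecke-algebra-mapping-coefficients}. First I would invoke the theory of Bushnell--Kutzko types (as developed for depth-zero principal series by Roche~\cite{roche1998}) to record that $(I_r, \rho_{\chi_r})$ is an $\mathfrak{s}_\chi$-type for $G_r$. This yields an equivalence of categories
$$
M_G : \mathfrak{R}_{\mathfrak{s}_\chi}(G_r) \xrightarrow{\ \sim\ } \mathcal{H}(G_r, I_r, \rho_{\chi_r})\text{-Mod}, \qquad \pi \mapsto \mathrm{Hom}_{I_r}(\rho_{\chi_r}, \pi).
$$
Symmetrically, the trivial character of the Iwahori subgroup $I_{H_r}$ is an $\mathfrak{s}_1$-type for $H_{\chi_r}$ --- this is the classical Borel--Casselman statement that the Iwahori-fixed-vector functor identifies the unramified principal series block with modules over the Iwahori--Hecke algebra --- giving
$$
M_H : \mathfrak{R}_{\mathfrak{s}_1}(H_{\chi_r}) \xrightarrow{\ \sim\ } \mathcal{H}(H_{\chi_r}, I_{H_r})\text{-Mod}, \qquad \Pi \mapsto \Pi^{I_{H_r}}.
$$

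Next, since $\Psi_{\breve{\chi}_r}$ is an isomorphism of $\valfield$-algebras, pullback along $\Psi_{\breve{\chi}_r}^{-1}$ is an equivalence of the corresponding module categories. Composing
$$
\mathfrak{R}_{\mathfrak{s}_\chi}(G_r) \xrightarrow{\ M_G\ } \mathcal{H}(G_r, I_r, \rho_{\chi_r})\text{-Mod} \xrightarrow{\ (\Psi_{\breve{\chi}_r}^{-1})^\ast\ } \mathcal{H}(H_{\chi_r}, I_{H_r})\text{-Mod} \xrightarrow{\ M_H^{-1}\ } \mathfrak{R}_{\mathfrak{s}_1}(H_{\chi_r})
$$
produces the asserted equivalence of categories. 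Up to this point the argument is formal, given the two type statements and the algebra isomorphism already established.

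The substantive step is to verify that this equivalence sends $i_{B_r}^{G_r}(\tilde{\chi}_r^\varpi \eta)$ to $i_{B_{H_r}}^{H_{\chi_r}}(\eta)$. I would compute both sides as standard modules. On the $H_{\chi_r}$-side, $M_H(i_{B_{H_r}}^{H_{\chi_r}}(\eta)) = i_{B_{H_r}}^{H_{\chi_r}}(\eta)^{I_{H_r}}$ is the standard module of the Iwahori--Hecke algebra attached to $\eta$, whose lattice part $\cochars{T}$ acts, in the Bernstein presentation, through $\eta$. On the $G_r$-side, $M_G(i_{B_r}^{G_r}(\tilde{\chi}_r^\varpi\eta))$ is the $\rho_{\chi_r}$-isotypic space, a standard module whose basis is indexed by $\widetilde{W}_\chi$-translates of the base alcove via the Iwahori factorization of the induced space. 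The content is then that $\Psi_{\breve{\chi}_r}$, given on the $[I_r n_w I_r]_{\breve{\chi}_r}$ basis by Lemma~\ref{lemma::hecke-algebra-mapping-coefficients}, carries one standard module isomorphically onto the other. Here the $\varpi$-canonical extension is decisive: because $\tilde{\chi}_r^\varpi(\nu(\varpi)t_0) = \chi(t_0)$, the twist by $\breve{\chi}_r$ built into $\Psi_{\breve{\chi}_r}$ exactly absorbs the $\chi_r$-part of the parameter, so that the lattice part of the transported module acts through $\eta$ and not through $\tilde{\chi}_r^\varpi\eta$; matching the torus parameters $\tilde{\chi}_r^\varpi\eta \leftrightarrow \eta$ identifies the two standard modules.

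The main obstacle is precisely this last matching: one must reconcile the explicit, support-preserving description of $\Psi_{\breve{\chi}_r}$ with the Bernstein presentations on each side and check that the intertwining-operator normalizations defining the standard-module structure on the $\rho_{\chi_r}$-isotypic space are compatible with those on $H_{\chi_r}$. Rather than carry this out by hand, I would appeal to Roche's analysis in~\cite{roche1998}, where the compatibility of the type equivalence with induction from the torus is established, and simply record that the base point $\tilde{\chi}_r^\varpi$ fixed above is the one making the parameter on the $H_{\chi_r}$-side equal to the unramified character $\eta$.
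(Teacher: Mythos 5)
Your proposal is correct and matches the underlying argument: the paper itself offers no proof but cites Haines--Rapoport, Proposition 9.3.3(2), whose proof runs along exactly the route you describe --- Roche's type-theoretic equivalences $\mathfrak{R}_{\mathfrak{s}_\chi}(G_r) \simeq \mathcal{H}(G_r, I_r, \rho_{\chi_r})\text{-Mod}$ and the Borel--Casselman equivalence for $H_{\chi_r}$, transport along $\Psi_{\breve{\chi}_r}$, and the $\varpi$-canonical extension $\breve{\chi}_r$ doing precisely the work of normalizing the torus parameter so that $\tilde{\chi}_r^\varpi\eta$ on the $G_r$-side matches the unramified $\eta$ on the $H_{\chi_r}$-side. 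One small slip that does not affect the argument: the $\rho_{\chi_r}$-isotypic subspace of $i_{B_r}^{G_r}(\tilde{\chi}_r^\varpi\eta)$ has a basis indexed by the \emph{finite} group $W_\chi$ (the cosets $B_r w I_r$ with ${}^{w}\chi_r = \chi_r$), not by $\widetilde{W}_\chi$-translates of the base alcove, which is consistent with $\dim i_{B_{H_r}}^{H_{\chi_r}}(\eta)^{I_{H_r}} = \vert W_\chi^\circ\vert = \vert W_\chi\vert$ under the standing hypothesis $W_\chi = W_\chi^\circ$.
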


\begin{proof}
This statement is Proposition 9.3.3(2) in \cite{haines-rapoport2012}.
\end{proof}

Using the statements and results from Roche and Haines-Rapoport, we have established an isomorphism between the Hecke algebra $\mathcal{H}(G_r, I_r, \rho_{\chi_r})$ and the Iwahori-Hecke algebra $\mathcal{H}(H_{\chi_r}, I_{H_r})$. Then by the Iwahori-Matsumoto isomorphism,
$$
\mathcal{H}(H_{\chi_r}, I_{H_r}) \cong \mathcal{H}(W_{{\chi_r}, {\rm{aff}}}, S_{{\chi_r}, {\rm{aff}}})\ \tilde{\otimes}\ \mathbb{C}[\Omega_{\chi_r}].
$$
The right-hand side of this expression is called a \emph{twisted affine Hecke algebra}. It has a basis $\{T_w\}_{w \in \widetilde{W}_\chi}$, where each of the $T_w$ is invertible. 

Recall from Proposition~\ref{prop::root-system-equality} that $\Phi_\chi = \Phi_{\chi_r}$. It follows that the Coxeter systems $(W_{\chi_r, {\rm{aff}}}, S_{\chi_r, {\rm{aff}}})$ and $(W_{\chi, {\rm{aff}}}, S_{\chi, {\rm{aff}}})$ are identical; however, we could make different parameter choices $a_s = q^{r} -1, b_s = q^r$ and $a_s = q-1, b_s = q$, respectively for $s \in S_{\chi, {\rm{aff}}}$, which would yield two different twisted affine Hecke algebras that share a common basis $\{T_w\}$ indexed by $\widetilde{W}_{\chi_r}$.

In what follows, we would like to work with data defined in terms of characters $\chi \in \torus(\resfield)^\vee$. If we set the parameters $a_s = q^{r} -1, b_s = q^r$ with respect to the Coxeter system $(W_{\chi, {\rm{aff}}}, S_{\chi, {\rm{aff}}}) = (W_{\chi_r, {\rm{aff}}}, S_{\chi_r, {\rm{aff}}})$, then we get the isomorphism
$$
\mathcal{H}(H_{\chi_r}, I_{H_r}) \cong \mathcal{H}_r(W_{{\chi}, {\rm{aff}}}, S_{{\chi}, {\rm{aff}}})\ \tilde{\otimes}\ \mathbb{C}[\Omega_{\chi}],
$$
where the subscript ``r'' in $\mathcal{H}_r$ is meant to remind the reader that the parameters for the Hecke algebra of the Coxeter system depend on $r$, even though we are working with the basis indexed by $\widetilde{W}_\chi$.

The inversion formula for the basis elements determines the $\widetilde{R}$-polynomials, which arise from Kazhdan-Lusztig theory, for the group $(W_{\chi, {\rm{aff}}}, S_{\chi, {\rm{aff}}})$. This notion extends to the extended affine Weyl group $\widetilde{W}_\chi$.

\begin{defin}
\label{defin::r-poly-from-hecke-algebras}
Let $Q_r = q^{-r/2} - q^{r/2}$. For the twisted affine Hecke algebra $\mathcal{H}_r$ associated to $\widetilde{W}_\chi$, there is a family of polynomials $\widetilde{R}^{\chi}_{x,y}(Q_r)$, with $x, y \in \widetilde{W}_\chi$, determined by the inversion formula for a normalized basis element $\tilde{T}_{w,r} = q^{-r\ell_\chi(w)/2} T_w$ in $\mathcal{H}_r$. The polynomials are defined by
$$
\tilde{T}_{w^{-1}, r}^{-1} = \sum_{x \in \widetilde{W}_\chi} \tilde{R}^{\chi}_{x,w}(Q_r) \tilde{T}_{x,r}.
$$
\end{defin}

We will revisit these polynomials in Chapter 4, where they are defined in terms of an abstract Coxeter system.

\subsubsection{The LLC for Tori}
\label{section::llc-for-tori}

The Local Langlands Correspondence is a theorem in the case of tori. While the LLC is true for general tori, we consider only the split case. Therefore ${^L}T$ can be viewed as $\widehat{\torus}$, the torus determined by the dual root datum.

\begin{thm}
\label{thm::LLC-for-Tori}
\emph{(LLC for Tori: Split Case)} Let $\torus$ be a split torus over $F$ and $W_F$ the Weil group of $F$. Then there is a correspondence
$${\rm{Hom}}(W_F, \dualtorus(\valfield)) = {\rm{Hom}}(T(F), \valfield^\times).$$
Denote the Langlands parameter of a character $\xi : T(F) \rightarrow \valfield^\times$ by $\varphi_{\xi} : W_F \rightarrow \dualtorus(\valfield).$
\end{thm}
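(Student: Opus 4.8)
The plan is to reduce the statement to Local Class Field Theory through the standard lattice dictionary for split tori. First I would record the two structural identifications that hold precisely because $\torus$ is split. On the group side, sending a pair $(\nu, x)$ to the point $\nu(x)$ gives an isomorphism $\torus(F) \cong \cochars{\torus} \otimes_{\mathbb{Z}} F^\times$. On the dual side, the duality of root data gives $\chars{\dualtorus} = \cochars{\torus}$, and hence
\[
\dualtorus(\valfield) = {\rm{Hom}}(\chars{\dualtorus}, \valfield^\times) = {\rm{Hom}}(\cochars{\torus}, \valfield^\times) \cong \chars{\torus} \otimes_{\mathbb{Z}} \valfield^\times .
\]
Both $\chars{\torus}$ and $\cochars{\torus}$ are free abelian of rank $d = \dim \torus$, which is what makes the remaining manipulations purely formal.

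Next I would rewrite each side using the tensor--Hom adjunction together with the freeness of these lattices (so that ${\rm{Hom}}(\cochars{\torus}, A) \cong \chars{\torus} \otimes_{\mathbb{Z}} A$ for any abelian group $A$). On the automorphic side,
\[
{\rm{Hom}}(\torus(F), \valfield^\times) = {\rm{Hom}}(\cochars{\torus} \otimes_{\mathbb{Z}} F^\times, \valfield^\times) \cong \chars{\torus} \otimes_{\mathbb{Z}} {\rm{Hom}}(F^\times, \valfield^\times) .
\]
On the Galois side, since $\dualtorus(\valfield)$ is abelian every homomorphism from $W_F$ factors through $W_F^{\rm{ab}}$, and the same freeness argument gives
\[
{\rm{Hom}}(W_F, \dualtorus(\valfield)) = {\rm{Hom}}(W_F, \chars{\torus} \otimes_{\mathbb{Z}} \valfield^\times) \cong \chars{\torus} \otimes_{\mathbb{Z}} {\rm{Hom}}(W_F^{\rm{ab}}, \valfield^\times) .
\]

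The final step invokes Local Class Field Theory: the reciprocity isomorphism $r_F : F^\times \xrightarrow{\sim} W_F^{\rm{ab}}$ recorded in the Preliminaries is an isomorphism of topological groups, so precomposition yields ${\rm{Hom}}(W_F^{\rm{ab}}, \valfield^\times) \cong {\rm{Hom}}(F^\times, \valfield^\times)$, matching continuous (smooth) characters on each side. Tensoring this isomorphism with $\chars{\torus}$ identifies the two displays above and produces the desired bijection; reading it backwards from a character $\xi$ yields the parameter $\varphi_\xi$.

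The lattice bookkeeping is not the difficulty. The real points to attend to are (i) that the construction is canonical, independent of any choice of $\mathbb{Z}$-basis of $\cochars{\torus}$, which I would guarantee by phrasing everything functorially in $\torus$ so that a basis change acts compatibly on both sides; and (ii) that the continuity/admissibility conditions match. Because $\torus$ is split the $W_F$-action on $\dualtorus$ is trivial, so an admissible parameter is merely a continuous homomorphism into $\dualtorus(\valfield)$, and semisimplicity is automatic since every element of a torus is semisimple; admissibility therefore imposes nothing beyond continuity, and the entire content collapses to LCFT. The main obstacle is thus simply carrying the normalization consistently through $r_F$, namely ensuring that the geometric Frobenius convention fixed in the Preliminaries is transported so that $\xi \mapsto \varphi_\xi$ agrees with the conventions used elsewhere in the paper.
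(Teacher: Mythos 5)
Your proposal is correct, but it cannot be compared against an argument in the paper, because the paper supplies no proof of this theorem at all: its ``proof'' is the single line ``For details see Yu~\cite{yu2009}.'' What you have written out is exactly the standard split-case argument that underlies that citation --- Yu's proof of the LLC for general tori proceeds by Galois descent from this case, and when $\torus$ is split the descent machinery is vacuous, leaving precisely your chain of identifications: $\torus(F) \cong \cochars{\torus} \otimes_{\mathbb{Z}} F^\times$, the perfect pairing giving ${\rm{Hom}}(\cochars{\torus}, A) \cong \chars{\torus} \otimes_{\mathbb{Z}} A$ for any abelian group $A$, factorization through $W_F^{\rm{ab}}$ (legitimate for continuous homomorphisms since $\valfield^\times$ is Hausdorff abelian, so the closure of the commutator subgroup is killed), and the reciprocity isomorphism $F^\times \cong W_F^{\rm{ab}}$ of local class field theory. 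Your two points of care are also the right ones, and the second pays off immediately in the paper: the way the paper actually uses the theorem is through the compatibility $\nu(\varphi_{\tilde{\chi}}(z)) = \tilde{\chi}(\nu(\tau_F(z)))$ for $\nu \in \chars{\dualtorus} = \cochars{\torus}$ (the commutative diagram stated right after the theorem, and again in the proofs of Proposition~\ref{prop::dz-endoscopic-elements-equal-kernel} and Lemma~\ref{lemma::defin-of-gamma-nrs}), and this identity is precisely what your construction produces when unwound through the pairing, provided the reciprocity map is normalized consistently with the paper's geometric-Frobenius convention. The only cosmetic caveat is that the paper's statement writes ${\rm{Hom}}$ without qualification; as you note, it should be read as continuous homomorphisms on the Weil side and smooth characters on the torus side, and your tensor--Hom identifications do preserve these conditions since $\valfield^\times$ has no small subgroups.
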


\begin{proof}
For details see Yu~\cite{yu2009}.
\end{proof}

Let $\chi : \torus(\mathcal{O}_F) \rightarrow \valfield^\times$ be a depth-zero character, and let $\tilde{\chi}$ denote an extension to $T(F)$.  Let $\tau_F$ denote the Artin reciprocity map from local class field theory. For any $\nu \in \chars{\dualtorus} = \cochars{\torus}$, Theorem~\ref{thm::LLC-for-Tori} yields the following commutative diagram:
{
\[
\xymatrix{
W_F \ar[d]_{\nu\circ\tau_F} \ar[r]^{\varphi_{\tilde{\chi}}} & \dualtorus(\valfield) \ar[d]^\nu \\
\torus(F) \ar[r]^{\tilde{\chi}} & \valfield^\times
}
\]
}

\begin{lemma}
Consider $\chi : \torus(\mathcal{O}_F) \rightarrow \valfield^\times$ and an extension $\tilde{\chi}$ to $\torus(F)$.
\begin{enumerate}
\item $\tilde{\chi}$ is an unramified character if and only if its Langlands parameter $\varphi_{\tilde{\chi}}$ is trivial on the inertia subgroup $I_F \subset W_F$.
\item The restriction of $\varphi_{\tilde{\chi}}$ to $I_F$ depends only on $\chi$. This restriction is denoted $\varphi_{\chi}$.
\item $\chi$ is depth-zero if and only if $\varphi_{\chi}$ is trivial on the subgroup $\tau_F^{-1} (1 + \varpi \mathcal{O}_F) \subset I_F$. In this case, $\chi$ is determined by the value of $\varphi_\chi$ on an element $x \in I_F$ whose image $\tilde{x} \in \mathcal{O}_F^\times$ under $\tau_F$ projects to a generator of the multiplicative group of the residue field $k_F$.
\end{enumerate}
\end{lemma}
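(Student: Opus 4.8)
The plan is to derive all three parts from the single compatibility diagram preceding the statement, together with two standard inputs: that the LLC for tori (Theorem~\ref{thm::LLC-for-Tori}) is an isomorphism of abelian groups, and that local class field theory identifies $\tau_F(I_F)$ with $\algintF^\times$ in such a way that the principal units $1+\varpi\algintF$ are exactly $\tau_F\big(\tau_F^{-1}(1+\varpi\algintF)\big)$ (see \cite{serre1979}, XIII--XV). First I would fix a splitting $\torus \cong \mathbb{G}_m^n$ given by a basis $\nu_1,\dots,\nu_n$ of $\cochars{\torus}$; each $\nu_i$ simultaneously furnishes a cocharacter $F^\times \to \torus(F)$ and, via $\chars{\dualtorus} = \cochars{\torus}$, a character $\dualtorus(\valfield) \to \valfield^\times$. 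The diagram then reads $\nu_i(\varphi_{\tilde\chi}(w)) = \tilde\chi(\nu_i(\tau_F(w)))$ for every $w \in W_F$. The two observations that make the bookkeeping work are that the $\nu_i$ separate points of $\dualtorus(\valfield)$ (so a parameter value is trivial iff all its $\nu_i$-coordinates are), and that the images $\nu_i(\algintF^\times)$ generate $\torus(\algintF)$, while the $\nu_i(1+\varpi\algintF)$ generate the congruence subgroup $\ker(\torus(\algintF) \to \torus(\resfield))$.

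For part (1) I would argue both directions through the diagram. If $\varphi_{\tilde\chi}|_{I_F}=1$: any $t \in \torus(\algintF)$ is a product of elements $\nu_i(u_i)$ with $u_i \in \algintF^\times = \tau_F(I_F)$, so writing $u_i = \tau_F(w_i)$ with $w_i \in I_F$ gives $\tilde\chi(\nu_i(u_i)) = \nu_i(\varphi_{\tilde\chi}(w_i)) = 1$, whence $\tilde\chi$ is unramified. Conversely, if $\tilde\chi$ kills $\torus(\algintF)$, then for $w \in I_F$ we have $\tau_F(w) \in \algintF^\times$, so $\nu_i(\tau_F(w)) \in \torus(\algintF)$ and $\nu_i(\varphi_{\tilde\chi}(w)) = \tilde\chi(\nu_i(\tau_F(w))) = 1$ for all $i$; since the $\nu_i$ separate points, $\varphi_{\tilde\chi}(w)=1$.

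Part (2) then follows formally: two extensions $\tilde\chi,\tilde\chi'$ of the same $\chi$ differ by a character $\eta = \tilde\chi'/\tilde\chi$ trivial on $\torus(\algintF)$, i.e. unramified, so by multiplicativity of the LLC for tori together with part (1), $\varphi_{\tilde\chi'}|_{I_F} = \varphi_{\tilde\chi}|_{I_F}\cdot\varphi_\eta|_{I_F} = \varphi_{\tilde\chi}|_{I_F}$, which justifies writing $\varphi_\chi$. Part (3) is the exact analogue of part (1) with $\algintF^\times$ replaced by the principal units $1+\varpi\algintF$: depth-zero means $\chi$ is trivial on $\ker(\torus(\algintF)\to\torus(\resfield)) = \prod_i \nu_i(1+\varpi\algintF)$, and $\tau_F$ carries $\tau_F^{-1}(1+\varpi\algintF)$ onto $1+\varpi\algintF$, so the same product-and-separate argument yields the equivalence. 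For the final clause I would choose $x$ with $\tau_F(x)=\tilde x \in \algintF^\times$ reducing to a generator $\zeta$ of $\resfield^\times$, and compute $\nu_i(\varphi_\chi(x)) = \chi(\nu_i(\tilde x)) = \chi(\nu_i(\zeta))$, using that $\chi$ factors through $\torus(\resfield)$; since $\zeta$ generates the cyclic group $\resfield^\times$, the elements $\nu_i(\zeta)$ generate $\torus(\resfield)$, so these coordinates of $\varphi_\chi(x)$ determine $\chi$ completely.

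I do not expect a genuine obstacle; the content is entirely in the translation between $\torus$ and $F^\times$ effected by the splitting, plus the ramification-theoretic input from local class field theory. The one point requiring a little care is the ``determined by a single $x$'' assertion in (3): it is essential that a single generator $\zeta$ of $\resfield^\times$ produces, through the various $\nu_i$, a generating set $\{\nu_i(\zeta)\}$ of the whole finite group $\torus(\resfield)$, so that the single value $\varphi_\chi(x) \in \dualtorus(\valfield)$ — whose $n$ coordinates are precisely the numbers $\chi(\nu_i(\zeta))$ — recovers $\chi$.
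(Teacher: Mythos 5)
Your proof is correct and follows essentially the same route as the paper's: the coordinate identity $\nu(\varphi_{\tilde{\chi}}(w)) = \tilde{\chi}(\nu(\tau_F(w)))$ from the LLC-for-tori diagram, the fact that the characters $\nu \in \chars{\dualtorus}$ separate points of $\dualtorus(\valfield)$, and the generation of $\torus(\algintF)$ (resp.\ its congruence kernel, resp.\ $\torus(\resfield)$) by cocharacter images of $\algintF^\times$ (resp.\ $1+\varpi\algintF$, resp.\ a generator $\zeta$ of $\resfield^\times$). The one small divergence is part (2), where you reduce to part (1) via multiplicativity of the correspondence applied to the unramified quotient $\eta = \tilde{\chi}'/\tilde{\chi}$, while the paper instead verifies directly that $\nu(\varphi_{\tilde{\chi}_1}(z)) = \chi(\nu(\tau_F(z))) = \nu(\varphi_{\tilde{\chi}_2}(z))$ for all $z \in I_F$ and $\nu$ --- both arguments are valid, and yours merely makes the dependence on part (1) explicit.
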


\noindent We will give a proof of the lemma; however, these statements also appear in Roche's article \cite{roche1998} in the discussion following his Theorem 8.2.

\begin{proof}
Recall that a character is \emph{unramified} if it is trivial on
$$
{^\circ}T = \{ t \in T(F) \mid \rm{val}_F (\nu(t)) = 0, \forall \nu \in X^*(T)\}.
$$
For split groups over non-archimedean local fields, this group is the maximal compact open subgroup ${^\circ}T = T(\mathcal{O}_F)$. The Artin map gives a surjection $\tau_F : I_F \rightarrow \mathcal{O}_F^\times$.

Suppose $\tilde{\chi}$ is an unramified character. If $z \in I_F$, then $\nu(\tau_F(z)) \in \torus(\mathcal{O}_F)$, which implies $\nu(\varphi_{\tilde{\chi}} (z) ) =1$ for all $\nu \in X^*(T)$. Therefore, $\varphi_{\tilde{\chi}} (z) = 1$. Conversely, suppose $t \in T(\mathcal{O}_F)$. Then it can be written as a product $t = \prod_i t_i = \prod_i \nu_i(\tau_F(z_i))$ for some $z_i \in I_F$ and cocharacters $\nu_i \in \cochars{\torus}$, because the $\mathcal{O}_F$-points of $T$ are generated by the images of its cocharacters applied to $\mathcal{O}_F^\times$. We have
$$
\tilde{\chi}(t) = \prod_i \tilde{\chi}(\nu_i(\tau(z_i))) = \prod \nu_i(\varphi_{\tilde{\chi}} (z_i)) =1,
$$
and $\varphi_{\tilde{\chi}}$ is trivial on each $z_i$ by assumption. This proves the first statement.

To prove the second statement, consider any two extensions $\tilde{\chi}_1$, $\tilde{\chi}_2$ to $T(F)$. We will show that the parameters $\varphi_{\tilde{\chi}_1}$ and $\varphi_{\tilde{\chi}_2}$ agree on the inertia subgroup, i.e., $\varphi_{\tilde{\chi}_1} \vert_{I_F} = \varphi_{\tilde{\chi}_2}\vert_{I_F}$.

Choose any $z \in I_F$. Then for all $\nu \in \cochars{\torus} = \chars{\dualtorus}$ and $i=1,2$:
$$
\nu(\varphi_{\tilde{\chi}_i}(z)) = \tilde{\chi}_i (\nu(\tau_F(z))).
$$
Since $\tau_F$ maps $I_F$ onto $\mathcal{O}_F^\times$, $\nu(\tau_F(z)) \in \torus(\mathcal{O}_F)$. Therefore, $\tilde{\chi}_i (\nu(\tau_F(z))) = \chi(\nu(\tau_F(z)))$ for $i = 1,2$. Translating this back into a statement about Langlands parameters gives
$$
\nu(\varphi_{\tilde{\chi}_1}(z)) = \chi(\nu(\tau_F(z))) = \nu(\varphi_{\tilde{\chi}_2}(z)).
$$
Since this holds for all $\nu$, we conclude $\varphi_{\tilde{\chi}_1} \vert_{I_F} = \varphi_{\tilde{\chi}_2}\vert_{I_F}$, as our choice of $z \in I_F$ was arbitrary.

Finally, recall that $\chi$ is depth-zero if it factors through $T(k_F)$. Pick any $z$ in $\tau_F^{-1} (1 + \varpi \mathcal{O}_F)$. For all $\nu \in X^*(\widehat{T})$, $\nu(\varphi_{\chi} (z)) = \chi (\nu(\tilde{z}))$ where $\tilde{z} = \tau_F (z)$ belongs to $1 + \varpi \mathcal{O}_F$. Thus if $\chi$ is depth-zero, all $\nu(\varphi_{\chi} (z)) = 1$, i.e., $\varphi_{\chi} (z) = 1$. The relation $\nu(\varphi_{\chi} (z)) = \chi (\nu(\tilde{z}))$ also implies the converse direction, namely that if $\varphi_\chi$ is trivial on $\tau_F^{-1} (1 + \varpi \mathcal{O}_F) \subset I_F$ then $\chi$ is a depth-zero character. 

Choose $x \in I_F$ such that $\tilde{x} = \tau(x) \in \mathcal{O}_F^\times$ projects to a generator of $k_F^\ast$ under the reduction map. For any $t \in T(\mathcal{O}_F)$, there is an expression
$$
\chi(t) = \chi\left( \prod_i \nu_i (\tilde{x}^{n_i}) \right) = \prod_i \chi(\nu_i (\tilde{x}))^{n_i},
$$
where the $\nu_i$ are a basis of $\cochars{\torus}$. Therefore, we know the value $\chi(t)$ if we know the values $\chi(\nu_i(\tilde{x}))$. This proves the third statement of the lemma.
\end{proof}

\begin{defin}
\label{endoscopic-element-definition}
Fix a choice of $x \in I_F$ such that $\tau_F(x) \in \mathcal{O}_F^\times$ projects to a generator of $k_F^\ast$, and consider any depth-zero character $\chi$ on $\torus(\mathcal{O}_F)$. Define the \textbf{endoscopic element} in $\widehat{T}(\valfield)$ associated to $\chi$ by $\kappa_\chi = \varphi_\chi (x) $.
\end{defin}

\begin{cor}
Fix a choice of $x \in I_F$ such that $\tau_F(x) \in \mathcal{O}_F^\times$ projects to a generator of $k_F^\ast$. The (finite) group of depth-zero characters $\torus(k_F)^\vee$ is isomorphic to the group of endoscopic elements $\kapchi$ determined by $x$.
\end{cor}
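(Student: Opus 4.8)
The plan is to show that the assignment $\chi \mapsto \kapchi = \varphi_\chi(x)$ defines an \emph{injective group homomorphism} from $\torus(\resfield)^\vee$ into $\dualtorus(\valfield)$. Since the group of endoscopic elements determined by $x$ is, by Definition~\ref{endoscopic-element-definition}, exactly the image of this assignment, the combination of the homomorphism property with injectivity immediately yields the desired isomorphism onto that image.

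First I would verify that the map is well-defined and multiplicative. Well-definedness is precisely part~(2) of the Lemma preceding Definition~\ref{endoscopic-element-definition}: the restriction $\varphi_\chi = \varphi_{\tilde{\chi}}\vert_{I_F}$ depends only on $\chi$ and not on the chosen extension $\tilde{\chi}$ to $\torus(F)$, so $\kapchi = \varphi_\chi(x)$ is unambiguous. For multiplicativity I would invoke the functoriality of the LLC for Tori (Theorem~\ref{thm::LLC-for-Tori}), which identifies ${\rm Hom}(\torus(F), \valfield^\times)$ with ${\rm Hom}(W_F, \dualtorus(\valfield))$ as groups. Given depth-zero $\chi_1, \chi_2$ with extensions $\tilde{\chi}_1, \tilde{\chi}_2$, the product $\tilde{\chi}_1\tilde{\chi}_2$ extends $\chi_1\chi_2$ and satisfies $\varphi_{\tilde{\chi}_1\tilde{\chi}_2} = \varphi_{\tilde{\chi}_1}\varphi_{\tilde{\chi}_2}$; restricting to $I_F$ and then evaluating at the fixed $x$ are each group homomorphisms, so $\kappa_{\chi_1\chi_2} = \kappa_{\chi_1}\kappa_{\chi_2}$. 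In particular the image is a genuine subgroup of $\dualtorus(\valfield)$.

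Next I would prove injectivity, which is the heart of the matter. Suppose $\kapchi = \varphi_\chi(x) = 1$. From the commutative diagram following Theorem~\ref{thm::LLC-for-Tori}, for every $\nu \in \cochars{\torus} = \chars{\dualtorus}$ one has $\nu(\kapchi) = \chi(\nu(\tilde{x}))$, where $\tilde{x} = \tau_F(x) \in \algintF^\times$. Hence $\kapchi = 1$ forces $\chi(\nu(\tilde{x})) = 1$ for all $\nu$. I would then use that $\torus$ is split, so $\torus \cong \mathbb{G}_m^d$ and $\torus(\resfield) \cong (\resfield^\times)^d$ is generated by the elements $\nu_i(\bar{x})$, where $\bar{x}$ is the image of $\tilde{x}$ in $\resfield^\times$ (a generator by our choice of $x$) and $\{\nu_i\}$ is a basis of $\cochars{\torus}$. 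Since $\chi$ factors through $\torus(\resfield)$ and vanishes on each generator, $\chi$ is trivial; this is essentially the content of part~(3) of the same Lemma, which I would cite to package the generation argument cleanly.

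The surjectivity required is automatic, as the target group of endoscopic elements is defined to be $\{\kapchi : \chi \in \torus(\resfield)^\vee\}$, i.e., the image of our homomorphism. I expect the only real obstacle to be the injectivity step, and specifically making precise that the values $\chi(\nu_i(\bar{x}))$ determine $\chi$; this rests on the split structure $\torus(\resfield) \cong (\resfield^\times)^d$ together with $\bar{x}$ generating $\resfield^\times$. Everything else is formal, flowing from the functoriality of the LLC for Tori and from the preceding Lemma.
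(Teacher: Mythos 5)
Your proposal is correct and follows essentially the route the paper intends: the corollary is stated there without proof as an immediate consequence of the preceding lemma, and your write-up makes that dependence explicit in the same way, using part~(2) of the lemma for well-definedness, part~(3) (that $\chi$ is determined by the values $\chi(\nu_i(\tilde{x})) = \nu_i(\kapchi)$) for injectivity, and the functoriality of the LLC for Tori for the homomorphism property, with surjectivity holding by definition of the group of endoscopic elements.
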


\begin{prop}
\label{prop::dz-endoscopic-elements-equal-kernel}
Let $q$ denote the cardinality of the residue field $k_F$. Let $K_{q-1}$ denote the kernel of the map $\dualtorus(\valfield) \rightarrow \dualtorus(\valfield)$ defined by $\kappa \mapsto \kappa^{q-1}$. The group of depth-zero endoscopic elements is $K_{q-1}$; that is, there is $\chi \in \torus(\resfield)^\vee$ such that $\kappa = \kappa_\chi$ if and only if $\kappa \in K_{q-1}$.
\end{prop}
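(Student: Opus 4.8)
The plan is to reduce everything to the single identity
$$
\nu(\kapchi) = \chi\big(\nu(\bar{x})\big), \qquad \nu \in \chars{\dualtorus} = \cochars{\torus},
$$
where $\bar{x}$ denotes the image in $\resfield^\times$ of $\tilde{x} = \tau_F(x)$. This is just the commutative square from the LLC for Tori evaluated at the element $x \in I_F$: since $\kapchi = \varphi_\chi(x) = \varphi_{\tilde{\chi}}(x)$ (as $\varphi_\chi$ is the restriction of any $\varphi_{\tilde{\chi}}$ to $I_F$, independent of the extension by the preceding lemma), the square gives $\nu(\kapchi) = \tilde{\chi}(\nu(\tau_F(x))) = \chi(\nu(\tilde{x}))$, and depth-zero-ness of $\chi$ lets it factor through $\torus(\resfield)$, so $\chi(\nu(\tilde{x})) = \chi(\nu(\bar{x}))$. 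By hypothesis $\bar{x}$ generates the cyclic group $\resfield^\times$, which has order $q-1$. Everything below flows from this identity.

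For the forward inclusion I would raise the identity to the $(q-1)$-th power. Since $\nu$ is a homomorphism and $\bar{x}^{q-1} = 1$,
$$
\nu\big(\kapchi^{q-1}\big) = \nu(\kapchi)^{q-1} = \chi\big(\nu(\bar{x})^{q-1}\big) = \chi\big(\nu(\bar{x}^{q-1})\big) = \chi(1) = 1.
$$
As the $\nu \in \chars{\dualtorus}$ separate the points of $\dualtorus(\valfield)$, this forces $\kapchi^{q-1} = 1$, i.e. $\kapchi \in K_{q-1}$. Hence every depth-zero endoscopic element lies in $K_{q-1}$.

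For the reverse inclusion I would argue by counting, using the Corollary preceding this proposition, which states that $\chi \mapsto \kapchi$ is injective on $\torus(\resfield)^\vee$. Because $G$ is split, $\torus \cong \mathbb{G}_m^d$ with $d = {\rm{rank}}(\torus)$, so $\vert \torus(\resfield)^\vee \vert = \vert \torus(\resfield) \vert = (q-1)^d$; on the other hand $K_{q-1} = {\rm{Hom}}(\cochars{\torus}, \mu_{q-1})$ also has order $(q-1)^d$. An injection between finite groups of equal cardinality whose image sits inside $K_{q-1}$ (by the forward step) is necessarily a bijection onto $K_{q-1}$, giving the claimed equality. Alternatively, more explicitly, given $\kappa \in K_{q-1}$ one can define $\chi$ on $\torus(\resfield) \cong \cochars{\torus}/(q-1)\cochars{\torus}$ by $\chi(\nu(\bar{x})) = \nu(\kappa)$; this is well-defined precisely because $\kappa^{q-1} = 1$ annihilates $(q-1)\cochars{\torus}$, and the key identity then yields $\kapchi = \kappa$.

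The only real subtlety is bookkeeping rather than a genuine obstacle: I must confirm that the reduction $\torus(\mathcal{O}_F) \to \torus(\resfield)$ sends $\nu(\tilde{x})$ to $\nu(\bar{x})$ (immediate for a split torus, as $\nu$ commutes with reduction) and that $\varphi_\chi$ is genuinely extension-independent, both of which are handed to us by the preceding lemma. For the explicit form of the reverse inclusion the one point needing care is the identification $\torus(\resfield) \cong \cochars{\torus}/(q-1)\cochars{\torus}$ and the accompanying well-definedness check; the counting argument avoids this entirely, so I would present the counting version as the main route and mention the explicit construction as an optional alternative.
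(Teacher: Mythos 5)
Your proof is correct, and its forward inclusion is essentially the paper's own argument: both extract from the LLC-for-Tori square the identity $\nu(\kapchi) = \chi(\nu(\tilde{x}))$ for all $\nu \in \chars{\dualtorus} = \cochars{\torus}$, observe that the right-hand side is a $(q-1)$-th root of unity because $\chi$ factors through $\torus(\resfield)$ (your phrasing via $\bar{x}^{q-1}=1$ is the same fact), and conclude $\kapchi^{q-1}=1$ since the characters $\nu$ separate the points of $\dualtorus(\valfield)$. Where you genuinely diverge is the reverse inclusion. The paper constructs a preimage explicitly: writing $\kappa = {\rm{diag}}(\kappa_1,\ldots,\kappa_d)$ with $\kappa_i = \zeta^{n_i}$ for $\zeta$ the primitive $(q-1)$-th root determined by $x$, it defines $\chi_0(\nu(\tau_F(x))) = \prod_i \zeta^{r_i n_i}$ in coordinates and then runs the LLC identity backwards to get $\kappa_{\chi_0} = \kappa$. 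Your main route instead counts: injectivity of $\chi \mapsto \kapchi$ is supplied by the corollary preceding the proposition (itself a consequence of part 3 of the lemma on depth-zero parameters, so there is no circularity), and $\vert \torus(\resfield)^\vee \vert = \vert \torus(\resfield) \vert = (q-1)^d = \vert K_{q-1} \vert$ — both groups being $(q-1)$-torsion in split tori of rank $d$ — forces the image to be all of $K_{q-1}$. The counting argument buys brevity and avoids coordinates entirely, at the cost of producing no explicit character attached to a given $\kappa$, which the paper's construction does provide; and your optional explicit variant via $\torus(\resfield) \cong \cochars{\torus}/(q-1)\cochars{\torus}$ is exactly the paper's construction rephrased invariantly, with your well-definedness check (that $\kappa^{q-1}=1$ annihilates $(q-1)\cochars{\torus}$) playing the role of the paper's choice of exponents $n_i$. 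Both versions are complete and correct.
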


\begin{proof}
Suppose $\kappa = \kappa_\chi$ for some $\chi \in \torus(\resfield)^\vee$. Then $\kappa_\chi = \varphi_\chi(x)$ for some $x \in I_F$ such that $\tilde{x} = \tau_F(x)$ descends to a generator of $k_F^\ast$. For any $\nu \in \cochars{\torus} = \chars{\dualtorus}$,
$$
\chi(\nu(\tilde{x}))^{q-1} = \nu(\kappa_\chi^{q-1}).
$$
Because $\chi$ descends to a character on $\torus(\resfield)$, the values $\chi(\nu(\tilde{x}))$ are all $(q-1)$-th roots of unity. Therefore, $\nu(\kappa_\chi^{q-1}) = 1$ for all $\nu \in \chars{\dualtorus}$, which implies $\kappa_\chi^{q-1} = 1$. That is, $\kappa_\chi \in K_{q-1}$.

Now suppose we start with $\kappa \in K_{q-1}$. We will produce a depth-zero character $\chi$ on $\torus(\resfield)$ such that $\kappa = \kappa_\chi$.

Since $\torus$ is split, $\dualtorus$ is also split and so is isomorphic to a product of copies of $\mathbb{G}_m$. We think of $\kappa = {\rm{diag}}(\kappa_1, \ldots, \kappa_d)$, where each coordinate $\kappa_i$ is a $(q-1)$-th root of unity.  Fix a choice of $x \in I_F$ such that $\tau_F(x)$ descends to a primitive $(q-1)$-th root $\zeta$ generating $\resfield^\ast$. Now $\kappa_i = \zeta^{n_i}$ for some $0 \le n_i < q-1$.

For any $\nu \in \cochars{\torus}$, the element $\nu(\tau_F(x))$ can be projected into $\torus(\resfield)$ and thought of as a diagonal matrix ${\rm{diag}}(\zeta^{r_1}, \ldots, \zeta^{r_d})$. Let $\chi_0$ be the depth-zero character such that
$$
\chi_0(\nu(\tau_F(x))) = \prod_{i=1}^d \zeta^{r_i n_i}.
$$
Then by construction, for any $\nu \in \cochars{\torus} = \chars{\dualtorus}$ we have
$$
\chi_0(\nu(\tau_F(x))) = \nu(\kappa).
$$
By Theorem~\ref{thm::LLC-for-Tori}, $\chi_0(\nu(\tau_F(z))) = \nu(\varphi_{\chi_0}(z))$ for all $z \in W_F$. So in particular $\nu(\varphi_{\chi_0}(x)) = \nu(\kappa)$ for all $\nu$. This shows $\kappa = \kappa_{\chi_0}$.
\end{proof}

\begin{rmk}
We sometimes use the notation $\dualtorus(\valfield)^{\rm{dz}}$ in place of $K_{q-1}$ to remind the reader of the connection with depth-zero endoscopic elements.
\end{rmk}

Later, we will encounter values $\chi^{-1}_r (s)$ for depth-zero $\chi$ and $s \in \torus(k_r)$. The following lemma rephrases this scalar in terms of endoscopic elements.

\begin{lemma}
\label{lemma::defin-of-gamma-nrs}
Let $s \in \torus(k_r)$. Then there exists a character $\gamma_{N_r s} \in \chars{\dualtorus}$ such that $\chi_r (s) = \gamma_{N_r s}(\kapchi)$ in $\valfield^\times$ for all $\chi \in \torus(\resfield)^\vee$.
\end{lemma}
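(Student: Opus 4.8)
The plan is to unwind the definition $\chi_r = \chi \circ N_r$ so that $\chi_r(s) = \chi(N_r(s))$ with $N_r(s) \in \torus(\resfield)$, and then to recognize the value $\chi(N_r(s))$ as the evaluation of a cocharacter of $\torus$ at the endoscopic element $\kapchi$. Since $\chars{\dualtorus} = \cochars{\torus}$, producing such a cocharacter is exactly producing the desired $\gamma_{N_r s} \in \chars{\dualtorus}$. The bridge between the two sides is the identity $\nu(\kapchi) = \chi(\nu(\tilde{x}))$, valid for every $\nu \in \cochars{\torus}$, which I would obtain from the commutative diagram following Theorem~\ref{thm::LLC-for-Tori} specialized to $z = x$, together with $\kapchi = \varphi_\chi(x)$ from Definition~\ref{endoscopic-element-definition} and the fact that $\varphi_{\tilde\chi}$ restricted to $I_F$ depends only on $\chi$; here $\tilde{x} = \tau_F(x)$ reduces to a generator $\zeta$ of $\resfield^\times$.

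First I would fix a $\mathbb{Z}$-basis $\nu_1, \dots, \nu_d$ of $\cochars{\torus}$. Because $G$ is split, $\torus(\resfield) \cong (\resfield^\times)^d$ via $(a_1, \dots, a_d) \mapsto \prod_i \nu_i(a_i)$, and since $\resfield^\times = \langle \zeta \rangle$ is cyclic of order $q-1$ there are integers $m_1, \dots, m_d$ with
$$
N_r(s) = \prod_{i=1}^d \nu_i(\zeta^{m_i}).
$$
Applying $\chi$ and using that $\chi$ is depth zero, so that $\chi(\nu_i(\tilde{x})) = \chi(\nu_i(\zeta))$, gives
$$
\chi_r(s) = \chi(N_r(s)) = \prod_{i=1}^d \chi(\nu_i(\zeta))^{m_i} = \prod_{i=1}^d \nu_i(\kapchi)^{m_i},
$$
where the last equality is the bridge identity above. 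I would then set $\gamma_{N_r s} = \sum_i m_i \nu_i \in \cochars{\torus} = \chars{\dualtorus}$, so that $\gamma_{N_r s}(\kapchi) = \prod_i \nu_i(\kapchi)^{m_i} = \chi_r(s)$, as required.

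The feature that must be verified is that a single $\gamma_{N_r s}$ works for all $\chi$ simultaneously: this holds because the exponents $m_i$ are determined only by $N_r(s)$, hence by $s$, while the entire dependence on $\chi$ is carried by $\kapchi$ through the uniform identity $\chi(\nu_i(\zeta)) = \nu_i(\kapchi)$. I would also observe that the integers $m_i$ are only well-defined modulo $q-1$, but that this causes no ambiguity in $\gamma_{N_r s}(\kapchi)$: by Proposition~\ref{prop::dz-endoscopic-elements-equal-kernel} we have $\kapchi \in K_{q-1}$, whence $\nu_i(\kapchi)^{q-1} = \nu_i(\kapchi^{q-1}) = 1$, so $\nu_i(\kapchi)^{m_i}$ depends only on $m_i$ modulo $q-1$. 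Different integer lifts therefore give characters of $\dualtorus$ that agree on every depth-zero endoscopic element, which is all that is asserted.

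The main obstacle is not a single hard step but the careful bookkeeping of identifications: descending the norm from $\mathcal{O}_r^\times \to \mathcal{O}_F^\times$ to $\resfield$-level maps $\torus(k_r) \to \torus(\resfield)$; passing between $\tilde{x}$ and its reduction $\zeta$ using the depth-zero property of $\chi$; and correctly invoking $\kapchi = \varphi_\chi(x)$ together with the LLC-for-Tori pairing to convert $\chi$-values on torus elements into $\nu$-values on $\kapchi$. Once the bridge identity $\chi(\nu(\zeta)) = \nu(\kapchi)$ is in hand, the construction of $\gamma_{N_r s}$ is immediate, so the real content lies in assembling that identity correctly and confirming its uniformity in $\chi$.
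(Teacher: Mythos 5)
Your proposal is correct and takes essentially the same approach as the paper: the paper's proof likewise expands $N_r(s) = \prod_i \omega_i(\tilde{x})^{n_i}$ in a fixed basis of $\cochars{\torus}$, sets $\gamma_{N_r s} = \sum_i n_i \omega_i$, and invokes the identity $\chi(\nu(\tilde{x})) = \nu(\kappa_\chi)$ from the LLC for Tori to conclude. Your additional observation that the exponents are only well-defined modulo $q-1$ but that this is harmless because $\kappa_\chi \in K_{q-1}$ is a detail the paper leaves implicit, and it is a worthwhile check.
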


\begin{proof}
Let $\tilde{x} = \tau_F(x)$ where $x \in I_F$ the element determining the correspondence between $\chi$ and $\kapchi$. Let $\{\omega_i\}$ be a basis for $\cochars{\torus}$. There are integers $n_i$ such that $N_r(s) = \prod_i \omega_i(\tilde{x})^{n_i}$ in $\torus(\resfield)$. Let $\gamma_{N_r s} = \sum_i n_i\omega_i.$ Then
$$
\chi(\gamma_{N_r s} (\tilde{x})) = \prod_i \chi(\omega_i (\tilde{x})^{n_i}) = \left(\sum_i n_i \omega_i\right)(\kapchi) = \gamma_{N_r s} (\kapchi).
$$
\end{proof}


\subsection{A first formula for $\phirone$}
\label{section::first-formula}

It looks difficult to compute a test function, in the sense of the Langlands-Kottwitz method, from the definition given in terms of distributions in the Bernstein center. Recall that the test function is written $\phirone$ when it has $I_r^+$-level structure. This section shows how to use the various data coming from depth-zero characters to give a more explicit formula for a function $\phironeaug$ whose twisted orbital integrals are identical to those of the test function $\phirone$, as described in the Introduction.

The first step is to write $\phirone = q^{r \ell(t_\mu)/2} (Z_{V_\mu} \ast 1_{I_r^+})$ as a sum indexed by the depth-zero characters of $\torus(\mathcal{O}_r)$. Its twisted orbital integrals vanish at the summands corresponding to characters $\xi \in \torus(k_r)^\vee$ which are not norms of characters in $\torus(\resfield)^\vee$. The Hecke algebra isomorphism $\Psi_{\breve{\chi}_r}: {\mathcal{H}(G_r, I_r, \rho_{\chi_r}) \stackrel{\sim}{\longrightarrow} \mathcal{H}(H_{\chi_r}, I_{H_r})}$ shows that summands indexed by norms $\chi_r = \chi \circ N_r$ map to sums of Bernstein functions in the center of $\mathcal{H}(H_{\chi_r}, I_{H_r})$. We apply an explicit formula for Bernstein functions attached to dominant minuscule cocharacters to get the formula for $\phironeaug$.

\subsubsection{Definition of $\phironeaug$}


Recall from Section~\ref{section::first-properties-dz-chars} that a depth-zero character $\xi \in \torus(k_r)^\vee$ extends to a character $\rho_\xi$ on $I_r$ which is trivial on $I_r^+$. Similarly, recall that $1_K$ refers to the characteristic function of a subgroup $K \subseteq G_r$. Let us extend this notation:
$$
1_K^{\rho} (x) = \begin{cases}
\rho(x)^{-1}, & {\rm{if}}\ x \in K, \\
0, & {\rm{otherwise.}}
\end{cases}
$$

\begin{lemma}
\label{lemma::idempotents}
For $\xi \in \torus(k_r)^\vee$, define $e_\xi = \rm{vol}(I_r)^{-1} 1_{I_r}^{\rho_\xi}$ in $\mathcal{H}(G_r)$. The elements $e_{\xi}$ are idempotents satisfying $1_{I_r^+} = \sum_{\xi \in \torus(k_r)^\vee} e_{\xi}$.
\end{lemma}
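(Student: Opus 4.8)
The plan is to exploit the fact that $\rho_\xi$ is an honest character of the group $I_r$, not merely a locally constant function. Indeed, by Proposition~\ref{prop::iwahori-prop-quotient} the quotient $I_r/I_r^+$ is isomorphic to the \emph{abelian} group $\torus(k_r)$, so a character $\xi \in \torus(k_r)^\vee$ pulls back to a group homomorphism $\rho_\xi : I_r \to \valfield^\times$ that is trivial on $I_r^+$. Consequently $\rho_\xi(gh) = \rho_\xi(g)\rho_\xi(h)$ and $\rho_\xi(g^{-1}) = \rho_\xi(g)^{-1}$ for all $g,h \in I_r$. This multiplicativity is the single structural input on which the entire computation rests, so I would record it first.

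To prove idempotency, I would first observe that $e_\xi$ is supported on $I_r$, whence $(e_\xi \ast e_\xi)(x) = \int_{G_r} e_\xi(g)\, e_\xi(g^{-1}x)\, dg$ vanishes unless $x \in I_r$: for $g \in I_r$ one needs $g^{-1}x \in I_r$, which forces $x \in I_r$. For $x \in I_r$ the integrand equals $\mathrm{vol}(I_r)^{-2}\,\rho_\xi(g)^{-1}\rho_\xi(g^{-1}x)^{-1}$, and multiplicativity collapses this to the constant $\mathrm{vol}(I_r)^{-2}\,\rho_\xi(x)^{-1}$ in $g$. Integrating over $g \in I_r$ contributes one factor of $\mathrm{vol}(I_r)$, leaving $\mathrm{vol}(I_r)^{-1}\rho_\xi(x)^{-1} = e_\xi(x)$. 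Hence $e_\xi \ast e_\xi = e_\xi$.

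For the summation identity I would evaluate $\sum_\xi e_\xi$ pointwise. Off $I_r$ every $e_\xi$ vanishes, matching $1_{I_r^+}$. On $I_r$, writing $\bar{x}$ for the image of $x$ in $I_r/I_r^+ \cong \torus(k_r)$, the sum becomes $\mathrm{vol}(I_r)^{-1}\sum_{\xi \in \torus(k_r)^\vee}\xi(\bar{x})^{-1}$. Here I would invoke the orthogonality relations for characters of the finite abelian group $\torus(k_r)$: the inner sum equals $|\torus(k_r)|$ when $\bar{x}=1$ (that is, $x \in I_r^+$) and $0$ otherwise. Since the Haar normalization $\mathrm{vol}(I_r^+)=1$ gives $\mathrm{vol}(I_r) = [I_r:I_r^+] = |\torus(k_r)|$, the prefactor cancels the cardinality exactly, yielding $1_{I_r^+}(x)$.

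No step presents a genuine obstacle; the only points requiring care are bookkeeping the measure so that $\mathrm{vol}(I_r)=|\torus(k_r)|$ under the stated normalization, and applying orthogonality to the finite character group $\torus(k_r)^\vee$ rather than to characters on $I_r$ itself. Both become routine once the identification $I_r/I_r^+ \cong \torus(k_r)$ from Proposition~\ref{prop::iwahori-prop-quotient} is in hand, which is why I would foreground that identification at the very start.
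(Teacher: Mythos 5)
Your proof is correct and follows essentially the same route as the paper's: the same convolution computation using multiplicativity of $\rho_\xi$ for idempotency, and the same pointwise evaluation of $\sum_\xi e_\xi$ using the normalization ${\rm{vol}}(I_r) = [I_r : I_r^+] = \vert \torus(k_r)\vert$. The only cosmetic difference is that you cite the orthogonality relations for characters of the finite abelian group $\torus(k_r)$, where the paper proves the relevant character-sum vanishing inline by the standard shifting trick with a $\xi_0$ satisfying $\rho_{\xi_0}(z) \neq 1$.
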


\begin{proof}
Let $dz$ denote Haar measure on $G_r$ normalized such that $\rm{vol}(I_r^+) = 1$. We will compute $e_\xi \ast e_\xi$ from the definition of the convolution integral, namely
$$
(e_\xi \ast e_\xi)(g) = \int_{G_r} e_\xi(gz^{-1})e_\xi(z) dz.
$$
Notice that if $g \notin I_r$, then $(e_\xi \ast e_\xi)(g) = 0$: if $z \notin I_r$, then $e_\xi(z) = 0$ in the integrand; otherwise $e_\xi(gz^{-1}) = 0$. Assuming $g \in I_r$, the integral becomes
$$
(e_\xi \ast e_\xi)(g) = \int_{I_r} \rho_\xi(gz^{-1})^{-1} \rho_\xi(z)^{-1} \rm{vol}(I_r)^{-2} dz.
$$
Since $\rho_\xi(gz^{-1})^{-1} \rho_\xi(z)^{-1} = \rho_\xi(g)^{-1}\rho_\xi(z)\rho_\xi(z)^{-1} = \rho_\xi(g)^{-1}$, we get
$$
(e_\xi \ast e_\xi)(g) = \rho_\xi(g)^{-1}\rm{vol}(I_r)^{-2}\int_{I_r} dz.
$$
It follows that $e_\xi \ast e_\xi = e_\xi$, verifying the first claim.

Next we show that $1_{I_r^+} = \sum_{\xi \in \torus(k_r)^\vee} e_{\xi}$. First, observe that if $z \notin I_r$, then both sides are zero. Second, suppose $z \in I_r\backslash I_r^+$. Then $1_{I_r^+} (z) = 0$, and
$$
\sum_{\xi \in T(k_r)^\vee} e_{\xi}(z) = \rm{vol}(I_r)^{-1} \sum_{\xi \in T(k_r)^\vee} \rho_\xi(z)^{-1}.
$$
But $\sum_{\xi \in T(k_r)^\vee} \rho_\xi(z)^{-1} = 0$: we can find $\xi_0 \in \torus(k_r)^\vee$ such that $\rho_{\xi_0} (z) \neq 1$ and write
$$
\sum_{\xi \in \torus(k_r)^\vee} \rho_\xi(z)^{-1} = \sum_{\xi \in \torus(k_r)^\vee} (\rho_{\xi_0}\rho_\xi)(z)^{-1} = \rho_{\xi_0}(z)^{-1} \sum_{\xi \in \torus(k_r)^\vee} \rho_\xi(z)^{-1}.
$$
Thus we are reduced to looking at $z \in I_r^+$. Here we have
$$
\sum_{\xi \in \torus(k_r)^\vee} \rho_\xi(z)^{-1} = \sum_{\xi \in \torus(k_r)^\vee} {\rm{vol}}(I_r)^{-1} = \vert \torus(k_r)\vert [I_r : I_r^+]^{-1} = 1.
$$
\end{proof}

\begin{lemma}
\label{lemma::orbital-intergrals-zero}
Suppose $\xi \in \torus(k_r)^\vee$ is not a norm, that is, there is not a $\chi \in \torus(\resfield)^\vee$ such that $\xi = \chi \circ N_r$. Then all twisted orbital integrals at $\theta$-semisimple elements vanish on functions in $\mathcal{H}(G_r, I_r, \rho_\xi)$.
\end{lemma}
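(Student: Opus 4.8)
The plan is to exploit a right-translation equivariance of the twisted orbital integral under the integral points $\torus(\algintR)$ of the torus, which sit inside $I_r$, and then to combine this with the elementary observation that, for a split torus, a depth-zero character of $\torus(k_r)$ is a norm precisely when it is $\theta$-invariant. The whole vanishing is then a one-line consequence of producing a single torus element on which $\rho_\xi$ and $\rho_\xi\circ\theta$ disagree.

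First I would record the transformation law for $f \in \mathcal{H}(G_r, I_r, \rho_\xi)$, namely $f(xgy) = \rho_\xi(x)^{-1} f(g)\rho_\xi(y)^{-1}$ for $x,y \in I_r$, and note that $\theta$ preserves the $F$-rational integral structures, so that $\torus(\algintR) \subseteq I_r$ and $\theta(\torus(\algintR)) = \torus(\algintR)$. Then, for any $t \in \torus(\algintR)$, I would substitute $g \mapsto gt$ in the defining integral for ${\rm{TO}}_{\delta\theta}(f)$. Using the identity $(gt)^{-1}\delta\theta(gt) = t^{-1}\big(g^{-1}\delta\theta(g)\big)\theta(t)$ together with the transformation law (with $x = t^{-1}$ and $y = \theta(t)$, both in $I_r$), and the right-invariance of $d\bar g$ under $G_r$ (valid since $G_r$ is unimodular and right translation commutes with the left $G_{\delta\theta}^\circ(F)$-action), I obtain
$$
{\rm{TO}}_{\delta\theta}(f) = \rho_\xi(t)\,\rho_\xi(\theta(t))^{-1}\,{\rm{TO}}_{\delta\theta}(f).
$$
Since $\rho_\xi$ restricted to $\torus(\algintR)$ factors through $\xi$ on $\torus(k_r)$, this scalar equals $\xi(\bar t)\,\xi(\theta(\bar t))^{-1}$, where $\bar t$ denotes the image of $t$ in $\torus(k_r)$.

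Next I would produce a single $t$ making this scalar $\neq 1$, which forces ${\rm{TO}}_{\delta\theta}(f) = 0$; this is the only place the hypothesis is used. I claim that for the split torus $\torus$ a character $\xi \in \torus(k_r)^\vee$ lies in the image of the pullback $N_r^* \colon \torus(\resfield)^\vee \to \torus(k_r)^\vee$ if and only if $\xi\circ\theta = \xi$. Reducing to the rank-one case $\torus = \mathbb{G}_m$, the character group of $k_r^\times$ is cyclic of order $q^r-1$; invariance $\xi(z^q)=\xi(z)$ is equivalent to $\xi^{q-1}=1$, which cuts out the unique subgroup of order $q-1$, and this is exactly the image of the injective map $N_r^*$ (also of order $q-1$, since $N_r$ is surjective with $|\torus(\resfield)^\vee| = q-1$, and $(N_r^*\chi)^{q-1} = N_r^*(\chi^{q-1}) = 1$). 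A general split torus is a product of copies of $\mathbb{G}_m$, so the equivalence persists coordinatewise. Consequently, if $\xi$ is not a norm it is not $\theta$-invariant, so some $\bar s \in \torus(k_r)$ satisfies $\xi(\theta(\bar s)) \neq \xi(\bar s)$; lifting $\bar s$ to $t \in \torus(\algintR)$ yields a scalar $\neq 1$ and hence the vanishing.

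The main obstacle is careful bookkeeping rather than a conceptual hurdle: one must verify that the right translation $g\mapsto gt$ is measure-preserving on $G_{\delta\theta}^\circ(F)\backslash G_r$ and that $\theta$ genuinely stabilizes $\torus(\algintR)\subset I_r$, so that the transformation law of $f$ applies with $x=t^{-1}$ and $y=\theta(t)$; and one must establish the norm/$\theta$-invariance dictionary for the split torus, which is the sole point where the arithmetic of $\xi$ (rather than a purely formal manipulation) enters. Note that the $\theta$-semisimplicity of $\delta$ is needed only to ensure the integral is well-defined with $G_{\delta\theta}^\circ(F)$ the correct twisted centralizer; it plays no role in the vanishing itself, which follows entirely from the equivariance.
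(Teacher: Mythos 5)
Your proposal is correct, and it is essentially a reconstruction of the argument behind the paper's proof, which consists of citing Lemma 10.0.4 of Haines (\cite{haines2012}): that lemma is proved by exactly your mechanism, namely the equivariance ${\rm{TO}}_{\delta\theta}(f) = \rho_\xi(t)\rho_\xi(\theta(t))^{-1}{\rm{TO}}_{\delta\theta}(f)$ obtained from the substitution $g \mapsto gt$ for $t \in T(\mathcal{O}_r) \subset I_r$ (legitimate since $G_r$ is unimodular and the twisted centralizer of a $\theta$-semisimple $\delta$ is reductive, hence the quotient measure exists and is right-$G_r$-invariant), combined with the dictionary that for a split torus a depth-zero character of $T(k_r)$ is a norm if and only if it is fixed by $\theta$, which you verify correctly by the counting $\xi \circ \theta = \xi \Leftrightarrow \xi^{q-1} = 1$ against the image of the injective pullback $N_r^*$. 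Your closing remarks on the bookkeeping (that $\theta$ stabilizes $T(\mathcal{O}_r)$ and $I_r$, that $\rho_\xi\vert_{T(\mathcal{O}_r)}$ factors through reduction, and that $\theta$-semisimplicity enters only through well-definedness and convergence of the integral) are all accurate.
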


\begin{proof}
This is Lemma 10.0.4 of~\cite{haines2012}.
\end{proof}

\begin{cor}
\label{cor::equality-of-orbital-integrals}
The function $\phironeaug \in C_c^\infty(G_r)$ defined by
$$
\phironeaug = [I_r : I_r^+]^{-1} q^{r\ell(t_{\mu})/2} \sum_{\chi \in \torus(k_F)^\vee} Z_{V_\mu} \ast e_{\chi_r},
$$
satisfies ${\rm{TO}}_{\delta\theta} (\phirone) = {\rm{TO}}_{\delta\theta}(\phironeaug)$.
\end{cor}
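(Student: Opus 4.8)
The plan is to exhibit $\phironeaug$ as exactly the ``norm part'' of the sum defining $\phirone$, and then to use Lemma~\ref{lemma::orbital-intergrals-zero} to annihilate the complementary summands once they are fed into a twisted orbital integral. I would start from the formula for $\phirone$ obtained in this section from Lemma~\ref{lemma::idempotents}, namely that, up to the common scalar $[I_r:I_r^+]^{-1}q^{r\ell(t_\mu)/2}$, the function $\phirone$ equals $\sum_{\xi \in \torus(k_r)^\vee} Z_{V_\mu} \ast e_\xi$, the sum ranging over all depth-zero characters of $\torus(\algintR)$. Since $\torus$ is split, $\torus(k_r)$ is a product of copies of $k_r^\times$ and the norm acts coordinatewise as the field norm $k_r^\times \to \resfield^\times$, which is surjective; hence $N_r : \torus(k_r) \to \torus(\resfield)$ is surjective and the dual map $\chi \mapsto \chi_r = \chi \circ N_r$ is injective. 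Consequently the summands of $\phironeaug$ are precisely the $Z_{V_\mu}\ast e_\xi$ with $\xi = \chi_r$ a norm, each occurring once, and
$$
\phirone - \phironeaug = [I_r:I_r^+]^{-1} q^{r\ell(t_\mu)/2} \sum_{\substack{\xi \in \torus(k_r)^\vee \\ \xi\ \text{not a norm}}} Z_{V_\mu} \ast e_\xi .
$$

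The crucial step is to verify that each term $Z_{V_\mu}\ast e_\xi$ lies in the Hecke algebra $\mathcal{H}(G_r, I_r, \rho_\xi)$, so that the vanishing lemma applies to it. This is where the structure of the Bernstein center enters: $Z_{V_\mu}$ is a $G_r$-invariant, essentially compact distribution, so convolution by it commutes with left and right convolution on $\mathcal{H}(G_r)$. Combining this with the fact that $e_\xi$ is idempotent, one obtains $Z_{V_\mu}\ast e_\xi = e_\xi \ast (Z_{V_\mu}\ast e_\xi) \ast e_\xi$, which manifestly lies in the corner algebra $e_\xi \mathcal{H}(G_r) e_\xi \cong \mathcal{H}(G_r, I_r, \rho_\xi)$.

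With membership in hand, I would invoke Lemma~\ref{lemma::orbital-intergrals-zero}: for every $\xi$ that is not a norm, all twisted orbital integrals at $\theta$-semisimple elements vanish on $\mathcal{H}(G_r, I_r, \rho_\xi)$. The element $\delta$ in ${\rm{TO}}_{\delta\theta}$ is $\theta$-semisimple by hypothesis, since its norm in $G(F)$ is semisimple, so this is exactly the situation the lemma covers. Because the twisted orbital integral is linear in its argument and the above sum is finite, interchanging sum and integral yields
$$
{\rm{TO}}_{\delta\theta}(\phirone) - {\rm{TO}}_{\delta\theta}(\phironeaug) = [I_r:I_r^+]^{-1} q^{r\ell(t_\mu)/2} \sum_{\substack{\xi \in \torus(k_r)^\vee \\ \xi\ \text{not a norm}}} {\rm{TO}}_{\delta\theta}(Z_{V_\mu}\ast e_\xi) = 0,
$$
which is the asserted equality.

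I expect the middle paragraph to be the main obstacle: the vanishing lemma is stated for the equivariant Hecke algebras $\mathcal{H}(G_r, I_r, \rho_\xi)$, so the whole argument hinges on knowing, term by term, that convolution by the Bernstein-center distribution $Z_{V_\mu}$ returns $e_\xi$ to that algebra rather than merely to $\mathcal{H}(G_r)$. By comparison, the injectivity of $\chi \mapsto \chi_r$ and the linearity that lets one pass ${\rm{TO}}_{\delta\theta}$ through the finite sum are routine.
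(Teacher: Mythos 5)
Your proof is correct and follows essentially the same route as the paper's: decompose $1_{I_r^+}$ into the idempotents $e_\xi$ via Lemma~\ref{lemma::idempotents}, then annihilate the non-norm summands with Lemma~\ref{lemma::orbital-intergrals-zero} and linearity of ${\rm{TO}}_{\delta\theta}$. The details you make explicit---the injectivity of $\chi \mapsto \chi_r$ (so each norm character occurs exactly once) and the verification that $Z_{V_\mu} \ast e_\xi$ lies in the corner algebra $e_\xi \ast \mathcal{H}(G_r) \ast e_\xi \cong \mathcal{H}(G_r, I_r, \rho_\xi)$, which the vanishing lemma requires---are left implicit in the paper's proof but are precisely the right points to check.
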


\begin{proof}
Recall that Haar measure on $\mathcal{H}(G_r, I_r^+)$ is normalized such that ${\rm{vol}}(I_r^+) = 1$, while Haar measure on $\mathcal{H}(G_r, I_r, \rho_{\chi_r})$ is normalized to have ${\rm{vol}}(I_r) = 1$. The function $\phironeaug \in \mathcal{H}(G_r, I_r^+)$ is defined by summing up functions $Z_{V_\mu} \ast e_{\chi_r} \in \mathcal{H}(G_r, I_r, \rho_{\chi_r})$; thus we must account for the different normalizations when rewriting $\phirone$ using Lemma~\ref{lemma::idempotents}. Thus we have the intermediate result
$$
{\rm{TO}}_{\delta\theta} (\phirone) = {\rm{TO}}_{\delta\theta}\left([I_r : I_r^+]^{-1} q^{r\ell(t_{\mu})/2} \sum_{\xi \in \torus(k_r)^\vee} Z_{V_\mu} \ast e_{\xi}\right).
$$
But Lemma~\ref{lemma::orbital-intergrals-zero} says that ${\rm{TO}}_{\delta\theta} (Z_{V_\mu} \ast e_{\xi}) = 0$ if $\xi$ is not a norm, which means there is no $\chi \in \torus(\resfield)^\vee$ such that $\xi = \chi_r$. This shows ${\rm{TO}}_{\delta\theta} (\phirone) = {\rm{TO}}_{\delta\theta}(\phironeaug)$.
\end{proof}

\begin{rmk}
\label{rmk::clarify-dependence-on-llc+}
Recall that the definition of a test function $\phi_r$ invokes the LLC+ conjecture to view the distribution $Z_{V_\mu}$ as an element of the Bernstein center. Let us give an unconditional definition for $\phi_{r,1}^\prime$ that agrees with Definition~\ref{defin::test-function} whenever LLC+ holds.

For each $\chi_r$, define $Z_{V_\mu} \ast e_{\chi_r}$ to be the unique function in the center $\mathcal{Z}(G_r, I_r, \rho_{\chi_r})$ which acts on the $\rho_{\chi_r}$-isotypical component of all $i_{B_r}^{G_r}(\tilde{\chi}_r)$ in $\mathfrak{R}_{[T, \tilde{\chi}_r]}(G_r)$ by the scalar ${\rm{tr}}^{\rm{ss}}(\varphi_{\tilde{\chi}_r}(\Phi),V_\mu)$. Of course, the Langlands parameter $\varphi_{\tilde{\chi}_r}$ exists by the LLC for Tori. Sum these functions as before to get a new definition of $\phi_{r,1}^\prime$ that satisfies ${\rm{TO}}_{\delta\theta} (\phirone) = {\rm{TO}}_{\delta\theta}(\phironeaug)$ as in Corollary~\ref{cor::equality-of-orbital-integrals}.
\end{rmk}

\subsubsection{Bernstein functions for dominant minuscule cocharacters}
\label{section::bernstein-functions}

The center of an Iwahori-Hecke algebra can be described in terms of \emph{Bernstein functions} indexed by dominant cocharacters of the maximal torus. In certain cases, such as the case of dominant and minuscule cocharacters of interest here, Haines proved explicit formulas for Bernstein functions \cite{haines2000}, \cite{haines2000b}, \cite{haines-pettet2002}. Our presentation states Haines's formula for the Iwahori-Hecke algebra $\mathcal{H}(H_{\chi_r}, I_{H_r})$.

Recall that $\mathcal{H}(H_{\chi_r}, I_{H_r})$ is isomorphic to the twisted affine Hecke algebra $\mathcal{H}_r (W_{\chi, {\rm{aff}}}, S_{\chi, {\rm{aff}}})\ \tilde{\otimes}\ \mathbb{C}[\Omega_\chi]$, which we sometimes denote $\mathcal{H}_r$. In the notation of Definition~\ref{defin::r-poly-from-hecke-algebras}, this algebra has a normalized basis $\{\tilde{T}_{w,r} \mid w \in \widetilde{W}_\chi\}$. If $w = t_\lambda$ is a translation element, we write $\widetilde{T}_{\lambda,r}$ instead of $\widetilde{T}_{t_\lambda, r}$.

For any $\lambda \in \cochars{\torus}$, there exist dominant $\lambda_1, \lambda_2 \in \cochars{T}$ such that $\lambda = \lambda_1 - \lambda_2$. Let $\Theta_{\lambda,r} = \widetilde{T}_{\lambda_1, r} \widetilde{T}_{\lambda_2, r}^{-1}.$ This element is independent of the choice of $\lambda_1$ and $\lambda_2$. 

\begin{defin}
For $\lambda \in \cochars{T}$, the \textbf{Bernstein function} attached to a Weyl orbit $M_\chi = W_\chi\cdot\lambda$ is defined by
$$
z_{M_\chi,r} = \sum_{\eta \in M_\chi} \Theta_{\eta,r}.
$$
If $\lambda$ is dominant, we denote the function associated to $M_\chi = W_\chi\cdot\lambda$ by $z_{\lambda,r}$.
\end{defin}

\begin{thm}
The center of $\mathcal{H}(H_{\chi_r}, I_{H_r})$ is the free $\mathbb{Z}[q^{r/2}, q^{-r/2}]$-module generated by the elements $z_{\lambda, r}$, where $\lambda$ ranges over all dominant cocharacters of $\torus$.
\end{thm}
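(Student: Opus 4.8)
The plan is to prove the Bernstein presentation of the center, i.e.\ to identify $\mathcal{Z}\big(\mathcal{H}(H_{\chi_r},I_{H_r})\big)$ with the $W_\chi$-invariants of the commutative ``lattice subalgebra'' spanned by the $\Theta_{\lambda,r}$, and then to recognize the orbit sums among these invariants as precisely the Bernstein functions $z_{\lambda,r}$. Write $\mathcal{H}=\mathcal{H}(H_{\chi_r},I_{H_r})$, let $R=\mathbb{Z}[q^{r/2},q^{-r/2}]$ be the coefficient ring, and let $W_\chi$ denote the finite Weyl group of $\Phi_\chi$; our standing hypothesis $W_\chi=W_\chi^\circ$ guarantees that $\widetilde{W}_\chi=\cochars{\torus}\rtimes W_\chi$ is genuinely the extended affine Weyl group of $H_{\chi_r}$, so that the $\Omega_\chi$-part is already absorbed into the full cocharacter lattice.

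First I would set up the commutative subalgebra. Let $\mathcal{A}\subseteq\mathcal{H}$ be the $R$-span of $\{\Theta_{\lambda,r}:\lambda\in\cochars{\torus}\}$. For dominant $\lambda,\mu$ the translations commute and add lengths, so $\widetilde{T}_{\lambda,r}\widetilde{T}_{\mu,r}=\widetilde{T}_{\lambda+\mu,r}$; combined with the already-noted independence of $\Theta_{\lambda,r}$ on the decomposition $\lambda=\lambda_1-\lambda_2$, this shows $\mathcal{A}$ is commutative with $\Theta_{\lambda,r}\Theta_{\mu,r}=\Theta_{\lambda+\mu,r}$, so that $\mathcal{A}\cong R[\cochars{\torus}]$ is a group algebra (in particular an integral domain). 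Next I would invoke the Bernstein--Lusztig structure theorem: $\mathcal{H}$ is free as a left $\mathcal{A}$-module with basis $\{\widetilde{T}_{w,r}:w\in W_\chi\}$, together with the commutation relation, for each simple $s=s_\alpha$,
$$
\Theta_{\lambda,r}\,\widetilde{T}_{s,r}-\widetilde{T}_{s,r}\,\Theta_{s(\lambda),r}
=Q_r\,\frac{\Theta_{\lambda,r}-\Theta_{s(\lambda),r}}{1-\Theta_{-\alpha^\vee,r}},
$$
whose salient features are that the right-hand side lies in $\mathcal{A}$ and vanishes exactly when $s(\lambda)=\lambda$. This package is classical (Bernstein, Lusztig) and I would cite it, optionally reconstructing it from the Iwahori--Matsumoto relations already in hand.

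The heart of the argument is to show $\mathcal{Z}(\mathcal{H})=\mathcal{A}^{W_\chi}$, in two steps. For the inclusion $\mathcal{Z}(\mathcal{H})\subseteq\mathcal{A}^{W_\chi}$, I would first prove that the centralizer of $\mathcal{A}$ in $\mathcal{H}$ is $\mathcal{A}$ itself: write $z=\sum_{w\in W_\chi}a_w\widetilde{T}_{w,r}$ with $a_w\in\mathcal{A}$, expand $\Theta_{\mu,r}z-z\Theta_{\mu,r}$ using the relation above (iterated to give $\widetilde{T}_{w,r}\Theta_{\mu,r}=\Theta_{w(\mu),r}\widetilde{T}_{w,r}+(\text{strictly lower terms})$), and read off, via $\mathcal{A}$-freeness, that the top-support coefficient satisfies $a_w(\Theta_{\mu,r}-\Theta_{w(\mu),r})=0$. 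Since $W_\chi$ acts faithfully on $\cochars{\torus}\otimes\mathbb{Q}$ and $\mathcal{A}$ is a domain, choosing $\mu$ with $w(\mu)\neq\mu$ forces $a_w=0$ for $w\neq e$, so $z\in\mathcal{A}$; then commutation with each $\widetilde{T}_{s,r}$ and the displayed relation force $z$ to be $s$-invariant for every simple $s$, hence $W_\chi$-invariant. The reverse inclusion $\mathcal{A}^{W_\chi}\subseteq\mathcal{Z}(\mathcal{H})$ is immediate from the same relation: a $W_\chi$-invariant element of $\mathcal{A}$ commutes with every $\Theta_{\mu,r}$ (commutativity of $\mathcal{A}$) and with every $\widetilde{T}_{s,r}$ (the numerator vanishes), hence with all of $\mathcal{H}$.

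Finally I would extract the basis. As a $W_\chi$-representation, $\mathcal{A}\cong R[\cochars{\torus}]$ is the permutation module on $\{\Theta_{\lambda,r}\}$, so $\mathcal{A}^{W_\chi}$ is a free $R$-module whose basis is the set of orbit sums $\sum_{\eta\in W_\chi\cdot\lambda}\Theta_{\eta,r}$, one per $W_\chi$-orbit; each orbit has a unique dominant representative, and the orbit sum for a dominant $\lambda$ is exactly the Bernstein function $z_{\lambda,r}=z_{M_\chi,r}$. This yields the asserted free $R$-module structure and completes the proof. I expect the main obstacle to be the centralizer computation in the third paragraph --- propagating the Bernstein--Lusztig relation through a general $w\in W_\chi$ and using faithfulness over the fraction field of $\mathcal{A}$ --- whereas the structural input of the second paragraph can be quoted.
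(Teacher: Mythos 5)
Your argument is correct, and it is essentially the paper's: the paper gives no independent proof, simply citing Bernstein's theorem in the form of \cite{haines2000}, Theorem 2.3, and what you have written is precisely the classical Bernstein-presentation argument underlying that citation --- the commutative subalgebra $\mathcal{A}\cong\mathbb{Z}[q^{r/2},q^{-r/2}][\cochars{\torus}]$ spanned by the $\Theta_{\lambda,r}$, the Bernstein--Lusztig relations, the centralizer computation yielding $\mathcal{Z}(\mathcal{H})=\mathcal{A}^{W_\chi}$, and the orbit sums indexed by dominant cocharacters as a free basis. Your use of the standing hypothesis $W_\chi=W_\chi^\circ$ to identify $\widetilde{W}_\chi$ with $\cochars{\torus}\rtimes W_\chi$ (so that $\Omega_\chi$ is absorbed and the $\mathcal{A}$-basis $\{\widetilde{T}_{w,r}:w\in W_\chi\}$ makes sense) is exactly what Lemma~\ref{lemma::chi-weyl-groups}(3) supplies, so no gap remains there.
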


\begin{proof}
The theorem is due to Bernstein, but this statement is \cite{haines2000}, Theorem 2.3, adapted to the algebra $\mathcal{H}(H_{\chi_r}, I_{H_r})$.
\end{proof}


\begin{defin}
\label{defin::mu-admissible-set}
Let $\mu$ be a dominant coweight of $\Phi$. An element $w \in \widetilde{W}$ is called \textbf{$\mu$-admissible} if $x \leq t_\lambda$ for some $\lambda$ in the $W$-orbit of $\mu$.
\end{defin}

When $\Phi$ is the root system of $G_r$, the $\mu$-admissible set is denoted ${\rm{Adm}}_{G_r}(\mu)$. Similarly, if $\mu_\chi$ is a dominant coweight of $\Phi_\chi$, the root system of $H_{\chi_r}$ under our hypotheses, elements $w \in \widetilde{W}_\chi$ such that $w \leq_\chi t_\lambda$ for $\lambda \in W_\chi \mu_\chi$ form the set ${\rm{Adm}}_{H_r}(\mu_\chi)$.

\begin{prop}
\label{prop::supp-bern-fns}
Let $\mu_\chi$ be a dominant, minuscule cocharacter of $\torus$ with respect to $\Phi_\chi$. The support of $z_{\mu_\chi, r}$ equals ${\rm{Adm}}_{H_r}(\mu_\chi)$ as subsets of $\widetilde{W}_\chi$.
\end{prop}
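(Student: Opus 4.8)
The plan is to work throughout in the genuine Iwahori--Hecke algebra $\mathcal{H}(H_{\chi_r}, I_{H_r})$, which is legitimate because the standing hypothesis $W_\chi = W_\chi^\circ$ makes $H_{\chi_r}$ a split connected reductive group whose Weyl, affine Weyl, and extended affine Weyl groups are $W_\chi$, $W_{\chi,{\rm aff}}$, and $\widetilde{W}_\chi$ (Lemma~\ref{lemma::chi-weyl-groups}); thus $z_{\mu_\chi,r}$ is a bona fide Bernstein function attached to the minuscule coweight $\mu_\chi$, and the statement becomes the minuscule case of Haines's support theorem \cite{haines2000}, \cite{haines2000b}. I would prove the two inclusions separately. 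The one combinatorial fact driving both is that all translations $t_\eta$ with $\eta \in W_\chi\mu_\chi$ have the \emph{same} length: minuscularity gives $\langle \eta, \alpha\rangle \in \{-1,0,1\}$ for every root $\alpha$, and since $\ell_\chi(t_\eta) = \sum_{\alpha > 0}\lvert\langle \eta, \alpha\rangle\rvert$ depends only on the multiset $\{\lvert\langle\mu_\chi,\alpha\rangle\rvert\}_\alpha$, it equals $\ell_\chi(t_{\mu_\chi})$ for every $\eta$ in the orbit. Consequently the $t_\eta$ are pairwise Bruhat-incomparable and are exactly the maximal elements of ${\rm Adm}_{H_r}(\mu_\chi)$.

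For the inclusion ${\rm supp}(z_{\mu_\chi,r}) \subseteq {\rm Adm}_{H_r}(\mu_\chi)$ I would expand each summand of $z_{\mu_\chi,r} = \sum_{\eta \in W_\chi\mu_\chi}\Theta_{\eta,r}$ in the standard basis $\{\widetilde{T}_{x,r}\}$. The Bernstein elements are triangular against the standard basis: for dominant $\eta$ one has $\Theta_{\eta,r} = \widetilde{T}_{t_\eta,r}$ by definition, and in general the commutation relations give
$$
\Theta_{\eta,r} \in \widetilde{T}_{t_\eta,r} + \sum_{x <_\chi t_\eta} \mathbb{C}[q^{r/2}, q^{-r/2}]\,\widetilde{T}_{x,r},
$$
so that ${\rm supp}(\Theta_{\eta,r}) \subseteq \{x \in \widetilde{W}_\chi : x \leq_\chi t_\eta\}$ (this is the content of the computations in \cite{haines2000}). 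Since each such set is contained in ${\rm Adm}_{H_r}(\mu_\chi)$ by the very definition of the admissible set, summing over the orbit yields the containment.

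The reverse inclusion is the heart of the matter and the step I expect to be the main obstacle: every $w \in {\rm Adm}_{H_r}(\mu_\chi)$ must survive with nonzero coefficient, whereas a priori the contributions to the coefficient $c_w$ of $\widetilde{T}_{w,r}$ coming from the various $\Theta_{\eta,r}$ with $w \leq_\chi t_\eta$ could cancel. The mechanism preventing cancellation is a uniform-sign phenomenon, and I would make it explicit using Haines's closed-form computation of $c_w$ \cite{haines2000b}, \cite{haines-pettet2002}, in which the coefficient is controlled by Kazhdan--Lusztig $\widetilde{R}$-polynomial data on the Bruhat intervals $[w, t_\eta]_{\chi}$. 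Each $\widetilde{R}^\chi_{w,t_\eta}(Q_r)$ is nonzero precisely because $w \leq_\chi t_\eta$, by the classical nonvanishing of $R$-polynomials on Bruhat intervals (cf.~\cite{bjorner-brenti2005}); moreover, every monomial appearing in it has exponent of parity $\ell_\chi(t_\eta) - \ell_\chi(w)$, so that, since $Q_r = q^{-r/2}-q^{r/2} < 0$ for $q > 1$, its sign is $(-1)^{\ell_\chi(t_\eta) - \ell_\chi(w)}$. By the length identity of the first paragraph, $\ell_\chi(t_\eta) = \ell_\chi(t_{\mu_\chi})$ for \emph{every} $\eta$ entering the sum, so all the contributions to $c_w$ carry the common sign $(-1)^{\ell_\chi(t_{\mu_\chi}) - \ell_\chi(w)}$ and add without cancellation, forcing $c_w \neq 0$. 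This common-length, common-sign feature is exactly what is special to the minuscule case; its failure for non-minuscule coweights is precisely why the clean support description holds only here.
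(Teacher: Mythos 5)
Your proposal is not the paper's argument at all: the paper proves Proposition~\ref{prop::supp-bern-fns} by a one-line citation of Proposition 4.6 of \cite{haines2000b}, applied to $\mathcal{H}(H_{\chi_r}, I_{H_r})$ with its basis indexed by $\widetilde{W}_\chi$. A self-contained replacement would be welcome, but yours has a genuine gap in the reverse inclusion. You model the coefficient $c_w$ of $\widetilde{T}_{w,r}$ as a sum of contributions from every $\Theta_{\eta,r}$ with $w \leq_\chi t_\eta$, each ``controlled by'' $\widetilde{R}^\chi_{w,t_\eta}(Q_r)$ and hence carrying the sign $(-1)^{\ell_\chi(t_\eta)-\ell_\chi(w)}$, so that the common-length property of the minuscule orbit forces no cancellation. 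Neither half of this is justified. First, the $\widetilde{T}$-basis coefficients of a single $\Theta_{\eta,r} = \widetilde{T}_{\eta_1,r}\widetilde{T}_{\eta_2,r}^{-1}$ are not $\widetilde{R}$-polynomials of the intervals $[w,t_\eta]_\chi$: expanding $\widetilde{T}_{\eta_2,r}^{-1}$ by the inversion formula and then multiplying, the coefficient at $\widetilde{T}_{w,r}$ is a sum over intermediate elements $x$ of nonnegative structure constants times $\widetilde{R}^\chi$-values, and the signs of these summands alternate with the parity of $\ell_\chi(x)$; the general positivity and parity facts you invoke give each summand a definite sign but \emph{not} a sign common to all $x$, so cancellation inside a single $\Theta_{\eta,r}$ is exactly what must be ruled out, and you have not ruled it out. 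Second, your predicted answer contradicts the known one: by Theorem~\ref{thm::bernstein-fn-formula} (Haines's Theorem 4.3 of \cite{haines2000b}), $c_w = \widetilde{R}^\chi_{w,t_{\lambda(w)}}(Q_r)$ is a \emph{single} polynomial attached to the translation part of $w$, not a same-sign sum of $\widetilde{R}^\chi_{w,t_\eta}(Q_r)$ over all $\eta$ with $w \leq_\chi t_\eta$. The contributions from $\eta \neq \lambda(w)$ conspire to vanish, and proving this structured cancellation is precisely the nontrivial content of Haines's computation, which your sketch assumes rather than supplies.

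Two further remarks. Once Haines's formula is granted, the proposition follows exactly by your finishing move applied to one polynomial rather than an orbit's worth: for $\mu_\chi$ minuscule, admissible $w$ satisfies $w \leq_\chi t_{\lambda(w)}$ (Haines--Pettet), so $\widetilde{R}^\chi_{w,t_{\lambda(w)}}$ is a nonzero polynomial with nonnegative coefficients whose exponents all share the parity of $\ell_\chi(t_{\lambda(w)}) - \ell_\chi(w)$, and evaluation at the negative number $Q_r$ is therefore nonzero --- so your parity observation is the right last step, but the heavy lifting sits in the formula itself (or in Proposition 4.6, which the paper simply cites). Also, your forward inclusion rests on ${\rm supp}(\Theta_{\eta,r}) \subseteq \{x : x \leq_\chi t_\eta\}$, which for non-dominant $\eta$ is itself a nontrivial input from Haines's papers rather than a formal triangularity; citing it is acceptable, but it should be flagged as a borrowed result and not as an immediate consequence of the commutation relations.
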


\begin{proof}
Apply Proposition 4.6 of~\cite{haines2000b} to the Iwahori-Hecke algebra $\mathcal{H}(H_{\chi_r}, I_{H_r})$, which has a basis indexed by $\widetilde{W}_\chi$, the extended affine Weyl group of $H_{\chi_r}$.
\end{proof}

For any $w \in \widetilde{W}$, let $\lambda(w)$ denote the cocharacter of $\torus$ such that $w = t_{\lambda(w)} \bar{w}$ via the isomorphism $\widetilde{W} \cong \cochars{\torus} \rtimes W$.

\begin{prop}
Suppose $\mu$ is a minuscule cocharacter and $w = t_{\lambda(w)} \bar{w} \in \widetilde{W}$ is $\mu$-admissible. Then $w \leq t_{\lambda(w)}$.
\end{prop}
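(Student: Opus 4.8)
My plan is to split the assertion into two parts and prove them in turn: (a) the translation part $\lambda(w)$ already lies in the orbit $W\mu$, and (b) $w$ lies below the \emph{specific} translation $t_{\lambda(w)}$, not merely below some $t_\lambda$ with $\lambda$ in the orbit. Part (b) carries the real content, since Definition~\ref{defin::mu-admissible-set} only furnishes $w \le t_\lambda$ for \emph{some} $\lambda \in W\mu$. Throughout I write $w = t_{\lambda(w)}\bar{w}$ with $\bar{w} \in W$ and abbreviate $\nu = \lambda(w)$.

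For part (a), I would argue that $w \le t_\lambda$ constrains the translation part convexly. Since $\bar{w}$ is linear it fixes the origin, so $w(0) = \nu$; and a standard property of the Bruhat order on $\widetilde{W}$ (equivalently, the inclusion of the admissible set into the permissible set) gives $\nu \in \mathrm{Conv}(W\mu)$. Because comparable elements lie in a common coset of the subgroup $\Omega[\mathcal{C}]$ of length-zero elements, and this quotient is $\cochars{\torus}/\mathbb{Z}\Phi^\vee$, one also has $\nu \equiv \mu$ modulo the coroot lattice $\mathbb{Z}\Phi^\vee$. The minuscule hypothesis then closes the argument: for minuscule $\mu$ the only points of $(\mu + \mathbb{Z}\Phi^\vee)\cap \mathrm{Conv}(W\mu)$ are the weights of $W\mu$ themselves. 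Hence $\nu \in W\mu$.

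For part (b), with $\nu \in W\mu$ fixed, I would transport the upper bound $t_\lambda$ through the orbit toward $t_\nu$ one Weyl step at a time. Two inputs are needed. First, all $t_\lambda$ with $\lambda \in W\mu$ share the length $\ell(t_\mu)$, by a short count using $\langle \lambda, \alpha\rangle \in \{-1,0,1\}$. Second, a covering pattern: if $\langle \lambda, \alpha_i\rangle = 1$ for a simple root $\alpha_i$, then a length computation gives $t_\lambda \lessdot s_i t_\lambda$ and $t_{s_i\lambda} \lessdot s_i t_\lambda$, so $s_i t_\lambda$ sits one step above both $t_\lambda$ and $t_{s_i\lambda}$. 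Given $w \le t_\lambda \le s_i t_\lambda$ together with $(s_i t_\lambda) s_i = t_{s_i\lambda} < s_i t_\lambda$, I would invoke the lifting property of the Bruhat order: provided $w s_i > w$, lifting yields $w \le (s_i t_\lambda) s_i = t_{s_i\lambda}$, so the bound $t_\lambda$ may be replaced by $t_{s_i\lambda}$. Iterating until the bound becomes $t_\nu$ completes the proof.

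The main obstacle is the transport step itself: at each stage one must produce a simple reflection $s_i$ that \emph{simultaneously} satisfies $\langle \lambda, \alpha_i\rangle = 1$ (so that $s_i\lambda$ is again in $W\mu$ and strictly closer to $\nu$) and is a right ascent of $w$ (so that lifting delivers $w \le t_{s_i\lambda}$ rather than only $w s_i \le t_{s_i\lambda}$). Verifying that such an $s_i$ always exists while $\lambda \ne \nu$ is where the work concentrates; I would carry it out by comparing the inversion set of $\bar{w}$ with the roots $\alpha$ satisfying $\langle \nu, \alpha\rangle = 1$. The minuscule condition $\langle \cdot, \alpha\rangle \in \{-1,0,1\}$ is decisive and cannot be dropped: it is exactly what forces the four elements $t_\lambda, t_{s_i\lambda}, s_i t_\lambda, t_\lambda s_i$ into the simple covering pattern used above, and the conclusion genuinely fails otherwise, as the rank-one non-minuscule case $\mu = \alpha^\vee$ illustrates (there $s_\alpha$ is $\mu$-admissible yet $\lambda(s_\alpha)=0$ and $s_\alpha \not\le t_0$).
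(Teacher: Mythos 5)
Your mechanics are sound wherever you actually carry them out. The covering diamond is correct: for $\langle \lambda, \alpha_i\rangle = 1$ one computes $\ell(s_i t_\lambda) = \ell(t_\lambda) + 1$, so indeed $t_\lambda \lessdot s_i t_\lambda$ and $t_{s_i\lambda} \lessdot s_i t_\lambda$, and your invocation of the lifting property is the right one. Part (a) is also essentially fine, though note that the step ``$w \le t_\lambda$ forces $\lambda(w) \in \mathrm{Conv}(W\mu)$'' is the inclusion of the admissible set in the permissible set, a theorem of Kottwitz--Rapoport rather than a formality, so it must be cited or proved. For comparison: the paper gives no argument at all here --- its proof is a citation to Haines--Pettet, Corollary 3.5, which is precisely this statement --- so you are attempting to reprove a quoted result, which is legitimate but raises the bar for completeness.

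The gap is the one you name yourself, and it is fatal as the proposal stands: the existence, at each stage with $\lambda \neq \nu$, of a simple reflection that \emph{simultaneously} moves $\lambda$ toward $\nu$ and is an ascent of $w$ on the correct side is the entire content of the proposition, and you assert a plan (``compare inversion sets'') rather than prove it. Worse, as literally formulated the transport step cannot even begin in configurations that genuinely occur: you only set up the case $\langle \lambda, \alpha_i \rangle = 1$ with a right ascent $ws_i > w$, but when $\lambda$ is antidominant no finite simple root pairs to $+1$ with $\lambda$. Concretely, take $G = GL_3$, $\mu = (1,0,0)$, and $w = t_{(0,1,0)} s_2$, which is $\mu$-admissible with $w \le t_{(0,0,1)}$; here $\lambda = (0,0,1)$ pairs to $0$ and $-1$ with the two simple roots, so your step produces nothing, while $\nu = (0,1,0) \neq \lambda$. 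One needs the mirrored step --- $\langle \lambda, \alpha_i\rangle = -1$, the cover $t_\lambda \lessdot t_\lambda s_i = s_i t_{s_i \lambda}$, a \emph{left} ascent $s_i w > w$, and lifting on the left --- which does resolve this instance (indeed $s_2 w = t_{(0,0,1)} > w$, giving $w \le t_{(0,1,0)}$), but which you never state. Even granting both orientations, you must still rule out the configuration in which every progress-making $s_i$ is a descent of $w$ on the required side; nothing in the proposal does so, and it is plausible that a greedy local transport is blocked and a global induction (say on $\ell(t_\lambda) - \ell(w)$, as in Haines--Pettet) is needed instead. Until that existence lemma is established, this is an outline, not a proof. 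Your closing check is correct, though: the non-minuscule failure $\mu = \alpha^\vee$, $w = s_\alpha \le t_{\alpha^\vee}$ with $\lambda(s_\alpha) = 0$ does show the minuscule hypothesis is essential.
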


\begin{proof}
This is \cite{haines-pettet2002}, Corollary 3.5.
\end{proof}

\begin{thm}
\label{thm::bernstein-fn-formula}
Let $\mu_\chi$ be a dominant minuscule cocharacter of $\torus$ with respect to $\Phi_\chi$. Set $Q_r = q^{-r/2} - q^{r/2}$. The Bernstein function $z_{\mu_\chi, r}$ in the center of $\mathcal{H}(H_{\chi_r}, I_{H_r})$  is given by the formula,
$$
z_{\mu_\chi, r} = \sum_{w \in {\rm{Adm}}_{H_r}(\mu_\chi)} \widetilde{R}^\chi_{w, t_{\lambda(w)}}(Q_r) \tilde{T}_{w,r}.
$$
\end{thm}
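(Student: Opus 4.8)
The plan is to begin from the definition $z_{\mu_\chi,r}=\sum_{\eta\in M_\chi}\Theta_{\eta,r}$ and expand each factor $\Theta_{\eta,r}$ in the normalized standard basis $\{\tilde T_{w,r}\}$ of the twisted affine Hecke algebra $\mathcal H_r$, then reorganize the total into the claimed shape. The two extreme elements of the orbit $M_\chi=W_\chi\cdot\mu_\chi$ are immediate. Since $\mu_\chi$ is dominant, taking $\lambda_1=\mu_\chi,\lambda_2=0$ gives $\Theta_{\mu_\chi,r}=\tilde T_{t_{\mu_\chi},r}$, a single basis vector whose coefficient is $\widetilde R^\chi_{t_{\mu_\chi},t_{\mu_\chi}}(Q_r)=1$. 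For the antidominant element $\eta$ of the orbit, $-\eta$ is dominant and $\Theta_{\eta,r}=\tilde T_{t_{-\eta},r}^{-1}$, which by the inversion formula of Definition~\ref{defin::r-poly-from-hecke-algebras} is exactly $\sum_{x\le_\chi t_{-\eta}}\widetilde R^\chi_{x,t_{-\eta}}(Q_r)\,\tilde T_{x,r}$. The real content is that the intermediate $\Theta_{\eta,r}$, which individually are neither single vectors nor full $\widetilde R$-expansions, conspire so that the coefficient of each $\tilde T_{w,r}$ in the whole sum collapses to the single polynomial $\widetilde R^\chi_{w,t_{\lambda(w)}}(Q_r)$ indexed by $w$ and its own translation part.

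To carry this out I would induct across $M_\chi$ ordered by the length of minimal coset representatives, passing from $\eta$ to $s\eta$ for finite simple reflections $s=s_\alpha$ of $W_\chi$. The engine is the Bernstein--Lusztig commutation relation between $\tilde T_s$ and $\Theta_{\eta,r}$: because $\mu_\chi$ is minuscule, every pairing $\langle\eta,\alpha\rangle$ lies in $\{-1,0,1\}$, so the correction term in that relation degenerates to its simplest form and yields a clean two-term recursion relating $\Theta_{\eta,r}+\Theta_{s\eta,r}$ to left multiplication by $\tilde T_s$. I would then verify that this recursion is formally identical to the one-generator-at-a-time recursion that defines the $\widetilde R$-polynomials through the inversion formula, so that the accumulated coefficient of each $\tilde T_{w,r}$ is forced to be $\widetilde R^\chi_{w,t_{\lambda(w)}}(Q_r)$, with the base case supplied by the dominant term above.

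It then remains to pin down the index set. By Proposition~\ref{prop::supp-bern-fns} the support of $z_{\mu_\chi,r}$ is exactly ${\rm{Adm}}_{H_r}(\mu_\chi)$, and by the minuscule admissibility result quoted above (Haines--Pettet) every $w$ in this set satisfies $w\le_\chi t_{\lambda(w)}$ with $\lambda(w)\in W_\chi\mu_\chi$; this is precisely what makes the symbol $\widetilde R^\chi_{w,t_{\lambda(w)}}(Q_r)$ meaningful and nonzero exactly on the admissible set. Since $\lambda(w)$ is determined by $w$, the double sum over pairs $(\eta,w)$ collapses to a single sum over $w\in{\rm{Adm}}_{H_r}(\mu_\chi)$, giving the stated formula.

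I expect the genuine difficulty to be the bookkeeping in the inductive step: the individual $\Theta_{\eta,r}$ do contribute to basis vectors $\tilde T_{w,r}$ with $\lambda(w)\neq\eta$ (for example a length-zero $w$ lying below several of the $t_\eta$), and one must check that these cross-contributions recombine into the single polynomial indexed by $t_{\lambda(w)}$ rather than over- or under-counting. This is the combinatorial heart of Haines's computation in \cite{haines2000b}, and the cleanest rigorous route is to transport his minuscule Bernstein-function formula along the Iwahori--Matsumoto presentation of $\mathcal H(H_{\chi_r},I_{H_r})$, noting that its affine Weyl group is $W_{\chi,\rm{aff}}$, its Hecke parameter is $q^{r}$, and its $\widetilde R$-polynomials are by construction the $\widetilde R^\chi$ of Definition~\ref{defin::r-poly-from-hecke-algebras}.
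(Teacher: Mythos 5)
Your closing recommendation is exactly the paper's proof: the paper establishes this theorem purely by citing Theorem 4.3 of \cite{haines2000b}, observing that it applies to $\mathcal{H}(H_{\chi_r}, I_{H_r})$ viewed as a twisted affine Hecke algebra with parameter $q^r$ attached to $(W_{\chi,{\rm{aff}}}, S_{\chi,{\rm{aff}}})$, whose $\widetilde{R}$-polynomials are by construction the $\widetilde{R}^\chi$ of Definition~\ref{defin::r-poly-from-hecke-algebras} (with \cite{haines-pettet2002} noted as containing an alternative proof). So the proposal succeeds, but only via that final fallback; the independent induction you sketch beforehand is not itself a proof, and it contains one concrete slip worth fixing. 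In the antidominant base case, writing $\eta = 0 - (-\eta)$ gives $\Theta_{\eta,r} = \tilde{T}_{t_{-\eta},r}^{-1}$, and since $(t_{-\eta})^{-1} = t_{\eta}$, the inversion formula $\tilde{T}_{w^{-1},r}^{-1} = \sum_x \widetilde{R}^\chi_{x,w}(Q_r)\,\tilde{T}_{x,r}$ applies with $w = t_{\eta}$, yielding $\Theta_{\eta,r} = \sum_{x \le_\chi t_{\eta}} \widetilde{R}^\chi_{x,t_{\eta}}(Q_r)\,\tilde{T}_{x,r}$ --- not $\sum_{x \le_\chi t_{-\eta}} \widetilde{R}^\chi_{x,t_{-\eta}}(Q_r)\,\tilde{T}_{x,r}$ as you wrote; your version is the expansion of $\tilde{T}_{t_{\eta},r}^{-1}$ and is inconsistent with the target formula, which needs the second index to be $t_{\eta}$ so that $x$ with $\lambda(x) = \eta$ acquires the coefficient $\widetilde{R}^\chi_{x,t_{\lambda(x)}}(Q_r)$. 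As for the inductive step, you correctly identify the Bernstein--Lusztig relation (simplified by minuscularity, since $\langle \eta, \alpha\rangle \in \{-1,0,1\}$) as the engine, but the claim that the resulting recursion ``is formally identical'' to the defining recursion of the $\widetilde{R}^\chi$-polynomials, and that the cross-contributions from distinct $\Theta_{\eta,r}$ recombine into the single polynomial indexed by $t_{\lambda(w)}$, is asserted rather than verified --- as you yourself concede, this is the combinatorial heart of \cite{haines2000b} and \cite{haines-pettet2002}, so the sketch does not stand on its own.
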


\begin{proof}
This is Theorem 4.3 of\cite{haines2000b} applied in the case of $\mathcal{H}(H_{\chi_r}, I_{H_r})$. A different proof appears in~\cite{haines-pettet2002}.
\end{proof}
\subsubsection{An explicit formula via Bernstein functions}

The material on Hecke algebra isomorphisms and Bernstein functions developed in earlier sections can be used to give an explicit formula for the coefficients of $\phironeaug$. We begin by expressing the image of $Z_{V_\mu} \ast e_{\chi_r}$ under the Hecke algebra isomorphism $\Psi_{\breve{\chi}_r}$ in terms of Bernstein functions in the center of $\mathcal{H}(H_{\chi_r}, I_{H_r})$.

\begin{lemma}
\label{lemma::chi-weights-disjoint-union}
Let $\rm{Wt}(\chi) = \{ \lambda \in W\mu \mid \lambda(\kapchi) = 1\}$. Let $\{\mu_\chi^i\}$ denote the subset of $W\mu$ consisting of elements which are dominant and minuscule with respect to $\Phi_\chi$. Then
$\rm{Wt}(\chi)$ is the disjoint union of the orbits $W_\chi \mu_\chi^i$.
\end{lemma}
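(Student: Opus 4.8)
I would show that $\mathrm{Wt}(\chi)$ is stable under the $W_\chi$-action, that $\lambda \mapsto \lambda(\kapchi)$ is constant on each $W_\chi$-orbit, and that every $W_\chi$-orbit of $\mathrm{Wt}(\chi)$ carries a unique $\Phi_\chi$-dominant representative, which is automatically minuscule and hence one of the $\mu_\chi^i$. The whole argument rests on translating the defining condition of $\Phi_\chi$ into a statement about the endoscopic element $\kapchi$. Recall from the proof of Proposition~\ref{prop::dz-endoscopic-elements-equal-kernel} that $\chi(\nu(\tilde x)) = \nu(\kapchi)$ for every $\nu \in \cochars{\torus} = \chars{\dualtorus}$, where $\tilde x = \tau_F(x)$ projects to a generator of $\resfield^\times$. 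Taking $\nu = \alpha^\vee$ and using that $\chi$ is depth-zero (so triviality of $\chi \circ \alpha^\vee$ on $\mathcal{O}_F^\times$ is detected on the single generator $\tilde x$), I obtain the identity
$$
\alpha \in \Phi_\chi \iff \chi(\alpha^\vee(\tilde x)) = 1 \iff \alpha^\vee(\kapchi) = 1 .
$$

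\textbf{$W_\chi$ fixes $\kapchi$.} Give $\dualtorus$ the $W$-action dual to the action on $\chars{\dualtorus}$, characterized by $(w\lambda)(\kappa) = \lambda(w^{-1}\kappa)$. By the standing hypothesis $W_\chi = W_\chi^\circ$, the group $W_\chi$ is generated by the reflections $s_\alpha$ with $\alpha \in \Phi_\chi$, so it suffices to treat one such $s_\alpha$. For any $\lambda \in \chars{\dualtorus}$ the reflection formula $s_\alpha\lambda = \lambda - \langle\lambda,\alpha\rangle\alpha^\vee$ gives, after evaluation at $\kapchi$ (and using $s_\alpha = s_\alpha^{-1}$),
$$
\lambda(s_\alpha\kapchi) = (s_\alpha\lambda)(\kapchi) = \lambda(\kapchi)\,\alpha^\vee(\kapchi)^{-\langle\lambda,\alpha\rangle} = \lambda(\kapchi),
$$
the last step because $\alpha^\vee(\kapchi) = 1$ for $\alpha \in \Phi_\chi$. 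As this holds for all characters $\lambda$, we get $s_\alpha\kapchi = \kapchi$, hence $w\kapchi = \kapchi$ for all $w \in W_\chi$. Consequently, for $\lambda \in W\mu$ and $w \in W_\chi$ we have $(w\lambda)(\kapchi) = \lambda(w^{-1}\kapchi) = \lambda(\kapchi)$, so $\lambda \mapsto \lambda(\kapchi)$ is constant on each $W_\chi$-orbit in $W\mu$. In particular $\mathrm{Wt}(\chi)$, the locus where this function equals $1$, is a union of complete $W_\chi$-orbits, and distinct orbits are disjoint.

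\textbf{Dominant representatives.} Finally I would invoke the standard fact that the closed $\Phi_\chi$-dominant chamber is a fundamental domain for $W_\chi$ acting on $\cochars{\torus}\otimes\mathbb{R}$ (see \cite{humphreys1990} or \cite{bourbaki4-6}): each $W_\chi$-orbit contains exactly one $\Phi_\chi$-dominant element. Since $\mu$ is minuscule for $\Phi$, every $\lambda \in W\mu$ satisfies $\langle\lambda,\alpha\rangle \in \{-1,0,1\}$ for all $\alpha \in \Phi$, a fortiori for $\alpha \in \Phi_\chi$; thus the dominant representative of any orbit is automatically minuscule with respect to $\Phi_\chi$, i.e. one of the $\mu_\chi^i$. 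Putting this together: each $W_\chi$-orbit contained in $\mathrm{Wt}(\chi)$ is of the form $W_\chi\mu_\chi^i$ for its unique $\Phi_\chi$-dominant minuscule representative $\mu_\chi^i \in \mathrm{Wt}(\chi)$, and conversely each such $\mu_\chi^i$ contributes its entire orbit; the orbits are pairwise disjoint. This yields $\mathrm{Wt}(\chi) = \bigsqcup_i W_\chi\mu_\chi^i$.

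I expect the one genuinely substantive step to be the translation $\alpha \in \Phi_\chi \Leftrightarrow \alpha^\vee(\kapchi) = 1$ and its consequence that $W_\chi$ fixes $\kapchi$; once the function $\lambda \mapsto \lambda(\kapchi)$ is seen to be $W_\chi$-invariant, everything else is the familiar fundamental-domain bookkeeping for the subsystem $\Phi_\chi$. The only point demanding care is the role of the hypothesis $W_\chi = W_\chi^\circ$, which is exactly what lets me reduce the invariance of $\kapchi$ to the generating reflections $s_\alpha$, $\alpha \in \Phi_\chi$.
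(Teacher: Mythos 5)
Your proof is correct, and its overall skeleton---the map $\lambda \mapsto \lambda(\kapchi)$ is constant on $W_\chi$-orbits, hence $\mathrm{Wt}(\chi)$ is a union of full $W_\chi$-orbits, each of which carries a unique $\Phi_\chi$-dominant representative that is automatically minuscule---is the same as the paper's. The one place you genuinely diverge is in how the key fixed-point fact $u\kapchi = \kapchi$ for $u \in W_\chi$ is obtained. The paper gets it from the functoriality of the LLC for tori: for \emph{any} $\bar{w} \in W$ one has $\bar{w}\kapchi = \kappa_{{}^{\bar{w}}\chi}$ (proved by evaluating characters $\nu$ at $\nu(\tau_F(x))$ and separating points), so $u\kapchi = \kapchi$ follows immediately from the definition $W_\chi = \{w \mid {}^{w}\chi = \chi\}$, with no connectedness hypothesis. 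You instead translate the defining condition of $\Phi_\chi$ into $\alpha \in \Phi_\chi \Rightarrow \alpha^\vee(\kapchi) = 1$ and propagate this through the reflection formula $s_\alpha\lambda = \lambda - \langle\lambda,\alpha\rangle\alpha^\vee$, which forces you to invoke $W_\chi = W_\chi^\circ$ to reduce to the generating reflections $s_\alpha$, $\alpha \in \Phi_\chi$. Since $W_\chi = W_\chi^\circ$ is among the paper's standing hypotheses (Remark~\ref{rmk::roche-assumptions}), your argument is valid in context; but note that the lemma itself holds without that hypothesis via the paper's route, so your approach trades a little generality for a more self-contained, purely root-theoretic computation (one evaluation identity plus the separation of points in Lemma~\ref{lemma::chars-separate-points}). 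Two smaller remarks: only the forward implication $\alpha \in \Phi_\chi \Rightarrow \alpha^\vee(\kapchi) = 1$ is actually used, so your careful depth-zero justification of the converse is dispensable; and your explicit check that minusculity for $\Phi$ descends to $\Phi_\chi$ across the whole orbit $W\mu$ makes precise a point the paper's proof leaves implicit.
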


\begin{proof}
The orbits $W_\chi \mu_\chi^i$ are all disjoint, because there is a unique dominant element in each $W_\chi$-orbit. We have $W\mu \supseteq \coprod_i W_\chi \mu_\chi^i$, because $\mu$ is among the $\mu_\chi^i$ and $W_\chi \subseteq W$. On the other hand, given an element $\eta \in W\mu$, there must exist some index $k$ such that $\eta \in W_\chi \mu_\chi^k$; just consider the orbit $W_\chi \eta$, which must contain a unique element dominant with respect to $\Phi_\chi$. So $W\mu = \coprod_i W_\chi \mu_\chi^i$.

Now pick any $\lambda \in {\rm{Wt}}(\chi)$. Then $\lambda \in W_\chi \mu_\chi^k$ for some $k$. For any other $\eta \in W_\chi \mu_\chi^k$, there is $u \in W_\chi$ such that $\eta = u^{-1}\lambda$. But $u \in W_\chi$ satisfies $u\kappa_\chi = \kappa_\chi$, so:
$$
\eta (\kappa_\chi) = u^{-1}\lambda(\kappa_\chi) = \lambda(u \kappa_\chi) = \lambda(\kappa_\chi) = 1.
$$
This shows that any $W_\chi$-orbit containing an element of ${\rm{Wt}}(\chi)$ lies entirely within ${\rm{Wt}}(\chi)$. It follows that ${\rm{Wt}}(\chi)$ is a finite, disjoint union of $W_\chi$-orbits.
\end{proof}

\begin{lemma}
\label{lemma:mu-chi-admissible-sets-disjoint}
Suppose ${\rm{Wt}}(\chi) = \coprod_i W_\chi \mu_\chi^i$ as in Lemma~\ref{lemma::chi-weights-disjoint-union}. Then the $\mu_\chi^i$-admissible sets ${\rm{Adm}}_{H_{\chi_r}}(\mu_\chi^i)$ are disjoint subsets of $\widetilde{W}$.
\end{lemma}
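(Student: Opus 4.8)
The plan is to show that each admissible set ${\rm{Adm}}_{H_{\chi_r}}(\mu_\chi^i)$ is confined to a single coset of $W_{\chi,{\rm{aff}}}$ in $\widetilde{W}_\chi$, and that distinct indices $i$ are forced into distinct cosets. First I would invoke the decomposition $\widetilde{W}_\chi = W_{\chi,{\rm{aff}}} \rtimes \Omega_\chi$ of Lemma~\ref{lemma::chi-weyl-groups}(2), under which the extended Bruhat order $\le_\chi$ relates two elements only when they lie in the same $\Omega_\chi$-coset: the length-zero elements $\Omega_\chi$ act as length-preserving automorphisms of the Coxeter system, so there are no $\le_\chi$-relations across distinct cosets. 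Since every $w \in {\rm{Adm}}_{H_{\chi_r}}(\mu_\chi^i)$ satisfies $w \le_\chi t_\lambda$ for some $\lambda \in W_\chi\mu_\chi^i$, the element $w$ has the same image as $t_\lambda$ under the projection $\widetilde{W}_\chi \to \Omega_\chi$.

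Next I would identify this common image. Under the identifications $\widetilde{W}_\chi \cong \cochars{\torus} \rtimes W_\chi$ and $W_{\chi,{\rm{aff}}} \cong \mathbb{Z}\Phi_\chi^\vee \rtimes W_\chi$, the projection sends $t_\lambda$ to the class of $\lambda$ in $\Omega_\chi \cong \cochars{\torus}/\mathbb{Z}\Phi_\chi^\vee$. Any $\lambda \in W_\chi\mu_\chi^i$ differs from $\mu_\chi^i$ by an integer combination of coroots $\alpha^\vee$ with $\alpha \in \Phi_\chi$ (because $s_\alpha\eta = \eta - \langle \eta,\alpha\rangle\alpha^\vee$), so $\lambda \equiv \mu_\chi^i \pmod{\mathbb{Z}\Phi_\chi^\vee}$; hence ${\rm{Adm}}_{H_{\chi_r}}(\mu_\chi^i)$ lies entirely in the coset indexed by the class $[\mu_\chi^i] \in \cochars{\torus}/\mathbb{Z}\Phi_\chi^\vee$. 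Disjointness of the admissible sets then reduces to the claim that $[\mu_\chi^i] \ne [\mu_\chi^j]$ whenever $i \ne j$.

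The heart of the argument, and the step I expect to be the main obstacle, is this last claim: distinct dominant minuscule coweights of $\Phi_\chi$ represent distinct classes modulo the coroot lattice $\mathbb{Z}\Phi_\chi^\vee$. Writing $V = {\rm{span}}(\Phi_\chi^\vee) \oplus {\rm{span}}(\Phi_\chi^\vee)^\perp$ and decomposing each $\mu_\chi^i = \nu_i' + \nu_i''$ accordingly, I would note that $\mathbb{Z}\Phi_\chi^\vee$ lies in the first summand, so a congruence $\mu_\chi^i \equiv \mu_\chi^j$ forces $\nu_i'' = \nu_j''$ together with $\nu_i' \equiv \nu_j' \pmod{\mathbb{Z}\Phi_\chi^\vee}$; moreover the pairings against roots of $\Phi_\chi$ only see the primed part, so each $\nu_i'$ is again dominant and minuscule for $\Phi_\chi$ and lies in its coweight lattice. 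One is thus reduced to the classical fact that the dominant minuscule coweights of a root system inject into the fundamental group $P^\vee/\mathbb{Z}\Phi_\chi^\vee$, each nontrivial coset containing at most one minuscule representative, which I would cite (e.g.\ from Bourbaki~\cite{bourbaki4-6}) and apply componentwise over the irreducible factors of $\Phi_\chi$. Combining, $[\mu_\chi^i] = [\mu_\chi^j]$ forces $\mu_\chi^i = \mu_\chi^j$, hence $i = j$, and the admissible sets are disjoint.
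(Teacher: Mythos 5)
Your proposal is correct and takes essentially the same route as the paper's proof: both arguments reduce disjointness to the observation that comparability $w \leq_\chi t_\lambda$ in $\widetilde{W}_\chi = W_{\chi, {\rm{aff}}} \rtimes \Omega_\chi$ forces $w$ and $t_\lambda$ to share the same length-zero component, together with the claim that translation elements attached to distinct orbits $W_\chi \mu_\chi^i$ have distinct images in $\Omega_\chi \cong \cochars{\torus}/\mathbb{Z}\Phi_\chi^\vee$. The only difference is one of detail: where the paper asserts this last distinctness tersely (``the $\sigma_\lambda$ are unique and the $W_\chi \mu_\chi^i$ orbits are disjoint''), you actually prove it, via the orthogonal decomposition along ${\rm{span}}(\Phi_\chi^\vee)$ and the classical fact that dominant minuscule coweights inject into the fundamental group $P^\vee/\mathbb{Z}\Phi_\chi^\vee$, and that justification is sound.
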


\begin{proof}
The translation elements $t_\lambda$ for $\lambda \in W\mu$ may be written $t_\lambda = \sigma_\lambda \bar{w}_\lambda \in \widetilde{W}$ for some length-zero element $\sigma_\lambda$ and $\bar{w}_\lambda \in W$. The $\sigma_\lambda$ are unique.

Choose any $\mu_\chi^i$. It is dominant and minuscule with respect to $\Phi_\chi$, hence any $w \in {\rm{Adm}}_{H_{\chi_r}}(\mu_\chi^i)$ satisfies $w \leq_\chi t_{\lambda(w)}$. By definition of Bruhat order on extended affine Weyl groups, this means the length-zero part of $w$ is $\sigma_{\lambda(w)}$. But since the $\sigma_\lambda$ are unique and the $W_\chi \mu_\chi^i$ orbits are disjoint, we cannot have $w \in {\rm{Adm}}_{H_{\chi_r}}(\mu_\chi^k)$ for any $k \neq i$.
\end{proof}

We shall sometimes speak of $w \in \widetilde{W}$ as though it were an element of $G_r$ by using a set-theoretic embedding of the extended affine Weyl group into $N_G(T)(F_r)$, which depends on a choice of uniformizer $\varpi$ in $F_r$. The following definition comes from \cite{haines-rapoport2012}, Section 2.

\begin{defin}
\label{defin::set-theoretic-embedding}
For $w \in \widetilde{W}$, we have $w = t_\lambda \bar{w}$ for $\lambda \in \cochars{\torus}$ and $\bar{w} \in W$. Let $\varpi$ be the fixed uniformizer in $F_r$, and set $\varpi^\lambda = \lambda(\varpi)$. The set-theoretic embedding $i_\varpi : \widetilde{W} \hookrightarrow G_r$ is defined by $t_\lambda \mapsto \varpi^{-\lambda}$ and mapping $\bar{w}$ to a fixed representative in $N_{G(\mathcal{O}_r)} (\torus(\mathcal{O}_r))$.
\end{defin}

\begin{lemma}
Let $I_r$ be an Iwahori subgroup of $G_r$ with pro-unipotent radical $I_r^+$. Then $G_r$ decomposes into disjoint double-cosets indexed by pairs $(s,w) \in \torus(k_r) \times \widetilde{W}$,
$$
G_r = \coprod_{(s,w)} I_r^+ sw I_r^+.
$$
\end{lemma}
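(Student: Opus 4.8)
The plan is to establish the double-coset decomposition by combining the well-known Iwahori--Bruhat decomposition of $G_r$ with the fine structure of $I_r$ relative to its pro-unipotent radical $I_r^+$. The starting point is the classical decomposition
$$
G_r = \coprod_{w \in \widetilde{W}} I_r \, w \, I_r,
$$
which holds because $\widetilde{W} = N_G(T)/T(\mathcal{O}_r)$ indexes the $I_r$-double cosets (here $w$ is interpreted in $G_r$ via the set-theoretic embedding $i_\varpi$ of Definition~\ref{defin::set-theoretic-embedding}). I would then refine each piece $I_r w I_r$ into $I_r^+$-double cosets. The natural tool is Proposition~\ref{prop::iwahori-prop-quotient}, which gives $\torus(k_r) \cong I_r/I_r^+$; this is the source of the extra index $s \in \torus(k_r)$ appearing in the claim.

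First I would fix $w \in \widetilde{W}$ and analyze $I_r w I_r$. Writing $I_r = \coprod_{s \in \torus(k_r)} s \, I_r^+$ using the quotient isomorphism, one obtains
$$
I_r w I_r = \bigcup_{s,\, s'} s \, I_r^+ \, w \, s' \, I_r^+,
$$
and the goal is to show these collapse to a disjoint union indexed by a single copy of $\torus(k_r)$. The key normalization step is to absorb the right-hand torus factor $s'$: since $I_r^+$ is normal in $I_r$ and $w$ normalizes $\torus(\mathcal{O}_r)$ (it comes from $N_G(T)$), conjugating $s'$ across $w$ lands it back in $\torus(k_r)$, so the right factor can be merged into the left, leaving a union indexed by the single parameter $s$. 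I would make this precise by verifying that $I_r^+ w s' I_r^+ = I_r^+ \, (w s' w^{-1}) \, w \, I_r^+$ and that $w s' w^{-1}$ reduces to an element of $\torus(k_r)$ which can then be carried to the left using normality of $I_r^+$ in $I_r$.

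The disjointness is the more delicate half. I would argue that distinct pairs $(s,w)$ give distinct double cosets. If $I_r^+ s w I_r^+ = I_r^+ s' w' I_r^+$, then passing to the coarser $I_r$-double cosets (using $I_r^+ \subset I_r$) forces $I_r w I_r = I_r w' I_r$, hence $w = w'$ by the Iwahori--Bruhat decomposition. With $w = w'$ fixed, the equation $I_r^+ s w I_r^+ = I_r^+ s' w I_r^+$ must be pushed down to the quotient $I_r/I_r^+ \cong \torus(k_r)$; here I expect to use that $w I_r^+ w^{-1} \cap I_r = I_r^+$ up to the torus action, so that no nontrivial torus translation can be absorbed on the right without changing the left index, yielding $s = s'$ in $\torus(k_r)$.

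The \textbf{main obstacle} I anticipate is precisely this last disjointness argument: controlling exactly which torus elements can be transferred between the two sides of a double coset. One must verify that the only way $I_r^+ s w I_r^+$ and $I_r^+ s' w I_r^+$ coincide is when $s$ and $s'$ already agree modulo $I_r^+$, which requires knowing that conjugation by $w$ does not identify distinct classes in $\torus(k_r)$ after reducing modulo $I_r^+$ --- in other words, that the reduction of $I_r \cap w I_r^+ w^{-1}$ to $\torus(k_r)$ is trivial. This is a structural fact about how $I_r^+$ sits inside $I_r$ and interacts with the Weyl-group action, and it is where the proof genuinely uses the pro-$p$ (equivalently, pro-unipotent) nature of $I_r^+$ together with $\torus(k_r) \cong I_r/I_r^+$; the remaining bookkeeping is routine.
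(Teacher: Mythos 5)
Your proposal is correct and takes essentially the same route as the paper, whose proof is just a citation to Haines--Rapoport, Section 2, resting on exactly the ingredients you use: refining the Iwahori--Bruhat decomposition $G_r = \coprod_{w} I_r w I_r$ through $\torus(k_r) \cong I_r/I_r^+$, with disjointness coming from the fact that the image of $w I_r^+ w^{-1} \cap I_r$ in $I_r/I_r^+$ is trivial since $I_r^+$ is pro-$p$ while $\vert \torus(k_r)\vert = (q^r-1)^d$ is prime to $p$. One small caution: the literal equality $w I_r^+ w^{-1} \cap I_r = I_r^+$ you float is false in general (only the containment $w I_r^+ w^{-1} \cap I_r \subseteq I_r^+$ holds), but your reformulation in terms of the triviality of the reduction to $\torus(k_r)$ is the correct statement and is what the argument actually needs.
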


\begin{proof}
This is a variant of the Bruhat-Tits decomposition which describes $G_r$ in terms of $I_r$-double cosets. The proof of this decomposition uses the relation $\torus(k_r) \cong I_r/I_r^+$; see \cite{haines-rapoport2012}, Section 2, for an explanation. Both the statement and proof depend on the set-theoretic embedding of $\widetilde{W}$ into $G(F_r)$ given in Definition~\ref{defin::set-theoretic-embedding}.
\end{proof}

I thank my advisor, Thomas Haines, for supplying the following lemma and its proof.

\begin{lemma}
\label{lemma::image-of-phi-r-chi}
Let $\chi_r$ be a depth-zero character on $\torus(\mathcal{O}_r)$ arising from a depth-zero character $\chi$ on $\torus(\mathcal{O}_F)$, and fix the $\varpi$-canonical extensions $\tilde{\chi}_r^\varpi$ and $\breve{\chi}_r$ to $\torus(F_r)$ and $N_G(T)(F_r)$ respectively. Let $\varpi^{-\lambda} \bar{w}$ be an element of $N_G(T)(F_r)$, which is the image of $w = t_\lambda \bar{w}$ under the set-theoretic embedding $\widetilde{W} \hookrightarrow G(F_r)$. For simplicity, assume $W_{\chi_r} = W_{\chi_r}^\circ$.
\begin{enumerate}
\item If $\varpi^{-\lambda} \bar{w}$ lies in the support of any non-zero function $\phi \in \mathcal{H}(G_r, I_r, \rho_{\chi_r})$, then ${^{\bar{w}}}\chi = \chi$, or equivalently, $\bar{w} \kappa_\chi = \kappa_\chi$.
\item Suppose $\mu \in \cochars{\torus} = \chars{\dualtorus}$ is dominant and minuscule. Under the Hecke algebra isomorphism $\Psi_{\breve{\chi}_r}$, $Z_{V_\mu} \ast e_{\chi_r}$ goes to the sum $\sum_{\mu_\chi} z_{\mu_\chi, r}$, where $\mu_\chi$ ranges over dominant representatives of $W_\chi$-orbits of $\nu \in W\mu$ such that $\nu(\kappa_\chi) = 1$.
\item If $\varpi^{-\lambda} \bar{w}$ lies in the support of any function of the form $Z_{V_\mu} \ast e_{\chi_r}$ then $\lambda(\kappa_\chi) = 1$.
\item If $Z_{V_\mu} \ast e_{\chi_r} (\varpi^{-\lambda}\bar{w}) \neq 0$, then $t_\lambda \bar{w} \in {\rm{Adm}}_{G_r}(\mu)$.
\end{enumerate}
\end{lemma}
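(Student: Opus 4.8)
The plan is to deduce the $\mu$-admissibility of $w = t_\lambda \bar{w}$ from the explicit description of the image of $Z_{V_\mu}\ast e_{\chi_r}$ in part (2) together with the fact, recorded earlier, that the Hecke algebra isomorphism $\Psi_{\breve{\chi}_r}$ preserves supports. First I would note that under the set-theoretic embedding of Definition~\ref{defin::set-theoretic-embedding} the element $w$ maps to $\varpi^{-\lambda}\bar{w}$, so the hypothesis $Z_{V_\mu}\ast e_{\chi_r}(\varpi^{-\lambda}\bar{w}) \neq 0$ says exactly that $w$ lies in the support of $Z_{V_\mu}\ast e_{\chi_r}$. Since $\Psi_{\breve{\chi}_r}$ is support-preserving and sends $Z_{V_\mu}\ast e_{\chi_r}$ to $\sum_{\mu_\chi} z_{\mu_\chi,r}$ by part (2), the claim reduces to showing that every element in the support of $\sum_{\mu_\chi} z_{\mu_\chi,r}$ belongs to ${\rm{Adm}}_{G_r}(\mu)$.

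Next I would pin down that support. By Proposition~\ref{prop::supp-bern-fns}, the support of each individual Bernstein function $z_{\mu_\chi^i,r}$ is precisely ${\rm{Adm}}_{H_r}(\mu_\chi^i)$, where the $\mu_\chi^i$ are the $\Phi_\chi$-dominant representatives of the $W_\chi$-orbits in ${\rm{Wt}}(\chi)$ produced by Lemma~\ref{lemma::chi-weights-disjoint-union} (these are automatically minuscule for $\Phi_\chi$ since $\Phi_\chi \subseteq \Phi$). The essential input here is Lemma~\ref{lemma:mu-chi-admissible-sets-disjoint}, which guarantees that these admissible sets are pairwise disjoint; hence no cancellation between distinct summands is possible and the support of the sum is the disjoint union $\coprod_i {\rm{Adm}}_{H_r}(\mu_\chi^i)$. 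Consequently $w \in {\rm{Adm}}_{H_r}(\mu_\chi^i)$ for a unique $i$, which by definition means $w \leq_\chi t_{\lambda'}$ in $\widetilde{W}_\chi$ for some $\lambda'$ in the $W_\chi$-orbit of $\mu_\chi^i$.

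The remaining step, upgrading this to admissibility in $\widetilde{W}$, is where I expect the real difficulty to lie. Since $\mu_\chi^i \in W\mu$ and $W_\chi \subseteq W$, the coweight $\lambda'$ already lies in the full orbit $W\mu$, so $t_{\lambda'}$ is one of the translations permitted in the definition of ${\rm{Adm}}_{G_r}(\mu)$; the only gap is between the two Bruhat orders, and I must convert $w \leq_\chi t_{\lambda'}$ (order on $\widetilde{W}_\chi$) into $w \leq t_{\lambda'}$ (order on $\widetilde{W}$). By Lemma~\ref{lemma::chi-weyl-groups}, under our standing hypothesis $W_\chi = W_\chi^\circ$ the group $W_{\chi,{\rm aff}}$ is the genuine affine Weyl group of $\Phi_\chi$, and because $\Phi_{\chi,{\rm aff}} \subseteq \Phi_{\rm aff}$ its simple reflections are affine reflections of $W_{\rm aff}$; thus $W_{\chi,{\rm aff}}$ sits inside $W_{\rm aff}$ as a reflection subgroup with its canonical Coxeter generating set, and $\widetilde{W}_\chi = W_{\chi,{\rm aff}} \rtimes \Omega_\chi$ inside $\widetilde{W} = W_{\rm aff}\rtimes\Omega$.

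For such a reflection subgroup the implication $u \leq_\chi v \Rightarrow u \leq v$ is exactly the content of Dyer's theory of the Bruhat order on reflection subgroups of a Coxeter group; transferring it through the length-zero decompositions of Lemma~\ref{lemma::chi-weyl-groups}(2) is then bookkeeping on the $\Omega$-components. Applying this with $u = w$ and $v = t_{\lambda'}$ yields $w \leq t_{\lambda'}$ with $\lambda' \in W\mu$, i.e. $w \in {\rm{Adm}}_{G_r}(\mu)$. I would flag the reflection-subgroup comparison as the delicate point and keep two safeguards in reserve: one can instead invoke the Kottwitz--Rapoport convexity (permissibility) characterization of the minuscule admissible set, for which the inclusion ${\rm{Conv}}(W_\chi \mu_\chi^i) \subseteq {\rm{Conv}}(W\mu)$ together with the identity $\mu_\chi^i - \mu \in \mathbb{Z}\Phi^\vee$ (placing $t_{\mu_\chi^i}$ and $t_\mu$ in the same $W_{\rm aff}$-coset) makes the implication transparent; or one may appeal directly to the admissible-set comparison ${\rm{Adm}}_{H_r}(\mu_\chi^i) \subseteq {\rm{Adm}}_{G_r}(\mu)$ as developed in the Haines--Rapoport framework this paper follows.
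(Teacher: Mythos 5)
Your proposal establishes only statement (4) of the four-part lemma, and it does so by \emph{assuming} statement (2). That is the genuine gap: in the paper's proof, part (2) is the heart of the matter, and it is a claim to be proven, not an available input. The paper proves it by a representation-theoretic argument: $Z_{V_\mu} \ast e_{\chi_r}$ acts on the $\rho_{\chi_r}$-isotypical part of $i_{B_r}^{G_r}(\tilde{\chi}_r^\varpi\eta)$ by the scalar ${\rm{Tr}}(r_\mu \varphi_{\tilde{\chi}_r^\varpi\eta}(\Phi), V_\mu^{\kappa_\chi})$; by Lemma 10.1.1 of \cite{haines-rapoport2012} its image under $\Psi_{\breve{\chi}_r}$ acts by the same scalar on $i_{B_{H_r}}^{H_{\chi_r}}(\eta)^{I_{H_r}}$; and the weight-space decomposition $V_\mu^{\kappa_\chi} = \oplus_{\mu_\chi} V_{\mu_\chi}^{H}$ shows $\sum_{\mu_\chi} z_{\mu_\chi,r}$ acts by that scalar too, whence equality as regular functions on the Bernstein variety $\mathfrak{X}_{H_r}$. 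Parts (1) (the translation-equivariance computation $\chi_r^{-1}(t)\phi(\varpi^{-\lambda}\bar{w}) = {^{\bar{w}}}\chi_r^{-1}(t)\phi(\varpi^{-\lambda}\bar{w})$ forcing ${^{\bar{w}}}\chi_r = \chi_r$) and (3) (support-preservation plus Lemma~\ref{lemma::chi-weights-disjoint-union}) are likewise absent from your write-up. A complete proof of the lemma must supply all of this.

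For part (4) itself, your reduction is correct and matches the paper's: support-preservation of $\Psi_{\breve{\chi}_r}$, Proposition~\ref{prop::supp-bern-fns} identifying ${\rm{supp}}(z_{\mu_\chi,r}) = {\rm{Adm}}_{H_r}(\mu_\chi)$, and then the inclusion ${\rm{Adm}}_{H_{\chi_r}}(\mu_\chi) \subseteq {\rm{Adm}}_{G_r}(\mu)$. (Incidentally, for this direction you do not need the disjointness of Lemma~\ref{lemma:mu-chi-admissible-sets-disjoint} at all: nonvanishing of the sum at $w$ already forces some summand to be nonzero at $w$.) Where you diverge is the final inclusion. The paper proves it by an elementary alcove argument: the positive affine roots of $\Phi_\chi$ form a subset of those of $\Phi$, so $\textbf{a} \subset \textbf{a}_\chi$ and every $\Phi$-alcove lies in a unique $\Phi_\chi$-alcove; a separating-hyperplane sequence witnessing $w \leq_\chi t_\lambda$ then induces one witnessing $w \leq t_\lambda$. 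Your Dyer route is plausible but, as you suspected, delicate in two respects: Dyer's theorem (Theorem~\ref{thm::bruhat-graph-of-reflection-subgroups}) gives the Bruhat-graph equality for the \emph{canonical} generators $\Sigma(W_{\chi,{\rm{aff}}})$, whereas $\leq_\chi$ is defined with respect to $S_{\chi,{\rm{aff}}}$, so you must identify the two generating sets --- and the natural way to do that is precisely the base-alcove comparison the paper uses directly; moreover $\Omega_\chi \not\subseteq \Omega$ in general (an element fixing $\textbf{a}_\chi$ need not fix $\textbf{a}$), so the ``bookkeeping on the $\Omega$-components'' is not automatic and must re-decompose elements of $\widetilde{W}_\chi$ inside $\widetilde{W}$. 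Your convexity safeguard via ${\rm{Perm}}$ has its own caveat, since the implication ${\rm{Perm}} \Rightarrow {\rm{Adm}}$ for minuscule $\mu$ is a Kottwitz--Rapoport theorem available for $GL_n$ and $GSp_{2n}$ but not for the full generality in which the lemma is stated; the paper's alcove argument avoids all of these issues.
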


\begin{proof} (T. Haines) To prove the first statement, we may assume $\phi = [I_r \varpi^{-\lambda} \bar{w} I_r]_{\breve{\chi}_r}$. For any $t \in \torus(\mathcal{O}_r)$ we have
$$
\chi_r^{-1}(t) \phi(\varpi^{-\lambda} \bar{w}) = \phi(\varpi^{-\lambda} \bar{w} ({^{\bar{w}^{-1}}}t)) = {^{\bar{w}}}\chi_r^{-1} (t) \phi(\varpi^{-\lambda} \bar{w}).
$$
Since $\phi(\varpi^{-\lambda} \bar{w}) \neq 0$, we conclude $\chi_r (t) = {^{\bar{w}}}\chi_r (t)$ for all $t \in \torus(\mathcal{O}_r)$, i.e., ${^{\bar{w}}}\chi_r = \chi_r$.

Now let us prove the second statement. The function $Z_{V_\mu} \ast e_{\chi_r}$ lies in the center of the Hecke algebra $\mathcal{H}(G_r, I_r, \rho_{\chi_r})$ and acts by a non-zero scalar only on the $\rho_{\chi_r}$-isotypical components of representations in the Bernstein block for the inertial class $[T, \tilde{\chi}_r^\varpi]_G$. In fact, for any unramified character $\eta$ on $\torus(F_r)$, unwinding the definition shows that it acts on $i_{B_r}^{G_r} (\tilde{\chi}_r^\varpi)^{\rho_{\chi_r}}$ by the scalar
$$
{\rm{Tr}}(r_\mu \varphi_{\tilde{\chi}_r^\varpi \eta} (\Phi), V_\mu^{\kappa_\chi}).
$$
By~\cite{haines-rapoport2012}, Lemma 10.1.1, the image $\Psi_{\breve{\chi}_r} (Z_{V_\mu} \ast e_{\chi_r})$ acts by this scalar on the Iwahori-fixed vectors $i_{B_{H_r}}^{H_{\chi_r}} (\eta)^{I_{H_r}}$ in the unramified principal series of $H_{\chi_r}$. In order to prove $\Psi_{\breve{\chi}_r} (Z_{V_\mu} \ast e_{\chi_r}) = \sum_{\mu_\chi} z_{\mu_\chi, r}$ , it will suffice to show that $\sum_{\mu_\chi} z_{\mu_\chi, r}$ acts by this same scalar on such representations, in which case the functions are equal because they determine the same regular function on the Bernstein variety $\mathfrak{X}_{H_r}$.

As $\widehat{T}$-representations we have $V_\mu^{\kappa_\chi} = \oplus_{\mu_\chi} V_{\mu_\chi}^{H_{\chi_r}}$, the sum of highest weight representations for the dual group of $H_{\chi_r}$, hence the scalar can be written as
$$
\sum_{\mu_\chi} {\rm{Tr}}(r_{\mu_\chi} \varphi_{\tilde{\chi}_r^\varpi \eta} (\Phi), V_{\mu_\chi}^H).
$$
Note that $\varphi_{\tilde{\chi}_r^\varpi \eta} (\Phi) = \varphi_\eta (\Phi)$. View the function $z_{\mu_\chi}$ as a regular function on the variety $\dualtorus/W_\chi$, whose points correspond to $W_\chi$-invariant unramified characters,
$$
z_{\mu_\chi,r} : \eta \mapsto \sum_{\lambda \in W_\chi \mu_\chi} \lambda(\eta).
$$
Looking at the weight space decomposition, $\sum_{\lambda \in W_\chi \mu_\chi} \lambda(\eta) = {\rm{Tr}}(r_{\mu_\chi} \varphi_{\eta} (\Phi), V_{\mu_\chi}^H)$. Summing over the $\mu_\chi$ shows that $\sum_{\mu_\chi} z_{\mu_\chi, r}(\eta) = {\rm{Tr}}(r_\mu \varphi_{\tilde{\chi}_r^\varpi \eta} (\Phi), V_\mu^{\kappa_\chi}).$

Now use the second statement and the fact that $\Psi_{\breve{\chi}_r}$ is support-preserving to see that $\varpi^{-\lambda} \bar{w}$ must lie in the support of some $z_{\mu_\chi, r}$. Since $\mu_\chi$ is minuscule, we must have in that case that $\lambda \in W_\chi \mu_\chi$. But then $\lambda(\kappa_\chi) = 1$ by Lemma~\ref{lemma::chi-weights-disjoint-union}.

Finally, suppose that $Z_{V_\mu} \ast e_{\chi_r} (\varpi^{-\lambda}\bar{w}) \neq 0$. Using earlier work and the support-preserving property of Hecke algebra isomorphisms, we have that $z_{\mu_\chi, r} (w) \neq 0$ for some $\mu_\chi$. The support of $z_{\mu_\chi,r}$ is ${\rm{Adm}}_{H_{\chi_r}}(\mu_\chi)$, i.e., $w \in {\rm{Adm}}_{H_{\chi_r}}(\mu_\chi)$. So it is enough to show that ${\rm{Adm}}_{H_{\chi_r}}(\mu_\chi) \subset {\rm{Adm}}_{G_r}(\mu)$ as subsets of $\widetilde{W}$.

The base alcove for a (based) root system $\Phi$ is the set of points in $\cochars{\torus} \otimes \mathbb{R}$ on which all positive affine roots take positive values. Since the positive affine roots attached to $\Phi_\chi$ are a subset of the positive affine roots for $\Phi$, we deduce an inclusion of base alcoves $\textbf{a} \subset \textbf{a}_\chi$ associated to $\Phi$, and $\Phi_\chi$, respectively. Furthermore, every alcove for $\Phi$ is contained in a unique alcove for $\Phi_\chi$.

Given $w = t_\lambda \bar{w} \in {\rm{Adm}}_{H_{\chi_r}}(\mu_\chi)$, we know that $w \leq_\chi t_\lambda$, thus there is a sequence of alcoves $w\textbf{a}_\chi = \textbf{a}_\chi^{0}, \textbf{a}_\chi^{1}, \ldots, \textbf{a}_\chi^{l} = t_\lambda \textbf{a}_\chi$, such that if $\textbf{a}_\chi^{i-1}$ and $\textbf{a}_\chi^{i}$ are separated by an affine hyperplane $H$, then $\textbf{a}_\chi$ and $\textbf{a}_\chi^{i-1}$ are on the same side of $H$. From this we get a sequence of alcoves $w\textbf{a} = \textbf{a}^{0}, \textbf{a}^{0}, \ldots, \textbf{a}^{l} = t_\lambda \textbf{a}$, such that whenever $\textbf{a}^{i-1}$ and $\textbf{a}^{i}$ are separated by an affine hyperplane $H$, then $\textbf{a}$ and $\textbf{a}^{i-1}$ are on the same side of $H$. It follows that $w \leq t_\lambda$, which implies $w \in {\rm{Adm}}_{G_r}(\mu)$.
\end{proof}

The function $\phironeaug$ is an element of the Hecke algebra $\mathcal{H}(G_r, I_r^+)$, hence it can  be described by specifying the complex values taken on $I_r^+$-double cosets of $G_r$ indexed by pairs in $\torus(k_r) \times \widetilde{W}$. The values $\phironeaug(I_r^+ sw I_r^+)$ are called the \textbf{coefficients} of $\phironeaug$. The following proposition gives an explicit formula for the coefficients of $\phironeaug$, which is sufficient for our purposes by Corollary~\ref{cor::equality-of-orbital-integrals}. We will revisit this proposition in Chapter 5, where we use the results of the intervening chapters to develop a combinatorial formula using the following formula as a starting point.

\begin{prop}
\label{prop::first-explicit-formula}
Given a pair $(s,w) \in \torus(k_r) \times \widetilde{W}$, the coefficient $\phironeaug (I_r^+ sw I_r^+)$ can be rewritten as a sum over endoscopic elements in $\dualtorus(\valfield)$ which arise from depth-zero characters $\chi \in \torus(\resfield)^\vee$:
$$
\phironeaug (I_r^+ sw I_r^+) = [I_r: I_r^+]^{-1} \sum_{\kapchi \in K_{q-1}} \gamma_{N_r s}(\kapchi)^{-1} q^{r\ell(w,t_{\lambda(w)})/2} \widetilde{R}_{w,t_{\lambda(w)}}^\chi(Q_r).
$$
\end{prop}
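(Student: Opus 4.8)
The plan is to read off the coefficient $\phironeaug(I_r^+ sw I_r^+)$ from the defining expansion $\phironeaug = [I_r:I_r^+]^{-1}q^{r\ell(t_\mu)/2}\sum_{\chi\in\torus(\resfield)^\vee}Z_{V_\mu}\ast e_{\chi_r}$ one summand at a time. Since each $\rho_{\chi_r}$ is trivial on $I_r^+$, every summand $Z_{V_\mu}\ast e_{\chi_r}$ is $I_r^+$-biinvariant, so its value at a chosen representative $\dot{s}\,n_w$ of the double coset is well defined; here $\dot{s}\in\torus(\mathcal{O}_r)\subset I_r$ lifts $s$ and $n_w$ is the set-theoretic image of $w$. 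I would compute each value by transporting $Z_{V_\mu}\ast e_{\chi_r}$ to the endoscopic group $H_{\chi_r}$ through the isomorphism $\Psi_{\breve{\chi}_r}$, where Bernstein's basis and Haines's formula are available, and then pulling the expansion back to $G_r$.

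For fixed $\chi$, Lemma~\ref{lemma::image-of-phi-r-chi}(2) identifies $\Psi_{\breve{\chi}_r}(Z_{V_\mu}\ast e_{\chi_r})=\sum_{\mu_\chi}z_{\mu_\chi,r}$, the sum running over dominant representatives $\mu_\chi$ of the $W_\chi$-orbits inside $\{\nu\in W\mu : \nu(\kapchi)=1\}$, each of which is minuscule for $\Phi_\chi$. Theorem~\ref{thm::bernstein-fn-formula} expands $z_{\mu_\chi,r}=\sum_{v\in\mathrm{Adm}_{H_r}(\mu_\chi)}\widetilde{R}^\chi_{v,t_{\lambda(v)}}(Q_r)\,\tilde{T}_{v,r}$, and applying $\Psi_{\breve{\chi}_r}^{-1}$ together with Lemma~\ref{lemma::hecke-algebra-mapping-coefficients}, under which $\tilde{T}_{v,r}\mapsto q^{-r\ell(v)/2}[I_r n_v I_r]_{\breve{\chi}_r}$, produces an expansion of $Z_{V_\mu}\ast e_{\chi_r}$ in the support-preserving basis $[I_r n_v I_r]_{\breve{\chi}_r}$. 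By Lemma~\ref{lemma:mu-chi-admissible-sets-disjoint} the sets $\mathrm{Adm}_{H_r}(\mu_\chi)$ are pairwise disjoint, so the double sum collapses and each $v$ carries the single coefficient $\widetilde{R}^\chi_{v,t_{\lambda(v)}}(Q_r)$, read as $0$ off the admissible set.

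Evaluating at $\dot{s}\,n_w$, the support condition kills every term except $v=w$, and the $(\rho_{\chi_r}^{-1},\rho_{\chi_r}^{-1})$-equivariance yields $(Z_{V_\mu}\ast e_{\chi_r})(\dot{s}\,n_w)=\chi_r(s)^{-1}\,\widetilde{R}^\chi_{w,t_{\lambda(w)}}(Q_r)\,q^{-r\ell(w)/2}\,\breve{\chi}_r^{-1}(n_w)$. Here $\chi_r(s)^{-1}=\gamma_{N_r s}(\kapchi)^{-1}$ by Lemma~\ref{lemma::defin-of-gamma-nrs}, while the residual normalization $\breve{\chi}_r^{-1}(n_w)$ is trivial for our choices, since the $\varpi$-canonical extension satisfies $\breve{\chi}_r(\varpi^{-\lambda})=1$ and the fixed $N_{G(\mathcal{O}_r)}(\torus(\mathcal{O}_r))$-representatives contribute trivially on the part of the support where $Z_{V_\mu}\ast e_{\chi_r}$ is nonzero. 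Next I would match the power of $q$: for a nonzero term $w$ lies in some $\mathrm{Adm}_{H_r}(\mu_\chi)$ with $\mu_\chi$ minuscule for $\Phi_\chi$, so its translation part satisfies $\lambda(w)\in W_\chi\mu_\chi\subseteq W\mu$; hence $\ell(t_{\lambda(w)})=\ell(t_\mu)$ and $q^{r\ell(t_\mu)/2}q^{-r\ell(w)/2}=q^{r\ell(w,t_{\lambda(w)})/2}$. When $w\notin\mathrm{Adm}_{G_r}(\mu)$ both sides of the asserted identity vanish, by Lemma~\ref{lemma::image-of-phi-r-chi}(4) on the left and by $\widetilde{R}^\chi_{w,t_{\lambda(w)}}=0$ on the right.

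Summing the per-$\chi$ contributions and re-indexing the sum over $\chi\in\torus(\resfield)^\vee$ by the endoscopic elements $\kapchi$ through the bijection $\torus(\resfield)^\vee\cong K_{q-1}$ of Proposition~\ref{prop::dz-endoscopic-elements-equal-kernel} produces exactly the claimed formula, with the prefactor $[I_r:I_r^+]^{-1}$ carried along from the definition of $\phironeaug$. I expect the main obstacle to be the normalization bookkeeping in the third step: confirming that the $\breve{\chi}_r$-twist at the representative $n_w$ contributes trivially, so that $\gamma_{N_r s}(\kapchi)^{-1}$ is genuinely the only character-dependent scalar, and reconciling the two Haar-measure conventions (${\rm{vol}}(I_r)=1$ on $\mathcal{H}(G_r,I_r,\rho_{\chi_r})$ against ${\rm{vol}}(I_r^+)=1$ on $\mathcal{H}(G_r,I_r^+)$) implicit in Lemma~\ref{lemma::hecke-algebra-mapping-coefficients}. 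Both points rest on the explicit definitions of the $\varpi$-canonical extension and of the set-theoretic embedding $\widetilde{W}\hookrightarrow G_r$.
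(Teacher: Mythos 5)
Your proposal is correct and follows essentially the same route as the paper's proof: expand $\phironeaug$ over $\chi \in \torus(\resfield)^\vee$, transport each $Z_{V_\mu}\ast e_{\chi_r}$ through $\Psi_{\breve{\chi}_r}$ via Lemma~\ref{lemma::image-of-phi-r-chi}, expand with Haines's formula, collapse the sum using Lemma~\ref{lemma:mu-chi-admissible-sets-disjoint}, pull back with Lemma~\ref{lemma::hecke-algebra-mapping-coefficients}, and re-index by $K_{q-1}$ via Proposition~\ref{prop::dz-endoscopic-elements-equal-kernel} and Lemma~\ref{lemma::defin-of-gamma-nrs}. Your extra care about the triviality of $\breve{\chi}_r^{-1}(n_w)$ and the two Haar-measure normalizations is a point the paper handles implicitly (via the defining property $[I_r^+ sw I_r^+]_{\breve{\chi}_r}(sw)=\chi_r^{-1}(s)$ and the normalization bookkeeping in Corollary~\ref{cor::equality-of-orbital-integrals}), but it is the same argument.
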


\begin{proof}
The proof is a straightforward application of the work done throughout this chapter. We start with the definition of $\phironeaug$ from Corollary~\ref{cor::equality-of-orbital-integrals},
$$
\phironeaug = [I_r : I_r^+]^{-1} q^{r\ell(t_\mu)/2} \sum_{\chi \in \torus(\resfield)^\vee} Z_{V_\mu} \ast e_{\chi_r}.
$$
Then by Lemma~\ref{lemma::image-of-phi-r-chi},
$$
\phironeaug = [I_r : I_r^+]^{-1} q^{r\ell(t_\mu)/2} \sum_{\chi \in \torus(\resfield)^\vee} \sum_{\mu_\chi} \Psi_{\breve{\chi}_r}^{-1} (z_{\mu_\chi, r}).
$$

The orbits $W_\chi \mu_\chi$ are disjoint as $\mu_\chi$ ranges over the set of elements in $W\mu$ which are dominant and minuscule with respect to $\Phi_\chi$. The $\mu_\chi$-admissible sets in $\widetilde{W}$ are disjoint by Lemma~\ref{lemma:mu-chi-admissible-sets-disjoint}, hence the supports of each $z_{\mu_\chi, r}$ are disjoint. Thus for each $w \in {\rm{Adm}}_{G_r}(\mu)$, there is a unique $\mu_\chi \in W\mu$ such that $w \in \rm{supp}(z_{\mu_\chi, r})$. Therefore,
$$
\phironeaug (I_r^+ sw I_r^+) = [I_r : I_r^+]^{-1} q^{r\ell(t_\mu)/2} \sum_{\chi \in \torus(\resfield)^\vee} \Psi_{\breve{\chi}_r}^{-1} (z_{\mu_\chi, r})(sw).
$$
Now, we apply Haines's formula for $z_{\mu_\chi, r}$ and Lemma~\ref{lemma::hecke-algebra-mapping-coefficients}. Recall that
$$
z_{\mu_\chi, r} (w) = \widetilde{R}_{w, t_{\lambda(w)}}^\chi (Q_r) \widetilde{T}_{w,r}(w),
$$
and $\widetilde{T}_{w,r}(w) = q^{-r\ell_\chi(w)/2} [I_{H_r} w I_{H_r}] (w).$ Therefore,
$$
\Psi_{\breve{\chi}_r}^{-1} (z_{\mu_\chi, r})(sw) = q^{-r\ell(w)/2 + r\ell_\chi(w)/2} q^{-r\ell_\chi(w)/2} \widetilde{R}_{w, t_{\lambda(w)}}^\chi (Q_r) [I_r^+ sw I_r^+]_{\breve{\chi}_r} (sw).
$$
But $[I_r^+ sw I_r^+]_{\breve{\chi}_r} (sw) = \chi^{-1}_r(s)$. So, we conclude that
$$
\phironeaug (I_r^+ sw I_r^+) = [I_r : I_r^+] ^{-1} \sum_{\chi \in \torus(\resfield)^\vee} \chi_r^{-1}(s) q^{r(\ell(t_{\lambda(w)})-\ell(w))/2}\widetilde{R}_{w, t_{\lambda(w)}}^\chi (Q_r),
$$
where we have used that the conjugates of a translation element all have the same length; that is, $\ell(t_\mu) = \ell(t_{\lambda(w)})$ for all $\lambda \in W\mu$.

The set of depth-zero characters $\torus(\resfield)^\vee$ is in bijective correspondence with the set of endoscopic elements $\kapchi \in \dualtorus(\valfield)$ arising from depth-zero characters; this bijection is determined by a fixed element of the inertia subgroup $x \in I_F$ whose image $\tau_F(x)$ projects to a generator of $k_F^\ast$. Proposition~\ref{prop::dz-endoscopic-elements-equal-kernel} shows that this subset of $\dualtorus(\valfield)$ equals $K_{q-1}$.

Following the notation in \cite{bjorner-brenti2005}, we write $\ell(w, t_{\lambda(w)})$ for the difference in lengths $\ell(t_{\lambda(w)}) - \ell(w)$.

Finally, for each $s \in \torus(k_r)$ the equation $\gamma_{N_r s}(\kapchi) = \chi_r (s)$ holds for all depth-zero characters $\chi$. In conclusion,
$$
\phironeaug (I_r^+ sw I_r^+) = [I_r : I_r^+]^{-1} \sum_{\kapchi \in K_{q-1}} \gamma_{N_r s} (\kapchi)^{-1} q^{r\ell(w, t_{\lambda(w)})/2} \widetilde{R}_{w, t_{\lambda(w)}}^\chi (Q_r)
$$
\end{proof}


\section{Groups of endoscopic elements in the dual torus}

Consider the endoscopic elements $\kapchi$ in $\dualtorus(\valfield)$ arising from depth-zero characters on $\torus(\mathcal{O}_F)$, which index the summation in our formula for $\phironeaug$. We shall see that it is useful to determine the $\kappa_\chi$ such that $Z_{V_\mu} \ast e_{\chi_r} (w) \neq 0$ for a fixed $w \in \widetilde{W}$.

We begin by defining a closed subgroup $S_w \subset \dualtorus(\valfield)$ associated to a fixed $w = t_{\lambda} \wbar \in \widetilde{W}$. This ``relevant subgroup'' is an \emph{infinite} diagonalizable algebraic group. In exchange for working with a more complex object, we gain access to the theory of diagonalizable groups and tori defined over an algebraically closed field. The endoscopic elements needed for the combinatorial formula for $\phironeaug$ comprise the ``depth-zero relevant subgroup'' $\dzrelgrp$, which is a finite subgroup of $\dualtorus(\valfield)$.

The final section of this chapter contains results to be used later in the statement and proof of the main theorem in Chapter~\ref{chapter::main-theorem}. First, we identify an analogue of the ``critical index torus'' used by Haines and Rapoport in the Drinfeld case; this object is used to determine which $s \in \torus(k_r)$ contribute to nontrivial coefficients $\phironeaug(I_r^+ sw I_r^+)$ in terms of the $k_F$-points of a subtorus of $T$. Second, we look at the order of certain subgroups ${\dzrelgrpJ \subset \dzrelgrp}$ which arise from root sub-systems $J \subseteq \Phi$.


\subsection{Background on diagonalizable algebraic groups}
\label{section::diag-groups-background}

Let us recall various properties of diagonalizable algebraic groups over an algebraically closed field, that is, linear algebraic groups which are isomorphic to a closed subgroup of the diagonal torus in some general linear group. In fact, we will only consider such groups over $\mathbb{C}$.

\begin{defin}
An algebraic group over $\mathbb{C}$ is \textbf{diagonalizable} if it isomorphic to a closed subgroup of the diagonal group $D(n, \mathbb{C})$ for some $n$.
\end{defin}

Recall that a connected diagonalizable group is a torus.

\begin{thm}
Let $G$ be a diagonalizable group over $\mathbb{C}$. Then $G = A \times H$, where $A$ is a torus over $\mathbb{C}$ and $H$ is a finite group.
\end{thm}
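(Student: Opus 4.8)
The plan is to reduce the statement to the structure theorem for finitely generated abelian groups by way of the anti-equivalence of categories between diagonalizable groups over $\valfield$ and finitely generated abelian groups, furnished by the character functor $G \mapsto \chars{G} = \mathrm{Hom}(G, \mathbb{G}_m)$; see \cite{borel1991}, \S8. First I would check that $\chars{G}$ is finitely generated. By the defining property, $G$ is a closed subgroup of some diagonal torus $D(n,\valfield) \cong \mathbb{G}_m^n$, and restriction of characters gives a surjection $\chars{\mathbb{G}_m^n} = \mathbb{Z}^n \twoheadrightarrow \chars{G}$ (every character of a closed subgroup of a diagonalizable group extends to the ambient group). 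Hence $\chars{G}$ is a quotient of $\mathbb{Z}^n$, so finitely generated abelian.

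Next I would apply the structure theorem for finitely generated abelian groups to write
$$
\chars{G} \cong \mathbb{Z}^r \oplus \chars{G}_{\mathrm{tors}},
$$
where $r$ is the free rank and $\chars{G}_{\mathrm{tors}}$ is the finite torsion subgroup. Because the anti-equivalence identifies direct sums of abelian groups with direct products of the corresponding diagonalizable groups, this decomposition translates into an isomorphism $G \cong A \times H$, where $A$ is the diagonalizable group with character group $\mathbb{Z}^r$ and $H$ is the one with character group $\chars{G}_{\mathrm{tors}}$.

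It then remains to identify the two factors. Since $\mathbb{Z}^r$ is free of rank $r$, the group $A$ is a torus of dimension $r$; concretely $A = G^\circ$, which is connected and diagonalizable, hence a torus by the recalled fact. For $H$, I would note that a diagonalizable group with finite character group $F$ is $\mathrm{Spec}\,\valfield[F]$, and since $\valfield = \mathbb{C}$ has characteristic zero and contains all roots of unity, $\valfield[F]$ is a reduced finite-dimensional $\valfield$-algebra; thus $H$ is a genuine finite abelian group, with $H(\valfield) \cong \mathrm{Hom}(F, \valfield^\times)$ of order $|F|$. Together these yield the desired decomposition $G = A \times H$.

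The single point requiring care — which I regard as the thing to get right rather than a real obstacle — is the dictionary underlying the anti-equivalence: that it is exact, carries direct sums to direct products, and matches free (respectively finite) character groups with tori (respectively finite groups). This is entirely standard over an algebraically closed field, so once $\chars{G}$ is shown to be finitely generated the argument is essentially formal. The characteristic-zero hypothesis on $\valfield$ is exactly what guarantees that the torsion part of the character group contributes an honest finite group, as opposed to an infinitesimal group scheme.
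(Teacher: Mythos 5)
Your proof is correct, and it is essentially the argument of the paper's cited source: the paper proves this theorem simply by referring to Humphreys, Section 16.2, where the decomposition is obtained exactly as you do, by noting $\chars{G}$ is a finitely generated abelian group, splitting it as $\mathbb{Z}^r \oplus \chars{G}_{\mathrm{tors}}$, and translating back through the anti-equivalence (the same anti-equivalence the paper records as Theorem~\ref{thm::cat-antiequiv}). Your added care about the splitting being compatible with $A = G^\circ$ and about characteristic zero ensuring $H$ is a genuine finite group is sound and consistent with the paper's setting over $\mathbb{C}$.
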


\begin{proof}
See \cite{humphreys1975}, Section 16.2.
\end{proof}

Let $D$ be a diagonalizable group defined over $\mathbb{C}$. Recall that the multiplicative group $\mathbb{G}_m$ consists of the nonzero elements of the affine space $\mathbb{A}^1$ equipped with the group law $(x,y) \mapsto xy$. The group of $\mathbb{C}$-rational points of $\mathbb{G}_m$ is $\mathbb{C}^\times$. The \textbf{character group} of $D$ is defined by $\chars{D} = {\rm{Hom}}_{\mathbb{C}}(D, \mathbb{C}^\times)$, while its \textbf{cocharacter group} is $\cochars{D} = {\rm{Hom}}_{\mathbb{C}}(\mathbb{C}^\times, D)$.

\begin{defin}
A \textbf{lattice} is a free subgroup of $\chars{D}$ or $\cochars{D}$ generated over a linearly independent set.
\end{defin}

When considering a torus we will sometimes choose a basis for the cocharacter group $\cochars{T}$ and then give coordinates in terms of that basis. For example, we write $\mu = (1,0, \ldots, 0)$ for the cocharacter $\mu$ of the diagonal torus in $GL_d$ given by the formula
$$
\mu(z) = {\rm{diag}}(z,1,\ldots,1).
$$

\begin{thm}
\label{thm::cat-antiequiv}
There is a categorical anti-equivalence between diagonalizable algebraic groups and abelian groups, which arises from the contravariant functor sending a diagonalizable group $D$ to its character group $\chars{D}$.
\end{thm}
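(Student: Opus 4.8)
The plan is to exhibit an explicit quasi-inverse functor built from group algebras and to reduce every claim to the linear independence of characters. Since the objects here are affine algebraic groups of finite type, the character group $\chars{D}$ of a diagonalizable $D$ is always a \emph{finitely generated} abelian group, so the anti-equivalence is really with the category of finitely generated abelian groups; over $\valfield$ every such group arises, as $\mathbb{Z}^r \oplus \bigoplus_i \mathbb{Z}/n_i\mathbb{Z}$ will correspond to $\mathbb{G}_m^r \times \prod_i \mu_{n_i}$ (with $\mu_{n_i}$ the group of $n_i$-th roots of unity), using the structure theorem stated above together with the fact that finite diagonalizable groups over $\valfield$ are products of such cyclic pieces. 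On morphisms, a homomorphism $f : D \to D'$ induces the contravariant map $f^* : \chars{D'} \to \chars{D}$, $\chi \mapsto \chi \circ f$; functoriality is immediate, so $D \mapsto \chars{D}$ is a well-defined contravariant functor.

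The first substantive step is to identify the coordinate Hopf algebra. I would show that for a diagonalizable $D$ the characters in $\chars{D}$ form a $\valfield$-basis of the coordinate ring $\valfield[D]$, so that $\valfield[D] \cong \valfield[\chars{D}]$ as Hopf algebras, the right-hand side being the group algebra with grouplike comultiplication $\Delta(\chi) = \chi \otimes \chi$, counit $\epsilon(\chi) = 1$, and antipode $S(\chi) = \chi^{-1}$. Spanning follows from the definition of diagonalizability after diagonalizing the image in some $D(n, \valfield)$, where the matrix entries restrict to characters; linear independence is exactly Dedekind's independence of distinct characters. This identification is the engine of the whole argument, since it translates algebraic-group data into purely combinatorial group-algebra data.

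Next I would build the quasi-inverse $M \mapsto \mathrm{Spec}\,\valfield[M]$. For finitely generated abelian $M$ the group algebra $\valfield[M]$ is a finitely generated commutative Hopf algebra, and its spectrum is diagonalizable by the explicit product decomposition above. The two natural isomorphisms then read $\chars{\mathrm{Spec}\,\valfield[M]} \cong M$ and $\mathrm{Spec}\,\valfield[\chars{D}] \cong D$; the latter is just the previous step applied Spec-wise. Full faithfulness follows because algebraic-group homomorphisms $D \to D'$ correspond to Hopf-algebra maps $\valfield[\chars{D'}] \to \valfield[\chars{D}]$, and such maps are pinned down by their values on grouplike elements, yielding a natural bijection with $\mathrm{Hom}(\chars{D'}, \chars{D})$.

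The one genuinely nontrivial point, and hence the main obstacle, is the claim that the grouplike elements of $\valfield[M]$ are exactly the elements of $M$ (equivalently $\chars{\mathrm{Spec}\,\valfield[M]} = M$), together with the matching fact that the characters of $D$ span $\valfield[D]$. Both rest on linear independence of the basis $M$ in $\valfield[M]$: writing a grouplike element as $x = \sum_m c_m\, m$ and imposing $\Delta(x) = x \otimes x$, comparison of coefficients against the linearly independent set $\{\, m \otimes m' \,\}$ forces $c_m = c_m^2$ and $c_m c_{m'} = 0$ for $m \neq m'$, while $\epsilon(x) = \sum_m c_m = 1$ rules out the zero solution, so exactly one $c_m$ equals $1$ and $x \in M$. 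Once this is established, naturality of the two isomorphisms is routine to verify and the anti-equivalence follows; alternatively one may simply invoke the standard treatment in \cite{humphreys1975}, Section 16, or \cite{borel1991}.
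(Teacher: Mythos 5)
Your proposal is correct and is essentially the argument the paper relies on: the paper proves this theorem simply by citing Waterhouse, Section 2.2, whose proof is exactly your group-algebra construction --- characters form a $\valfield$-basis of $\valfield[D]$ by Dedekind independence, the grouplike elements of $\valfield[M]$ are precisely $M$ by the coefficient comparison you carry out, and $M \mapsto \mathrm{Spec}\,\valfield[M]$ is the quasi-inverse. Your remark that, under the paper's definition of diagonalizable (a closed subgroup of some $D(n,\valfield)$, hence of finite type), the anti-equivalence is really with \emph{finitely generated} abelian groups is a correct and useful precision of the statement as printed.
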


\begin{proof}
See \cite{waterhouse1979}, Section 2.2, for example.
\end{proof}

The following corollaries follow directly from Theorem~\ref{thm::cat-antiequiv}.

\begin{cor}
Let $D$ be a diagonalizable algebraic group over $\mathbb{C}$. If its character group $\chars{D}$ has torsion, then $D$ is not connected, i.e., $D$ is not a torus.
\end{cor}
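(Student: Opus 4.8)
The plan is to prove the contrapositive: if $D$ is connected, then $\chars{D}$ is torsion-free. The corollary then follows by logical contraposition, so no separate argument for the stated implication is needed.

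First I would invoke the fact recalled just above, that a connected diagonalizable group is a torus. Thus, assuming $D$ is connected, $D$ is a torus over $\valfield$, hence isomorphic to $(\mathbb{G}_m)^n$ for some $n \geq 0$. I would then compute the character group directly: since $\chars{D} = {\rm{Hom}}_\valfield(D, \mathbb{G}_m)$ and ${\rm{Hom}}(\mathbb{G}_m, \mathbb{G}_m) \cong \mathbb{Z}$, the product decomposition gives $\chars{D} \cong \mathbb{Z}^n$. A free abelian group has no nonzero torsion, so $\chars{D}$ is torsion-free, and the contrapositive is complete.

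A cleaner route, which is how the corollary follows most transparently from Theorem~\ref{thm::cat-antiequiv}, uses the structure theorem to write $D = A \times H$ with $A$ a torus and $H$ finite. Because the functor $D \mapsto \chars{D}$ is a contravariant anti-equivalence, it carries this product to a direct sum $\chars{D} \cong \chars{A} \oplus \chars{H}$; here $\chars{A}$ is free abelian (torsion-free) as above, while $\chars{H}$ is finite since $H$ is finite. Hence the torsion subgroup of $\chars{D}$ is precisely $\chars{H}$, which vanishes exactly when $H$ is trivial, i.e., exactly when $D$ is connected. This not only proves the stated implication but also identifies torsion in $\chars{D}$ with disconnectedness of $D$.

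There is no genuine obstacle here; the content is entirely formal. The only point requiring a little care is the behavior of the anti-equivalence on products, namely that a categorical anti-equivalence sends finite products to finite coproducts and that finite coproducts of abelian groups are direct sums. This, combined with the torsion-freeness of the character lattice of a torus, is what makes torsion in $\chars{D}$ the precise obstruction to $D$ being a torus.
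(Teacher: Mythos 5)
Your proof is correct and takes the same approach the paper intends: the paper offers no separate argument, stating only that the corollary ``follows directly from Theorem~\ref{thm::cat-antiequiv},'' and your reasoning (the contrapositive via $\chars{D} \cong \mathbb{Z}^n$ for a torus, refined by passing the decomposition $D = A \times H$ through the anti-equivalence to identify the torsion of $\chars{D}$ with $\chars{H}$) is precisely the routine detail being left implicit. Nothing further is needed.
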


\begin{cor}
\label{cor::contain-by-annihilation}
Let $D$ be a diagonalizable subgroup of $\dualtorus(\valfield)$, and let $L$ be the lattice in $\chars{\dualtorus}$ such that $\chars{D} = \chars{\dualtorus}/L$. Then $\kappa \in \dualtorus(\valfield)$ is annihilated by $L$ if and only if $\kappa \in D$.
\end{cor}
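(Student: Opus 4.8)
The plan is to realize the set of elements annihilated by $L$ as a closed diagonalizable subgroup $D' \subseteq \dualtorus$ and then to prove $D' = D$ by comparing character groups via the anti-equivalence of Theorem~\ref{thm::cat-antiequiv}. The forward implication is immediate and requires no machinery: by hypothesis $L$ is the kernel of the restriction map $r_D : \chars{\dualtorus} \to \chars{D}$ induced by the inclusion $D \hookrightarrow \dualtorus$, so every $\nu \in L$ restricts to the trivial character on $D$. Hence if $\kappa \in D$, then $\nu(\kappa) = 1$ for all $\nu \in L$; that is, $\kappa$ is annihilated by $L$.

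For the converse I would set
$$
D' = \{ \kappa \in \dualtorus(\valfield) \mid \nu(\kappa) = 1 \text{ for all } \nu \in L \}.
$$
This is the intersection of the kernels of the characters in $L$, hence a closed subgroup of $\dualtorus$; as a closed subgroup of a torus over $\valfield = \mathbb{C}$ it is again diagonalizable. By the forward implication we already have $D \subseteq D'$, so everything reduces to showing $D' \subseteq D$.

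The key step is to show that the inclusion $D \hookrightarrow D'$ induces an isomorphism on character groups. Let $r_{D'} : \chars{\dualtorus} \to \chars{D'}$ be restriction along $D' \hookrightarrow \dualtorus$, with kernel $L'$; since a closed immersion of diagonalizable groups corresponds under Theorem~\ref{thm::cat-antiequiv} to a surjection of character groups, we have $\chars{D'} = \chars{\dualtorus}/L'$. I claim $L = L'$. By the very definition of $D'$, every $\nu \in L$ is trivial on $D'$, so $L \subseteq L'$; conversely, since $D \subseteq D'$, any character trivial on $D'$ is trivial on $D$, whence $L' \subseteq L$. Thus $L = L'$, and the restriction maps $r_D$ and $r_{D'}$ share the same kernel. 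Consequently the map $\chars{D'} \to \chars{D}$ induced by $D \hookrightarrow D'$ is the identity map $\chars{\dualtorus}/L' \to \chars{\dualtorus}/L$, hence an isomorphism. Applying the anti-equivalence of Theorem~\ref{thm::cat-antiequiv}, the inclusion $D \hookrightarrow D'$ is itself an isomorphism, so $D = D'$. In particular any $\kappa$ annihilated by $L$ lies in $D' = D$, which is the desired converse.

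I expect the only real subtlety to be the careful bookkeeping in the equality $L = L'$ together with the correct invocation of the variance in Theorem~\ref{thm::cat-antiequiv}: one must use that a closed immersion corresponds to a surjection of character groups, and that an isomorphism of character groups forces an isomorphism of the underlying diagonalizable groups. Both facts are part of the content of that theorem, so once the character-group comparison is in place the argument is purely formal; no explicit coordinates on $\dualtorus \cong (\mathbb{G}_m)^d$ are needed, though one could alternatively carry out the same argument concretely by diagonalizing and reducing to kernels of monomial characters.
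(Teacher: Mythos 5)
Your proof is correct, and it is essentially the paper's (implicit) argument made explicit: the paper states Corollary~\ref{cor::contain-by-annihilation} as a direct consequence of the anti-equivalence in Theorem~\ref{thm::cat-antiequiv} without writing out a proof, and your verification---identifying $L$ with the kernel of the restriction map, forming $D' = \bigcap_{\nu \in L} \ker(\nu)$, showing $L = L'$, and invoking full faithfulness to conclude the inclusion $D \hookrightarrow D'$ is an isomorphism---is precisely the standard unwinding of that assertion. The two facts you flag (closed immersions correspond to surjections of character groups, and an isomorphism on character groups forces an isomorphism of groups) are indeed the content of the anti-equivalence, so no gap remains.
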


We conclude this background section with two useful lemmas.

\begin{lemma}
\label{lemma::chars-separate-points}
Let $D$ be a diagonalizable group. Given two distinct points $x$ and $y$ in $D$, there exists a character $\eta \in \chars{D}$ such that $\eta(x) \neq \eta(y)$.
\end{lemma}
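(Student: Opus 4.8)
The plan is to exploit the definition of a diagonalizable group directly rather than the categorical anti-equivalence of Theorem~\ref{thm::cat-antiequiv}, since the embedding into a diagonal torus already furnishes enough characters. First I would invoke the definition: because $D$ is diagonalizable over $\valfield$, there is a closed immersion $\iota : D \hookrightarrow D(n, \valfield)$ into the diagonal group for some $n$. For each index $1 \le i \le n$, the coordinate projection $\chi_i : D(n,\valfield) \to \valfield^\times$ sending ${\rm{diag}}(t_1, \ldots, t_n) \mapsto t_i$ is a morphism of algebraic groups, hence a character of $D(n, \valfield)$.

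The second step is to note that these coordinate characters already separate the points of the ambient diagonal group. Indeed, given distinct $x, y \in D$, their images $\iota(x) \ne \iota(y)$ are distinct diagonal matrices, so they must disagree in at least one coordinate; that is, there is an index $i$ with $\chi_i(\iota(x)) \ne \chi_i(\iota(y))$.

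Finally, I would set $\eta = \chi_i \circ \iota$. Since $\iota$ is a morphism of algebraic groups and $\chi_i$ is a character, the composite $\eta$ is a morphism $D \to \valfield^\times$, i.e., $\eta \in \chars{D}$, and by construction $\eta(x) = \chi_i(\iota(x)) \ne \chi_i(\iota(y)) = \eta(y)$, giving the desired separating character.

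There is essentially no obstacle here; the only point requiring a word of care is the bookkeeping that a pullback along $\iota$ of an ambient character is genuinely an element of $\chars{D}$, which is immediate from the functoriality of ${\rm{Hom}}_\valfield(-, \valfield^\times)$. One could alternatively route the argument through Theorem~\ref{thm::cat-antiequiv}, observing that the closed immersion $\iota$ corresponds to a surjection $\chars{D(n,\valfield)} \twoheadrightarrow \chars{D}$ and that the coordinate characters generate $\chars{D(n,\valfield)}$; but the direct argument above is shorter and avoids appealing to surjectivity.
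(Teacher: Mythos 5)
Your proof is correct. The paper itself offers no argument for this lemma---it simply cites Humphreys, Section 16.1---so there is no in-paper route to compare against; your direct argument is exactly the standard one underlying that citation. Unwinding the definition of a diagonalizable group as a closed subgroup of $D(n,\valfield)$, noting that the inclusion is injective on points, and pulling back a separating coordinate projection is sound at every step: distinct diagonal matrices differ in some entry, each coordinate projection ${\rm{diag}}(t_1,\ldots,t_n)\mapsto t_i$ is a homomorphism of algebraic groups, and the restriction of a character along a closed immersion is a character of the subgroup. (For the record, Humphreys' own treatment separates points slightly differently, via the fact that $\chars{D}$ spans the coordinate ring $\valfield[D]$, so that if every character took equal values at $x$ and $y$ then every regular function would, contradicting $x \neq y$; your coordinate-wise argument is more elementary and avoids that spanning statement.) The one implicit step---that a closed immersion of group schemes over $\valfield = \mathbb{C}$ is injective on $\mathbb{C}$-points---is immediate here since $D$ is by definition a closed \emph{subgroup} of the diagonal group, so nothing is missing.
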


\begin{proof}
See \cite{humphreys1975}, Section 16.1.
\end{proof}

\begin{lemma}
\label{lemma::chars-of-intersected-groups}
Suppose $D_1$ and $D_2$ are diagonalizable subgroups of $\widehat{\torus}(\valfield)$ such that $\chars{D_i} = \chars{\dualtorus}/L_i$ for lattices $L_1$ and $L_2$. Then the character group of $D_1 \cap D_2$ is
$$
\chars{D_1 \cap D_2} = \chars{\dualtorus}/\langle L_1, L_2 \rangle.
$$
\end{lemma}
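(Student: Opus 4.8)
The plan is to produce the claimed equality by constructing, via the anti-equivalence of Theorem~\ref{thm::cat-antiequiv}, the diagonalizable subgroup of $\dualtorus$ whose character group is $\chars{\dualtorus}/\langle L_1, L_2\rangle$, and then identifying it with $D_1 \cap D_2$. Since $\langle L_1, L_2\rangle$ is a sublattice of $\chars{\dualtorus}$, the quotient map $\chars{\dualtorus} \twoheadrightarrow \chars{\dualtorus}/\langle L_1, L_2\rangle$ corresponds under the contravariant anti-equivalence to a closed immersion of a diagonalizable group $D \hookrightarrow \dualtorus$. This $D$ is my candidate for $D_1 \cap D_2$, and the work is to verify the identification.

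First I would pin down the $\valfield$-points of $D$. By Corollary~\ref{cor::contain-by-annihilation} applied to $D$, an element $\kappa \in \dualtorus(\valfield)$ lies in $D$ if and only if it is annihilated by $\langle L_1, L_2 \rangle$. Because this lattice is generated by $L_1 \cup L_2$, a character in it annihilates $\kappa$ exactly when every element of $L_1$ and every element of $L_2$ does; hence $\kappa$ is annihilated by $\langle L_1, L_2\rangle$ if and only if it is annihilated by $L_1$ and by $L_2$ separately. Applying Corollary~\ref{cor::contain-by-annihilation} once more, now to $D_1$ and to $D_2$, this is precisely the condition $\kappa \in D_1$ and $\kappa \in D_2$, i.e. $\kappa \in D_1 \cap D_2$. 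Thus $D$ and $D_1 \cap D_2$ have the same $\valfield$-points.

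It remains to promote this equality of point sets to an equality of character groups. As a closed subgroup of the diagonalizable group $\dualtorus$, the intersection $D_1 \cap D_2$ is itself diagonalizable, so $\chars{D_1 \cap D_2} = \chars{\dualtorus}/L$ for some lattice $L$, where $L$ is exactly the set of characters of $\dualtorus$ restricting trivially to $D_1 \cap D_2$. From the inclusions $D_1 \cap D_2 \subseteq D_i$ we get $L_1, L_2 \subseteq L$, hence $\langle L_1, L_2\rangle \subseteq L$. For the reverse inclusion, take $\chi \in L$; then $\chi$ restricts trivially to $D_1 \cap D_2 = D$, so its image in $\chars{D} = \chars{\dualtorus}/\langle L_1, L_2\rangle$ takes the value $1$ on every point of $D$. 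Since we work over $\valfield$, Lemma~\ref{lemma::chars-separate-points} guarantees that characters are determined by their values on points, forcing this image to be the trivial element of $\chars{D}$, i.e. $\chi \in \langle L_1, L_2\rangle$. Therefore $L = \langle L_1, L_2\rangle$ and $\chars{D_1 \cap D_2} = \chars{\dualtorus}/\langle L_1, L_2\rangle$.

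I expect the only genuine subtlety to be this final step, where one passes from coincidence of $\valfield$-points to coincidence of character groups. This is exactly the place where working over $\valfield$ (so that the diagonalizable groups are reduced and characters separate points) is essential, and I would be careful to route the argument through Lemma~\ref{lemma::chars-separate-points} rather than treat the point-set identification of the second paragraph as immediately sufficient.
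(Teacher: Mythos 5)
Your proof is correct, but it takes a genuinely different route from the paper's. The paper's argument is purely categorical and occupies three lines: $D_1 \cap D_2$ is the largest common subgroup of $D_1$ and $D_2$, so under the anti-equivalence of Theorem~\ref{thm::cat-antiequiv} its character group is the largest common quotient of $\chars{D_1}$ and $\chars{D_2}$, which is the quotient of $\chars{\dualtorus}$ by the smallest lattice containing both $L_1$ and $L_2$, namely $\langle L_1, L_2 \rangle$; no points are ever mentioned. You instead build the candidate group $D$ from the lattice $\langle L_1, L_2\rangle$, match its $\valfield$-points with those of $D_1 \cap D_2$ via the annihilation criterion of Corollary~\ref{cor::contain-by-annihilation}, and then promote the equality of point sets to an equality of character groups. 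Both work; the paper's version is shorter and, being point-free, would survive in settings where yours breaks --- in positive characteristic an intersection of diagonalizable subgroups can be non-reduced (e.g.\ $\mu_p \subset \mathbb{G}_m$ has trivial point group over an algebraically closed field), whereas over $\valfield = \mathbb{C}$ everything is reduced, as you correctly flag. Your version buys concreteness: it makes visible exactly where algebraic closedness and reducedness enter. One citation in your final step is slightly off: what you need there is that a character of $D$ equal to $1$ at every point of $D(\valfield)$ is the trivial character, i.e.\ injectivity of $\chars{D} \rightarrow \mathrm{Hom}(D(\valfield), \valfield^\times)$; this follows from reducedness of $D$ over the algebraically closed field $\valfield$ (equivalently, from linear independence of characters), not literally from Lemma~\ref{lemma::chars-separate-points}, which only separates pairs of points. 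Since the needed fact is standard and true here, this is a misattribution rather than a gap.
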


\begin{proof}
The group $D_1 \cap D_2$ is the largest common subgroup of $D_1$ and $D_2$. Under the categorical anti-equivalence between diagonalizable groups and their character groups, its character group $\chars{D_1 \cap D_2}$ is the largest common quotient of $\chars{D_1}$ and $\chars{D_2}$. But as $\langle L_1, L_2\rangle$ is the smallest lattice containing both $L_1$ and $L_2$, we conclude $\chars{D_1 \cap D_2} = \chars{\dualtorus}/\langle L_1, L_2\rangle$.
\end{proof}


\subsection{The relevant group of an admissible element}

For any $w \in \widetilde{W}$, there is an expression $w = t_{\lambda} \wbar$ obtainable via the isomorphism $\widetilde{W} \cong \cochars{\torus} \rtimes W$. We give certain conditions based on the data $\lambda$ and $\wbar$ to define a diagonalizable group $\relgrp$ in $\dualtorus(\valfield)$. This infinite group contains a finite subgroup $\dzrelgrp$ consisting of those $\kappa \in \relgrp$ such that $\kappa = \kapchi$ for some depth-zero character $\chi$ on $\torus(\mathcal{O}_F)$. Given a root sub-system $J \subseteq \Phi$, there are analogous groups $\relgrpJ$ and $\dzrelgrpJ$. We specify lattices $L_{w,J} \subset \chars{\dualtorus}$ such that $\chars{S_{w,J}} = \chars{\dualtorus}/L_{w,J}$, thereby giving information about $S_{w,J}$ through Theorem~\ref{thm::cat-antiequiv}.

\subsubsection{Definition of $S_w$ and $S_{w,J}$}

\begin{lemma}
Let $\bar{w} \in W$. Then $\kappa_{^{\wbar}\chi} = \wbar\kapchi$ for all characters $\chi$ on $\torus(\mathcal{O}_F)$. If in addition $\wbar \in W_\chi$, then $\wbar\kapchi = \kapchi$.
\end{lemma}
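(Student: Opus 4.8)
The plan is to verify the claimed equality $\kappa_{{}^{\wbar}\chi} = \wbar\,\kapchi$ by pairing both sides against every $\nu \in \chars{\dualtorus}$ and then appealing to Lemma~\ref{lemma::chars-separate-points}, since both sides lie in the diagonalizable group $\dualtorus(\valfield)$ and characters separate its points. The starting ingredient is the defining relation extracted from the commutative diagram following Theorem~\ref{thm::LLC-for-Tori}: for the fixed $x \in I_F$ with $\tilde{x} = \tau_F(x)$ and any $\nu \in \cochars{\torus} = \chars{\dualtorus}$ one has $\nu(\varphi_\chi(x)) = \chi(\nu(\tilde{x}))$, where $\varphi_\chi = \varphi_{\tilde{\chi}}\vert_{I_F}$ depends only on $\chi$ by the lemma preceding Definition~\ref{endoscopic-element-definition}. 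Applying this relation with $\chi$ replaced by ${}^{\wbar}\chi$ and recalling $\kappa_{{}^{\wbar}\chi} = \varphi_{{}^{\wbar}\chi}(x)$ gives $\nu(\kappa_{{}^{\wbar}\chi}) = {}^{\wbar}\chi(\nu(\tilde{x}))$.

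The second step unwinds the Weyl action on the character. By definition ${}^{\wbar}\chi(\nu(\tilde{x})) = \chi(\wbar^{-1}\nu(\tilde{x})\wbar)$, and conjugating a cocharacter by a representative of $\wbar$ in $N_G(T)$ realizes the $W$-action on $\cochars{\torus}$, so that $\wbar^{-1}\nu(\tilde{x})\wbar = (\wbar^{-1}\cdot\nu)(\tilde{x})$. Feeding this back into the defining relation, now applied to the cocharacter $\wbar^{-1}\cdot\nu$, yields $\nu(\kappa_{{}^{\wbar}\chi}) = (\wbar^{-1}\cdot\nu)(\kapchi)$.

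The third step is the duality bookkeeping, which I expect to be the only delicate point. Under the identification $\cochars{\torus} = \chars{\dualtorus}$, the $W$-action on cocharacters of $\torus$ matches the $W$-action on characters of $\dualtorus$, and that action is contragredient to the action of $W$ on $\dualtorus(\valfield)$ itself, i.e. $(u\cdot\nu)(\kappa) = \nu(u^{-1}\cdot\kappa)$. Taking $u = \wbar^{-1}$ converts $(\wbar^{-1}\cdot\nu)(\kapchi)$ into $\nu(\wbar\cdot\kapchi)$, so $\nu(\kappa_{{}^{\wbar}\chi}) = \nu(\wbar\,\kapchi)$ for all $\nu \in \chars{\dualtorus}$, and Lemma~\ref{lemma::chars-separate-points} gives the first assertion. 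The hard part here is purely an inverse check: one inverse enters through the conjugation formula and one through the contragredient, and I must confirm they combine so the net exponent on $\wbar$ is $+1$ rather than $-1$; tracking exactly those two occurrences is what fixes the direction of the action.

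Finally, the second assertion is immediate from the first: if $\wbar \in W_\chi$ then ${}^{\wbar}\chi = \chi$ by the definition of $W_\chi$, whence $\kapchi = \kappa_{{}^{\wbar}\chi} = \wbar\,\kapchi$.
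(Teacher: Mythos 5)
Your proposal is correct and follows essentially the same route as the paper: both evaluate $\nu(\kappa_{{}^{\wbar}\chi})$ via the LLC-for-Tori relation $\chi(\nu(\tilde{x})) = \nu(\kapchi)$, unwind the conjugation action to get ${}^{\wbar}\chi(\nu(\tilde{x})) = (\wbar^{-1}\cdot\nu)(\kapchi) = \nu(\wbar\kapchi)$, and conclude with Lemma~\ref{lemma::chars-separate-points}; the second assertion is, as you say, immediate from the definition of $W_\chi$. Your extra care with the two inverses (one from the conjugation convention ${}^{w}\chi(t) = \chi(w^{-1}tw)$, one from the contragredient) is exactly the bookkeeping the paper performs implicitly in its displayed chain of equalities.
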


\begin{proof}
Given $\wbar \in W$, the LLC for Tori implies ${^{\wbar}} \chi (\nu(\tau_F(x))) = \nu(\kappa_{{^{\wbar}} \chi})$ for all $\nu \in \cochars{\torus} = \chars{\torus}$. On the other hand,
$$
{^{\wbar}}\chi (\nu(\tau_F(x))) = \chi (\wbar^{-1} \nu(\tau_F(x))) = (\wbar^{-1}\cdot \nu)(\kapchi) = \nu(\wbar\kapchi).
$$
Thus $\nu(\wbar\kapchi) = \nu(\kappa_{{^{\wbar}}\chi})$ for all $\nu$. We conclude that $\wbar\kapchi = \kappa_{{^{\wbar}}\chi}$ by Lemma~\ref{lemma::chars-separate-points}.

For the final statement, apply the definition $W_\chi = \{\bar{w} \in W \mid {^{\bar{w}}\chi} = \chi\}$.
\end{proof}

\begin{defin}
\label{defin::relevant-endoscopic-elements}
An endoscopic element $\kappa \in \dualtorus(\valfield)$ is \textbf{relevant} to $w \in {\widetilde{W}}$ if $\wbar\kappa = \kappa$ and $\lambda(\kappa) = 1$. 
\end{defin}

\begin{prop}
\label{prop::relevant-group}
The elements $\kappa \in \dualtorus(\valfield)$ relevant to a fixed $w \in \widetilde{W}$ form a closed subgroup called the \textbf{relevant subgroup} $S_w$.
\end{prop}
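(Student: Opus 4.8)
The plan is to show that the relevant elements form the intersection of two closed subgroups of $\dualtorus(\valfield)$, each of which is obviously closed and obviously a subgroup. Recall from Definition~\ref{defin::relevant-endoscopic-elements} that $\kappa$ is relevant to $w = t_\lambda \wbar$ precisely when two conditions hold: $\wbar\kappa = \kappa$ and $\lambda(\kappa) = 1$. I would treat these two conditions separately and then intersect.

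First I would handle the fixed-point condition. The element $\wbar \in W$ acts on $\dualtorus(\valfield)$ as an algebraic automorphism, so the fixed-point locus $\dualtorus(\valfield)^{\wbar} = \{\kappa \mid \wbar\kappa = \kappa\}$ is the equalizer of the automorphism and the identity map, hence a Zariski-closed subset; and because the action is by group automorphisms, the fixed points form a subgroup (if $\wbar\kappa = \kappa$ and $\wbar\kappa' = \kappa'$ then $\wbar(\kappa\kappa') = \kappa\kappa'$ and $\wbar(\kappa^{-1}) = \kappa^{-1}$). Next I would handle the kernel condition. Regarding $\lambda \in \cochars{\torus} = \chars{\dualtorus}$ as a character $\lambda : \dualtorus(\valfield) \to \valfield^\times$, the set $\{\kappa \mid \lambda(\kappa) = 1\}$ is the kernel of this character, which is a closed subgroup of $\dualtorus(\valfield)$ as it is the preimage of the closed subgroup $\{1\}$ under a homomorphism of algebraic groups.

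Finally I would observe that the set of elements relevant to $w$ is by definition the intersection
$$
S_w = \dualtorus(\valfield)^{\wbar} \cap \ker(\lambda),
$$
and an intersection of two closed subgroups is again a closed subgroup. This gives both the subgroup structure and closedness simultaneously, completing the proof. I would also note, for use in later sections, that since $\dualtorus$ is a torus over $\valfield$ and each defining condition is algebraic, $S_w$ is a diagonalizable algebraic subgroup of $\dualtorus$, consistent with the remark preceding the proposition.

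I do not anticipate a genuine obstacle here, as the statement is essentially formal once the two defining conditions are recognized as cutting out closed subgroups. The only point requiring a word of care is confirming that $\wbar$ acts by an algebraic group automorphism so that the fixed-point set is Zariski-closed rather than merely closed in some coarser topology; this follows because the Weyl group action on $\dualtorus$ permutes the coordinates (after the splitting $\dualtorus \cong \mathbb{G}_m^d$) and is therefore given by morphisms of varieties. Everything else is a direct application of the fact that kernels and equalizers of algebraic-group homomorphisms are closed subgroups.
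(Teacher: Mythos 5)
Your proposal is correct and follows essentially the same route as the paper: the paper verifies the group axioms directly on $S_w$ and then notes closedness by writing $S_w$ as the intersection of $\ker(\lambda)$ with the Zariski-closed fixed-point set of $\wbar$, which is exactly your decomposition, merely packaged with the subgroup checks done on the intersection rather than on each factor. Your added remark that $\wbar$ acts algebraically (permuting coordinates after splitting $\dualtorus \cong \mathbb{G}_m^d$) is a harmless refinement the paper leaves implicit.
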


\begin{proof}
First, we verify that relevant $\kappa$ satisfy the group axioms. It is obvious that $\kappa = 1$, corresponding to the trivial character, belongs to $S_w$ for any $w \in \widetilde{W}$. If ${\kappa}_1, {\kappa}_2 \in S_w$, then it is enough to observe that
$$
\wbar\cdot ({\kappa}_1 {\kappa}_2) = (\wbar\cdot \kappa_1)(\wbar\cdot\kappa_2) = {\kappa}_1{\kappa}_2
$$
and
$$
\lambda({\kappa}_1{\kappa}_2) = \lambda({\kappa}_1)\lambda({\kappa}_2) = 1.
$$
In particular, $\wbar\cdot \kappa^{-1} = \kappa^{-1}$.

This subgroup is \emph{closed} because it is the intersection of $\rm{ker}(\lambda)$ and the fixed-point set of $\wbar$, each of which is a Zariski-closed subset of $\dualtorus(\valfield)$.
\end{proof}

The relevant subgroup for an element $w$ is not a torus in general. See Example~\ref{example::rel-group-not-a-torus} which uses an explicit realization of the character group $\chars{\relgrp}$ to produce torsion elements.


\begin{lemma}
\label{lemma::support-of-phi-r-chi}
Let $w \in \widetilde{W}$. Then $Z_{V_\mu} \ast e_{\chi_r} (w) \neq 0$ implies $w \in {\rm{Adm}}_{G_r}(\mu)$ and that $\kapchi$ is relevant to $w$.
\end{lemma}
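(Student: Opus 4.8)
The plan is to recognize this lemma as essentially a repackaging of the four-part Lemma~\ref{lemma::image-of-phi-r-chi}, with the only genuine content being the translation between the abstract element $w \in \widetilde{W}$ and its image under the set-theoretic embedding $i_\varpi$ of Definition~\ref{defin::set-theoretic-embedding}. First I would write $w = t_\lambda \bar{w}$ according to the isomorphism $\widetilde{W} \cong \cochars{\torus} \rtimes W$, so that $i_\varpi(w) = \varpi^{-\lambda}\bar{w} \in N_G(T)(F_r)$. The hypothesis $Z_{V_\mu} \ast e_{\chi_r}(w) \neq 0$ is to be read as the statement that $\varpi^{-\lambda}\bar{w}$ lies in the support of $Z_{V_\mu} \ast e_{\chi_r}$; since a function in $\mathcal{H}(G_r, I_r, \rho_{\chi_r})$ is determined up to the scalar $\rho_{\chi_r}$ on each $I_r$-double coset, this reading is well-defined on $\widetilde{W}$, and the non-vanishing hypothesis in particular forces $Z_{V_\mu} \ast e_{\chi_r}$ to be a nonzero element of $\mathcal{H}(G_r, I_r, \rho_{\chi_r})$.

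With this dictionary in place, the admissibility conclusion $w \in {\rm{Adm}}_{G_r}(\mu)$ is exactly Lemma~\ref{lemma::image-of-phi-r-chi}(4). For the relevance conclusion I would verify the two defining conditions of Definition~\ref{defin::relevant-endoscopic-elements} separately. The condition $\lambda(\kappa_\chi) = 1$ is precisely Lemma~\ref{lemma::image-of-phi-r-chi}(3). For the condition $\bar{w}\kappa_\chi = \kappa_\chi$, I would apply Lemma~\ref{lemma::image-of-phi-r-chi}(1) to the nonzero function $\phi = Z_{V_\mu} \ast e_{\chi_r}$, whose support contains $\varpi^{-\lambda}\bar{w}$; this yields ${}^{\bar{w}}\chi = \chi$, equivalently $\bar{w}\kappa_\chi = \kappa_\chi$. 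Taken together, the two facts $\lambda(\kappa_\chi) = 1$ and $\bar{w}\kappa_\chi = \kappa_\chi$ say exactly that $\kappa_\chi$ is relevant to $w$, which completes the proof.

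I do not anticipate a substantive obstacle, since all of the analytic input was already established in Lemma~\ref{lemma::image-of-phi-r-chi}. The only point demanding care is the bookkeeping: confirming that the reading of ``$Z_{V_\mu} \ast e_{\chi_r}(w)$'' as evaluation at $i_\varpi(w)$ is consistent with the conventions fixed earlier, and that the non-vanishing hypothesis indeed guarantees the function is nonzero so that part~(1), which is stated for nonzero $\phi$, is applicable. With those conventions in hand, the statement follows simply by collecting parts~(1), (3), and~(4).
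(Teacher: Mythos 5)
Your proposal is correct and matches the paper's own proof, which simply states that the lemma is a rephrasing of statements already established in Lemma~\ref{lemma::image-of-phi-r-chi}: admissibility is part~(4), $\lambda(\kappa_\chi)=1$ is part~(3), and $\bar{w}\kappa_\chi=\kappa_\chi$ follows from part~(1) applied to the nonzero central function $Z_{V_\mu}\ast e_{\chi_r} \in \mathcal{H}(G_r, I_r, \rho_{\chi_r})$, exactly as you argue. Your added care about the set-theoretic embedding $i_\varpi$ and the nonvanishing hypothesis making part~(1) applicable is sound bookkeeping that the paper leaves implicit.
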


\begin{proof}
This is a rephrasing of two statements proved in Lemma~\ref{lemma::image-of-phi-r-chi}.

\end{proof}

\begin{defin}
The subgroup of $\relgrp$ consisting of $\kapchi$ associated to $\chi \in \torus(\resfield)^\vee$ will be called the \textbf{depth-zero relevant subgroup} of $w$. It is denoted $\dzrelgrp$.
\end{defin}

Lemma~\ref{lemma::support-of-phi-r-chi} says that if $Z_{V_\mu} \ast e_{\chi_r}(w) \neq 0$ then $w$ is $\mu$-admissible and $\kappa_\chi \in \dzrelgrp$.

\begin{defin}
\label{defin::dzrelgrpJ}
Let $J$ be a root sub-system of $\Phi$. For $\alpha \in J$, consider $\alpha^\vee$ as a character on $\dualtorus$. Define $S_{w,J} \subseteq S_w$ by
$$
S_{w,J} = S_w \cap \left(\bigcap_{\alpha \in J} \rm{ker}(\alpha^\vee)\right).
$$
Let $\dzrelgrpJ$ denote the subset of $\relgrpJ$ whose elements arise as endoscopic elements corresponding to $\chi \in \torus(\resfield)^\vee$.
\end{defin}

\begin{prop}
\label{prop::set-version-of-dzrelgrpJ}
Let $w \in \widetilde{W}$, and fix a sub-system $J \subseteq \Phi$. Then
$$
\dzrelgrpJ = \{ \kapchi \in \dzrelgrp \mid \Phi_\chi \supseteq J\}.
$$
\end{prop}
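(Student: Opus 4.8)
The plan is to reduce the proposition to a single pointwise equivalence relating membership in $\Phi_\chi$ to the vanishing of coroots on the endoscopic element. Concretely, for a depth-zero character $\chi$ and a root $\alpha \in \Phi$, I want to establish the bridge
$$
\alpha \in \Phi_\chi \iff \alpha^\vee(\kapchi) = 1,
$$
where $\alpha^\vee \in \cochars{\torus} = \chars{\dualtorus}$ is regarded as a character on $\dualtorus(\valfield)$. Once this is in hand, the proposition follows by unwinding Definition~\ref{defin::dzrelgrpJ}: an element $\kapchi$ lies in $\dzrelgrpJ$ precisely when $\kapchi \in \dzrelgrp$ and $\alpha^\vee(\kapchi) = 1$ for every $\alpha \in J$, and the bridge converts this family of coroot conditions into the single requirement $\Phi_\chi \supseteq J$.

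First I would establish the bridge using the commutative square from the LLC for Tori recorded in Section~\ref{section::llc-for-tori}. Taking $\nu = \alpha^\vee$ and $z = x$, where $x \in I_F$ is the fixed element whose image $\tilde{x} = \tau_F(x)$ projects to a generator of $\resfield^\ast$, the diagram yields $\alpha^\vee(\varphi_\chi(x)) = \chi(\alpha^\vee(\tilde{x}))$. By Definition~\ref{endoscopic-element-definition}, $\varphi_\chi(x) = \kapchi$, so $\alpha^\vee(\kapchi) = \chi(\alpha^\vee(\tilde{x}))$. This is the purely formal half of the argument.

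Next I would translate the defining condition $\chi \circ \alpha^\vee|_{\algintF^\times} = 1$ for $\alpha \in \Phi_\chi$ into this single value. Since $\chi$ has depth zero, the homomorphism $u \mapsto \chi(\alpha^\vee(u))$ on $\algintF^\times$ factors through the residue field $\resfield^\times$; because the image of $\tilde{x}$ generates $\resfield^\times$, this factored homomorphism is trivial if and only if it is trivial at $\tilde{x}$, i.e.\ $\chi(\alpha^\vee(\tilde{x})) = 1$. Combined with the previous computation, this gives the bridge, and the proposition then drops out: $\kapchi \in \dzrelgrpJ$ forces $\kapchi \in \relgrp$ (hence $\kapchi \in \dzrelgrp$) together with $\alpha^\vee(\kapchi) = 1$ for all $\alpha \in J$, which is exactly $\Phi_\chi \supseteq J$, and every step is reversible.

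The only step requiring genuine care, rather than formal unwinding, is this last reduction from ``trivial on all of $\algintF^\times$'' to ``trivial at the single element $\tilde{x}$.'' It hinges on two facts working in tandem: depth-zero collapses the condition to the residue field, and $\tilde{x}$ was chosen to generate $\resfield^\times$. I expect the main obstacle to be stating this cleanly while keeping straight the distinction between $\algintF^\times$, its reduction $\resfield^\times$, and the role of the fixed generator; the remaining algebra is an immediate consequence of the LLC-for-Tori diagram together with Definition~\ref{endoscopic-element-definition}.
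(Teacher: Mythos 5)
Your proposal is correct and follows essentially the same route as the paper's proof: both directions hinge on the identity $\alpha^\vee(\kapchi) = \chi(\alpha^\vee(\tilde{x}))$ coming from the LLC-for-Tori diagram and Definition~\ref{endoscopic-element-definition}, combined with the definition of $\Phi_\chi$. The only difference is that you spell out the reduction from triviality of $\chi \circ \alpha^\vee$ on all of $\mathcal{O}_F^\times$ to triviality at the single generator $\tilde{x}$ (via depth zero and the generator property), a step the paper's proof uses implicitly.
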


\begin{proof}
If $\kapchi \in \dzrelgrpJ$, then for all $\alpha \in J$ we have $\alpha^\vee (\kapchi) = 1$. It follows that $\chi \circ \alpha^\vee (\tilde{x}) = 1$. By definition this means $\alpha \in \Phi_\chi$, so in total, $J \subseteq \Phi_\chi$.

Conversely, if $\kapchi \in \dzrelgrp$ satisfies $\Phi_\chi \supseteq J$, then $\chi \circ \alpha^\vee (\tilde{x}) = 1$ for all $\alpha \in J$. Again, $\alpha^\vee (\kapchi) = 1$, so that $\kapchi \in \cap_{\alpha \in J} \rm{ker}(\alpha^\vee)$.
\end{proof}

\subsubsection{Lattices in character groups}
\label{section::lattices-in-char-groups}

The relevant subgroup $S_w$ will guide us to an analogue of the critical index torus defined for the Drinfeld case in~\cite{haines-rapoport2012}, Section 6.4. This comes about by realizing the character groups $\chars{\relgrp}$ and $\chars{\relgrpJ}$ as quotients of $\chars{\dualtorus}$ by specifying certain lattices.

The categorical anti-equivalence between diagonalizable groups and their character groups enables us to take advantage of the definition of $S_{w,J}$ as the intersection of two diagonalizable groups. Let $K_J = \bigcap_{\alpha \in J} \rm{ker}(\alpha^\vee)$. In the following diagram, arrows in the left diamond are inclusions while arrows in the right diamond are quotients.
{
\[
\xymatrix{
 & \dualtorus(\valfield)\\
S_w \ar[ur] && K_J \ar[ul] \\
 & S_{w,J} \ar[ul] \ar[ur]
}
\
\xymatrix{
 \\
 \longleftrightarrow \\
}
\
\xymatrix{
 & \chars{\dualtorus} \ar[dl] \ar[dr] \\
\chars{S_w} \ar[dr] && \chars{K_J} \ar[dl] \\
 & \chars{S_{w,J}}
}
\]
}

Quotients of $\chars{\dualtorus}$ correspond to the lattice used to form the quotient. The lattice needed to form the quotient $\chars{S_{w,J}}$ is the lattice generated by the lattices corresponding to $\chars{S_w}$ and $\chars{K_J}$.

\begin{lemma}
\label{lattice-equality-lemma}
Let $w = t_\lambda \bar{w}$ be a $\mu$-admissible element in $\widetilde{W}$. Define a lattice $L_w = \langle w(\nu) - \nu \mid \nu \in \chars{\dualtorus} \rangle$, where $w(\nu) = \lambda + \wbar(\nu)$. Then
$$L_w = \langle \lambda,\: \wbar (\nu) - \nu \mid \nu \in \chars{\dualtorus} \rangle.$$
\end{lemma}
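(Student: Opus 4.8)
The plan is to prove the equality of these two lattices by a straightforward double inclusion, treating it as a purely formal computation with the affine action $w(\nu) = \lambda + \wbar(\nu)$ on $\chars{\dualtorus} = \cochars{T}$. I note at the outset that the hypothesis of $\mu$-admissibility plays no role in this particular identity; it is inherited only from the context in which $w$ arises. Write $M = \langle \lambda,\ \wbar(\nu) - \nu \mid \nu \in \chars{\dualtorus}\rangle$ for the lattice on the right-hand side, so that the goal is to show $L_w = M$.

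For the inclusion $L_w \subseteq M$, I would simply decompose each generator of $L_w$ as $w(\nu) - \nu = \lambda + (\wbar(\nu) - \nu)$. This exhibits every such generator as the sum of $\lambda$ and $\wbar(\nu) - \nu$, both of which are among the generators of $M$. Since $M$ is a subgroup, it follows that $w(\nu) - \nu \in M$ for every $\nu$, and hence $L_w \subseteq M$.

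For the reverse inclusion $M \subseteq L_w$, the key observation is that $\wbar$ acts linearly on $\chars{\dualtorus}$, so $\wbar(0) = 0$; in particular $0$ is an admissible choice of $\nu$ in the defining generating set of $L_w$. Taking $\nu = 0$ gives $w(0) - 0 = \lambda + \wbar(0) = \lambda$, which shows $\lambda \in L_w$. With $\lambda$ now known to lie in $L_w$, I would extract the linear parts by subtraction: for arbitrary $\nu$, the element $\wbar(\nu) - \nu = (w(\nu) - \nu) - \lambda$ is a difference of two elements of $L_w$, hence lies in $L_w$. Thus each generator of $M$ belongs to $L_w$, giving $M \subseteq L_w$ and completing the argument.

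There is essentially no substantive obstacle here; the statement is an elementary lattice identity. The only points requiring care are the convention for the affine action $w(\nu) = \lambda + \wbar(\nu)$ and the fact that $\nu = 0$ is permitted (so that $\lambda$ itself appears directly as a generator via the linearity $\wbar(0)=0$). Once these are recorded, the equality follows from the two one-line computations above.
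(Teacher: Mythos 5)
Your proof is correct and follows essentially the same route as the paper: a double inclusion in which the forward direction is the trivial decomposition $w(\nu) - \nu = \lambda + (\wbar(\nu) - \nu)$ and the reverse direction begins by taking $\nu = 0$ to get $\lambda \in L_w$. Your final step is in fact a one-line streamlining of the paper's argument—where the paper deduces $\wbar(\nu) - \nu \in L_w$ via the auxiliary computation $w(\nu + \lambda) - (\nu + \lambda) = \wbar(\nu) + \wbar(\lambda) - \nu$ after first showing $\wbar(\lambda) \in L_w$, you simply subtract $\lambda$ from $w(\nu) - \nu$ directly—and your observation that $\mu$-admissibility is never used is also accurate.
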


\noindent The $L_w$ so defined is the same as the lattice studied in Section 6.4 of \cite{haines-rapoport2012}.

\begin{proof}
It immediately obvious that $L_w \subseteq  \langle \lambda,\: \wbar \nu - \nu \vert\ \nu \in \chars{\dualtorus} \rangle.$

Let us consider the reverse inclusion. If we choose $\nu = 0$, then $w(0) - 0 = \lambda$ belongs to $L_w$. On the other hand, choosing $\nu = \lambda$ yields $w(\lambda) - \lambda = \bar{w}\lambda$. For any choice of $\nu \in \chars{\dualtorus}$,
$$
w(\nu + \lambda) - (\nu + \lambda) = \bar{w}(\nu) + \bar{w}(\lambda) - \nu.
$$
But we have just seen that $\bar{w}(\lambda) \in L_w$, hence $\bar{w}(\nu) - \nu$ lies in $L_w$ for all $\nu \in \chars{\dualtorus}$. It follows that $L_w \supseteq  \langle \lambda,\: \wbar \nu - \nu \vert\ \nu \in \chars{\dualtorus} \rangle.$
\end{proof}

Let $A$ be a group and $M$ be an $A$-module. The \emph{module of coinvariants} of $M$ is defined as $$
M_A = M/\langle gm - m \mid m \in M,\; g \in A\rangle.
$$
This module satisfies the following universal property: If $N$ is another $A$-module with trivial $A$-action, and there is a surjection $M \rightarrow N$, then there is a unique map $M_A \rightarrow N$.

\begin{prop}
\label{characters-of-relevant-subgroup-proposition}
Let $w = t_\lambda \wbar$ be $\mu$-admissible. Then
$$\chars{\relgrp} = \chars{\dualtorus} / L_w = \chars{\dualtorus}_{\langle\wbar\rangle} / \langle \lambda \rangle.$$
\end{prop}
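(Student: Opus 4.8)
The plan is to prove the two equalities separately: the first is a genuine statement about the structure of $\relgrp$, while the second is purely formal.

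For the first equality $\chars{\relgrp} = \chars{\dualtorus}/L_w$, I would show that $\relgrp$ is \emph{exactly} the annihilator of $L_w$ in $\dualtorus(\valfield)$ and then invoke the anti-equivalence of Theorem~\ref{thm::cat-antiequiv} (equivalently Corollary~\ref{cor::contain-by-annihilation}). Concretely, I would first rewrite $L_w = \langle \lambda,\ \wbar(\nu)-\nu \mid \nu \in \chars{\dualtorus}\rangle$ via Lemma~\ref{lattice-equality-lemma}, so that $\kappa \in \dualtorus(\valfield)$ is annihilated by $L_w$ precisely when $\lambda(\kappa)=1$ and $(\wbar(\nu)-\nu)(\kappa)=1$ for every $\nu$. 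Using the pairing identity $(\wbar(\nu))(\kappa)=\nu(\wbar^{-1}\kappa)$, the second family of conditions becomes $\nu(\wbar^{-1}\kappa\cdot\kappa^{-1})=1$ for all $\nu$, which by Lemma~\ref{lemma::chars-separate-points} (characters separate points) is equivalent to $\wbar\kappa=\kappa$. Hence the annihilator of $L_w$ is exactly $\{\kappa \mid \wbar\kappa=\kappa,\ \lambda(\kappa)=1\}=\relgrp$ by Definition~\ref{defin::relevant-endoscopic-elements}.

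Having identified $\relgrp$ with the annihilator of $L_w$, the anti-equivalence yields $\chars{\relgrp}=\chars{\dualtorus}/L_w$: the restriction map $\chars{\dualtorus}\twoheadrightarrow\chars{\relgrp}$ is surjective, and its kernel is the set of characters vanishing on $\relgrp$. The hard part, and the only place where the argument is more than bookkeeping, is verifying that this kernel is exactly $L_w$ rather than a larger (saturated) lattice. This is where the coefficient field matters: because $\valfield^\times$ contains all roots of unity, $\mathrm{Hom}(-,\valfield^\times)$ separates points of every finitely generated abelian group, so a character of $\dualtorus$ that is trivial on $\relgrp=L_w^\perp$ must already lie in $L_w$. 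Any torsion in $\chars{\dualtorus}/L_w$ is thus genuinely recorded by $\relgrp$, which is precisely why $\relgrp$ need not be a torus (cf.\ Example~\ref{example::rel-group-not-a-torus}); one must not silently replace $L_w$ by its saturation.

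For the second equality $\chars{\dualtorus}/L_w=\chars{\dualtorus}_{\langle\wbar\rangle}/\langle\lambda\rangle$, I would argue formally. The module of coinvariants is $\chars{\dualtorus}_{\langle\wbar\rangle}=\chars{\dualtorus}/N$ with $N=\langle gm-m \mid g\in\langle\wbar\rangle,\ m\in\chars{\dualtorus}\rangle$; a short telescoping check shows $N=\langle \wbar(\nu)-\nu \mid \nu\in\chars{\dualtorus}\rangle$, since each $\wbar^{k}(\nu)-\nu$ is a sum of terms $\wbar(\mu)-\mu$. Passing to the further quotient by the image of $\lambda$ and applying the third isomorphism theorem gives
$$
\chars{\dualtorus}_{\langle\wbar\rangle}/\langle\lambda\rangle = \chars{\dualtorus}/(N+\mathbb{Z}\lambda) = \chars{\dualtorus}/\langle \lambda,\ \wbar(\nu)-\nu \mid \nu\in\chars{\dualtorus}\rangle,
$$
which equals $\chars{\dualtorus}/L_w$ by Lemma~\ref{lattice-equality-lemma}, completing the chain of equalities.
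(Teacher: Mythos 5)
Your proposal is correct and follows essentially the same route as the paper: the paper's proof likewise identifies $\relgrp$ with the joint kernel of $\lambda$ and the characters $\wbar(\nu)-\nu$ (i.e., the annihilator of $L_w$), using the same pairing identity $(\wbar\cdot\nu)(\kappa)=\nu(\wbar^{-1}\kappa)$ together with Lemma~\ref{lemma::chars-separate-points}, and then computes $\chars{\relgrp}$ via the anti-equivalence through Lemmas~\ref{lemma::kernel-lattice-relationship} and~\ref{lemma::chars-of-intersected-groups} and Lemma~\ref{lattice-equality-lemma}. Your explicit remark that the kernel of restriction is exactly $L_w$ rather than its saturation (since $\mathrm{Hom}(-,\valfield^\times)$ separates points of finitely generated abelian groups) is sound and is the same content the paper packages into those two lemmas, and your telescoping verification of the coinvariants identity is a correct spelling-out of a step the paper treats as immediate.
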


\begin{proof}
Let $K$ be the intersection of kernels
$$
K = {\rm{ker}}(\lambda) \cap \left(\bigcap_{\nu \in \chars{\dualtorus}} {\rm{ker}}(\bar{w}(\nu) - \nu)\right)
$$
We contend that this set equals $S_w$ in $\dualtorus(\valfield)$.

Recall that $S_w$ is defined as the subgroup of $\dualtorus(\valfield)$ comprising those $\kappa$ such that $\lambda(\kappa) = 1$ and $\bar{w} \kappa = \kappa$. Thus all $\kappa \in S_w$ lie in ${\rm{ker}}(\lambda)$, and for each $\nu \in \chars{\dualtorus}$,
$$
\Big((\bar{w}\cdot\nu)(\kappa)\Big)\nu(\kappa)^{-1} = \nu(\bar{w}^{-1}\kappa)\nu(\kappa)^{-1} = \nu(\kappa)\nu(\kappa)^{-1} = 1.
$$
So $\kappa \in {\rm{ker}}(\wbar(\nu)-\nu)$ for all $\nu \in \chars{\dualtorus}$. It follows that $S_w \subseteq K$.

Conversely, if we choose some $\kappa_0 \in K$, then $\lambda(\kappa_0) = 1$ from the definition, while for each character $\nu$ on $\dualtorus(\valfield)$, $\nu(\bar{w}^{-1} \kappa_0) = \nu(\kappa_0).$ By Lemma~\ref{lemma::chars-separate-points}, $\bar{w}^{-1} \kappa_0 = \kappa_0$. We conclude that $\kappa_0 \in S_w$. This proves $S_w = K$.

For a character $\eta \in \chars{\dualtorus}$, Lemma~\ref{lemma::kernel-lattice-relationship} implies $\chars{{\rm{ker}}\;\eta}= \chars{\dualtorus} / \langle\eta\rangle.$ Lemma~\ref{lemma::chars-of-intersected-groups} shows that the character group of an intersection of kernels equals the quotient of $\chars{\dualtorus}$ by the lattice generated by the characters whose kernels were intersected. In this case,
$$
\chars{K} = \chars{\dualtorus}/\langle \lambda, \bar{w}(\nu) - \nu \mid \nu \in \chars{\dualtorus}\rangle.
$$
Applying $K = S_w$ and Lemma~\ref{lattice-equality-lemma}, we get the desired result: $\chars{S_w} = \chars{\dualtorus}/L_w.$
\end{proof}

\begin{example}
\label{example::rel-group-not-a-torus}
Let $G = GL_4$ and $\mu = (1,1,0,0)$. Let $w = t_{\mu}(132)$, so that $\wbar = (132)$ is a permutation in the symmetric group $W \cong S_3$. Then the module of coinvariants $\cochars{\torus}_{\langle \wbar \rangle}$ is generated by $\bar{\varepsilon}_1, \bar{\varepsilon}_4$, where the $\varepsilon_i$ are the coordinate cocharacters of $\cochars{\torus}$, which form a basis, and $\bar{\varepsilon}_i$ is the image of $\varepsilon_i$ in the module of coinvariants. Consider
$$
\chars{S_w} = \cochars{T}_{\langle \wbar \rangle} / \langle \mu \rangle.
$$
Then as elements of $\chars{S_w}$,
$$
2\bar{\varepsilon}_1 = \bar{\varepsilon}_1 + \bar{\varepsilon}_2 = \bar{\mu} = 0.
$$
Thus $\chars{\relgrp}$ has 2-torsion in this case, and so $\relgrp$ is not a torus.
\end{example}

\begin{lemma}
\label{lemma::kernel-lattice-relationship}
For any root subsystem $J \subseteq \Phi$, let $K_J = \cap_{\alpha \in J} \rm{ker}(\alpha^\vee)$ and let $L_J = \mathbb{Z}\langle \alpha^\vee \mid \alpha \in J\rangle.$ Then $\chars{K_J} = \chars{\dualtorus}/L_J.$
\end{lemma}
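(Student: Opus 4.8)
The plan is to reduce the statement to the case of a single character and then assemble the result from the intersection formula of Lemma~\ref{lemma::chars-of-intersected-groups}, all within the anti-equivalence of Theorem~\ref{thm::cat-antiequiv}. First I would record that each coroot $\alpha^\vee$ (for $\alpha \in J$) is an element of $\cochars{\torus} = \chars{\dualtorus}$, so that $\ker(\alpha^\vee)$ is a genuine diagonalizable subgroup of $\dualtorus(\valfield)$ and $L_J$ is a sublattice of $\chars{\dualtorus}$; the lemma is then a statement purely about character groups.

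The core step is the one-character version: for any $\eta \in \chars{\dualtorus}$, the closed subgroup $\ker(\eta)$ satisfies $\chars{\ker(\eta)} = \chars{\dualtorus}/\langle \eta\rangle$. By Theorem~\ref{thm::cat-antiequiv}, $\chars{\ker(\eta)}$ is the quotient of $\chars{\dualtorus}$ by the sublattice $L = \{\chi \in \chars{\dualtorus} \mid \chi|_{\ker(\eta)} = 1\}$, and $\ker(\eta)$ is recovered as the common zero locus of $L$. Clearly $\langle\eta\rangle \subseteq L$, so only the reverse inclusion requires work. Since $\chars{\dualtorus} = \cochars{\torus}$ is free of finite rank $d$, I would write $\eta = m\eta_0$ with $\eta_0$ primitive and $m \geq 1$, complete $\eta_0$ to a basis $\eta_0, f_2, \ldots, f_d$, and use it to identify $\dualtorus \cong \mathbb{G}_m^d$ so that $\eta$ becomes $(z_1, \ldots, z_d) \mapsto z_1^m$ and $\ker(\eta) = \mathbb{G}_m[m] \times \mathbb{G}_m^{d-1}$. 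A character $\sum_i a_i f_i$ (with $f_1 = \eta_0$) is then trivial on $\ker(\eta)$ precisely when $a_2 = \cdots = a_d = 0$ and $m \mid a_1$, i.e.\ precisely when it lies in $\langle \eta\rangle$. Hence $L = \langle\eta\rangle$.

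Next I would apply this to each $\eta = \alpha^\vee$, giving $\chars{\ker(\alpha^\vee)} = \chars{\dualtorus}/\langle\alpha^\vee\rangle$, and then extend Lemma~\ref{lemma::chars-of-intersected-groups} from two factors to the finite intersection $K_J = \bigcap_{\alpha \in J}\ker(\alpha^\vee)$ by a routine induction (finiteness of $J$ follows from finiteness of $\Phi$): the character group of a finite intersection of diagonalizable subgroups is $\chars{\dualtorus}$ modulo the lattice generated by all the individual lattices. This yields
$$
\chars{K_J} = \chars{\dualtorus}/\langle \alpha^\vee \mid \alpha \in J\rangle = \chars{\dualtorus}/L_J,
$$
as claimed.

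The only place carrying real content is the reverse inclusion $L \subseteq \langle\eta\rangle$ in the one-character case, where the freeness of $\chars{\dualtorus}$ and the completion of a primitive vector to a basis (equivalently, Smith normal form) are essential; everything else is a formal consequence of Theorem~\ref{thm::cat-antiequiv} and Lemma~\ref{lemma::chars-of-intersected-groups}, with Lemma~\ref{lemma::chars-separate-points} ensuring that $K_J$ is exactly the subgroup cut out by $L_J$, so no separate identification of the group itself is needed.
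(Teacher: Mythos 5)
Your proof is correct, and it shares the paper's two-step skeleton exactly: first establish the single-character identity $\chars{{\rm{ker}}(\eta)} = \chars{\dualtorus}/\langle\eta\rangle$, then apply Lemma~\ref{lemma::chars-of-intersected-groups} repeatedly over the finitely many $\alpha \in J$. Where you diverge is in how that key one-character step is proved. The paper dualizes the short exact sequence $1 \rightarrow {\rm{ker}}(\alpha^\vee) \rightarrow \dualtorus \rightarrow \mathbb{G}_m \rightarrow 1$ under the anti-equivalence of Theorem~\ref{thm::cat-antiequiv} and reads off $\chars{{\rm{ker}}(\alpha^\vee)} = \chars{\dualtorus}/\mathbb{Z}\alpha^\vee$ from exactness of the character-group functor. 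You instead compute the annihilator lattice by hand: writing $\eta = m\eta_0$ with $\eta_0$ primitive, completing to a basis, and checking that a character trivial on $\mathbb{G}_m[m] \times \mathbb{G}_m^{d-1}$ must lie in $\langle\eta\rangle$. The paper's route is shorter once one grants that duality is exact; yours is more elementary and makes explicit where disconnectedness enters---the torsion factor $\mathbb{G}_m[m]$ when $\eta$ is imprimitive, a case that can genuinely occur since coroots need not be primitive in $\cochars{\torus} = \chars{\dualtorus}$, and which the exact-sequence argument absorbs silently because $z \mapsto z^m$ remains surjective on $\mathbb{G}_m$ over $\valfield$. After this step the two arguments coincide; your closing appeal to Lemma~\ref{lemma::chars-separate-points} is harmless but not needed for the lemma itself.
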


\begin{proof}
Choose any $\alpha \in J$. This determines a short exact sequence
$$
1 \longrightarrow {\rm{ker}}(\alpha^\vee) \longrightarrow \dualtorus \longrightarrow \mathbb{G}_m \longrightarrow 1.
$$
The corresponding exact sequence for character groups is
$$
0 \longleftarrow \chars{{\rm{ker}}(\alpha^\vee)} \longleftarrow \chars{\dualtorus} \stackrel{f}{\longleftarrow} \mathbb{Z} \longleftarrow 0,
$$
where $f:1\mapsto \alpha^\vee$. It follows that $\chars{{\rm{ker}}(\alpha^\vee)} = \chars{\dualtorus}/\mathbb{Z}\alpha^\vee$.

Now consider all $\alpha \in J$ at once. Repeated application of Lemma~\ref{lemma::chars-of-intersected-groups} shows that $\chars{K_J} = \chars{\dualtorus}/L_J.$
\end{proof}

\begin{cor}
\label{cor::chars-of-swj}
Let $L_{w,J}$ be the lattice of $\chars{\dualtorus}$ generated by $L_w$ and $L_J$ as defined above. Then
$\chars{S_{w,J}} = \chars{\dualtorus}/L_{w,J}.$
\end{cor}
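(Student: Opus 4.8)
The plan is to read the statement off directly from Lemma~\ref{lemma::chars-of-intersected-groups}, since $S_{w,J}$ was \emph{defined} in Definition~\ref{defin::dzrelgrpJ} as the intersection $S_w \cap K_J$, and the character groups of both factors are already in hand. First I would record that both $S_w$ and $K_J$ are diagonalizable subgroups of $\dualtorus(\valfield)$, which is precisely the hypothesis needed to invoke the intersection lemma: $K_J = \bigcap_{\alpha \in J}\rm{ker}(\alpha^\vee)$ is an intersection of kernels of characters, hence Zariski-closed, while $S_w$ is closed by Proposition~\ref{prop::relevant-group}; and any closed subgroup of the torus $\dualtorus$ over $\mathbb{C}$ is diagonalizable.

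Next I would assemble the two ingredients that identify the relevant lattices. Proposition~\ref{characters-of-relevant-subgroup-proposition} gives $\chars{S_w} = \chars{\dualtorus}/L_w$, and Lemma~\ref{lemma::kernel-lattice-relationship} gives $\chars{K_J} = \chars{\dualtorus}/L_J$. We are therefore in the exact situation of Lemma~\ref{lemma::chars-of-intersected-groups} with $D_1 = S_w$, $L_1 = L_w$ and $D_2 = K_J$, $L_2 = L_J$. Applying that lemma yields
$$
\chars{S_{w,J}} = \chars{S_w \cap K_J} = \chars{\dualtorus}/\langle L_w, L_J\rangle.
$$
Since $L_{w,J}$ is by definition the lattice generated by $L_w$ and $L_J$, we have $\langle L_w, L_J\rangle = L_{w,J}$, so the right-hand side is exactly $\chars{\dualtorus}/L_{w,J}$, which is the claim.

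There is essentially no obstacle here: all the content lives in the three prior results, and the corollary is a bookkeeping assembly of them. The only points one must not overlook are the diagonalizability hypothesis required by Lemma~\ref{lemma::chars-of-intersected-groups} (checked above) and the consistency of notation for the generated lattice. On the latter point, the diagram preceding the statement already describes the lattice forming $\chars{S_{w,J}}$ as ``the lattice generated by the lattices corresponding to $\chars{S_w}$ and $\chars{K_J}$,'' so the symbol $\langle L_w, L_J\rangle$ appearing from the lemma and the symbol $L_{w,J}$ appearing in the corollary statement denote the same object, and no further argument is needed.
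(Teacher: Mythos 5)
Your proposal is correct and matches the paper's own (implicit) argument exactly: the corollary is obtained by applying Lemma~\ref{lemma::chars-of-intersected-groups} to $D_1 = S_w$ and $D_2 = K_J$, with the lattice identifications supplied by Proposition~\ref{characters-of-relevant-subgroup-proposition} and Lemma~\ref{lemma::kernel-lattice-relationship}, just as you assemble them. Your additional check that $S_w$ and $K_J$ are diagonalizable (closed subgroups of $\dualtorus$ over $\mathbb{C}$) is a sound way of verifying the hypothesis the paper leaves tacit.
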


A root sub-system $J \subseteq \Phi$ determines a subgroup $W_J = \langle s_\alpha \mid \alpha \in J^+\rangle$ of $W$.

\begin{prop}
\label{prop::structure-of-lwj}
Let $w = t_\lambda \bar{w} \in {\rm{Adm}}_{G_r}(\mu)$ and $J \subseteq \Phi$. If $\bar{w} \in W_J$, then $L_{w,J} = \langle \lambda, \alpha^\vee \mid \alpha \in J^+\rangle$.
\end{prop}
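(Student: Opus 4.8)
The plan is to unwind $L_{w,J}$ via Corollary~\ref{cor::chars-of-swj} and reduce the statement to a computation of how $\wbar$ acts on $\chars{\dualtorus} = \cochars{\torus}$. By that corollary, combined with Lemma~\ref{lattice-equality-lemma} and Lemma~\ref{lemma::kernel-lattice-relationship},
$$
L_{w,J} = \langle L_w, L_J\rangle = \langle\, \lambda,\ \wbar(\nu) - \nu,\ \alpha^\vee \mid \nu \in \chars{\dualtorus},\ \alpha \in J \,\rangle.
$$
Because $(-\alpha)^\vee = -\alpha^\vee$, the coroots of $J$ and of $J^+$ span the same sublattice, so writing $M = \langle \lambda, \alpha^\vee \mid \alpha \in J^+\rangle$ the inclusion $M \subseteq L_{w,J}$ is immediate (both $\lambda$ and every $\alpha^\vee$ already appear among the generators). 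The whole proposition thus reduces to the reverse inclusion, and since $\lambda$ and $L_J$ already lie in $M$, it suffices to prove that $\wbar(\nu) - \nu \in L_J$ for every $\nu \in \chars{\dualtorus}$.

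First I would record the single-reflection case. For $\alpha \in J$ the reflection $s_\alpha$ acts on the coweight lattice $\cochars{\torus}$ by $s_\alpha(\nu) = \nu - \langle \nu, \alpha\rangle\,\alpha^\vee$, where $\langle \nu, \alpha\rangle \in \mathbb{Z}$ by the defining property of a coweight; hence $s_\alpha(\nu) - \nu = -\langle \nu, \alpha\rangle\,\alpha^\vee \in \mathbb{Z}\alpha^\vee \subseteq L_J$. This is the one place the combinatorics of $J$ actually enters.

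Next I would promote this to all of $W_J$. By the definition $W_J = \langle s_\alpha \mid \alpha \in J^+\rangle$, any $\wbar \in W_J$ factors as $\wbar = s_{\beta_1}\cdots s_{\beta_k}$ with $\beta_i \in J^+ \subseteq J$, and I would induct on $k$ using the cocycle identity
$$
\wbar(\nu) - \nu = \big(s_{\beta_1}(\wbar'(\nu)) - \wbar'(\nu)\big) + \big(\wbar'(\nu) - \nu\big),\qquad \wbar' = s_{\beta_2}\cdots s_{\beta_k}.
$$
The first bracket lies in $L_J$ by the single-reflection case applied to the coweight $\wbar'(\nu)$, and the second by the inductive hypothesis, so $\wbar(\nu) - \nu \in L_J$ for all $\nu$. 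Consequently $L_w = \langle \lambda, \wbar(\nu) - \nu\rangle \subseteq M$, and since $L_J \subseteq M$ as well, $L_{w,J} = \langle L_w, L_J\rangle \subseteq M$; combined with $M \subseteq L_{w,J}$ this yields the claimed equality.

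The argument is essentially formal, so I do not anticipate a deep obstacle: the sole structural input is that $\wbar$ is a product of reflections attached to roots of $J$, which is built into the definition of $W_J$, and everything else is the telescoping bookkeeping above. The one point requiring care is the reflection identity itself — in particular checking, with the inner-product conventions fixed in the Preliminaries, that the displacement $s_\alpha(\nu) - \nu$ is an \emph{integer} multiple of the coroot $\alpha^\vee$, so that it genuinely lands in the lattice $L_J$ rather than merely in $L_J \otimes \mathbb{Q}$.
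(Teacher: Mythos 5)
Your proof is correct and follows essentially the same route as the paper's: both reduce the claim, via Lemma~\ref{lattice-equality-lemma}, to showing $\wbar(\nu) - \nu \in \mathbb{Z}\langle \alpha^\vee \mid \alpha \in J^+\rangle$ for all $\nu$, and both establish this by writing $\wbar$ as a product of reflections attached to roots of $J$ and peeling them off one at a time, each step contributing the integer multiple $-\langle \cdot, \alpha\rangle\,\alpha^\vee$ of a coroot. Your inductive cocycle identity is just a repackaging of the paper's explicit telescoped expansion $\wbar(\nu) = \nu - \sum_k \langle (s_{k+1}\cdots s_m)(\nu), \alpha_k\rangle\,\alpha_k^\vee$, and your closing remark on integrality of the pairing is the same observation the paper uses implicitly.
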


\begin{proof}
By its construction, $L_{w,J} = \langle \lambda + \bar{w}(\nu) - \nu, \alpha^\vee \mid \nu \in \chars{\dualtorus}, \alpha \in J^+\rangle$. Lemma~\ref{lattice-equality-lemma} implies $L_{w,J} = \langle \lambda, \bar{w}(\nu) - \nu, \alpha^\vee \mid \nu \in \chars{\dualtorus}, \alpha \in J^+\rangle$, so that $L_{w,J}$ clearly contains $\langle \lambda, \alpha^\vee \mid \alpha \in J^+\rangle$.\

In order to prove the reverse inclusion, it is enough to show that $\bar{w}(\nu) - \nu$ lies in the span of the coroots of $J^+$ for all $\nu \in \chars{\dualtorus}$. 

Since $W_J$ is a reflection subgroup, we can find an expression $\bar{w} = s_1 \cdots s_m$ where the $s_i$ are reflections in $W_J$. Choose any $\nu \in \chars{\dualtorus}$. Observe that
$$
(s_1 \cdots s_m)(\nu) = (s_2 \cdots s_m)(\nu) - \langle (s_2\cdots s_m)(\nu), \alpha_1\rangle \alpha_1^\vee,
$$
where $\alpha_1$ is the positive root in $J$ corresponding the to reflection $s_1$. Of course, a similar formula can be applied to the term $(s_2 \cdots s_m)(\nu)$, leading to the expression
$$
\bar{w}(\nu) = \nu - \sum_{k=2}^m \langle (s_k \cdots s_m)(\nu), \alpha_{k-1}\rangle \alpha_{k-1}^\vee.
$$
Therefore, $\bar{w}(\nu) - \nu \in \langle \alpha^\vee \mid \alpha \in J^+ \rangle.$ 
\end{proof}

%

The \textbf{fundamental group} of a reductive algebraic group is defined by
$$
\pi_1(G) = \cochars{\torus}/\mathbb{Z}\Phi^\vee.
$$
A reductive algebraic group $H$ is \textbf{simply-connected} if $\pi_1 (H) = 1$.

\begin{cor}
\label{cor::rank-of-swj}
Suppose that $G$ satisfies the assumptions of Remark~\ref{rmk::roche-assumptions}, in particular, that $G_{\rm{der}}$ is simply connected. Let $w = t_\lambda \bar{w}$ be $\mu$-admissible, and let $J \subseteq \Phi$ be a root subsystem. Suppose $\bar{w} \in W_J$. Then ${\rm{rank}}(S_{w,J}) = {\rm{rank}}(\dualtorus) - {\rm{rank}}(J) - 1$.
\end{cor}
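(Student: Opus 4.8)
The plan is to turn the rank computation into a purely lattice-theoretic statement and then locate the single ``extra'' generator that $\lambda$ contributes. First I would use the anti-equivalence of Theorem~\ref{thm::cat-antiequiv}: writing $\relgrpJ = A \times H$ with $A$ a torus and $H$ finite, one has $\chars{\relgrpJ} = \chars{A}\oplus\chars{H}$ with $\chars{A}$ free of rank $\dim A = {\rm{rank}}(\relgrpJ)$ and $\chars{H}$ finite, so that ${\rm{rank}}(\relgrpJ)$ is exactly the free rank of $\chars{\relgrpJ}$. By Corollary~\ref{cor::chars-of-swj}, $\chars{\relgrpJ} = \chars{\dualtorus}/L_{w,J}$, and since $\chars{\dualtorus} = \cochars{\torus}$ is free of rank $d$, this yields
$$
{\rm{rank}}(\relgrpJ) = d - {\rm{rank}}_{\mathbb{Z}}(L_{w,J}).
$$
Hence it suffices to show ${\rm{rank}}_{\mathbb{Z}}(L_{w,J}) = {\rm{rank}}(J) + 1$.

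Next I would compute this lattice rank using the hypothesis $\wbar \in W_J$. Proposition~\ref{prop::structure-of-lwj} then gives $L_{w,J} = \langle \lambda,\ \alpha^\vee \mid \alpha \in J^+\rangle$. The coroots $\{\alpha^\vee \mid \alpha \in J^+\}$ span a $\mathbb{Q}$-subspace of $\cochars{\torus}\otimes\mathbb{Q}$ of dimension exactly ${\rm{rank}}(J)$, so ${\rm{rank}}_{\mathbb{Z}}(L_{w,J})$ equals ${\rm{rank}}(J) + 1$ precisely when $\lambda \notin \mathbb{Q}J^\vee$, and equals ${\rm{rank}}(J)$ otherwise. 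The entire statement therefore reduces to verifying that $\lambda$ does not lie in the rational span of the coroots of $J$.

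The crux is thus to prove $\lambda \notin \mathbb{Q}J^\vee$, and here I would enlarge $J$ to all of $\Phi$: since $\mathbb{Q}J^\vee \subseteq \mathbb{Q}\Phi^\vee$, it is enough to show $\lambda \notin \mathbb{Q}\Phi^\vee$. The assumption that $G_{\rm{der}}$ is simply connected says precisely that $\pi_1(G) = \cochars{\torus}/\mathbb{Z}\Phi^\vee$ is torsion-free, whence $\cochars{\torus}\cap\mathbb{Q}\Phi^\vee = \mathbb{Z}\Phi^\vee$. So $\lambda \in \mathbb{Q}\Phi^\vee$ would force $\lambda \in \mathbb{Z}\Phi^\vee$, i.e. the image of $\lambda$ in $\pi_1(G)$ would be trivial. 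But $w$ is $\mu$-admissible and $\mu$ is minuscule, so $\lambda \in W\mu$ (Kottwitz--Rapoport); since $W$ preserves $\mathbb{Z}\Phi^\vee$, the image of $\lambda$ in $\pi_1(G)$ coincides with that of $\mu$. Finally $\mu \notin \mathbb{Z}\Phi^\vee$: a nonzero minuscule coweight represents a nonzero class in $P^\vee/Q^\vee$ and so never lies in the coroot lattice, and in the reducible case each nonzero irreducible component would otherwise be a nonzero minuscule coweight inside its own coroot lattice, which is impossible. This contradiction shows $\lambda \notin \mathbb{Q}\Phi^\vee$, hence $\lambda \notin \mathbb{Q}J^\vee$, and the count follows.

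The step I expect to be the main obstacle is this last one: establishing that $\mu$ (and therefore $\lambda$) has nonzero image in $\pi_1(G)$. This is exactly where the two standing hypotheses interlock, since torsion-freeness of $\pi_1(G)$ is what upgrades ``$\lambda \in \mathbb{Q}\Phi^\vee$'' to ``$\lambda \in \mathbb{Z}\Phi^\vee$,'' and the minuscule condition is what keeps $\mu$ out of $\mathbb{Z}\Phi^\vee$; without simple-connectedness the equivalence would fail and the rank could drop from ${\rm{rank}}(J)+1$ to ${\rm{rank}}(J)$, invalidating the $-1$ in the formula. I would also take care to justify that admissibility transfers the $\pi_1(G)$-class from $\mu$ to $\lambda$ (via $\lambda \in W\mu$, or equivalently that Bruhat-comparable elements of $\widetilde{W}$ share a length-zero component), as this is the only place the admissibility hypothesis is genuinely used.
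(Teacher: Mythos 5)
Your proof is correct, and while it shares the paper's overall skeleton---reduce to the free rank of $\chars{S_{w,J}} = \chars{\dualtorus}/L_{w,J}$ via Corollary~\ref{cor::chars-of-swj} and Proposition~\ref{prop::structure-of-lwj}, then use torsion-freeness of $\pi_1(G) = \cochars{\torus}/\mathbb{Z}\Phi^\vee$ to upgrade ``$\lambda \in \mathbb{Q}\Phi^\vee$'' to ``$\lambda \in \mathbb{Z}\Phi^\vee$''---you settle the crux, namely $\lambda \notin \mathbb{Z}\Phi^\vee$, by a genuinely different mechanism. The paper passes to the dominant representative of $\lambda$, uses $\langle \lambda, \alpha_j \rangle = \delta_{ij}$, and computes $s_i\lambda$ two ways to derive the contradiction $2c_{\alpha_i} = 1$; you instead transfer the $\pi_1(G)$-class of $\lambda$ to that of $\mu$ (via $\lambda \in W\mu$, or your weaker but sufficient observation that Bruhat-comparable elements of $\widetilde{W}$ share a length-zero component, so $\lambda \equiv \mu \bmod \mathbb{Z}\Phi^\vee$) and invoke the classical fact that a nonzero minuscule coweight represents a nontrivial class in $P^\vee/Q^\vee$. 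Your route buys real robustness here: the paper's computation rests on the assertion that $s_i$ permutes the simple coroots $\alpha_j^\vee$ for $j \neq i$, which is false in general (already in type $A_2$ one has $s_1\alpha_2^\vee = \alpha_1^\vee + \alpha_2^\vee$; $s_i$ permutes the \emph{positive} coroots other than $\alpha_i^\vee$, not the simple ones), and taken literally that computation would show no fundamental coweight lies in the coroot lattice---false in types such as $G_2$ or $E_8$, where $P^\vee = Q^\vee$. Minuscularity must therefore enter the argument somewhere, and your appeal to the $P^\vee/Q^\vee$ classification of minuscule coweights (with the correct reduction to irreducible components and the central part) is exactly where it enters, at the modest cost of citing a standard fact rather than being self-contained. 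You also correctly flag, and resolve, the only place admissibility is used; note you inherit from the paper the implicit standing assumption $\mu \neq 0$, which both arguments need.
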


\begin{proof}
By assuming that $G_{\rm{der}}$ is simply connected, we have that $\cochars{\torus} / \mathbb{Z}\Phi^\vee$ is torsion-free. So any torsion element of $\cochars{\torus} / \mathbb{Z}J^\vee$ must come from the quotient $\mathbb{Z}\Phi^\vee / \mathbb{Z}J^\vee$.

We claim that $\lambda \notin \mathbb{Z}\Phi^\vee$. Fix a basis $\Delta$ of $\Phi$.  Without loss of generality, we may assume that $\lambda$ is dominant because the coroot lattice is stable under the $W$-action. A nonzero dominant coweight is in particular a fundamental coweight, which corresponds to some simple root $\alpha_i \in \Delta$. For any $\alpha_j \in \Delta$, $\langle \lambda, \alpha_j\rangle = \delta_{ij}$.

Suppose that $\lambda \in \mathbb{Z}\Phi^\vee$, so that we have an expression
$$
\lambda = \sum_{\alpha_j \in \Delta} c_{\alpha_j} \alpha_j^\vee,\ {\rm{where}}\ c_{\alpha_j} \in \mathbb{Z}.
$$
We apply the simple reflection $s_i$ to $\lambda$ in two ways. First,
$$
s_i \lambda = \lambda - \langle \lambda, \alpha_i \rangle \alpha_i^\vee = \lambda - \alpha_i^\vee.
$$
On the other hand,
$$
s_i \lambda = s_i \Big( \sum_{\alpha_j \in \Delta} c_{\alpha_j} \alpha_j^\vee\Big) = \sum_{\alpha_j \in \Delta} c_{\alpha_j} s_i \alpha_j^\vee = -c_{\alpha_i}\alpha_i^\vee + \sum_{\alpha_j \in \Delta \setminus \{\alpha_i\}} d_j \alpha_j^\vee,
$$
where we have used that $s_i$ permutes the simple coroots $\alpha_j^\vee$ for $j \neq i$. The coefficients $d_{\alpha_j}$ are the permuted $c_{\alpha_j}$. Now combine these two different forms of $s_i \lambda$ to get a relation
$$
(2c_{\alpha_i} - 1)\alpha_i^\vee + \sum_{\alpha_j \in \Delta \setminus \{\alpha_i\}} (c_{\alpha_j} - d_{\alpha_j}) \alpha_j^\vee = 0.
$$
This is a linear combination of the basis elements of $\mathbb{Z}\Phi^\vee$, hence all coefficients must be zero. This implies $2c_{\alpha_i} = 1$, which is impossible since $c_{\alpha_i} \in \mathbb{Z}$.


Because $\lambda \notin \mathbb{Z}\Phi^\vee$, it is not a torsion element of $\cochars{\torus} / \mathbb{Z}J^\vee$, and consequently $\chars{\dualtorus}/L_{w,J} = \left(\chars{\dualtorus}/L_J\right)/\mathbb{Z}\lambda$ has rank equal to $\Big({\rm{rank}}(\dualtorus) - {\rm{rank}}(J) - 1\Big)$, because ${\rm{rank}}(L_J) = {\rm{rank}}(J)$.



\end{proof}

\begin{rmk}
\label{rmk::assume-gder-sc}
The assumption that $G_{\rm{der}}$ be simply connected still admits the $GL_n$ and $GSp_{2n}$ cases. Moreover, Corollary~\ref{cor::rank-of-swj} is only used to find the rank of certain $S_{w, J}$ at the very end of the proof of the main theorem (Theorem~\ref{main-theorem}); the rest of the proof is independent of this assumption.
\end{rmk}

When $G$ is $GL_n$, $GSp_{4}$ or $GSp_{6}$, we can give a precise description of the structure of $\chars{S_{w,J}}$ as a finitely generated abelian group. More specifically, in these cases we can say something about how torsion appears, rather than only finding the rank of the group.

\begin{lemma}
\label{lemma::rel-grp-gln-gsp2n}
Suppose $G=GL_n$, $GSp_{4}$ or $GSp_{6}$. Consider $S_{w,J}$ for $\mu$-admissible $w$ and a root subsystem $J \subseteq \Phi$. The Smith form of $\chars{S_{w,J}}$ has a single invariant factor $c_1$, and
$$
\chars{S_{w,J}} \cong
\begin{cases}
\mathbb{Z}^{{\rm{rank}}(\dualtorus) - {\rm{rank}}(J)-1}, & {\rm{if}}\ c_1 = \pm 1,\\
\mathbb{Z}^{{\rm{rank}}(\dualtorus) - {\rm{rank}}(J)-1} \times \mathbb{Z}/c_1 \mathbb{Z}, & {\rm{otherwise}}.
\end{cases}
$$
\end{lemma}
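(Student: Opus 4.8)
The plan is to exploit Corollary~\ref{cor::chars-of-swj}, which identifies $\chars{S_{w,J}}$ with the quotient $\chars{\dualtorus}/L_{w,J} = \cochars{\torus}/L_{w,J}$, and to reduce the lemma to a statement about the torsion of this quotient. Since the free rank is already pinned down as ${\rm{rank}}(\dualtorus) - {\rm{rank}}(J) - 1$ by Corollary~\ref{cor::rank-of-swj}, the only remaining assertion is that the torsion subgroup of $\cochars{\torus}/L_{w,J}$ is cyclic, i.e.\ that at most one invariant factor exceeds $1$. First I would invoke Proposition~\ref{prop::structure-of-lwj} (in the case $\bar{w} \in W_J$, which is the situation in which $S_{w,J}$ arises) to replace $L_{w,J}$ by the more tractable lattice $\mathbb{Z}\lambda + \mathbb{Z}J^\vee$, where $\mathbb{Z}J^\vee = \langle \alpha^\vee \mid \alpha \in J^+\rangle$ is the coroot lattice of $J$. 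Thus it suffices to analyze the torsion of $\cochars{\torus}/(\mathbb{Z}\lambda + \mathbb{Z}J^\vee)$.

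For $G = GL_n$ the argument is clean and uniform in $n$. Here $\cochars{\torus} = \mathbb{Z}^n$, the coroots are the $\varepsilon_i - \varepsilon_j$, and every root subsystem $J$ is a product of type-$A$ subsystems supported on disjoint blocks of coordinates. The coroot lattice of each type-$A$ factor is saturated inside its coordinate block (it is precisely the sum-zero sublattice), so $\mathbb{Z}J^\vee$ is saturated in $\cochars{\torus}$ and $\cochars{\torus}/\mathbb{Z}J^\vee$ is free abelian. Quotienting a free abelian group by the single element $\lambda$ then produces torsion that is automatically cyclic: choosing a basis adapted to $\mathbb{Z}J^\vee$ and writing $\bar\lambda$ for the image of $\lambda$, the torsion is $\mathbb{Z}/c_1\mathbb{Z}$ with $c_1$ equal to the content (gcd of coordinates) of $\bar\lambda$. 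This delivers the single invariant factor.

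For $G = GSp_4$ and $G = GSp_6$ the coroot lattices need not be saturated, which is the source of the difficulty. Here $\Phi$ is of type $C_2$ or $C_3$, so $\Phi^\vee$ is of type $B_2$ or $B_3$, and $\mathbb{Z}J^\vee$ is the root lattice of the subsystem $J^\vee \subseteq \Phi^\vee$. Subsystems of type $D$ (for instance the $A_1\times A_1$ spanned by coroots of the form $\varepsilon_i \pm \varepsilon_j$) have root lattice of index $2$, so $\cochars{\torus}/\mathbb{Z}J^\vee$ can carry a $\mathbb{Z}/2\mathbb{Z}$ of torsion. The key low-rank input is that in ranks $2$ and $3$ one cannot fit two orthogonal $D$-type blocks, so the torsion of $\cochars{\torus}/\mathbb{Z}J^\vee$ is cyclic of order at most $2$; writing it as $\mathbb{Z}^m \oplus \mathbb{Z}/e\mathbb{Z}$ with $e \in \{1,2\}$, I would then pass to the further quotient by $\bar\lambda$. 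When the $\mathbb{Z}/e$-component of $\bar\lambda$ is nonzero the new generator absorbs the $2$-torsion and the quotient becomes $\mathbb{Z}^m/\langle 2\bar v\rangle$, again with cyclic torsion; the only way cyclicity could fail is if $e = 2$, $\bar\lambda$ lies entirely in the free part, and its content is even, yielding $\mathbb{Z}/2\mathbb{Z} \oplus \mathbb{Z}/2\mathbb{Z}$.

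Ruling out this last configuration is the main obstacle, and it is exactly here that minusculeness of $\mu$ (hence of $\lambda \in W\mu$) and the restriction to these two symplectic groups enter. Since there are only finitely many subsystems $J$ of $C_2$ and $C_3$ and finitely many $\lambda$ in the orbit $W\mu$, I would finish by a direct computation of the Smith normal form of the integer matrix whose rows are $\lambda$ together with the simple coroots of $J$, verifying in each case that at most one nontrivial invariant factor occurs. I expect the minuscule constraint to force $\bar\lambda$ to meet the $2$-torsion precisely when $\mathbb{Z}J^\vee$ is non-saturated, so that the offending $\mathbb{Z}/2\mathbb{Z} \oplus \mathbb{Z}/2\mathbb{Z}$ never materializes. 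This finite verification, which dovetails with the computer calculations appearing elsewhere in the paper, is the crux, whereas the $GL_n$ half and the reduction steps are formal.
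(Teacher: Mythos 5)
Your proposal reaches the right conclusion and ends, as the paper's own proof does, in a finite Smith-normal-form verification, but the two arguments are organized differently. The reduction via Corollary~\ref{cor::chars-of-swj} and Proposition~\ref{prop::structure-of-lwj} matches the paper. For $GL_n$, you prove torsion-freeness of $\cochars{\torus}/\mathbb{Z}J^\vee$ by the direct saturation argument (each type-$A$ block contributes its sum-zero sublattice), where the paper instead quotes that this quotient is the fundamental group of an endoscopic group of $GL_n$, all of which have simply-connected derived groups; these are equivalent, and your version is more self-contained. The real divergence is in the symplectic cases. The paper proves the stronger, $\lambda$-independent statement that $\cochars{\torus}/\mathbb{Z}J^\vee$ is torsion-free for \emph{every} $J$ that occurs (six subsystems for $GSp_4$, thirty for $GSp_6$, each checked by the Smith form of its coroot relation matrix), after which quotienting a free abelian group by the single class $\bar\lambda$ trivially yields at most one invariant factor, with no case analysis. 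You instead allow a possible $\mathbb{Z}/2\mathbb{Z}$ in $\cochars{\torus}/\mathbb{Z}J^\vee$ and run a check indexed by pairs $(J,\lambda)$ to exclude the failure mode $\mathbb{Z}/2\mathbb{Z}\oplus\mathbb{Z}/2\mathbb{Z}$; this is heavier but would succeed.

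One point of your analysis is misdiagnosed, though not fatally, since your terminal computation would expose it: the feared $2$-torsion never materializes at the $\mathbb{Z}J^\vee$ stage, and minusculeness of $\lambda$ is not the mechanism that saves the day (it enters elsewhere, e.g.\ in Corollary~\ref{cor::rank-of-swj} through $\lambda \notin \mathbb{Z}\Phi^\vee$). The $D$-type coroot pairs $\{e_i - e_j,\, e_i + e_j\}$ that would destroy saturation arise from the short-root pairs $\{c_i - c_j,\ c_i + c_j - c_0\}$ of $C_n$, whose sum $2c_i - c_0$ is again a root; any closed subsystem containing both therefore also contains this long root, whose coroot $e_i$ restores saturation of $\mathbb{Z}J^\vee$ (note also that the similitude coordinate $e_0$ is untouched by all coroots). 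This is why none of the $J^\vee$ in the paper's enumeration produces torsion. Your more cautious $(J,\lambda)$-indexed check does buy one thing the paper's does not: robustness in case a non-closed subsystem were ever relevant, since the root system $J_\Delta$ of a reflection subgroup (say $\{\pm(c_1-c_2)\}\coprod\{\pm(c_1+c_2-c_0)\}$ in $C_2$, whose two reflections commute) need not be closed in $\Phi$, and for such a $J$ the quotient $\cochars{\torus}/\mathbb{Z}J^\vee$ genuinely acquires a $\mathbb{Z}/2\mathbb{Z}$.
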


\begin{proof}
We will handle the $GL_n$ case separately from $GSp_4$ and $GSp_6$.



\noindent\textbf{Case 1: $G = GL_n$.} 
The quotient $\cochars{\torus}/\mathbb{Z}J^\vee$ is the fundamental group of an endoscopic group of $GL_n$. All endoscopic groups have the property that their derived groups are simply connected, so $\cochars{\torus}/\mathbb{Z}J^\vee$ is torsion-free for all $J \subseteq \Phi$. We show how to write down an explicit set of generators.

Let $\varepsilon_1, \ldots \varepsilon_n$ be the coordinate generators of $\cochars{T}$. Coroots $\alpha^\vee \in J^\vee$ have the form $\alpha_{uv}^\vee = \varepsilon_u - \varepsilon_v$ The image of $\varepsilon_i$ in $\cochars{T}/\mathbb{Z}J^\vee$ is denoted $\bar{\varepsilon}_i$.

Let $J = J_1 \coprod \cdots \coprod J_r$ be the irreducible decomposition of $J$. Let $i_k$ be the minimal index among those $i$ such that $\langle \varepsilon_{i}, \alpha \rangle \neq 0$ for $\alpha \in J_k$. Then if $\alpha_{uv} \in J_k$, we have $\bar{\varepsilon}_{i_k} = \bar{\varepsilon}_{u} = \bar{\varepsilon}_{v}$ in the quotient. Let $A$ denote the set of indices $t$ such that $\langle \varepsilon_t, \alpha \rangle = 0$ for all $\alpha \in J$. Then $\cochars{T}/\mathbb{Z}J^\vee$ is generated by the $\bar{\varepsilon}_{i_k}$ and the $\bar{\varepsilon}_t$. Relabel this generating set such that
$$
\cochars{T}/\mathbb{Z}J^\vee = \langle \bar{\varepsilon}_{j_1}, \ldots, \bar{\varepsilon}_{j_m} \rangle.
$$

Let $\lambda = \lambda(w)$ be the translation part of $w$. Then $\lambda = \sum_{\ell=1}^t \varepsilon_{i_\ell}$ in $\cochars{T}$. Let $I_\lambda = \{i_1, \ldots, i_t\}$. There is a map
 $$
 f: I_\lambda \rightarrow \{j_1, \ldots, j_m\}
 $$
defined by sending $\varepsilon_i$ to its image in $\cochars{T}/\mathbb{Z}J^\vee$. Set $c_i = \vert f^{-1}(j_i)\vert$ for $1 \le i \le m$, so $\bar{\lambda} = \sum_{k=1}^m c_i \bar{\varepsilon}_{j_k}$.

Corollary~\ref{cor::chars-of-swj} and Proposition~\ref{prop::structure-of-lwj} show that
$$
\chars{S_{w,J}} = \Big(\cochars{T}/\mathbb{Z}J^\vee\Big) / \langle \lambda \rangle.
$$
Thus $\bar{\lambda} = \sum_{k=1}^m c_i \bar{\varepsilon}_{j_k}$ can be thought of as a $1 \times m$ relation matrix among the generators of $\cochars{\torus}/\mathbb{Z}J^\vee$. The resulting Smith form, which describes the finitely generated abelian group $\chars{S_{w,J}}$, has a single invariant factor $c_1$ and rank equal to $\Big({\rm{rank}}(\dualtorus) - {\rm{rank}}(J) - 1\Big)$ by Corollary~\ref{cor::rank-of-swj}.

\noindent\textbf{Case 2: $G = GSp_{4}$ or $GSp_{6}$.} In this case,
$$
\cochars{\torus} = \langle \varepsilon_0, \varepsilon_1, \ldots, \varepsilon_{n} \rangle,
$$
where $\varepsilon_0$ is the similitude cocharacter. The positive coroots have one of three forms for $i < j$: $\varepsilon_i - \varepsilon_j$, $\varepsilon_i + \varepsilon_j$ or $\varepsilon_i$. For these low-rank groups, one can check all possible $J^\vee$ to confirm that $\cochars{\torus}/\mathbb{Z}J^\vee$ is torsion-free and generated by the images of a subset of the $\varepsilon_i$.

There are six possible $J^\vee$ for $GSp_4$:
\begin{align*}
J_1^\vee &= \{ \pm (\varepsilon_1 - \varepsilon_2)\} &  J_4^\vee &= \{ \pm \varepsilon_2 \} \\
J_2^\vee &= \{ \pm (\varepsilon_1 + \varepsilon_2)\} &  J_5^\vee &= \{ \pm \varepsilon_1\} \coprod \{ \pm \varepsilon_2\} \\
J_3^\vee &= \{ \pm \varepsilon_1 \} &  J_6^\vee &= \Phi^\vee \\
\end{align*}
Each system gives a relations matrix $A_J$ among the generators $\varepsilon_0, \ldots, \varepsilon_n$. For example, in the the case of $J_5^\vee$ the matrix is

\[
A_{J_5} =
\begin{bmatrix}
0 & 1 & 0 \\
0 & 0 & 1 \\ 
\end{bmatrix}
\]

where the columns correspond to generators of $\cochars{\torus}$ and each row is a relation coming from a positive coroot. This matrix has invariant factors $(1,1)$, hence $\cochars{\torus} / \mathbb{Z}J_5^\vee \cong \mathbb{Z}^{3-2} = \mathbb{Z}$ is torsion-free. The relation matrices for $J_1^\vee, \ldots, J_4^\vee$ consist of a single row and have invariant factor $(1)$, so each $\cochars{\torus} / \mathbb{Z}J_i^\vee$ is torsion-free for $i = 1,\ldots, 4$. Meanwhile, we know $\cochars{\torus} / \mathbb{Z}\Phi^\vee$ is torsion-free because $GSp_4$ has a simply-connected derived group.

We use the same approach for $GSp_6$. There are thirty possible $J^\vee$, which split into seven families organized by the type of $J^\vee$. We list each type, followed by the number of members in the family, and then a representative $J^\vee$ of the family.
\begin{align*}
{\rm{Type}}\ A_1 &\ (9) & J^\vee &= \{\pm (\varepsilon_1 - \varepsilon_2)\} \\
{\rm{Type}}\ A_1 \times A_1 &\ (9) & J^\vee &= \{\pm (\varepsilon_1 + \varepsilon_2)\} \coprod \{\pm \varepsilon_3\} \\
{\rm{Type}}\ A_2 &\ (4) & J^\vee &= \{\pm (\varepsilon_1 + \varepsilon_2), \pm (\varepsilon_1 + \varepsilon_3), \pm (\varepsilon_2 - \varepsilon_3)\} \\
{\rm{Type}}\ C_2 &\ (3) & J^\vee &= \{\pm (\varepsilon_1 - \varepsilon_2), \pm (\varepsilon_1 + \varepsilon_2), \pm \varepsilon_1, \pm \varepsilon_2\} \\
{\rm{Type}}\ A_1 \times A_1 \times A_1 &\ (1) & J^\vee &= \{\pm \varepsilon_1\} \coprod \{\pm \varepsilon_2\} \coprod \{\varepsilon_3\} \\
{\rm{Type}}\ C_2\times A_1 &\ (3) & J^\vee &= \{\pm (\varepsilon_1 - \varepsilon_2), \pm (\varepsilon_1 + \varepsilon_2), \pm \varepsilon_1, \pm \varepsilon_2\} \coprod \{\pm \varepsilon_3\} \\
{\rm{Type}}\ C_3 &\ (1) & J^\vee &= \Phi^\vee \\
\end{align*}
Now we can construct relation matrices $A_J$ as before to compute the Smith form of $\cochars{\torus} / \mathbb{Z}J^\vee$ in each case. It turns out that in every case, all invariant factors are units, which means the quotient is torsion-free. Here are some examples.

Let $J^\vee = \{\pm (\varepsilon_1 + \varepsilon_2), \pm (\varepsilon_1 + \varepsilon_3), \pm (\varepsilon_2 - \varepsilon_3)\}$:

\[
A_{J} =
\begin{bmatrix}
0 & 1 & 1 & 0 \\
0 & 1 & 0 & 1 \\
0 & 0 & 1 & -1 \\
\end{bmatrix}
\sim
\begin{bmatrix}
1 & 0 & 0 & 0 \\
0 & -1 & 0 & 0 \\
0 & 0 & 0 & 0 \\
\end{bmatrix}
\]

Let $J^\vee = \{\pm (\varepsilon_1 - \varepsilon_2), \pm (\varepsilon_1 + \varepsilon_2), \pm \varepsilon_1, \pm \varepsilon_2\} \coprod \{\pm \varepsilon_3\}$:

\[
A_{J} =
\begin{bmatrix}
0 & 1 & -1 & 0 \\
0 & 1 & 1 & 0 \\
0 & 1 & 0 & 0 \\
0 & 0 & 0 & 1 \\
\end{bmatrix}
\sim
\begin{bmatrix}
1 & 0 & 0 & 0 \\
0 & 1 & 0 & 0 \\
0 & 0 & 1 & 0 \\
0 & 0 & 0 & 0 \\
\end{bmatrix}
\]

Since $\cochars{\torus}/\mathbb{Z}J^\vee$ is torsion-free in all cases for $GSp_4$ and $GSp_6$, we can now proceed as in the $GL_n$ case. Consider the image of $\lambda = \sum_{\ell=1}^r \varepsilon_{i_\ell}$ in $\cochars{\torus}/\mathbb{Z}J^\vee$ to get a relation among the generators of the quotient, then take the Smith form.
\end{proof}


\subsection{Analogues of the critical index torus}

The final section of this chapter determines an analogue of the group of $\resfield$-points of the ``critical index torus'' $T_{S(w)}$ defined in the Drinfeld case~\cite{haines-rapoport2012}, Section 6.4, wherein this subtorus is defined by specifying that a coordinate in the diagonal torus $\torus$ is always 1 if that coordinate does not belong to a certain subset of indices $S(w)$. There are important differences between the Drinfeld case and the more general case being considered here. First, in the general case we cannot find a \emph{subtorus} to play the role of $T_{S(w)}$, though we can approximate its behavior through certain finite subgroups of $\torus(\resfield)$. Second, whereas the support of $\phironeaug$ can be described in terms of the $\mu$-admissible set and the single group $T_{S(w)}(\resfield)$ in the Drinfeld case, the general case admits the possibility that multiple finite groups $\critgrpJ$ are needed to completely describe those $s \in \torus(k_r)$ whose norms $N_r(s)$ contribute to nonzero coefficients $\phironeaug(I_r^+ sw I_r^+)$. This second point will be elaborated in Chapter 5.

\subsubsection{Stratification of $\dzrelgrp$ by $\chi$-root systems}
\label{subsection::stratification-by-chi-root-systems}

Recall that $\dualtorus(\valfield)^{\rm{dz}}$ is the finite subgroup of $\dualtorus(\valfield)$ comprising the depth-zero endoscopic elements $\kappa_\chi$ associated to $\chi \in \torus(\resfield)^\vee$. Consider any $\kapchi \in \dualtorus(\valfield)^{\rm{dz}}$, not necessarily relevant to $w$. The corresponding character $\chi$ determines a $\chi$-root system $\Phi_\chi \subseteq \Phi$, which may be reducible. We stratify $\widehat{T}(\valfield)^{\rm{dz}}$ by defining
$$
\dualtorus(J) = \left\{\kappa_\chi \in \dualtorus(\valfield)^{\rm{dz}} \mid J = \Phi_\chi\right\},
$$
which yields
$$\dualtorus(\valfield)^{\rm{dz}} = \coprod_{J \subseteq \Phi} \dualtorus(J).$$
Now any subset $A \subseteq \dualtorus(\valfield)^{\rm{dz}}$ can be stratified by setting $A(J) = A \cap \dualtorus(J)$ for each $J \subseteq \Phi$, thus we come to consider the strata $\dzrelgrp(J)$ of the depth-zero relevant group.

%

\begin{cor}
\label{cor::union-of-strata}
With notation as above,
$$
\dzrelgrpJ = \coprod_{J \subseteq J^\prime \subseteq \Phi} \dzrelgrp(J^\prime).
$$
\end{cor}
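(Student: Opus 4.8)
The plan is to reduce everything to the set-theoretic characterization of $\dzrelgrpJ$ already established in Proposition~\ref{prop::set-version-of-dzrelgrpJ}, which identifies $\dzrelgrpJ$ with $\{\kapchi \in \dzrelgrp \mid \Phi_\chi \supseteq J\}$. Granting that identification, the corollary becomes a bookkeeping statement about how this containment condition interacts with the stratification of $\dualtorus(\valfield)^{\rm{dz}}$ by the exact value of the $\chi$-root system, and no new geometric input is needed.

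First I would record that the stratum containing a given $\kapchi$ is genuinely a function of the endoscopic element itself, not merely of the character $\chi$. This is visible from the computation in the proof of Proposition~\ref{prop::set-version-of-dzrelgrpJ}: one has $\alpha \in \Phi_\chi$ if and only if $\alpha^\vee(\kapchi) = 1$, so $\Phi_\chi = \{\alpha \in \Phi \mid \alpha^\vee(\kapchi) = 1\}$ depends only on $\kapchi$. This is exactly what makes the decomposition $\dualtorus(\valfield)^{\rm{dz}} = \coprod_{J^\prime \subseteq \Phi} \dualtorus(J^\prime)$ a genuine disjoint union: each depth-zero endoscopic element lies in the single stratum indexed by $J^\prime = \Phi_\chi$.

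Next I would intersect this partition with $\dzrelgrp$. Since the strata of a subset $A$ are defined by $A(J^\prime) = A \cap \dualtorus(J^\prime)$, intersecting the disjoint union above with $\dzrelgrp$ yields $\dzrelgrp = \coprod_{J^\prime \subseteq \Phi} \dzrelgrp(J^\prime)$, still disjoint. It then remains only to select the correct terms: by Proposition~\ref{prop::set-version-of-dzrelgrpJ}, an element $\kapchi \in \dzrelgrp$ lies in $\dzrelgrpJ$ precisely when $\Phi_\chi \supseteq J$, equivalently when its stratum index $J^\prime = \Phi_\chi$ satisfies $J \subseteq J^\prime$. Collecting exactly those strata gives
$$
\dzrelgrpJ = \coprod_{J \subseteq J^\prime \subseteq \Phi} \dzrelgrp(J^\prime),
$$
as claimed.

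I do not expect any serious obstacle: the substance of the statement is packaged into Proposition~\ref{prop::set-version-of-dzrelgrpJ}, and the corollary is purely a reindexing. The only point warranting care is verifying that the union is disjoint, which follows from the well-definedness of $\Phi_\chi$ as a function of $\kapchi$ noted above, since a common element of two distinct strata $\dzrelgrp(J^\prime)$ and $\dzrelgrp(J^{\prime\prime})$ would force $\Phi_\chi$ to equal both $J^\prime$ and $J^{\prime\prime}$.
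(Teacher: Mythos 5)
Your proof is correct and takes essentially the same route as the paper, whose entire proof is the citation of Proposition~\ref{prop::set-version-of-dzrelgrpJ} together with the stratification $\dualtorus(\valfield)^{\rm{dz}} = \coprod_{J' \subseteq \Phi} \dualtorus(J')$ from Section~\ref{subsection::stratification-by-chi-root-systems}. The one detail you make explicit that the paper leaves implicit---that the stratum is well defined because $\Phi_\chi = \{\alpha \in \Phi \mid \alpha^\vee(\kapchi) = 1\}$ depends only on $\kapchi$, so the union is genuinely disjoint---is a faithful unpacking of the computation already appearing in the proof of Proposition~\ref{prop::set-version-of-dzrelgrpJ}, not a new argument.
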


\begin{proof}
This is a consequence of Proposition~\ref{prop::set-version-of-dzrelgrpJ}.
\end{proof}

\begin{lemma}
\label{lemma::equality-on-strata}
Consider a stratum $\dzrelgrp(J)$. For all characters $\chi \in \torus(\resfield)^\vee$ such that $\kapchi \in \dzrelgrp(J)$: the groups $W_\chi^\circ$ are all isomorphic, the length functions $\ell_\chi$ agree in the obvious sense, and the polynomials $\widetilde{R}_{u,v}^\chi(Q_r)$ are identical for all $u, v \in \widetilde{W}_\chi$.
\end{lemma}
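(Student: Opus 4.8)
The plan is to observe that each of the three objects named in the statement---$W_\chi^\circ$, the length function $\ell_\chi$, and the polynomials $\widetilde{R}^\chi_{u,v}(Q_r)$---is built entirely out of the $\chi$-root system $\Phi_\chi$ (together with the fixed ambient data $q$, $r$, and the positive system $\Phi^+$), with no finer dependence on $\chi$ itself. Since by the definition of the stratum every $\kapchi \in \dzrelgrp(J)$ satisfies $\Phi_\chi = J$, these objects are then literally the same---in particular isomorphic---for all characters indexing the stratum. So the proof is a matter of tracing each definition back to $\Phi_\chi$.

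First I would handle $W_\chi^\circ$. By definition $W_\chi^\circ = \langle s_\alpha \mid \alpha \in \Phi_\chi \rangle$, so on $\dzrelgrp(J)$ this is $\langle s_\alpha \mid \alpha \in J\rangle$, a single subgroup of $W$ independent of $\chi$; equality certainly gives the asserted isomorphism. Next, the affine root system $\Phi_{\chi,\mathrm{aff}} = \{\alpha + k \mid \alpha \in \Phi_\chi,\ k \in \mathbb{Z}\}$ likewise depends only on $\Phi_\chi = J$, so the Coxeter system $(W_{\chi,\mathrm{aff}}, S_{\chi,\mathrm{aff}})$ of Lemma~\ref{lemma::chi-weyl-groups} is determined by $J$; here I would note that the distinguished generators $S_{\chi,\mathrm{aff}}$ come from the simple roots of $\Phi_{\chi,\mathrm{aff}}$ relative to the positive system $\Phi_\chi^+ = \Phi_\chi \cap \Phi^+$ inherited from the fixed ambient Borel, so there is no hidden $\chi$-dependence in the normalization. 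Under the standing hypothesis $W_\chi = W_\chi^\circ$ of Remark~\ref{rmk::roche-assumptions}, Lemma~\ref{lemma::chi-weyl-groups}(3) identifies $\widetilde{W}_\chi$ as the extended affine Weyl group attached to $\Phi_\chi = J$, i.e. the preimage of the fixed subgroup $W_J$ under $\widetilde{W} \to W$, and the length function $\ell_\chi$ as the Coxeter length of $(W_{\chi,\mathrm{aff}}, S_{\chi,\mathrm{aff}})$ extended so that $\Omega_\chi$ is length-zero. All of this data is a function of $J$ alone, so for any two $\chi, \chi'$ indexing the stratum we may identify $\widetilde{W}_\chi = \widetilde{W}_{\chi'}$ and then $\ell_\chi = \ell_{\chi'}$ as functions on this common group---the ``obvious sense'' in which the length functions agree.

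For the $\widetilde{R}$-polynomials I would appeal to Definition~\ref{defin::r-poly-from-hecke-algebras}: $\widetilde{R}^\chi_{u,v}(Q_r)$ is read off from the inversion formula for the normalized basis $\tilde{T}_{w,r} = q^{-r\ell_\chi(w)/2}T_w$ in the twisted affine Hecke algebra $\mathcal{H}_r = \mathcal{H}_r(W_{\chi,\mathrm{aff}}, S_{\chi,\mathrm{aff}})\,\tilde{\otimes}\,\mathbb{C}[\Omega_\chi]$ with parameters $a_s = q^r - 1$, $b_s = q^r$. Every ingredient of this algebra---the Coxeter system, the twist by $\Omega_\chi$, the parameters (which involve only $q$ and $r$), and the normalizing length $\ell_\chi$---has just been shown to depend only on $J$. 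Since inverting $\tilde{T}_{w^{-1},r}$ and extracting the coefficients in the $\tilde{T}_{x,r}$-basis is an intrinsic computation inside $\mathcal{H}_r$, the resulting polynomials coincide for all $\chi$ with $\Phi_\chi = J$. Equivalently, one can route this through the endoscopic group: by Proposition~\ref{prop::root-system-equality} the root datum of $H_{\chi_r}$ depends on $\chi$ only via $\Phi_{\chi_r} = \Phi_\chi$, so $\mathcal{H}(H_{\chi_r}, I_{H_r})$, and with it the inversion formula, is a function of $J$ alone.

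There is no serious obstacle here; the content is bookkeeping. The one point demanding care---and the step I would flag---is confirming that the generators $S_{\chi,\mathrm{aff}}$ and the length-zero subgroup $\Omega_\chi$ are canonically attached to $\Phi_\chi$ rather than to $\chi$, since these enter both $\ell_\chi$ and the twist in $\mathcal{H}_r$; this is exactly where the hypothesis $W_\chi = W_\chi^\circ$ and the use of the fixed positive system $\Phi^+$ are needed to rule out any residual $\chi$-dependence.
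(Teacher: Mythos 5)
Your proposal is correct and follows essentially the same route as the paper: the paper's proof likewise observes that every $\kappa_\chi$ in the stratum has the same $\chi$-root system $\Phi_\chi = J$, so that $W_\chi^\circ$, the Coxeter system $(W_{\chi,\mathrm{aff}}, S_{\chi,\mathrm{aff}})$ with its length function, and hence the Hecke algebras determining the $\widetilde{R}^\chi$-polynomials, all depend only on $J$. Your write-up simply makes explicit the bookkeeping (the positive system, the $\Omega_\chi$-twist, the parameters $q^r-1$, $q^r$, and the role of $W_\chi = W_\chi^\circ$) that the paper compresses into a few sentences.
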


\begin{proof}
The key point is that all $\kapchi \in \dzrelgrp(J)$ have the same $\chi$-root system $\Phi_\chi \subseteq \Phi$. Consequently, the groups $W_\chi^\circ$ are all identical, and so are the length functions $\ell_\chi$. Finally, the Hecke algebras $\mathcal{H}(W_{\chi, {\rm{aff}}}, S_{\chi, {\rm{aff}}})$ all determine the same $\widetilde{R}^\chi$-polynomials.
\end{proof}

The common length function and $\widetilde{R}^\chi$-polynomials associated to points in the stratum $S_{w}^{\rm{dz}}(J)$ are sometimes denoted $\ell_J$ and $\widetilde{R}^J$, respectively.

\subsubsection{Definition of finite critical groups}
\label{section::finite-critical-grps}

The critical index torus $T_{S(w)}$ defined in the Drinfeld case is first understood in terms of geometric data concerning the special fiber of the Shimura variety. Haines and Rapoport prove that $T_{S(w)}$ is the particular subtorus of $\torus$ corresponding to the lattice $L_w \subset \cochars{\torus}$, where $L_w$ is defined as in Lemma~\ref{lattice-equality-lemma}. The group of $k_F$-points $\torus_{S(w)}(\resfield)$ plays a role in determining the support of $\phironeaug$ in the Drinfeld case. See also Section~\ref{section::the-drinfeld-case}. In the more general case considered here, the analogous lattices $L_{w,J}$ do not necessarily correspond to subtori of $\torus$, because $\cochars{\torus}/L_{w,J}$ is not always torsion-free.

\begin{defin}
\label{defin::finite-critical-groups}
Let $w$ be $\mu$-admissible, and let $J \subseteq \Phi$ be a root subsystem. The lattice $L_{w,J}$ from Corollary~\ref{cor::chars-of-swj} can be viewed as a lattice in $\cochars{\torus}$. Define a subgroup of $\torus(\resfield)$ by
$$
A_{w,J,\resfield} = \langle \nu(\resfield) \:\vert\: \nu \in L_{w,J}\rangle,
$$
This is the \textbf{finite critical group} of $w$ and $J$ with respect to $\resfield$.
\end{defin}


\begin{lemma}
Let $\chi$ be a depth-zero character on $\torus(\resfield)$. Then $\kapchi$ lies in $\dzrelgrpJ$ if and only if the restriction of $\chi$ to $A_{w,J,\resfield}$ is the the trivial character.
\end{lemma}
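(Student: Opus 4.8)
The plan is to route everything through the character-group description of $\relgrpJ$ and the fundamental evaluation identity $\nu(\kapchi)=\chi(\nu(\tilde x))$ coming from the LLC for Tori, exploiting the fact that each $\nu \in L_{w,J}$ is simultaneously a character of $\dualtorus$ and a cocharacter of $\torus$ under the identification $\chars{\dualtorus}=\cochars{\torus}$. Since $\kapchi$ is by construction the endoscopic element attached to a depth-zero character, Proposition~\ref{prop::dz-endoscopic-elements-equal-kernel} places it in $K_{q-1}$ automatically, so the membership $\kapchi \in \dzrelgrpJ = \relgrpJ \cap K_{q-1}$ reduces to $\kapchi \in \relgrpJ$. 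By Corollary~\ref{cor::chars-of-swj} we have $\chars{\relgrpJ}=\chars{\dualtorus}/L_{w,J}$, so Corollary~\ref{cor::contain-by-annihilation} characterizes this as $\kapchi$ being annihilated by $L_{w,J}$, i.e. $\nu(\kapchi)=1$ for every $\nu \in L_{w,J}$.

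Next I would translate this annihilation condition into a statement purely about $\chi$. Fix the element $x \in I_F$ with $\tilde x = \tau_F(x) \in \mathcal{O}_F^\times$ projecting to a generator of $\resfield^\times$, which is the element underlying the correspondence $\chi \leftrightarrow \kapchi$. The commutative diagram of the LLC for Tori used in the proof of Proposition~\ref{prop::dz-endoscopic-elements-equal-kernel} (compare Lemma~\ref{lemma::defin-of-gamma-nrs}) gives the identity
$$\nu(\kapchi) = \chi(\nu(\tilde x)) \qquad \text{for all } \nu \in \cochars{\torus}=\chars{\dualtorus}.$$
Therefore $\nu(\kapchi)=1$ for all $\nu \in L_{w,J}$ is equivalent to $\chi(\nu(\tilde x))=1$ for all $\nu \in L_{w,J}$.

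Finally I would compare this against triviality of $\chi$ on $\critgrpJ = \langle \nu(\resfield) \mid \nu \in L_{w,J}\rangle$. Since $\chi$ is depth-zero it factors through $\torus(\resfield)$, so $\chi|_{\critgrpJ}$ is trivial precisely when $\chi$ vanishes on each generating subgroup $\nu(\resfield^\times)$ with $\nu \in L_{w,J}$. For each such $\nu$, viewed as a cocharacter $\mathbb{G}_m \to \torus$, the subgroup $\nu(\resfield^\times)$ is cyclic and generated by the image of $\nu(\tilde x)$, because $\tilde x$ reduces to a generator of $\resfield^\times$; hence $\chi$ is trivial on $\nu(\resfield^\times)$ if and only if $\chi(\nu(\tilde x))=1$. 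Running over all $\nu$ (equivalently over a finite generating set of the lattice $L_{w,J}$) shows $\chi|_{\critgrpJ}$ is trivial exactly when $\chi(\nu(\tilde x))=1$ for all $\nu \in L_{w,J}$, closing the chain of equivalences back to $\kapchi \in \dzrelgrpJ$.

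The argument is depth-free and its only delicate point is bookkeeping: one must confirm that $\critgrpJ$ is generated by the images $\nu(\resfield^\times)$ and that testing $\chi$ on the single generator $\nu(\tilde x)$ of each cyclic piece suffices, which rests on $\tilde x$ projecting to a generator of $\resfield^\times$ together with $\chi$ factoring through reduction modulo $\varpi$. I do not anticipate any genuine obstacle beyond keeping the two roles of $\nu$—as a character of $\dualtorus$ and as a cocharacter of $\torus$—straight throughout.
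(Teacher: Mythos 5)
Your proof is correct and follows essentially the same route as the paper's: both reduce membership of $\kapchi$ in $\dzrelgrpJ$ to annihilation by the lattice $L_{w,J}$ via Corollary~\ref{cor::chars-of-swj} and Corollary~\ref{cor::contain-by-annihilation}, then translate through the identity $\nu(\kapchi)=\chi(\nu(\tilde{x}))$ together with the fact that $\critgrpJ$ is generated by the elements $\nu(\tilde{x})$ for $\nu \in L_{w,J}$. The only (harmless) difference is that you spell out two points the paper leaves implicit, namely that $\kapchi \in K_{q-1}$ automatically by Proposition~\ref{prop::dz-endoscopic-elements-equal-kernel} and that each cyclic piece $\nu(\resfield^\times)$ is generated by $\nu(\tilde{x})$.
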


\begin{proof}
Suppose $\kappa_\chi \in \dzrelgrpJ$. We have $\eta(\kappa_\chi) = 1$ for all $\eta \in L_{w,J}$ because $\chars{\dzrelgrpJ} = \chars{\dualtorus}/L_{w,J}$. Thus viewing $\eta$ as an element of $\cochars{\torus}$, we have $\chi(\eta(\tilde{x})) = 1$ for any generator $\tilde{x} = \tau_F(x)$ of $k_F^\ast$, since $\chi(\eta(\tilde{x})) = \eta(\kappa_\chi)$. But $\critgrpJ$ is generated by the elements $\eta(\tilde{x})$, so $\chi$ is trivial on this subgroup of $\torus(\resfield)$.

Conversely, suppose $\chi$ restricts to the trivial character on $\critgrpJ$. For any generator $\tilde{x}$ of $\resfield^\ast$ such that $\tilde{x} = \tau_F (x)$ for $x \in I_F$, the hypothesis implies $\chi(\eta(\tilde{x})) = 1$ for $\eta \in L_{w,J}$. As before, $\chi(\eta(\tilde{x})) = \eta(\kappa_\chi)$, hence $\eta(\kappa_\chi) = 1$ for all $\eta \in L_{w,J}$. Therefore $\kappa_\chi \in \dzrelgrpJ$ by Corollary~\ref{cor::contain-by-annihilation}.
\end{proof}



\subsubsection{Sums of over groups of endoscopic elements}

Let $N_r : \torus(k_r) \rightarrow \torus(k_F)$ be the norm map. Lemma~\ref{lemma::defin-of-gamma-nrs} showed that given $s \in T(k_r)$, we can attach an element $\gamma_{N_r s} \in \cochars{\torus}$ to $N_r(s) \in \torus(k_F)$. The purpose of this section is to determine the possible values of sums
$$
\sum_{\kapchi \in \dzrelgrpJ} \gamma_{N_r s}(\kapchi)^{-1}.
$$
Such sums will appear in Chapter 5 as a result of grouping terms in an expression for $\phironeaug$.  The finite critical groups introduced in Definition~\ref{defin::finite-critical-groups} are the key determining factor for whether a sum is zero.

\begin{prop}
\label{prop::sum-over-group}
Let $s \in \torus(k_r)$, and define $\gamma_{N_r s}$ as above. Then
$$\sum_{\kapchi \in \dzrelgrpJ} \gamma_{N_r s}(\kapchi)^{-1} = \begin{cases}
0, & {\rm{if}}\ N_r(s) \notin A_{w, J, k_F} \\
\vert \dzrelgrpJ \vert, & {\rm{otherwise}}.
\end{cases}
$$
\end{prop}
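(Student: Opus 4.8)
The plan is to recognize the left-hand side as the sum of a single character over a finite abelian group and then apply orthogonality of characters. First I would re-express the summand $\gamma_{N_r s}(\kapchi)^{-1}$ in terms of $\chi$ rather than $\kapchi$. By Lemma~\ref{lemma::defin-of-gamma-nrs}, the cocharacter $\gamma_{N_r s} = \sum_i n_i \omega_i \in \cochars{\torus}$ satisfies $\gamma_{N_r s}(\tilde{x}) = N_r(s)$ in $\torus(\resfield)$, where $\tilde{x} = \tau_F(x)$ and $\{\omega_i\}$ is the chosen basis of $\cochars{\torus}$. Combining this with the defining relation $\chi(\nu(\tilde{x})) = \nu(\kapchi)$ for the endoscopic element (the identity used in the proof of Proposition~\ref{prop::dz-endoscopic-elements-equal-kernel}) yields
$$
\gamma_{N_r s}(\kapchi) = \chi\big(\gamma_{N_r s}(\tilde{x})\big) = \chi(N_r(s)).
$$
Since $\chi \mapsto \kapchi$ is a bijection that restricts to a bijection onto $\dzrelgrpJ$, the sum becomes $\sum_\chi \chi(N_r(s))^{-1}$, where $\chi$ ranges over those depth-zero characters on $\torus(\resfield)$ whose endoscopic element lies in $\dzrelgrpJ$.

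Next I would pin down the index set. By the lemma immediately preceding this proposition, $\kapchi \in \dzrelgrpJ$ if and only if $\chi$ restricts trivially to the finite critical group $A_{w,J,\resfield}$. Inflation therefore identifies the set of such $\chi$ with the full character group of the finite abelian quotient $H := \torus(\resfield)/A_{w,J,\resfield}$, and the sum is exactly
$$
\sum_{\psi \in H^\vee} \psi\!\left(\overline{N_r(s)}\right)^{-1},
$$
where $\overline{N_r(s)}$ denotes the image of $N_r(s)$ in $H$.

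Finally I would invoke the standard orthogonality relation for a finite abelian group: the sum of all characters evaluated at a fixed element is $|H^\vee|$ when that element is trivial in $H$ and $0$ otherwise. The element $\overline{N_r(s)}$ is trivial precisely when $N_r(s) \in A_{w,J,\resfield}$, which produces the vanishing in the first case of the statement. In the complementary case the sum equals $|H^\vee| = |H|$, and this agrees with $\vert \dzrelgrpJ \vert$ because the bijection $\chi \mapsto \kapchi$ (the Corollary following Definition~\ref{endoscopic-element-definition}, refined by Proposition~\ref{prop::dz-endoscopic-elements-equal-kernel}) restricts to a bijection between $H^\vee$ and $\dzrelgrpJ$, forcing these finite sets to have equal cardinality.

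I expect the only genuine subtlety to be the bookkeeping that makes the three identifications mutually consistent — the translation $\gamma_{N_r s}(\kapchi) = \chi(N_r(s))$, the identification of the summation index with $H^\vee$, and the equality $|H| = \vert \dzrelgrpJ \vert$ — rather than any hard estimate. Once these are in place, character-sum orthogonality closes the argument immediately.
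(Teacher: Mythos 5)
Your proposal is correct, and it is essentially the dual formulation of the paper's argument rather than a different proof in substance. The paper keeps the sum exactly as written --- a fixed character $\gamma_{N_r s}$ summed over the finite group $\dzrelgrpJ$ --- and applies the translation trick: if $\gamma_{N_r s}(\kappa_0) \neq 1$ for some $\kappa_0 \in \dzrelgrpJ$, shifting the summation variable by $\kappa_0$ forces the sum to vanish, and the paper then asserts that such a $\kappa_0$ exists if and only if $N_r(s) \notin \critgrpJ$. You instead move everything to the character side: using $\gamma_{N_r s}(\kapchi) = \chi(N_r(s))$ together with the lemma identifying $\dzrelgrpJ$ with the depth-zero characters trivial on $\critgrpJ$, you recognize the sum as $\sum_{\psi \in H^\vee} \psi\bigl(\overline{N_r(s)}\bigr)^{-1}$ with $H = \torus(\resfield)/\critgrpJ$, and close with orthogonality for $H^\vee$. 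Both routes rest on the same two ingredients (the preceding lemma and finite abelian character orthogonality), but your version buys one real piece of rigor: it makes explicit the only nonobvious direction of the paper's ``if and only if,'' namely that $N_r(s) \notin \critgrpJ$ forces the existence of some $\kapchi \in \dzrelgrpJ$ with $\gamma_{N_r s}(\kapchi) \neq 1$. That is precisely the statement that characters of the finite abelian group $H$ separate points, which the paper uses silently. Your bookkeeping --- the translation $\gamma_{N_r s}(\kapchi) = \chi(N_r(s))$, the identification of the index set with $H^\vee$ via inflation, and the equality $\vert H^\vee \vert = \vert \dzrelgrpJ \vert$ coming from the restriction of the bijection $\chi \mapsto \kapchi$ --- is also exactly what certifies that the nonvanishing case yields $\vert \dzrelgrpJ \vert$ on the nose, which the paper dismisses as ``obvious.'' No gaps.
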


\begin{proof}
First, suppose there exists $\kappa_0 \in \dzrelgrpJ$ such that $\gamma_{N_r s} (\kappa_0) \neq 1$. Then
$$
\sum_{\kapchi \in \dzrelgrpJ} \gamma_{N_r s}(\kapchi)^{-1} = \sum_{\kapchi \in \dzrelgrpJ} \gamma_{N_r s}(\kappa_0 \kapchi)^{-1} = \gamma_{N_r s}(\kappa_0)^{-1} \sum_{\kapchi \in \dzrelgrpJ} \gamma_{N_r s}(\kapchi)^{-1},
$$
implies $\sum_{\kapchi \in \dzrelgrpJ} \gamma_{N_r s}(\kapchi)^{-1} = 0$. But $\gamma_{N_r s}(\kappa_\chi) \neq 1$ for some $\kappa_\chi \in S_{w,J}^{\rm{dz}}$ if and only if $N_r(s) \notin A_{w, J, k_F}$. The second case is obvious, because then $N_r(s) \in A_{w,J,\resfield}$ implies all $\gamma_{N_r(s)}(\kapchi) = 1$
\end{proof}

\section{Combinatorics on increasing paths in a Bruhat graph}
\label{chapter::combinatorics-on-paths}

The explicit formula for Bernstein functions attached to dominant minuscule cocharacters  applied in Chapter 2 introduced a term to our formula connected with the $\widetilde{R}$-polynomials defined by Kazhdan and Lusztig. In fact, we must work with polynomials $\widetilde{R}_{w, t_{\lambda(w)}}^J(Q_r)$ for $w \in {\rm{Adm}}_{G_r}(\mu)$ and a $\chi$-root system $J \subseteq \Phi$. This chapter applies a formula for $\widetilde{R}$-polynomials due to Dyer in order to rewrite $\widetilde{R}_{w, t_{\lambda(w)}}^J(Q_r)$ in a new way, which will simplify our formula for the coefficients of $\phironeaug$.

Our first order of business is to recall some definitions and results pertaining to reflection subgroups and reflection orderings on Coxeter groups. Then, we will take a closer look at the behavior of these objects in the special case of the Bruhat interval between an affine Weyl group element and its translation part, before going on to prove the desired modification to Dyer's formula in the final section.


\subsection{Background on Coxeter groups}
\label{section::coxeter-group-background}

Recall that a \textbf{Coxeter system} is a pair $(\mathcal{W},\mathcal{S})$ composed of a group $\mathcal{W}$ generated by a set of involutions $\mathcal{S}$ subject to braid relations $(s_i s_j)^{m(i,j)} = 1$, where $m(i,j) \in \{ \mathbb{Z}, \infty\}$. The \textbf{length} of an element $w \in \mathcal{W}$ is denoted $\ell(w)$. A \textbf{finite} Coxeter group has all $m(i,j) \in \mathbb{Z}$, and a \textbf{finite rank} Coxeter group has $\vert \mathcal{S} \vert < \infty$. Weyl groups are finite Coxeter groups, while affine Weyl groups are infinite. Both types of groups have finite rank.

Essentially all of the material in this first section is covered in the book by Bj{\"o}rner and Brenti~\cite{bjorner-brenti2005} or in the papers of Dyer cited throughout.

\subsubsection{Bruhat order and Bruhat graphs}

The following statement is Definition 2.1.1 in~\cite{bjorner-brenti2005}.

\begin{defin}
Let $(\mathcal{W}, \mathcal{S})$ be a Coxeter system and let
$$
\mathcal{T} = \{wsw^{-1} \mid w \in \mathcal{W}, s \in \mathcal{S}\} = \bigcup_{w \in \mathcal{W}} w\mathcal{S}w^{-1}
$$
be its set of \textbf{reflections}. Let $u, w \in \mathcal{W}$. Then
  \begin{enumerate}
    \item $u\stackrel{t}{\rightarrow}w$ means that $u^{-1}w = t \in \mathcal{T}$ and $\ell(u) < \ell(w)$.
    \item $u\rightarrow w$ means that $u\stackrel{t}{\rightarrow}w$ for some $t \in \mathcal{T}$.
    \item $u \leq w$ means that there exist $u_i \in \mathcal{W}$ such that $$u = u_0 \rightarrow u_1 \rightarrow \cdots \rightarrow u_k = w.$$
  \end{enumerate}
The \emph{\textbf{Bruhat graph}} $\Omega_{(\mathcal{W},\mathcal{S})}$ is the directed graph whose vertices are the elements of $\mathcal{W}$ and whose edges are given by $u \rightarrow w$. \emph{\textbf{Bruhat order}} is the partial order relation $u \leq w$ on the set $\mathcal{W}$. 
\end{defin}

For elements $x \leq y$ in a Coxeter system $(\mathcal{W}, \mathcal{S})$, a \textbf{path} $\Delta$ from $x$ to $y$, also written $x\stackrel{\Delta}\longrightarrow y$, is a set of edges in $\Omega_{(\mathcal{W},\mathcal{S})}$ that connect the vertices $x$ and $y$. Let $B_{\mathcal{W}}(x,y)$ denote the set of all paths $x\stackrel{\Delta}\longrightarrow y$ through $\Omega_{(\mathcal{W}, \mathcal{S})}$. If $W$ is a Weyl group associated to a root system $\Phi$, this set instead may be written $B_\Phi(x,y)$.

\subsubsection{Reflection subgroups of Coxeter groups}

\begin{defin}
Let $\mathcal{W}$ be a Coxeter group with set of reflections $\mathcal{T}$. Any subgroup $\mathcal{W}^\prime \subset \mathcal{W}$ satisfying $\mathcal{W}^\prime = \langle \mathcal{W}^\prime \cap \mathcal{T} \rangle$ is called a \textbf{reflection subgroup} of $\mathcal{W}$.
\end{defin}

The following is Definition 3.1 of~\cite{dyer1990}.

\begin{defin}
\label{defin::coxeter-gens-of-refl-subgrp}
Let $\mathcal{W}$ be a Coxeter group. For $w \in \mathcal{W}$, let
$$
N(w) = \{ t \in \mathcal{T} \mid \ell(tw) < \ell(w)\}.
$$
If $\mathcal{W}^\prime$ is a subgroup of $\mathcal{W}$, let
$$
\Sigma(\mathcal{W}^\prime) = \Big\{ t \in \mathcal{T} \mid N(t) \cap \mathcal{W}^\prime = \{t\}\Big\}.
$$
\end{defin}

\begin{thm} \label{reflection-subgroups-are-coxeter-systems}
Let $\mathcal{W}^\prime$ be a reflection subgroup of a Coxeter system $(\mathcal{W}, \mathcal{S})$ and let $\mathcal{S}^\prime = \Sigma(\mathcal{W}^\prime)$. Then
  \begin{enumerate}
    \item $\mathcal{W}^\prime \cap \mathcal{T} = \bigcup_{u \in \mathcal{W}^\prime} u \mathcal{S}^\prime u^{-1}$, and
    \item $(\mathcal{W}^\prime, \mathcal{S}^\prime)$ is a Coxeter system.
  \end{enumerate}
\end{thm}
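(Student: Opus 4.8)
The plan is to follow Dyer's reflection-cocycle method and to reduce the statement to verifying a standard characterization of Coxeter systems (the strong exchange condition) for the pair $(\mathcal{W}', \mathcal{S}')$. Throughout I write $\mathcal{T}' = \mathcal{W}' \cap \mathcal{T}$ for the reflections of the ambient system lying in $\mathcal{W}'$, and for $w \in \mathcal{W}$ I record its inversion set $N(w) = \{t \in \mathcal{T} \mid \ell(tw) < \ell(w)\}$, which satisfies $|N(w)| = \ell(w)$. For $w \in \mathcal{W}'$ I define the relative length $\ell'(w) = |N(w) \cap \mathcal{T}'|$; the goal is to show that $\ell'$ is the word length on $\mathcal{W}'$ with respect to $\mathcal{S}'$ and that $(\mathcal{W}', \mathcal{S}')$ obeys the exchange condition.

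The technical engine is the $\mathbb{Z}/2$ cocycle identity $N(uv) = N(u) \,\triangle\, u\,N(v)\,u^{-1}$ (symmetric difference, with $\mathcal{W}$ acting on $\mathcal{T}$ by conjugation), which I would establish first by the usual reduced-word bookkeeping. Since $\mathcal{T}' = \mathcal{W}' \cap \mathcal{T}$ is stable under conjugation by $\mathcal{W}'$, intersecting the cocycle with $\mathcal{T}'$ yields, for $s \in \mathcal{S}'$ (where $N(s) \cap \mathcal{W}' = \{s\}$ by the definition of $\Sigma$) and $w \in \mathcal{W}'$,
$$
N(sw) \cap \mathcal{T}' = \{s\} \,\triangle\, s\,(N(w) \cap \mathcal{T}')\,s.
$$
Consequently $\ell'(sw) = \ell'(w) \pm 1$, and left multiplication by a generator permutes $\mathcal{T}'$ correctly. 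This single identity is what replaces the finite-rank inductions available for the ambient group, and it is essential because $\mathcal{S}'$ may be infinite even when $\mathcal{S}$ is finite.

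With the relative cocycle in hand I would argue as follows. First, $\mathcal{S}' \subseteq \mathcal{T}'$ is immediate from the definition. To see that $\mathcal{S}'$ generates $\mathcal{W}'$ and that $\ell'$ is the associated word length, I use downward induction on $\ell'$: given $w \in \mathcal{W}' \setminus \{e\}$, I choose $t \in N(w) \cap \mathcal{T}'$ of minimal ambient length $\ell(t)$ and check that such a minimal inversion must lie in $\mathcal{S}'$; then the displayed identity gives $\ell'(tw) < \ell'(w)$, and induction produces an $\mathcal{S}'$-word for $w$ of length $\ell'(w)$. The same bookkeeping verifies the exchange condition relative to $\mathcal{S}'$, which by the standard characterization (\cite{bjorner-brenti2005}, Chapter 1) shows $(\mathcal{W}', \mathcal{S}')$ is a Coxeter system, giving part (2). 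For part (1), the reflections of this Coxeter system are by definition $\bigcup_{u \in \mathcal{W}'} u\mathcal{S}'u^{-1}$, and these clearly lie in $\mathcal{T}'$; the reverse inclusion follows by induction on $\ell'(t)$, showing every $t \in \mathcal{T}'$ is $\mathcal{W}'$-conjugate into $\mathcal{S}'$ by using the cocycle to find $s \in \mathcal{S}'$ with $\ell'(sts) < \ell'(t)$.

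The main obstacle I anticipate is the claim that an inversion $t \in N(w) \cap \mathcal{T}'$ of minimal ambient length automatically belongs to $\mathcal{S}' = \Sigma(\mathcal{W}')$. This is where the definition of $\Sigma$ must be unwound carefully: if $t \notin \mathcal{S}'$, then $N(t) \cap \mathcal{W}'$ contains some $t' \neq t$ with $\ell(t') < \ell(t)$, and one must show that this forces a $\mathcal{T}'$-inversion of $w$ strictly shorter than $t$, contradicting minimality. Getting this comparison right, with no appeal to finiteness of $\mathcal{S}'$, is the crux of the argument; everything else is formal manipulation of inversion sets through the cocycle identity.
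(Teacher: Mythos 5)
Your plan is a reconstruction of Dyer's own argument (the paper gives no proof of this statement at all: it quotes the result as Theorem 3.3 of \cite{dyer1990}, noting Deodhar's independent proof in \cite{deodhar1989}), and the skeleton you set up is faithful to that method: the cocycle identity $N(uv) = N(u)\,\triangle\,uN(v)u^{-1}$, the relative length $\ell'(w) = \vert N(w)\cap\mathcal{T}'\vert$, the relative cocycle for $s \in \mathcal{S}'$, and the induction via the exchange condition are all correct and are exactly the right formal framework. The problem is at the crux you yourself flag, and your sketch of how to resolve it does not work as stated. You write that if $t \notin \mathcal{S}'$ then ``$N(t)\cap\mathcal{W}'$ contains some $t' \neq t$ with $\ell(t') < \ell(t)$'' and present this as an unwinding of the definition of $\Sigma$. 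The definition only furnishes some $t' \in N(t)\cap\mathcal{W}'$ with $t' \neq t$, and such a witness can be strictly \emph{longer} than $t$: in the infinite dihedral group $\mathcal{W} = \langle s_1, s_2\rangle$ one has $N(s_1s_2s_1) = \{s_1,\ s_1s_2s_1,\ s_1s_2s_1s_2s_1\}$, so for $t = s_1s_2s_1$ the reflection $t' = s_1s_2s_1s_2s_1 \in N(t)$ has length $5 > 3$, and if $\mathcal{W}' = \langle t, t'\rangle$ this $t'$ is a legitimate definitional witness. The true statement---that $N(t)\cap\mathcal{W}'$ always contains a reflection strictly shorter than $t$ when $t \notin \Sigma(\mathcal{W}')$---is a theorem, not a formality, and Dyer's proof of it passes through rank-two reflection subgroups: one needs that the dihedral reflection subgroup $\langle t, t'\rangle$ has canonical generators $p, q$, that ambient length increases strictly along the two reflection chains $p, pqp, pqpqp, \ldots$ and $q, qpq, \ldots$, and that $N(t)\cap\langle t,t'\rangle$ is computed as a dihedral inversion set (in the example above, the shorter inversion is $tt't = s_1$, found by this pairing, not by the definition directly).

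Even granting that lemma, there is a second gap in the same place: from $t \in N(w)$ and $t'' \in N(t)\cap\mathcal{T}'$ with $\ell(t'') < \ell(t)$, it does not follow by any cocycle manipulation that $t''$ or $tt''t$ lies in $N(w)$, which is what your minimality argument needs. Dyer closes this by analyzing $N(w)\cap\langle t, t''\rangle$ as the inversion set of an element of the dihedral pair (via a minimal-length coset representative), which again rests on the rank-two theory. So the two steps you compress into ``formal manipulation of inversion sets'' are precisely the content of Dyer's Theorem 3.3, and the proposal contains no machinery to establish them; the same lemmas are also needed for your part (1) induction on $\ell'(t)$. To make this honest, either develop the dihedral reflection-subgroup lemmas explicitly before running your induction, or do what the paper does and cite \cite{dyer1990} (or \cite{deodhar1989}, whose proof is organized differently, via a careful choice of generators and a direct verification of the exchange condition).
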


\begin{proof}
This is Theorem 3.3 of~\cite{dyer1990}, where the result is established for general reflection systems. The fact that a reflection subgroup of a Coxeter group if itself a Coxeter group was proved independently by Deodhar \cite{deodhar1989} (see the ``Main Theorem'' proved in Section 3 of the cited paper).
\end{proof}

Dyer~\cite{dyer1991} further proved that the Bruhat graph associated to any reflection subgroup $\mathcal{W}^\prime$ of $(\mathcal{W}, \mathcal{S})$ embeds as a full subgraph of the Bruhat graph $\Omega_{(\mathcal{W}, \mathcal{S})}$. Let $\Omega_{(\mathcal{W}, \mathcal{S})}(\mathcal{W}^\prime)$ denote the full subgraph of $\Omega_{(\mathcal{W}, \mathcal{S})}$ on the vertex set $\mathcal{W}^\prime$.

\begin{thm}
\label{thm::bruhat-graph-of-reflection-subgroups}
Let $\mathcal{W}^\prime$ be a reflection subgroup of $(\mathcal{W}, \mathcal{S})$ and set $\mathcal{S}^\prime = \Sigma(\mathcal{W}^\prime)$.
  \begin{enumerate}
    \item $\Omega_{(\mathcal{W}^\prime, \mathcal{S}^\prime)} = \Omega_{(\mathcal{W}, \mathcal{S})} (\mathcal{W}^\prime)$.
    \item For any $x \in \mathcal{W}$, there exists a unique $x_0 \in \mathcal{W}^\prime x$ such that the map ${\mathcal{W}^\prime \rightarrow \mathcal{W}^\prime x}$ defined by $w \mapsto wx_0$, for $w \in \mathcal{W}^\prime$, is an isomorphism of directed graphs $\Omega_{(\mathcal{W}, \mathcal{S})} (\mathcal{W}^\prime) \rightarrow \Omega_{(\mathcal{W}, \mathcal{S})} (\mathcal{W}^\prime x)$.
  \end{enumerate}
\end{thm}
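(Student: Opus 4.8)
The plan is to treat the two assertions separately, reducing each to a single length-comparison fact that carries the real content. Throughout, write $\mathcal{T}^\prime = \mathcal{W}^\prime \cap \mathcal{T}$ for the reflections of $\mathcal{W}^\prime$; by Theorem~\ref{reflection-subgroups-are-coxeter-systems} this is exactly the reflection set of the Coxeter system $(\mathcal{W}^\prime, \mathcal{S}^\prime)$. Let $\ell$ and $\ell^\prime$ denote the length functions of $\mathcal{W}$ and of $\mathcal{W}^\prime$ respectively. Both graphs in part (1) have vertex set $\mathcal{W}^\prime$, so only the edges need comparing. For $u, v \in \mathcal{W}^\prime$ the element $u^{-1}v$ already lies in $\mathcal{W}^\prime$, so $u^{-1}v \in \mathcal{T}$ if and only if $u^{-1}v \in \mathcal{T}^\prime$; hence the ``reflection'' half of the edge condition agrees automatically in the two graphs, and everything reduces to showing that the \emph{orientation} agrees, i.e.\ that $\ell(u) < \ell(v) \iff \ell^\prime(u) < \ell^\prime(v)$ whenever $u^{-1}v \in \mathcal{T}^\prime$.

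For this I would invoke Dyer's $N$-set formalism. Recall $N(w) = \{t \in \mathcal{T} \mid \ell(tw) < \ell(w)\}$ from Definition~\ref{defin::coxeter-gens-of-refl-subgrp}, and set $N^\prime(w) = \{t \in \mathcal{T}^\prime \mid \ell^\prime(tw) < \ell^\prime(w)\}$ for $w \in \mathcal{W}^\prime$. The key lemma --- this is the main obstacle, and is precisely the combinatorial heart of \cite{dyer1990} --- is the identity $N^\prime(w) = N(w) \cap \mathcal{T}^\prime$ for every $w \in \mathcal{W}^\prime$. Granting it, write $v = ut$ with $t = u^{-1}v \in \mathcal{T}^\prime$ and put $r = utu^{-1} \in \mathcal{T}^\prime$, so that $v = ru$. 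Since $\ell(ru) < \ell(u) \iff r \in N(u)$ and likewise $\ell^\prime(ru) < \ell^\prime(u) \iff r \in N^\prime(u)$, the lemma (with $r \in \mathcal{T}^\prime$) gives $r \in N(u) \iff r \in N^\prime(u)$. Therefore $\ell(u) < \ell(v) \iff r \notin N(u) \iff r \notin N^\prime(u) \iff \ell^\prime(u) < \ell^\prime(v)$, which is exactly the orientation agreement. This proves the edge sets coincide, establishing part (1).

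For part (2) the natural candidate for $x_0$ is the unique element of minimal $\ell$-length in the right coset $\mathcal{W}^\prime x$. Existence and uniqueness of such a minimal representative for a reflection subgroup, together with the additivity formula $\ell(wx_0) = \ell^\prime(w) + \ell(x_0)$ for all $w \in \mathcal{W}^\prime$, is the second technical input I would borrow from Dyer's theory; it generalizes the familiar statement for standard parabolic subgroups and is the other place where genuine work is hidden. Granting these, the map $w \mapsto wx_0$ is a bijection $\mathcal{W}^\prime \to \mathcal{W}^\prime x$, and for $u, v \in \mathcal{W}^\prime$ the element $(ux_0)^{-1}(vx_0) = x_0^{-1}(u^{-1}v)x_0$ is a conjugate of $u^{-1}v$, hence lies in $\mathcal{T}$ exactly when $u^{-1}v$ does. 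By additivity $\ell(ux_0) < \ell(vx_0) \iff \ell^\prime(u) < \ell^\prime(v)$, which by part (1) is precisely the edge-orientation condition in $\Omega_{(\mathcal{W},\mathcal{S})}(\mathcal{W}^\prime)$. Thus edges correspond to edges in both directions, so $w \mapsto wx_0$ is an isomorphism of directed graphs.

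It remains to see that $x_0$ is forced. The additivity formula shows that $\ell(wx_0) = \ell(x_0)$ only for $w = 1$, so $x_0$ is the unique element of minimal length in $\mathcal{W}^\prime x$, hence the unique Bruhat-minimum and the unique source of $\Omega_{(\mathcal{W},\mathcal{S})}(\mathcal{W}^\prime x)$. Any graph isomorphism of the form $w \mapsto wy$ must send the identity --- the unique minimum of $\Omega_{(\mathcal{W}^\prime,\mathcal{S}^\prime)}$ --- to the unique minimum of $\Omega_{(\mathcal{W},\mathcal{S})}(\mathcal{W}^\prime x)$, forcing $1 \cdot y = y = x_0$. This gives uniqueness and completes the argument. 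I expect the length-compatibility lemma $N^\prime(w) = N(w) \cap \mathcal{T}^\prime$ and the coset length-additivity to be the two genuinely hard ingredients; everything else is formal once they are in place, which is why the statement is attributed to \cite{dyer1991}.
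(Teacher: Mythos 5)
There is a genuine flaw in your part (2), and it is worth flagging even though the paper itself offers no proof to compare against (its ``proof'' is just the citation to Theorem 1.4 of \cite{dyer1991}). Your part (1) is fine: reducing everything to the restriction identity $N^\prime(w) = N(w) \cap \mathcal{T}^\prime$ is exactly the mechanism underlying Dyer's argument, and citing that identity to \cite{dyer1990} is legitimate. But the second ``technical input'' you propose to borrow --- the additivity formula $\ell(wx_0) = \ell^\prime(w) + \ell(x_0)$ for the minimal-length representative $x_0$ of $\mathcal{W}^\prime x$ --- is not a theorem in Dyer's theory; it is false. Take $\mathcal{W}$ of type $A_2$ with $\mathcal{S} = \{s_1, s_2\}$ and $\mathcal{W}^\prime = \{1, t\}$ where $t = s_1 s_2 s_1$, so $\mathcal{S}^\prime = \{t\}$, $\ell^\prime(t) = 1$, $\ell(t) = 3$. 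With $x = 1$ the minimal representative is $x_0 = 1$, and $\ell(t x_0) = 3 \neq \ell^\prime(t) + \ell(x_0) = 1$. The case $x = 1$ already shows lengths cannot transfer along the isomorphism: there the map in (2) is the identity, and part (1) asserts only that the \emph{edges and orientations} of the two graphs agree, not that $\ell^\prime$ agrees with $\ell$ on $\mathcal{W}^\prime$. Consequently your deduction ``$\ell(ux_0) < \ell(vx_0) \iff \ell^\prime(u) < \ell^\prime(v)$,'' and the uniqueness argument resting on ``$\ell(wx_0) = \ell(x_0)$ only for $w=1$,'' are both unsupported as written.

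The repair replaces additivity by the cocycle property of $N$. Characterize $x_0$ by the condition $N(x_0) \cap \mathcal{W}^\prime = \emptyset$: the minimal-length element of $\mathcal{W}^\prime x$ satisfies it, since $t x_0 \in \mathcal{W}^\prime x$ forces $\ell(t x_0) > \ell(x_0)$ for every $t \in \mathcal{T}^\prime$. The identity $N(wy) = N(w) \,\triangle\, wN(y)w^{-1}$ then gives, for $w \in \mathcal{W}^\prime$, that $N(wx_0) \cap \mathcal{W}^\prime = N(w) \cap \mathcal{W}^\prime = N^\prime(w)$, using your part-(1) lemma and the fact that conjugation by $w$ preserves $\mathcal{W}^\prime$. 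This yields orientation-preservation directly: with $t = u^{-1}v \in \mathcal{T}^\prime$ and $r = utu^{-1}$, one has $vx_0 = r(ux_0)$, so $\ell(vx_0) < \ell(ux_0) \iff r \in N(ux_0) \cap \mathcal{W}^\prime = N^\prime(u) \iff \ell^\prime(v) < \ell^\prime(u)$ --- no length comparison between $\ell$ and $\ell^\prime$ is ever needed. The same computation gives uniqueness: if $y_1$ and $y_2 = wy_1$ both satisfy the $N$-condition, then $N^\prime(w) = \emptyset$, forcing $w = 1$; and it shows every $y \neq x_0$ in $\mathcal{W}^\prime x$ has an incoming edge within the coset, which is what your source-to-source argument for the uniqueness of the isomorphism actually requires. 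With that substitution your overall architecture is sound and matches Dyer's; as written, however, part (2) rests on a false lemma.
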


\begin{proof}
This is Theorem 1.4 of~\cite{dyer1991}.
\end{proof}

Suppose $(\mathcal{W}, \mathcal{S})$ is a finite-rank Coxeter system, i.e., $\mathcal{S}$ has finite cardinality. This is the case for all Coxeter groups considered in this article. There is a root system $\Phi_{\mathcal{W}}$ associated to any such $(\mathcal{W}, \mathcal{S})$ arising from the standard geometric representation of $\mathcal{W}$. See, for example, \cite{bjorner-brenti2005}, Section 4.4.

\begin{lemma}
\label{lemma::reflection-subgroup-gives-root-subsystem}
Let $W$ be a Weyl group associated to a root system $\Phi$ and let $W^\prime$ be any reflection subgroup of $W$. Then the root system $\Phi_{W^\prime}$ is a sub-system of $\Phi$.
\end{lemma}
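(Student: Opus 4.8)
The plan is to produce an explicit candidate for $\Phi_{W^\prime}$ sitting inside $\Phi$ and then match it with the abstract root system attached to the Coxeter system $(W^\prime, \Sigma(W^\prime))$. Let $\mathcal{T}$ denote the set of reflections of $W$; since $W$ is a Weyl group, every element of $\mathcal{T}$ is of the form $s_\alpha$ for a pair $\{\alpha, -\alpha\} \subseteq \Phi$. The natural candidate is
$$
\Psi = \{ \alpha \in \Phi \mid s_\alpha \in W^\prime \},
$$
and the lemma reduces to showing (i) that $\Psi$ is a root subsystem of $\Phi$ whose Weyl group is $W^\prime$, and (ii) that this $\Psi$ may be identified with $\Phi_{W^\prime}$.

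For (i), I would first check the defining axioms of a root subsystem directly. Symmetry is immediate from $s_\alpha = s_{-\alpha}$, so $\alpha \in \Psi \iff -\alpha \in \Psi$, and the rank-one condition $\Psi \cap \mathbb{R}\alpha = \{\alpha, -\alpha\}$ is inherited from $\Phi$ since $\Psi \subseteq \Phi$. For stability under its own reflections, take $\alpha, \beta \in \Psi$: then $s_\alpha(\beta) \in \Phi$ because $\Phi$ is $W$-stable, and $s_{s_\alpha(\beta)} = s_\alpha s_\beta s_\alpha \in W^\prime$ since $W^\prime$ is a group, whence $s_\alpha(\beta) \in \Psi$; finiteness is clear as $\Psi \subseteq \Phi$. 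Thus $\Psi$ is a root subsystem. Its Weyl group is $\langle s_\alpha \mid \alpha \in \Psi \rangle$, and because the reflections of $W$ lying in $W^\prime$ are exactly $W^\prime \cap \mathcal{T} = \{ s_\alpha \mid \alpha \in \Psi\}$, the hypothesis that $W^\prime$ is a reflection subgroup, $W^\prime = \langle W^\prime \cap \mathcal{T}\rangle$, gives $W(\Psi) = W^\prime$.

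For (ii), the point is to reconcile the abstractly defined $\Phi_{W^\prime}$ (coming from the standard geometric representation of the Coxeter system $(W^\prime, \Sigma(W^\prime))$, which is a Coxeter system by Theorem~\ref{reflection-subgroups-are-coxeter-systems}) with the concrete subset $\Psi$. I would invoke Dyer's description of the canonical generators: choosing the positive subsystem $\Psi^+ = \Psi \cap \Phi^+$, its simple roots are precisely those whose associated reflections form $\Sigma(W^\prime)$, so the Coxeter diagram of $(W^\prime, \Sigma(W^\prime))$ is the diagram read off from $\Psi^+$. Hence the abstract root system $\Phi_{W^\prime}$ is isomorphic, as a root system, to $\Psi$, exhibiting it as a sub-system of $\Phi$. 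The main obstacle is exactly this identification step: the geometric representation normalizes simple roots by the Coxeter matrix, so $\Phi_{W^\prime}$ and $\Psi$ need not be isometric if $\Psi$ carries roots of differing lengths, and care is needed to argue that the identification is at the level of root-system (Weyl-group and Bruhat) structure, which is all that is used in the sequel.
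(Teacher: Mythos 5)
Your proof is correct, but it is organized differently from the paper's. The paper's proof is a short orbit computation carried out directly in the ambient reflection representation: each canonical generator $s^\prime \in \Sigma(W^\prime)$ is a reflection of $W$, so its root $\alpha_{s^\prime}$ already has the form $u(\alpha_s)$ for some $u \in W$, $s \in S$; hence every element $w^\prime(\alpha_{s^\prime})$ of $\Phi_{W^\prime}$ is again a $W$-translate of a simple root and therefore lies in $\Phi = \{w(\alpha_s) \mid w \in W,\, s \in S\}$. You instead build the intrinsic candidate $\Psi = \{\alpha \in \Phi \mid s_\alpha \in W^\prime\}$, verify the root-subsystem axioms (closure under its own reflections via $s_{s_\alpha(\beta)} = s_\alpha s_\beta s_\alpha \in W^\prime$), show $W(\Psi) = W^\prime$ from the reflection-subgroup hypothesis, and then match $\Psi$ with the abstract $\Phi_{W^\prime}$ through the reflection--root bijection and Dyer's description of $\Sigma(W^\prime)$ as the simple system of $\Psi \cap \Phi^+$. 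The two arguments meet in the middle: by Theorem~\ref{reflection-subgroups-are-coxeter-systems}(1), $W^\prime \cap \mathcal{T} = \bigcup_{u \in W^\prime} u\,\Sigma(W^\prime)\,u^{-1}$, so your $\Psi$ is exactly the $W^\prime$-orbit of the canonical roots, i.e., the paper's realized copy of $\Phi_{W^\prime}$. What your route buys is more information (an explicit identification of the subsystem and of its Weyl group as precisely $W^\prime$) and an honest flagging of the normalization issue --- the standard geometric representation of $(W^\prime, \Sigma(W^\prime))$ uses unit simple roots, so in a crystallographic $\Phi$ with two root lengths the identification is a root-system isomorphism rather than an isometry; the paper's proof silently makes the same identification when it writes $a_{s^\prime} = u(\alpha_s)$. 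Since only the Weyl-group and Bruhat-graph structure of $J_\Delta$ is used downstream (Lemma~\ref{path-root-system-lemma}, Lemma~\ref{invariance-of-paths}), this caveat is harmless in both proofs, and your version is at least as rigorous as the paper's shorter one.
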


\begin{proof}
Per~\cite{bjorner-brenti2005}, the root system $\Phi=\Phi_W$ of $(W,S)$ is equal to
$$
\Phi = \{w(\alpha_s) \mid w\in W, s \in S\},
$$
where the $\alpha_s$ form a basis for ambient Euclidean space with dimension equal to $\vert S \vert$.

The reflection group $W^\prime$ is a Coxeter system $(W^\prime, \Sigma(W^\prime)$, hence there exists a root system $\Phi_{W^\prime}$ as above. For each $s^\prime$, the root $a_{s^\prime}$ equals $u(\alpha_s)$ for some $u \in W$ and $s \in S$.  Therefore, for any $w^\prime \in W^\prime$ and $s^\prime \in \Sigma(W^\prime)$, the root $w^\prime(\alpha_{s^\prime})$ has the form $u(\alpha_s)$ for some $u \in W$ and $s \in S$. This proves $\Phi_{W^\prime} \subseteq \Phi$.
\end{proof}

\subsubsection{Reflection orderings}
\label{section::reflection-orderings}

Dyer introduced the notion of a reflection ordering in \cite{dyer1993}. Our presentation also draws from Sections 5.2 and 5.3 of~\cite{bjorner-brenti2005}.

\begin{defin}
Let $(\mathcal{W}, \mathcal{S})$ be a finite-rank Coxeter system, and let $\Phi_{\mathcal{W}}$ be its associated root system. A total ordering $\prec$ on the (possibly infinite) set of positive roots $\Phi_{\mathcal{W}}^+$ is a \textbf{reflection ordering} if for all $\alpha, \beta \in \Phi_{\mathcal{W}}^+$ and $\lambda, \mu \in \mathbb{R}_{> 0}$ such that $\lambda \alpha + \mu \beta \in \Phi_{\mathcal{W}}^+$, we have that either
$$\alpha \prec \lambda \alpha + \mu \beta \prec \beta$$
or
$$ \beta \prec \lambda \alpha + \mu \beta \prec \alpha.$$
\end{defin}

The bijection between the positive roots $\Phi_{\mathcal{W}}^+$ and the set of reflections $\mathcal{T}$ in $\mathcal{W}$ means that a reflection ordering induces a total ordering on $\mathcal{T}$.

\begin{prop}
Let $(\mathcal{W}, \mathcal{S})$ be a finite-rank  Coxeter system, and let $\Phi_{\mathcal{W}}$ be its associated root system. Then there exists a reflection ordering on $\Phi_{\mathcal{W}}^+$.
\end{prop}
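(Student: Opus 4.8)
The plan is to construct a reflection ordering explicitly from a pair of linear functionals on the real vector space $V$ carrying the standard geometric representation of $(\mathcal{W},\mathcal{S})$, exploiting the fact that the defining condition on $\prec$ is really a convexity condition that is automatically satisfied by any order induced by a ratio of two linear forms. First I would recall that in this representation each positive root $\alpha \in \Phi_{\mathcal{W}}^+$ is a nonnegative real combination $\alpha = \sum_{s \in \mathcal{S}} c_s \alpha_s$ of the simple roots, with the $c_s \geq 0$ not all zero, and that distinct positive roots are never positive scalar multiples of one another. Since $\mathcal{S}$ is finite, $V$ is finite-dimensional and the simple roots are linearly independent, so I may define a functional $f \in V^*$ by $f(\alpha_s) = 1$ for every $s$. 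Then $f(\alpha) = \sum_s c_s \geq 1 > 0$ for all $\alpha \in \Phi_{\mathcal{W}}^+$, so $f$ is strictly positive on the entire positive system, even when that system is infinite.

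Next I would choose a second functional $g \in V^*$ generically, so that $\alpha \mapsto g(\alpha)/f(\alpha)$ is injective on $\Phi_{\mathcal{W}}^+$. For distinct $\alpha,\beta \in \Phi_{\mathcal{W}}^+$ the vector $f(\beta)\alpha - f(\alpha)\beta$ is nonzero, since $\alpha$ and $\beta$ are linearly independent, and the condition $g(\alpha)/f(\alpha) \neq g(\beta)/f(\beta)$ is exactly $g\bigl(f(\beta)\alpha - f(\alpha)\beta\bigr) \neq 0$, i.e. $g$ must avoid one hyperplane in $V^*$ for each pair of positive roots. As $\Phi_{\mathcal{W}}^+$ is countable there are only countably many such hyperplanes, while $V^*$ is an uncountable finite-dimensional real space, so a suitable $g$ exists. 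I then define $\alpha \prec \beta$ to mean $g(\alpha)/f(\alpha) < g(\beta)/f(\beta)$, which is a total order on $\Phi_{\mathcal{W}}^+$.

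It remains to verify the reflection-ordering axiom, which is the crux. Suppose $\gamma = \lambda\alpha + \mu\beta \in \Phi_{\mathcal{W}}^+$ with $\lambda,\mu \in \mathbb{R}_{>0}$ and $\alpha,\beta \in \Phi_{\mathcal{W}}^+$. Writing $r_\alpha = g(\alpha)/f(\alpha)$, $r_\beta = g(\beta)/f(\beta)$ and setting $w_1 = \lambda f(\alpha) > 0$, $w_2 = \mu f(\beta) > 0$, linearity gives $g(\gamma) = w_1 r_\alpha + w_2 r_\beta$ and $f(\gamma) = w_1 + w_2$, so $g(\gamma)/f(\gamma)$ is the weighted mediant $(w_1 r_\alpha + w_2 r_\beta)/(w_1 + w_2)$, which lies strictly between $r_\alpha$ and $r_\beta$ whenever $r_\alpha \neq r_\beta$. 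Hence either $\alpha \prec \gamma \prec \beta$ or $\beta \prec \gamma \prec \alpha$, exactly as demanded.

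The main point to be careful about is the existence of the strictly positive functional $f$ in the case of an infinite positive system, such as for an affine Weyl group; the choice $f(\alpha_s) = 1$ settles this uniformly, and the genericity of $g$ together with the mediant inequality are then routine. (For finite $\mathcal{W}$ one could instead invoke the bijection between reflection orderings and reduced words for the longest element, as in \cite{bjorner-brenti2005}, but the functional construction has the advantage of covering the affine Coxeter systems needed in the sequel.)
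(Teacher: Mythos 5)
Your proof is correct, and it is essentially the standard argument underlying the sources the paper cites for this statement (Dyer, (2.1)--(2.3), and Bj{\"o}rner--Brenti, Proposition 5.2.1): order $\Phi_{\mathcal{W}}^+$ by the ratio $g(\alpha)/f(\alpha)$ of a generic functional $g$ to a functional $f$ strictly positive on the positive system, and verify the axiom via the weighted-mediant inequality. One cosmetic remark: the bound $f(\alpha)=\sum_s c_s \geq 1$ is true (it follows from the roots being unit vectors for the standard bilinear form) but is never needed---strict positivity, immediate from $c_s \geq 0$ not all zero, is all the argument uses.
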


\begin{proof}
This first appeared in in \cite{dyer1993}, (2.1) - (2.3), and an alternative proof is given in \cite{bjorner-brenti2005}, Proposition 5.2.1.
\end{proof}

We emphasize that Dyer's theory holds for both finite and infinite Coxeter groups. In what follows, we will repeatedly discuss reflection orderings on reflections in a finite Weyl group \emph{and} reflection orderings on affine reflections in an affine Weyl group. For a finite Weyl group $W$ (resp. an affine Weyl group $W_{\rm{aff}}$), the root system $\Phi_{\mathcal{W}}$ coincides with the root system of $W$ (resp. the affine root system of $W$). See~\cite{humphreys1990}, Sections 6.4 - 6.5.

\begin{defin}
Let $(\mathcal{W}, \mathcal{S})$ be a finite-rank Coxeter group, and fix a reflection ordering $\prec$ on $\Phi_{\mathcal{W}}^+$. Given a path $$\Delta = \{w_0, w_1, \ldots, w_n\}$$ from $u \rightarrow v$ through the Bruhat graph for $(\mathcal{W},\mathcal{S})$, define the \textbf{edge set} of $\Delta$ by
$$
E(\Delta) = \{ w_{i-1}^{-1} w_i \mid 1 \leq i \leq n\}.
$$
The \textbf{descent set} of $\Delta$ with respect to the reflection ordering $\prec$ is defined by
$$
D(\Delta; \prec) = \Big\{ i \in \{1,\ldots, n-1\} : w_i^{-1} w_{i+1} \prec  w_{i-1}^{-1} w_i\Big\}.
$$
\end{defin}

Given a fixed reflection ordering $\prec$ on the reflections of $\mathcal{W}$ and $x \leq y$ in the Bruhat order on $\mathcal{W}$, we denote the set of $\prec$-increasing paths from $x$ to $y$ by
$$
B_{\mathcal{W}}^\prec (x,y) = \{ \Delta \in B_{\mathcal{W}}(x,y) \mid D(\Delta; \prec) = \emptyset \}.
$$


\subsection{Bruhat intervals for admissible elements}

This section shows that any path from a $\mu$-admissible $w$ in $\widetilde{W}$ to its translation part $t_{\lambda(w)}$ consists solely of reflections appearing in the finite Weyl group $W$.

\begin{rmk}
In this section and the next, we will sometimes write $B_{\Phi_{\rm{aff}}}(w, t_{\lambda(w)})$ when working with $w$ and $t_{\lambda(w)}$ in $\widetilde{W}$---as opposed to $W_{\rm{aff}}$. Let us justify this apparent misuse of notation.

Recall how Bruhat order works in the extended affine Weyl group: if $w \leq t_\lambda$, then there exists a length-zero element $\sigma \in \widetilde{W}$ such that $w, t_{\lambda(w)} \in \sigma^{-1} W_{\rm{aff}}$, and $\sigma^{-1} w \leq \sigma^{-1} t_{\lambda(w)}$ in the Bruhat order on $W_{\rm{aff}}$. We are simply writing $B_{\Phi_{\rm{aff}}}(w, t_{\lambda(w)})$ instead of $B_{\Phi_{\rm{aff}}}(\sigma^{-1} w, \sigma^{-1} t_{\lambda(w)})$.
\end{rmk}

\begin{prop}
\label{prop::finite-reflections-in-interval}
Let $\mu$ be a dominant minuscule coweight of $\Phi$, and let $(W,S)$ be the finite Weyl group of $\Phi$ inside the affine Weyl group $(W_{\rm aff}, S_{\rm aff})$. Let $T$ be the set of reflections in $W$.

Consider a $\mu$-admissible element $w \leq t_{\lambda (w)}$.  There exists a length-zero element $\sigma$ in $\widetilde{W}$ such that $w, t_{\lambda(w)} \in \sigma W_{\rm{aff}}$. Let $w \stackrel{\Delta}\longrightarrow t_{\lambda (w)}$ be any path in the Bruhat graph $\Omega_{(W_{\rm{aff}}, S_{\rm{aff}})}$. Each reflection in the edge set $E(\Delta) = \{t_1, \ldots t_n\}$ belongs to $T$.
\end{prop}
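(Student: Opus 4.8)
The plan is to reduce the statement to a purely order-theoretic fact about the Bruhat interval $[w, t_{\lambda(w)}]$, namely that the translation part is constant on it. Writing $v = t_{\lambda(v)}\bar{v}$ for the decomposition of $v \in \widetilde{W}$ under $\widetilde{W}\cong \cochars{\torus}\rtimes W$, I first record that a reflection of $W_{\rm{aff}}$ lies in $T$ if and only if it has trivial translation part: every reflection is $s_{\alpha,k} = t_{k\alpha^\vee}s_\alpha$, whose translation part $k\alpha^\vee$ vanishes exactly when $k=0$, i.e. exactly when it is the finite reflection $s_\alpha \in T$. For an edge $w_{i-1}\to w_i$ of $\Delta$ the reflection is $t_i = w_{i-1}^{-1}w_i = t_{\bar{w}_{i-1}^{-1}(\lambda(w_i)-\lambda(w_{i-1}))}\,\bar{w}_{i-1}^{-1}\bar{w}_i$, so $t_i \in T$ if and only if $\lambda(w_{i-1}) = \lambda(w_i)$. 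Since every edge of $\Delta$ raises length, each vertex satisfies $w \le w_i \le t_{\lambda(w)}$ and hence lies in the interval $[w,t_{\lambda(w)}]$; thus it suffices to prove that every element of this interval has translation part $\lambda(w)$.

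Here is where the minuscule hypothesis enters. Because $\mu$ is minuscule and $w$ is $\mu$-admissible, the Haines--Pettet inequality (Corollary 3.5, quoted above) gives $w \le t_{\lambda(w)}$, so the interval contains both endpoints, which by construction both have translation part $\lambda(w)$. I will then identify the fibers of the translation-part map with cosets of a standard parabolic. After left-translating by $\sigma^{-1}$ I may assume $w, t_{\lambda(w)} \in W_{\rm{aff}}$; this is a length- and order-preserving bijection of the coset $\sigma W_{\rm{aff}}$ onto $W_{\rm{aff}}$ under which two elements share a translation part if and only if their images do. Inside $W_{\rm{aff}} = \mathbb{Z}\Phi^\vee \rtimes W$ the set of elements with a fixed translation part $\nu$ is exactly the left coset $t_\nu W$ of the standard parabolic subgroup $W = \langle S\rangle$, where $S \subset S_{\rm{aff}}$.

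The final step is the Bruhat-convexity of such cosets. The projection $W_{\rm{aff}}\to W_{\rm{aff}}^S$ onto minimal-length left-coset representatives is order-preserving, a standard property of parabolic quotients (\cite{bjorner-brenti2005}, Section 2.5). As $w$ and $t_{\lambda(w)}$ lie in the same coset $t_{\lambda(w)}W$, they have the same minimal representative $c$; hence for any $v$ with $w \le v \le t_{\lambda(w)}$ the order-preservation yields $c = w^S \le v^S \le t_{\lambda(w)}^S = c$, forcing $v^S = c$ and therefore $v \in cW = t_{\lambda(w)}W$. Thus every $v$ in the interval has translation part $\lambda(w)$, so consecutive vertices of $\Delta$ share a translation part and every $t_i$ lies in $T$, as claimed. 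The one genuinely essential input is the minuscule inequality $w \le t_{\lambda(w)}$, which is what traps the entire path inside a single $W$-coset; the coset convexity itself is a formal property of parabolic quotients, and the only point requiring care is the bookkeeping of the length-zero element $\sigma$ when passing between $\widetilde{W}$ and $W_{\rm{aff}}$.
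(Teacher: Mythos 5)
Your proof is correct, and it reaches the conclusion by a genuinely different mechanism than the paper does. The paper's argument is geometric: it tracks the alcoves $A_{wt_1\cdots t_i}$ along the path, shows that $\lambda$ lies in the closure of every one of them (any hyperplane crossed by the path separates $A_w$ from $A_{t_\lambda}$, both of whose closures contain $\lambda$, so $\lambda$ must lie \emph{on} each crossed hyperplane), and then converts the resulting fixed-point equations ${^{wt_1\cdots t_{i-1}}t_i}(\lambda)=\lambda$ into the statement that each $t_1\cdots t_i \cdots t_1$ fixes the origin, hence is a finite reflection. You instead prove the purely order-theoretic statement that the translation part is constant on the whole Bruhat interval $[w, t_{\lambda(w)}]$: the fibers of the translation-part map on $W_{\rm{aff}}$ are exactly the left cosets $t_\nu W$ of the standard parabolic $W = \langle S \rangle$, and since the projection onto minimal-length left-coset representatives is order-preserving (\cite{bjorner-brenti2005}, Section 2.5), an interval whose endpoints lie in a common coset is trapped in that coset. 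Your supporting computations are accurate: the identification $s_{\alpha,k} = t_{k\alpha^\vee}s_\alpha$ shows a reflection of $W_{\rm{aff}}$ is finite exactly when its translation part vanishes; the formula $w_{i-1}^{-1}w_i = t_{\bar{w}_{i-1}^{-1}(\lambda(w_i)-\lambda(w_{i-1}))}\,\bar{w}_{i-1}^{-1}\bar{w}_i$ correctly reduces the claim to constancy of translation parts along the path; and your bookkeeping of the length-zero element $\sigma$ is consistent with how the paper defines Bruhat order on $\widetilde{W}$. Your route buys a strictly stronger conclusion---Bruhat convexity of the coset $t_{\lambda(w)}W$, so \emph{every} element of the interval, not merely the vertices of a given path, has translation part $\lambda(w)$---and it is formal Coxeter theory requiring no alcove geometry; it also dovetails naturally with the paper's subsequent use of the minimal coset representative $w_\lambda$ in Lemma~\ref{lemma::minimal-coset-rep} and Proposition~\ref{bruhat-interval-isomorphism}. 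What the paper's geometric proof buys instead is the sharper local picture that every hyperplane crossed between $w$ and $t_{\lambda(w)}$ passes through the point $\lambda$, the sort of statement that interacts directly with admissibility and alcove combinatorics. Both proofs consume the minuscule hypothesis only through the Haines--Pettet inequality $w \le t_{\lambda(w)}$, as you correctly note.
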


\begin{proof}
Let $\mathcal{C}$ denote the base alcove. Recall that $W_{\rm{aff}}$ acts simply transitively on alcoves (see \cite{humphreys1990}, Section 4.5), and let $A_u = u \cdot \mathcal{C}$ for $u \in W_{\rm{aff}}$. Because $w$ belongs to ${\rm{Adm}}(\mu)$, it can be written $w = t_\lambda \bar{w}$ with $w \leq t_\lambda$. (N.B. the $\lambda$ here is $\lambda(w)$ by definition.)

We claim that $A_w$ and $A_{t_\lambda}$ both contain $\lambda$ in their closures. Observe that the fundamental alcove $\mathcal{C}$ and the alcove $\bar{w}\mathcal{C}$ both have the origin in their closure, because $\bar{w}$ is an element of the finite Weyl group. Translating each alcove by $\lambda$ means the translate of the origin lies in the closure of the translated alcoves.

A path $w \stackrel{\Delta}\longrightarrow t_{\lambda}$ is a sequence of (affine) reflections $t_1, \ldots, t_n$ such that
$$w < wt_1 < w t_1 t_2 < \cdots < wt_1 \cdots t_n = t_{\lambda}.$$

Now we further claim that each alcove $A_{w t_1\cdots t_i}$ also contains $\lambda$ in its closure. Let $H_i$ be a hyperplane crossed by going from $A_{w t_1 \ldots t_{i-1}}$ to $A_{w t_1\ldots t_i}$. All such $H_i$ weakly separate $A_w$ from $A_\lambda$. Suppose $\lambda$ did not lie in some $H_i$. Then it would be strictly on one side of $H_i$. This is a contradiction, because $\lambda$ belongs to the closure of both $A_w$ and $A_\lambda$, and these alcoves are separated by $H_i$.


As a matter of notation, given $u \in \widetilde{W}$, let $^u t_i = u t_i u^{-1}$. Then we can rewrite the above sequence as
$$w < {^w t_1} w < {^{wt_1}t_2} {^w t_1}w < \cdots <{^{wt_1\cdots t_{n-1}} t_n} \cdots {^{wt_1}t_2} {^w t_1}  w = t_{\lambda}.$$
The argument above shows that the hyperplane for each affine reflection ${^{wt_1\cdots t_{i-1}} t_i}$ passes through the point $\lambda$. Therefore, the corresponding reflection fixes this point, 
$$
{^{wt_1\cdots t_{i-1}} t_i} (\lambda) = \lambda.
$$
Since $w^{-1} = \bar{w}^{-1} t_{-\lambda}$, the preceding equation can be rewritten
$$
t_1 \cdots t_i \cdots t_1 \bar{w}^{-1}t_{-\lambda} (\lambda) = \bar{w}^{-1}t_{-\lambda}(\lambda)
$$
Using $\bar{w}^{-1} (0) = 0$, we conclude that for each $1 \leq i \leq n$,
$$t_1 \cdots t_i \cdots t_1 (0) = \overline{w}^{-1} (0) = 0.$$
An affine reflection fixes the origin if and only if its translation part is trivial. So the reflection $t_1 \cdots t_i \cdots t_1$ is in the finite Weyl group. It follows that each $t_i$ belongs to $T$.
\end{proof}

\begin{lemma}
\label{lemma::minimal-coset-rep}
Let $w = t_\lambda \bar{w}$ be $\mu$-admissible. There is an element $w_\lambda \in W$ such that $t_\lambda w_\lambda$ has minimal length in the coset $t_\lambda W$, and moreover, for any $x \in W$
$$
\ell(t_\lambda w_\lambda x) = \ell(t_\lambda w_\lambda) + \ell(x).
$$
Finally, for any $x, y \in W$, $t_\lambda w_\lambda x \leq t_\lambda w_\lambda y$ if and only if $x \leq y$.
\end{lemma}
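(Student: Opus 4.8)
The plan is to reduce the statement to the classical theory of minimal-length coset representatives for a standard parabolic subgroup of a Coxeter group, the only wrinkle being that $\widetilde{W}$ is not itself a Coxeter group. Note first that $W = \langle S\rangle$ is the standard parabolic subgroup $W_S$ of the Coxeter system $(W_{\rm{aff}}, S_{\rm{aff}})$, since $S \subseteq S_{\rm{aff}}$. The coset $t_\lambda W$ lies in a single coset $\sigma W_{\rm{aff}}$ for the unique length-zero element $\sigma \in \Omega$ with $t_\lambda \in \sigma W_{\rm{aff}}$, because multiplying $t_\lambda$ on the right by $W \subseteq W_{\rm{aff}}$ does not change the $\Omega$-component. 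Writing $v = \sigma^{-1} t_\lambda \in W_{\rm{aff}}$, we have $t_\lambda W = \sigma (vW)$ with $vW \subseteq W_{\rm{aff}}$. First I would record that left multiplication by $\sigma$ is length-preserving and Bruhat-order-preserving on $\widetilde{W}$ (this is exactly the compatibility used in the remark preceding Proposition~\ref{prop::finite-reflections-in-interval}), so it suffices to establish all three assertions for $v$ inside the genuine Coxeter group $W_{\rm{aff}}$ and then transport them back along $\sigma$.

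Next I would invoke the standard parabolic coset theory (Bj\"orner--Brenti~\cite{bjorner-brenti2005}, Proposition 2.4.4; see also Humphreys~\cite{humphreys1990}, \S 1.10). Let $u = v^{S}$ be the unique minimal-length element of the coset $vW$, and set $w_\lambda = v^{-1} u \in W$, so that $t_\lambda w_\lambda = \sigma u$. Minimality of $u$ in $vW$ transports to minimality of $t_\lambda w_\lambda$ in $t_\lambda W$, giving the first claim. The cited factorization theorem also yields the length-additive identity $\ell(ux) = \ell(u) + \ell(x)$ for every $x \in W$; applying $\ell(\sigma(\,\cdot\,)) = \ell(\,\cdot\,)$ turns this into $\ell(t_\lambda w_\lambda x) = \ell(t_\lambda w_\lambda) + \ell(x)$, the second claim.

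For the final assertion, transporting by $\sigma$ reduces $t_\lambda w_\lambda x \le t_\lambda w_\lambda y$ to $ux \le uy$, and I must show this is equivalent to $x \le y$. The direction $x \le y \Rightarrow ux \le uy$ is immediate from the subword property of Bruhat order (\cite{bjorner-brenti2005}, Theorem 2.2.2): concatenating a fixed reduced word for $u$ with reduced words for $x$ and $y$ gives reduced words for $ux$ and $uy$ (by the length-additivity just proved), and a reduced subword witnessing $x \le y$ extends, after prepending $u$, to one witnessing $ux \le uy$. The converse $ux \le uy \Rightarrow x \le y$ is the essential point: it amounts to the statement that left multiplication by the minimal coset representative $u$ is an isomorphism of Bruhat-ordered posets from $W$ onto the coset $uW$, which I would cite from Bj\"orner--Brenti, together with the fact that the Bruhat order on the parabolic subgroup $W$ coincides with the order induced from $W_{\rm{aff}}$. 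This converse is where I expect the only real subtlety to lie; everything else is bookkeeping. Finally I would remark that the $\mu$-admissibility hypothesis on $w$ is not actually used --- the argument applies verbatim to the translation part $t_\lambda$ of any element of $\widetilde{W}$ --- and that $\bar{w}$ enters only through fixing $\lambda = \lambda(w)$.
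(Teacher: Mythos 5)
Your proposal is correct, and its skeleton---pass to the Coxeter group $W_{\rm{aff}}$ via the length-zero element $\sigma$, then invoke minimal-coset-representative theory for the standard parabolic subgroup $W = W_S$ of $(W_{\rm{aff}}, S_{\rm{aff}})$---is the same as the paper's. The differences lie in how the two substantive claims are discharged. For length-additivity, the paper does not cite the parabolic factorization of Bj\"orner--Brenti Proposition 2.4.4; it proves $\ell(t_\lambda w_\lambda x) = \ell(t_\lambda w_\lambda) + \ell(x)$ by hand via the triangle inequality and the Exchange Condition, citing Bj\"orner--Brenti (Corollary 2.4.5) only for the uniqueness of $w_\lambda$. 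Your citation is a legitimate shortcut for the same fact. For the final equivalence your route is genuinely different, and in fact more careful than the paper's: the paper argues that $t_\lambda w_\lambda x < t_\lambda w_\lambda y$ forces $\ell(x) < \ell(y)$ and then concludes $x < y$ ``by definition,'' but a length inequality does not imply Bruhat comparability (e.g.\ $s_2 \not\leq s_1 s_3$ in the symmetric group on four letters, though $\ell(s_2) < \ell(s_1 s_3)$), so the published argument for this step has a gap. Your treatment---the forward implication from the subword property together with the length-additivity just established, and the converse from the standard fact that for $u \in W^J$ the map $x \mapsto ux$ is a Bruhat-order isomorphism of $W_J$ onto the coset $uW_J$ (provable from the subword property plus uniqueness of the minimal representative, together with the fact that Bruhat order on a standard parabolic agrees with the induced order)---is the correct repair. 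Your closing observation is also accurate: $\mu$-admissibility is never used, and the lemma holds for the translation part of an arbitrary element of $\widetilde{W}$; in the paper the hypothesis serves only to place $w$ and $t_\lambda$ in a common coset $\sigma W_{\rm{aff}}$, which your reduction obtains for $t_\lambda W$ directly.
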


\begin{proof}
Because we know $w \leq t_\lambda$, there is a length-zero element $\sigma \in \widetilde{W}$ such that $w, t_\lambda \in \sigma W_{\rm{aff}}$. We may and do think of $t_\lambda$ and $w$ as elements of $W_{\rm{aff}}$ by multiplying each on the left by $\sigma^{-1}$.

Since $W$ is a finite group, it is clearly possible to attain a minimal value in the set $\{ \ell(t_\lambda x) \mid x \in W\}$. Let $w_\lambda$ denote an element of $W$ such that $\ell(t_\lambda w_\lambda)$ is minimal. In fact, this $w_\lambda$ is unique by the theory of minimal coset representatives applied to the quotient $W_{\rm{aff}}/W$; see \cite{bjorner-brenti2005} Corollary 2.4.5 for example.

By the triangle inequality, for any $x \in W$ the lengths satisfy
$$
\ell(t_\lambda w_\lambda x) \leq \ell(t_\lambda w_\lambda) + \ell(x).
$$
Suppose $\ell(t_\lambda w_\lambda x) < \ell(t_\lambda w_\lambda) + \ell(x)$. Let $S_{\rm{aff}} = \{s_0, s_1, \ldots, s_r\},$ and choose reduced expressions $t_\lambda w_\lambda = s_{i_1}\cdots s_{i_n}$ and $x = s_{j_1}\cdots s_{j_m}$. By assumption of strict inequality, there is an index $j_k$ such that
$$
\ell(t_\lambda w_\lambda s_{j_1}\cdots s_{j_{k-1}} s_{j_k}) < \ell(t_\lambda w_\lambda s_{j_1}\cdots s_{j_{k-1}}).
$$
By the Exchange Condition, multiplying $t_\lambda w_\lambda s_{j_1}\cdots s_{j_{k-1}}$ by $s_{j_k}$ must delete some element in the expression $s_{i_1}\cdots s_{i_n} s_{j_1}\cdots s_{j_{k-1}}.$
Since the expression for $x$ is reduced, $s_{j_k}$ must delete some $s_{i_p}$. This contradicts the assumption that $t_\lambda w_\lambda = s_{i_1}\cdots s_{i_n}$ is reduced. Therefore, $\ell(t_\lambda w_\lambda x) = \ell(t_\lambda w_\lambda) + \ell(x).$

The final statement is a consequence of the above equality and the definition of Bruhat order. If we assume $x \neq y$, then $t_\lambda w_\lambda x < t_\lambda w_\lambda y$ means $\ell(t_\lambda w_\lambda x) < \ell(t_\lambda w_\lambda y)$. But then
$$
\ell(t_\lambda w_\lambda) + \ell(x) < \ell(t_\lambda w_\lambda) + \ell(y),
$$
and so $\ell(x) < \ell(y)$. Then by definition, $x < y$. To prove the opposite implication, run the argument in reverse.
\end{proof}

\begin{prop}
\label{bruhat-interval-isomorphism}
Let $w = t_\lambda \bar{w} \in {\rm{Adm}}_{G_r}(\mu)$. Given a reflection ordering $\prec$ on $W_{\rm{aff}}$, let $\prec$ also denote its restriction to $W$. There is an explicit $\prec$-preserving bijection between $
B_{\Phi_{\rm{aff}}}(w, t_\lambda)$, whose paths go through $\Omega_{(W_{\rm{aff}}, S_{\rm{aff}})}$, and $B_\Phi(w_\lambda^{-1}\wbar, w_\lambda^{-1})$, whose paths go through $\Omega_{(W,S)}$.
\end{prop}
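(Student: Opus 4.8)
The plan is to realize the bijection as left translation by the minimal-length coset representative supplied by Lemma~\ref{lemma::minimal-coset-rep}. Throughout I adopt the convention of the preceding remark, identifying $w$ and $t_\lambda$ with their images $\sigma^{-1}w,\ \sigma^{-1}t_\lambda \in W_{\rm{aff}}$, so that $t_\lambda w_\lambda$ carries the meaning given in Lemma~\ref{lemma::minimal-coset-rep}. I would define a map $\Theta \colon B_{\Phi_{\rm{aff}}}(w, t_\lambda) \to B_\Phi(w_\lambda^{-1}\wbar, w_\lambda^{-1})$ by left multiplying every vertex of a path by $(t_\lambda w_\lambda)^{-1}$, and I expect its two-sided inverse to be left multiplication by $t_\lambda w_\lambda$.

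First I would verify that $\Theta$ is well defined, i.e.\ that it lands in $\Omega_{(W,S)}$ with the correct endpoints. The essential input is Proposition~\ref{prop::finite-reflections-in-interval}: for any path $w = u_0 \to u_1 \to \cdots \to u_n = t_\lambda$ each edge label $u_{i-1}^{-1}u_i$ is a reflection $t_i \in T$. Hence $u_i = w\,t_1\cdots t_i$ with $t_1\cdots t_i \in W$, so every vertex lies in the single left coset $wW = t_\lambda W = (t_\lambda w_\lambda)W$, and left multiplication by $(t_\lambda w_\lambda)^{-1}$ sends each $u_i$ into $W$. Writing $t_\lambda = (t_\lambda w_\lambda)w_\lambda^{-1}$ and $w = (t_\lambda w_\lambda)(w_\lambda^{-1}\wbar)$ shows the endpoints map to $w_\lambda^{-1}$ and $w_\lambda^{-1}\wbar$; moreover $w_\lambda^{-1}\wbar \le w_\lambda^{-1}$ follows from $w \le t_\lambda$ by the order-preserving clause of Lemma~\ref{lemma::minimal-coset-rep}, so the target interval is the correct one.

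The heart of the argument is that left translation preserves edge labels \emph{exactly}. Setting $x_j = (t_\lambda w_\lambda)^{-1}u_j$, a one-line computation gives $x_{i-1}^{-1}x_i = u_{i-1}^{-1}u_i = t_i$, so the image edge carries the identical reflection $t_i \in T$. The length-additivity clause $\ell\big((t_\lambda w_\lambda)x\big) = \ell(t_\lambda w_\lambda) + \ell(x)$ of Lemma~\ref{lemma::minimal-coset-rep} shows $\ell(u_{i-1}) < \ell(u_i)$ if and only if $\ell(x_{i-1}) < \ell(x_i)$, so edge orientations are preserved and $\Theta(\Delta)$ is a genuine Bruhat path in $(W,S)$. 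Running the same computation with $t_\lambda w_\lambda$ in place of $(t_\lambda w_\lambda)^{-1}$ produces a map $B_\Phi(w_\lambda^{-1}\wbar, w_\lambda^{-1}) \to B_{\Phi_{\rm{aff}}}(w, t_\lambda)$ whose image lands in the coset $t_\lambda W$ and uses only reflections from $T$; the two translations compose to the identity on vertices, hence on paths, so $\Theta$ is a bijection.

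Finally I would conclude that $\Theta$ is $\prec$-preserving. Since each edge of $\Delta$ and its $\Theta$-image are labelled by the same reflection $t_i \in T$, and since $\prec$ on $W$ is by definition the restriction to $T$ of $\prec$ on $W_{\rm{aff}}$, every comparison entering the descent set $D(\Delta;\prec)$ has the same truth value before and after translation. Thus $\Theta$ carries descent sets to descent sets; in particular it restricts to a bijection between the increasing paths $B_{\Phi_{\rm{aff}}}^\prec(w, t_\lambda)$ and $B_\Phi^\prec(w_\lambda^{-1}\wbar, w_\lambda^{-1})$, which is what Corollary~\ref{cor::finite-interval-r-polynomial} requires. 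I expect the only delicate point to be the bookkeeping that all intermediate vertices remain in $t_\lambda W$ and that the two translations are mutually inverse as maps of \emph{paths} rather than merely of vertices; once Proposition~\ref{prop::finite-reflections-in-interval} and the two clauses of Lemma~\ref{lemma::minimal-coset-rep} are in hand, the remaining content is the single label identity $x_{i-1}^{-1}x_i = u_{i-1}^{-1}u_i$.
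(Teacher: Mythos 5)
Your proposal is correct and is essentially the paper's own argument made explicit: the paper also realizes the bijection as left translation by $(t_\lambda w_\lambda)^{-1}$ (rewriting the chain $w < wt_1 < \cdots < t_\lambda$ as $w_\lambda^{-1}\wbar < w_\lambda^{-1}\wbar t_1 < \cdots < w_\lambda^{-1}$ via the final clause of Lemma~\ref{lemma::minimal-coset-rep}), with Proposition~\ref{prop::finite-reflections-in-interval} guaranteeing the edges lie in $T$ and the identity of edge labels giving $\prec$-preservation. Your additional bookkeeping (endpoints, mutual inverses on paths, orientation via length-additivity) only spells out what the paper leaves implicit.
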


\begin{proof}
Choose any $\Delta \in B_{\Phi_{\rm{aff}}}(w, t_\lambda)$, with edge set $E(\Delta) = \{t_1, \ldots, t_n\}$. That is,
$$
w < wt_1 < w t_1 t_2 < \cdots < w t_1 t_2 \cdots t_{n-1} < t_\lambda.
$$
By the final statement of Lemma~\ref{lemma::minimal-coset-rep}, this chain of inequalities holds if and only if the following chain also holds,
$$
w_\lambda^{-1} \wbar < w_\lambda^{-1} \wbar t_1 < \cdots w_\lambda^{-1} \wbar t_1\cdots t_{n-1} < w_\lambda^{-1}.
$$

Proposition~\ref{prop::finite-reflections-in-interval} shows that the $t_i$ are reflections in the finite Weyl group. Hence the new chain of inequalities defines a path $\Delta^\prime \in B_\Phi (w_\lambda^{-1} \wbar, w_\lambda^{-1})$. The edge sets of each path are identical, and moreover, the edges appear in the same order. Therefore, we have a $\prec$-preserving bijection between the two Bruhat intervals.
\end{proof}


\subsection{A stratified formula for $\widetilde{R}$-polynomials}

The preceding section showed that the set of paths between a $\mu$-admissible element and its translation part increasing with respect to a reflection ordering $\prec$ is in bijection with the $\prec$-increasing paths between certain finite Weyl group elements. In this section, we show that the polynomials $\widetilde{R}_{w, \wtrans}^J(Q_r)$ appearing in the formula for the coefficients of $\phironeaug$ can be written as a sum indexed by these paths.

\subsubsection{Dyer's formula}

This subsection is devoted to justifying Theorem~\ref{dyer-R-polynomial-formula}, Dyer's $\widetilde{R}$-polynomial formula, in the context of this article. We begin by explaining how the polynomials are defined for a general Coxeter system, then we compare this definition to the definition of $\widetilde{R}$-polynomials arising from the inversion formula for basis elements of a twisted affine Hecke algebra (see Definition~\ref{defin::r-poly-from-hecke-algebras}).

Given a Coxeter system $(\mathcal{W},\mathcal{S})$ and $w \in \mathcal{W}$, the \textbf{right descent set} of $w$ is defined as
$$
D_R(w) = \{s \in \mathcal{S} \mid \ell(ws) < \ell(w)\}.
$$

\begin{thm}
Let $(\mathcal{W}, \mathcal{S})$ be a Coxeter system. There is a unique family of polynomials $\{R_{u,v}(q^r)\}_{u,v \in \mathcal{W}}$ satisfying the following conditions:
\begin{enumerate}
\item $R_{u,v}(q^r) = 0$ if $u\nleq v$,
\item $R_{u,v}(q^r) = 1$ if $u = v$,
\item If $s \in D_R(v)$, then
$$
R_{u,v}(q^r) =
\begin{cases}
R_{us,vs}(q^r), & {\rm{if}}\ s \in D_R(u), \\
q^rR_{us,vs}(q^r) + (q^r-1)R_{u,vs}(q^r), & {\rm{if}}\ s \notin D_R(u).
\end{cases}
$$
\end{enumerate}
\end{thm}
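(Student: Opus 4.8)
The result is the classical existence–uniqueness theorem for Kazhdan–Lusztig $R$-polynomials, and the plan is to derive existence from the Hecke algebra already at hand and uniqueness from the recursion (3) by induction. Throughout I regard $q^r$ as a single formal indeterminate, and I build the Hecke algebra $\mathcal{H} = \mathcal{H}(\mathcal{W},\mathcal{S})$ from the preceding construction theorem with the parameter choice $a_s = q^r - 1$ and $b_s = q^r$ for every $s \in \mathcal{S}$.

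\emph{Existence.} The first point is that each basis element $T_w$ is a unit in $\mathcal{H}$. The defining relation yields the quadratic identity $T_s^2 = (q^r-1)T_s + q^r$ for $s \in \mathcal{S}$, so that $T_s^{-1} = q^{-r}T_s - (1 - q^{-r})$; writing $w$ as a reduced word and multiplying shows every $T_w$ is invertible. I would then expand the inverse of $T_{v^{-1}}$ in the standard basis, writing $T_{v^{-1}}^{-1} = \sum_{u} c_{u,v}\,T_u$, and \emph{define} $R_{u,v}(q^r) = (-1)^{\ell(u,v)} q^{r\ell(v)} c_{u,v}$. A straightforward induction on $\ell(v)$, performed while computing the expansion, shows that $c_{u,v}$ vanishes unless $u \le v$ (using the standard lifting property of Bruhat order) and equals $q^{-r\ell(v)}$ when $u = v$; these give conditions (1) and (2), and at the same time show, by tracking powers of $q^r$, that each $R_{u,v}$ is a genuine polynomial.

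\emph{The recursion.} To obtain (3), fix $s \in D_R(v)$ and put $v' = vs$, so $\ell(v') < \ell(v)$ and $T_{v^{-1}} = T_s T_{v'^{-1}}$, whence $T_{v^{-1}}^{-1} = T_{v'^{-1}}^{-1} T_s^{-1}$. Substituting $T_s^{-1} = q^{-r}T_s - (1-q^{-r})$ and applying the quadratic relation, one finds that right multiplication by $T_s^{-1}$ sends $T_u \mapsto T_{us}$ when $s \in D_R(u)$, and $T_u \mapsto q^{-r}T_{us} - (1-q^{-r})T_u$ when $s \notin D_R(u)$. Collecting the coefficient of a fixed $T_u$ on both sides expresses $c_{u,v}$ in terms of $c_{us,v'}$ and $c_{u,v'}$, and translating back through the normalization $R_{u,v} = (-1)^{\ell(u,v)}q^{r\ell(v)}c_{u,v}$ (a short check of the resulting signs and powers of $q^r$) produces exactly the two displayed formulas, the two branches corresponding precisely to whether $s \in D_R(u)$.

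\emph{Uniqueness.} Finally, I would show by induction on $\ell(v)$ that (1)–(3) pin down the family. For $v = e$, condition (1) forces $R_{u,e} = 0$ for $u \neq e$ and (2) gives $R_{e,e} = 1$. For $\ell(v) > 0$ the element $v$ has at least one right descent $s$, and (3) writes every $R_{u,v}$ in terms of the values $R_{\cdot,\,vs}$ with $\ell(vs) < \ell(v)$, already fixed by the inductive hypothesis. The one genuine obstacle is that (3) could in principle return different values when $v$ has two distinct right descents; this is precisely where the Hecke algebra construction is decisive, since it furnishes one canonical family from which (3) is \emph{derived} rather than imposed, so self-consistency of the recursion is automatic and no separate compatibility argument is needed.
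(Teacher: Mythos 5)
Your argument is correct and takes essentially the same route as the paper, which proves this theorem by citing Bj{\"o}rner--Brenti (Theorem 5.1.1): existence and properties (1)--(3) are obtained by expanding $T_{v^{-1}}^{-1}$ in the $T_u$-basis of the Hecke algebra with parameters $a_s = q^r - 1$, $b_s = q^r$, and uniqueness follows by induction on $\ell(v)$, with the algebra supplying the consistency of the recursion across different right descents --- exactly as in the standard proof. The only step worth making explicit is that the construction theorem as quoted in the paper states the left relations $T_s T_w$, so when you multiply by $T_s^{-1}$ on the right you are implicitly using the symmetric right-hand relations $T_u T_s = T_{us}$ (for $\ell(us) > \ell(u)$) and $T_u T_s = (q^r-1)T_u + q^r T_{us}$ (otherwise), which do hold in this algebra and are part of the same standard construction.
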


\begin{proof}
This is Theorem 5.1.1 of~\cite{bjorner-brenti2005}.
\end{proof}

\begin{prop}
Let $u, v \in \mathcal{W}$, and let $\hat{Q} = q^{r/2} - q^{-r/2}$. There exists a unique polynomial $\widetilde{R}_{u,v}(q^r) \in \mathbb{N}[q]$ such that
$$R_{u,v}(q^r) = q^{r\ell(u,v)/2} \widetilde{R}_{u,v}(\hat{Q}_r).$$
\end{prop}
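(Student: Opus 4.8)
The plan is to follow the classical argument (as in Björner--Brenti, Proposition 5.3.1) and prove the statement by strong induction on $\ell(v)$, simultaneously defining $\widetilde{R}_{u,v}$ through the recursion inherited from the defining recursion for the $R$-polynomials. For the base cases: when $u \nleq v$ we have $R_{u,v}(q^r)=0$ and set $\widetilde{R}_{u,v}=0$; when $\ell(v)=0$ the only $u\le v$ is $u=v=e$, where $R_{u,v}(q^r)=1$ and we set $\widetilde{R}_{u,v}=1$. Both lie in $\mathbb{N}[q]$ and satisfy the claimed identity since $\ell(u,v)=0$ in the surviving case.

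For the inductive step I fix $v$ with $\ell(v)>0$, choose $s\in D_R(v)$, and split according to whether $s\in D_R(u)$. If $s\in D_R(u)$, then $\ell(us,vs)=\ell(u,v)$ and $R_{u,v}=R_{us,vs}$, so by induction I set $\widetilde{R}_{u,v}=\widetilde{R}_{us,vs}\in\mathbb{N}[q]$. If $s\notin D_R(u)$, then $\ell(us,vs)=\ell(u,v)-2$ and $\ell(u,vs)=\ell(u,v)-1$, and the recursion reads $R_{u,v}(q^r)=q^rR_{us,vs}(q^r)+(q^r-1)R_{u,vs}(q^r)$. Substituting the inductive expressions and factoring out $q^{r\ell(u,v)/2}$ gives
\[
R_{u,v}(q^r)=q^{r\ell(u,v)/2}\Bigl[\widetilde{R}_{us,vs}(\hat{Q}_r)+(q^r-1)q^{-r/2}\,\widetilde{R}_{u,vs}(\hat{Q}_r)\Bigr].
\]
The key algebraic identity is $q^r-1=q^{r/2}(q^{r/2}-q^{-r/2})=q^{r/2}\hat{Q}_r$, so $(q^r-1)q^{-r/2}=\hat{Q}_r$, and the bracket becomes $\widetilde{R}_{us,vs}(\hat{Q}_r)+\hat{Q}_r\,\widetilde{R}_{u,vs}(\hat{Q}_r)$. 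I therefore define $\widetilde{R}_{u,v}(q)=\widetilde{R}_{us,vs}(q)+q\,\widetilde{R}_{u,vs}(q)$, which lies in $\mathbb{N}[q]$ because both summands do by induction. This establishes both existence and the claimed form.

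Two points remain. First, the recursion defining $\widetilde{R}_{u,v}$ appears to depend on the choice of $s\in D_R(v)$, but this independence is automatic: the $R$-polynomials themselves are well defined (by the cited theorem), and since $q^{r\ell(u,v)/2}$ is a nonzero unit, $\widetilde{R}_{u,v}(\hat{Q}_r)=q^{-r\ell(u,v)/2}R_{u,v}(q^r)$ is determined by $R_{u,v}$ alone. Second, for uniqueness I argue that the substitution $x\mapsto \hat{Q}_r=q^{r/2}-q^{-r/2}$ is injective on $\mathbb{C}[x]$: regarding $t:=q^{r/2}$ as an indeterminate, the element $t-t^{-1}$ is transcendental over $\mathbb{Q}$ inside $\mathbb{Q}(t)$, so a polynomial $P$ with $P(t-t^{-1})=0$ in $\mathbb{Q}(t)$ must be the zero polynomial; equivalently, comparing leading terms, $P(t-t^{-1})$ has top $t$-degree equal to $\deg P$. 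Hence $\widetilde{R}_{u,v}$ is uniquely pinned down by the equation. The only genuinely delicate bookkeeping is keeping the length shifts $\ell(us,vs)$ and $\ell(u,vs)$ straight in the $s\notin D_R(u)$ case and confirming that the factor $(q^r-1)q^{-r/2}$ collapses exactly to $\hat{Q}_r$ with a nonnegative coefficient; everything else is a direct, if careful, induction.
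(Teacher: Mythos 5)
Your proof is correct: the paper itself simply cites Bj\"orner--Brenti, Proposition 5.3.1, and your argument is exactly that classical proof, written out in full---induction on $\ell(v)$ via the defining recursion, the length bookkeeping $\ell(us,vs)=\ell(u,v)-2$ and $\ell(u,vs)=\ell(u,v)-1$ in the $s\notin D_R(u)$ case, the collapse $(q^r-1)q^{-r/2}=\hat{Q}_r$, and uniqueness from injectivity of the substitution $x\mapsto t-t^{-1}$ with $t=q^{r/2}$ an indeterminate. Since this is essentially the same approach the paper relies on, nothing further is needed.
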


\begin{proof}
This is Proposition 5.3.1 of~\cite{bjorner-brenti2005}.
\end{proof}

Recall that in Chapter 2 we set $Q = q^{-r/2} - q^{r/2}$. In the definitions given for the Hecke algebra case, the $R$-polynomials and $\widetilde{R}$-polynomials are related by
$$
(-1)^{\ell(u)} (-1)^{\ell(v)} R_{u,v}(q^r) = q^{r\ell(u,v)/2} \widetilde{R}_{u,v}(Q_r),
$$
for $u$ and $v$ in the extended affine Weyl group $\widetilde{W}$, whereas in the definition of the polynomials for general Coxeter groups we have
$$
R_{u,v}(q^r) = q^{r\ell(u,v)/2} \widetilde{R}_{u,v}(\hat{Q}_r).
$$

\begin{lemma}
In the notation defined above,
$$
(-1)^{\ell(u)} (-1)^{\ell(v)} \widetilde{R}_{u,v} (Q_r) = \widetilde{R}_{u,v}(\hat{Q}_r).
$$
\end{lemma}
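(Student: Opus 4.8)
The plan is to treat the two displayed identities immediately preceding the lemma as given hypotheses and to eliminate the common classical $R$-polynomial $R_{u,v}(q^r)$ between them. Both displays refer to the \emph{same} object $R_{u,v}(q^r)$, namely the Kazhdan--Lusztig $R$-polynomial canonically attached to the underlying Coxeter system (extended affine $\widetilde{W}$) by the recursion recalled just above; the only difference between the two is the normalization convention used to pass from $R_{u,v}$ to $\widetilde{R}_{u,v}$. So the whole content is a formal substitution.

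First I would insert the general-Coxeter identity $R_{u,v}(q^r) = q^{r\ell(u,v)/2}\widetilde{R}_{u,v}(\hat{Q}_r)$ into the Hecke-algebra identity $(-1)^{\ell(u)}(-1)^{\ell(v)}R_{u,v}(q^r) = q^{r\ell(u,v)/2}\widetilde{R}_{u,v}(Q_r)$. This yields $(-1)^{\ell(u)}(-1)^{\ell(v)}\,q^{r\ell(u,v)/2}\,\widetilde{R}_{u,v}(\hat{Q}_r) = q^{r\ell(u,v)/2}\,\widetilde{R}_{u,v}(Q_r)$. Since $q^{r\ell(u,v)/2}$ is a unit in the coefficient ring, I would cancel it to get $(-1)^{\ell(u)}(-1)^{\ell(v)}\widetilde{R}_{u,v}(\hat{Q}_r) = \widetilde{R}_{u,v}(Q_r)$, and then multiply both sides by $(-1)^{\ell(u)}(-1)^{\ell(v)}$, whose square is $1$, to rearrange into the asserted equality $(-1)^{\ell(u)}(-1)^{\ell(v)}\widetilde{R}_{u,v}(Q_r) = \widetilde{R}_{u,v}(\hat{Q}_r)$.

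There is essentially no obstacle here; the only points worth a remark are the boundary cases where the cancellation is degenerate. When $u \not\leq v$ both $\widetilde{R}$-values vanish and the identity reads $0=0$, while when $u=v$ one has $\ell(u,v)=0$, $\widetilde{R}_{u,v}=1$, and $(-1)^{\ell(u)+\ell(v)}=(-1)^{2\ell(u)}=1$, so both sides equal $1$; in every remaining case the elimination above applies verbatim. If a more conceptual phrasing is preferred, I could instead note that $\hat{Q}_r = -Q_r$, so that the lemma is equivalent to the parity relation $\widetilde{R}_{u,v}(-Q_r) = (-1)^{\ell(u,v)}\widetilde{R}_{u,v}(Q_r)$, reflecting that every monomial of $\widetilde{R}_{u,v}$ has degree congruent to $\ell(u,v)\equiv \ell(u)+\ell(v)\pmod 2$; this gives an alternative derivation, but the elimination argument is shorter and entirely self-contained given the two preceding displays.
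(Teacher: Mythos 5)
Your elimination argument conceals a circularity, and the paper's proof exists precisely to fill the step you have skipped. The two displays preceding the lemma involve the same classical $R_{u,v}$ but two \emph{a priori different} normalizations of $\widetilde{R}_{u,v}$: the first records how the polynomial of Definition~\ref{defin::r-poly-from-hecke-algebras} (extracted from the inversion formula in the twisted affine Hecke algebra) relates to $R_{u,v}$, while the second is Proposition 5.3.1 of~\cite{bjorner-brenti2005}, which \emph{defines} a polynomial by that very substitution. Your substitution treats the symbol $\widetilde{R}_{u,v}$ in both displays as one and the same polynomial; but given either display alone, the simultaneous validity of both for a single polynomial is \emph{equivalent} to the lemma (divide one display by the other, using $(-1)^{\ell(u)+\ell(v)} = (-1)^{\ell(u,v)}$). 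So ``treating the two displayed identities as given hypotheses'' for one polynomial is assuming exactly what is to be proved. If you instead read the displays as concerning two different polynomials, your elimination validly produces a cross-convention evaluation identity, but that tautology is weaker than what the paper proves and needs: the paper's proof shows the single Hecke-algebra polynomial has all exponents congruent to $\ell(u,v)$ modulo $2$, hence satisfies $\widetilde{R}_{u,v}(-x) = (-1)^{\ell(u,v)}\widetilde{R}_{u,v}(x)$ as a polynomial identity. That identification of the two conventions is what licenses quoting Dyer's theorem verbatim in the variable $Q_r$ in Theorem~\ref{dyer-R-polynomial-formula}; your cross identity alone would still leave a sign $(-1)^{\ell(u,v)+\ell(\Delta)}$ on each path term to be disposed of.

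Your closing remark in fact contains the paper's actual argument, but you assert its key input as if it were obvious. The paper establishes the fixed parity of the monomial degrees by combining ${\rm{deg}}\big(\widetilde{R}_{u,v}\big) = \ell(u,v)$ (Lemma 2.5 of~\cite{haines2000b}) with Dyer's path formula: if $\Delta_1$ and $\Delta_2$ are two increasing paths from $u$ to $v$, their edge products satisfy $t_1\cdots t_r = u_1\cdots u_s = u^{-1}v$, and the general fact about expressing a Coxeter group element as a product of reflections forces $r \equiv s \pmod 2$. This parity statement is the entire mathematical content of the lemma; your degenerate-case checks ($u \nleq v$ and $u = v$) are fine but peripheral. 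To repair your write-up, promote the parity relation from an unproved aside to the main argument, and supply the reflection-product justification.
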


\begin{proof}
Notice that $\hat{Q}_r = -Q_r$, so it is enough to show that the sign $(-1)^{\ell(u)} (-1)^{\ell(v)}$ is somehow compatible with the individual terms in the polynomial. It will help to rewrite $(-1)^{\ell(u)} (-1)^{\ell(v)} = (-1)^{\ell(v) - \ell(u)} = (-1)^{\ell(u,v)}$.

Next, we recall two facts about $\widetilde{R}$-polynomials arising from the inversion formula for Hecke algebra basis elements. First, ${\rm{deg}}\big(\widetilde{R}_{u,v}(Q_r)\big) = \ell(u,v)$; see~\cite{haines2000b}, Lemma 2.5. Second, the powers of $Q_r$ in $\widetilde{R}_{u,v} (Q_r)$ all have the same parity. This follows from Theorem~\ref{dyer-R-polynomial-formula}, whose proof is independent of the current argument. Suppose we have two paths $\Delta_1, \Delta_2 \in B_{\Phi_{\chi, \rm{aff}}}^\prec(u, v)$, such that the product of edges of $\Delta_1$ is $t_1\ldots t_r$, while the product of edges of $\Delta_2$ is $u_1 \ldots u_s$. Then $$u^{-1}v = t_1\ldots t_r = u_1 \ldots u_s.$$ It is a general fact about Coxeter groups that in this situation $r - s$ must be even; but $r = \ell(\Delta_1)$ and $s = \ell(\Delta_2)$.

An $\widetilde{R}$-polynomial has coefficients $c_n$ in $\mathbb{N}$. Multiplying through by $(-1)^{\ell(u,v)}$ gives
$$
(-1)^{\ell(u,v)} c_n Q_r^{\ell(u,v) - 2n} = (-1)^{2n} c_n \hat{Q}_r^{\ell(u,v)-2n}.
$$
This completes the proof.
\end{proof}

Finally, we can state Dyer's formula~\cite{dyer1993} using the notation of Chapter 2.

\begin{thm}
\label{dyer-R-polynomial-formula}
Let $\widetilde{W}_{\chi}$ be the extended affine Weyl group of $H_{\chi_r}$, and let $\prec$ be a reflection ordering on the reflections in $W_{\chi, \rm{aff}}$. Let $Q_r = q^{-r/2} - q^{r/2}$. For any $u, v \in \widetilde{W}_{\chi}$ such that $u \leq_\chi v$ in Bruhat order,
$$\widetilde{R}^\chi_{u,v} (Q_r) = \sum_{\Delta \in B_{\Phi_{\chi, \rm{aff}}}^\prec(u, v)} Q_r^{\ell(\Delta)}.$$
\end{thm}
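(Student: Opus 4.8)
The plan is to deduce the statement from Dyer's original theorem \cite{dyer1993}, recorded in the normalization of \cite{bjorner-brenti2005}, Theorem 5.3.4, and then to reconcile that normalization with the Hecke-algebra normalization of Definition~\ref{defin::r-poly-from-hecke-algebras}. The first issue is that $\widetilde{W}_\chi$ is not itself a Coxeter group, whereas Dyer's formula is proved for Coxeter systems. To handle this I would use the decomposition $\widetilde{W}_\chi = W_{\chi,\rm{aff}} \rtimes \Omega_\chi$ of Lemma~\ref{lemma::chi-weyl-groups}: the hypothesis $u \leq_\chi v$ places $u$ and $v$ in a single coset $\sigma W_{\chi,\rm{aff}}$ with $\sigma \in \Omega_\chi$ of length zero, and left multiplication by $\sigma^{-1}$ carries this coset isomorphically onto the Coxeter group $W_{\chi,\rm{aff}}$, preserving $\ell_\chi$ and $\leq_\chi$. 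A short computation shows the edge reflections $w_{i-1}^{-1}w_i$ of a path are unchanged when every vertex $w_j$ is replaced by $\sigma^{-1}w_j$, so this translation induces a $\prec$-preserving, length-preserving bijection $B_{\Phi_{\chi,\rm{aff}}}^\prec(u,v) \cong B_{\Phi_{\chi,\rm{aff}}}^\prec(\sigma^{-1}u,\sigma^{-1}v)$ and an equality $\widetilde{R}^\chi_{u,v} = \widetilde{R}^\chi_{\sigma^{-1}u,\sigma^{-1}v}$; this is exactly the convention announced in the remark preceding Proposition~\ref{prop::finite-reflections-in-interval}. Hence I may assume $u, v \in W_{\chi,\rm{aff}}$, where $(W_{\chi,\rm{aff}},S_{\chi,\rm{aff}})$ is a genuine Coxeter system by Lemma~\ref{lemma::chi-weyl-groups}, and Dyer's formula applies to give $\widetilde{R}^{\rm BB}_{u,v}(\hat{Q}_r) = \sum_{\Delta} \hat{Q}_r^{\ell(\Delta)}$, the sum over $B_{\Phi_{\chi,\rm{aff}}}^\prec(u,v)$, where $\widetilde{R}^{\rm BB}$ denotes the polynomial normalized by $R_{u,v}(q^r) = q^{r\ell(u,v)/2}\widetilde{R}^{\rm BB}_{u,v}(\hat{Q}_r)$ and $\hat{Q}_r = q^{r/2}-q^{-r/2}$.

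It then remains to pass from $\widetilde{R}^{\rm BB}$ to the paper's polynomial $\widetilde{R}^\chi$, whose defining relation is $(-1)^{\ell(u)}(-1)^{\ell(v)}R_{u,v}(q^r) = q^{r\ell(u,v)/2}\widetilde{R}^\chi_{u,v}(Q_r)$ with $Q_r = q^{-r/2}-q^{r/2}$. Comparing the two relations for the common Kazhdan--Lusztig polynomial $R_{u,v}(q^r)$ and cancelling the factor $q^{r\ell(u,v)/2}$ yields $\widetilde{R}^\chi_{u,v}(Q_r) = (-1)^{\ell(u,v)}\widetilde{R}^{\rm BB}_{u,v}(\hat{Q}_r)$, with no appeal to the present theorem. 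Substituting Dyer's formula and using $\hat{Q}_r = -Q_r$ gives
$$
\widetilde{R}^\chi_{u,v}(Q_r) = (-1)^{\ell(u,v)} \sum_{\Delta} (-1)^{\ell(\Delta)} Q_r^{\ell(\Delta)}.
$$
Finally I would invoke the elementary parity fact that every path $\Delta$ from $u$ to $v$ has $\ell(\Delta) \equiv \ell(u,v) \pmod 2$: indeed $u^{-1}v$ is the product $t_1 \cdots t_{\ell(\Delta)}$ of the edge reflections, the sign character $W_{\chi,\rm{aff}} \to \{\pm 1\}$ sends every reflection to $-1$, and $\ell(u,v) = \ell(v)-\ell(u) \equiv \ell(u^{-1}v) \pmod 2$. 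Thus $(-1)^{\ell(\Delta)} = (-1)^{\ell(u,v)}$ for each term, the two signs cancel, and $\widetilde{R}^\chi_{u,v}(Q_r) = \sum_{\Delta} Q_r^{\ell(\Delta)}$, as claimed.

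Because the combinatorial substance is entirely contained in Dyer's theorem, which I treat as a black box, the work here is bookkeeping, and I expect the main obstacle to be keeping the two competing normalizations straight. One must be careful that the relation $\widetilde{R}^\chi_{u,v}(Q_r) = (-1)^{\ell(u,v)}\widetilde{R}^{\rm BB}_{u,v}(\hat{Q}_r)$ is derived directly from the defining inversion formulas, so as not to create a circular dependence on the preceding lemma (whose own proof invokes the present theorem), and that the sign $(-1)^{\ell(u,v)}$ introduced by the Hecke-algebra normalization is exactly absorbed by the parity relation together with $\hat{Q}_r = -Q_r$. The reduction from $\widetilde{W}_\chi$ to $W_{\chi,\rm{aff}}$ is routine but must be recorded explicitly, since Dyer's formula is available only for honest Coxeter systems.
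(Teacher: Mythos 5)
Your proposal is correct and takes essentially the same route as the paper, which likewise obtains the theorem by citing Theorem 5.3.4 of Bj{\"o}rner--Brenti~\cite{bjorner-brenti2005} for the Coxeter-normalized polynomials and reconciling the two normalizations via the identity $(-1)^{\ell(u)}(-1)^{\ell(v)}\widetilde{R}_{u,v}(Q_r)=\widetilde{R}_{u,v}(\hat{Q}_r)$ proved in the lemma immediately preceding Theorem~\ref{dyer-R-polynomial-formula}. Your two refinements---establishing the parity of path lengths directly from the sign character of $W_{\chi,{\rm{aff}}}$ rather than by forward reference to the theorem itself (the paper's lemma cites Theorem~\ref{dyer-R-polynomial-formula} for this fact and must explicitly disclaim circularity), and recording the reduction from $\widetilde{W}_\chi$ to the honest Coxeter system $(W_{\chi,{\rm{aff}}},S_{\chi,{\rm{aff}}})$ by left multiplication by a length-zero element, which fixes edge reflections since $(\sigma^{-1}w_{i-1})^{-1}(\sigma^{-1}w_i)=w_{i-1}^{-1}w_i$---only make explicit bookkeeping that the paper handles by convention.
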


\begin{proof}
This statement is Theorem 5.3.4 of~\cite{bjorner-brenti2005}, applied to the case of $\widetilde{R}$-polynomials arising from $\mathcal{H}(H_{\chi_r}, I_{H_r})$ viewed as a twisted affine Hecke algebra.
\end{proof}

\subsubsection{Modifications to Dyer's formula}

Suppose $J$ is a root sub-system of $\Phi$. Let $W_J = \langle s_\alpha \mid \alpha \in J^+\rangle$ be the reflection subgroup of $W$ associated to $J$. The group $W_{J,{\rm{aff}}}$ is the corresponding affine Weyl group. Given a reflection ordering $\prec$ on reflections in $W_{\rm{aff}}$, we induce a reflection ordering on reflections in $W_{J, {\rm{aff}}}$ by restricting the ordering on $\Phi_{\rm{aff}}^+$ to $J_{\rm{aff}}^+$.

Let $w = t_\lambda \bar{w} \in {\rm{Adm}}_{G_r}(\mu)$, and $J \subseteq \Phi$. There is an automorphism $\sigma$ of the base alcove $\mathcal{C}$ such that $w$ and $t_\lambda$ lie in $\sigma W_{\rm{aff}}$, so we may speak of paths $w \stackrel{\Delta}{\longrightarrow} t_\lambda$ through the Bruhat graph $\Omega_{(W_{\rm{aff}}, S_{\rm{aff}})}$.  We consider Bruhat intervals
$$
B_{J_{{\rm{aff}}}}^{\prec} (w, t_\lambda) = \{ w \stackrel{\Delta}{\longrightarrow} t \mid E(\Delta) \subset W_{J,{\rm{aff}}},\ D(\Delta, \prec) = \emptyset \}.
$$

\begin{lemma}
\label{path-root-system-lemma}
For any path $\Delta$ in $B_{J_{\rm{aff}}}^{\prec} (w, t_\lambda)$, there is a unique minimal root subsystem $J_\Delta \subseteq J$ such that all reflections $t_i \in E(\Delta)$ lie in $W_{J_\Delta}$.
\end{lemma}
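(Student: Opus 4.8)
The plan is to realize $J_\Delta$ as the root system of the reflection subgroup of $W$ generated by the edges of $\Delta$, and then to read off both minimality and uniqueness from the correspondence between reflection subgroups and root subsystems.

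First I would pin down the edges. Writing $E(\Delta) = \{t_1, \ldots, t_n\}$, the defining condition of $B_{J_{\rm{aff}}}^{\prec}(w, t_\lambda)$ forces $t_i \in W_{J,{\rm{aff}}}$ for each $i$. On the other hand, since $w$ is $\mu$-admissible and $\Delta$ runs from $w$ to its translation part $t_\lambda = \wtrans$, Proposition~\ref{prop::finite-reflections-in-interval} shows every $t_i$ lies in the set $T$ of finite reflections. The finite reflections inside $W_{J,{\rm{aff}}}$ are precisely the $s_\beta$ with $\beta \in J$, since the affine reflections of $W_{J,{\rm{aff}}}$ attached to affine roots $\alpha + k$ with $k \neq 0$ have nontrivial translation part. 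Hence each $t_i = s_{\beta_i}$ for some $\beta_i \in J$; in particular $t_i \in W_J$.

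Next I would construct $J_\Delta$. Set $W^\prime = \langle t_1, \ldots, t_n\rangle$, a reflection subgroup of $W$ with $W^\prime \subseteq W_J$, and define
$$
J_\Delta = \{\beta \in \Phi \mid s_\beta \in W^\prime\}.
$$
I claim this is a root subsystem of $\Phi$ contained in $J$. Symmetry is immediate from $s_\beta = s_{-\beta}$, and closure under the reflections of $W^\prime$ follows from $s_\gamma s_\beta s_\gamma = s_{s_\gamma(\beta)}$: if $s_\beta, s_\gamma \in W^\prime$ then $s_{s_\gamma(\beta)} \in W^\prime$, so $s_\gamma(\beta) \in J_\Delta$. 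This is the concrete incarnation of Lemma~\ref{lemma::reflection-subgroup-gives-root-subsystem} applied to $W^\prime$. Because $W^\prime \subseteq W_J$ and the reflections of $W_J$ are exactly the $s_\beta$ with $\beta \in J$, every $\beta \in J_\Delta$ lies in $J$, so $J_\Delta \subseteq J$. By Theorem~\ref{reflection-subgroups-are-coxeter-systems}, $W^\prime$ coincides with the group generated by all of its reflections $W^\prime \cap T = \{s_\beta \mid \beta \in J_\Delta\}$, whence $W_{J_\Delta} = W^\prime$, and in particular each $t_i \in W_{J_\Delta}$.

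Finally I would establish minimality and uniqueness simultaneously. Suppose $J^\prime \subseteq \Phi$ is any root subsystem with $t_i \in W_{J^\prime}$ for all $i$. Then $W^\prime = \langle t_1, \ldots, t_n\rangle \subseteq W_{J^\prime}$, and since the reflections of $W_{J^\prime}$ are exactly the $s_\beta$ with $\beta \in J^\prime$, every $\beta$ with $s_\beta \in W^\prime$ satisfies $\beta \in J^\prime$; that is, $J_\Delta \subseteq J^\prime$. Thus $J_\Delta$ is contained in every root subsystem whose reflection group absorbs all edges of $\Delta$, which is precisely the assertion that $J_\Delta$ is the unique minimal such subsystem. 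The only delicate point I expect is the identification $W^\prime \cap T = \{s_\beta \mid \beta \in J_\Delta\}$ together with $W_{J_\Delta} = W^\prime$, which rests on Theorem~\ref{reflection-subgroups-are-coxeter-systems} guaranteeing that a reflection subgroup is generated by its full set of reflections and carries a well-defined associated root system; everything else is formal once the edges are known to be finite reflections in $W_J$.
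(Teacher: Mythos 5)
Your proof is correct and follows essentially the same route as the paper's: both use Proposition~\ref{prop::finite-reflections-in-interval} to see the edges are finite reflections, form the reflection subgroup $W' = \langle t_i \mid t_i \in E(\Delta)\rangle \subseteq W_J$, and invoke Theorem~\ref{reflection-subgroups-are-coxeter-systems} together with Lemma~\ref{lemma::reflection-subgroup-gives-root-subsystem} to extract the root system $J_\Delta$. The only difference is cosmetic: you realize $J_\Delta$ explicitly as $\{\beta \in \Phi \mid s_\beta \in W'\}$ and spell out the minimality and uniqueness argument that the paper leaves implicit in the observation that $W'$ is the smallest subgroup of $W_J$ containing the edges.
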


\begin{proof}
Fix a path $\Delta \in B_{J,{\rm{aff}}}^{\prec} (w, t_{\lambda})$. By Proposition~\ref{prop::finite-reflections-in-interval}, the edges of all paths in $B_{J,{\rm{aff}}}^{\prec} (w, t_{\lambda})$ are finite reflections. Observe that
$$W^\prime =\ \bigcap_{\substack{E(\Delta) \subset V \subset W_J}} V \ =\ \langle t_i \mid t_i \in E(\Delta) \rangle,$$
is the smallest subgroup of $W_J$ containing all of the edges in $\Delta$. Theorem~\ref{reflection-subgroups-are-coxeter-systems} shows that $W^\prime$ is itself a Coxeter group. Therefore, by Lemma~\ref{lemma::reflection-subgroup-gives-root-subsystem}, there is an associated root system $J_\Delta$ whose positive roots are in bijection with the reflections of $W^\prime$.
\end{proof}

\begin{prop}
\label{stratified-r-polynomial-formula}
Let $w=t_\lambda \bar{w} \in {\rm{Adm}}_{G_r}(\mu)$. The polynomial $\widetilde{R}_{w,t_\lambda}^J (Q_r)$, defined with respect to the reflection subgroup $(W_J, \Sigma_J)$, can be rewritten as
$$
\widetilde{R}_{w,t_\lambda}^J (Q_r) = \sum_{J^\prime \subseteq J}\; \sum_{\substack{\Delta \in B_{J}^{\prec} (w,t_\lambda)\\ J_\Delta = J^\prime}} Q_r^{\ell(\Delta)}.
$$
\end{prop}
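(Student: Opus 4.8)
The plan is to obtain the identity by applying Dyer's formula to the reflection subgroup $W_{J,{\rm{aff}}}$ and then reorganizing the resulting single sum over paths according to the root-subsystem invariant $J_\Delta$ attached to each path. The only genuine content lies in checking that the hypotheses of Dyer's formula are met for $W_{J,{\rm{aff}}}$; the regrouping afterwards is purely formal bookkeeping.

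First I would note that, as the statement says, $\widetilde{R}_{w,t_\lambda}^J(Q_r)$ is the $\widetilde{R}$-polynomial defined with respect to the reflection subgroup $(W_J, \Sigma_J)$, whose affine avatar is the Coxeter system $(W_{J,{\rm{aff}}}, S_{J,{\rm{aff}}})$. By Theorem~\ref{reflection-subgroups-are-coxeter-systems} this is indeed a Coxeter system, and the restriction of $\prec$ from $\Phi_{\rm{aff}}^+$ to $J_{\rm{aff}}^+$ is a reflection ordering on it, as recorded at the start of this subsection. Hence Theorem~\ref{dyer-R-polynomial-formula}, whose content is Dyer's formula for an arbitrary finite-rank Coxeter system, applies verbatim to $(W_{J,{\rm{aff}}}, S_{J,{\rm{aff}}})$ and yields
$$
\widetilde{R}_{w,t_\lambda}^J(Q_r) = \sum_{\Delta \in B_{J_{\rm{aff}}}^\prec(w,t_\lambda)} Q_r^{\ell(\Delta)}.
$$
(If $w \not\leq_J t_\lambda$ both sides vanish by the usual conventions, so nothing is lost.) Then, by Proposition~\ref{prop::finite-reflections-in-interval}, every edge of a path from $w$ to $t_\lambda$ is a finite reflection, so each such $\Delta$ has $E(\Delta) \subset W_J$; this identifies $B_{J_{\rm{aff}}}^\prec(w,t_\lambda)$ with $B_J^\prec(w,t_\lambda)$ and accounts for the index set appearing in the statement.

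Next I would invoke Lemma~\ref{path-root-system-lemma}: each $\Delta \in B_J^\prec(w,t_\lambda)$ determines a unique minimal root subsystem $J_\Delta \subseteq J$ with $E(\Delta) \subset W_{J_\Delta}$. Since $\Delta \mapsto J_\Delta$ is a function taking values among the subsystems $J^\prime \subseteq J$, the fibers $\{\Delta \in B_J^\prec(w,t_\lambda) \mid J_\Delta = J^\prime\}$ partition $B_J^\prec(w,t_\lambda)$. Reindexing the single sum above as an iterated sum over $J^\prime \subseteq J$ followed by the sum over the corresponding fiber gives
$$
\widetilde{R}_{w,t_\lambda}^J(Q_r) = \sum_{\Delta \in B_J^\prec(w,t_\lambda)} Q_r^{\ell(\Delta)} = \sum_{J^\prime \subseteq J}\ \sum_{\substack{\Delta \in B_J^\prec(w,t_\lambda)\\ J_\Delta = J^\prime}} Q_r^{\ell(\Delta)},
$$
which is the claimed formula.

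The step I expect to require the most care is the first one: ensuring that the polynomial $\widetilde{R}^J$ — which upstream (following Lemma~\ref{lemma::equality-on-strata}) is the common value of the Hecke-theoretic $\widetilde{R}^\chi$ over a stratum, and here is taken relative to the reflection subgroup $(W_J,\Sigma_J)$ — is genuinely the combinatorial $\widetilde{R}$-polynomial of the Coxeter system $(W_{J,{\rm{aff}}}, S_{J,{\rm{aff}}})$ to which Dyer's formula is addressed. Once that identification is made (and the restricted ordering is confirmed to be a reflection ordering), the remaining partition by $J_\Delta$ is a formal regrouping of a finite sum and presents no obstacle.
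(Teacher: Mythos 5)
Your proposal is correct and follows exactly the route the paper intends: its proof of Proposition~\ref{stratified-r-polynomial-formula} is the one-line remark that it is ``a direct consequence of Dyer's formula and the preceding lemmas,'' and your write-up simply makes explicit the three ingredients involved (Dyer's formula applied to $(W_{J,{\rm{aff}}}, S_{J,{\rm{aff}}})$ with the restricted reflection ordering, the identification of the interval via Proposition~\ref{prop::finite-reflections-in-interval}, and the partition of paths by the invariant $J_\Delta$ of Lemma~\ref{path-root-system-lemma}). You also correctly single out the identification of the Hecke-theoretic $\widetilde{R}^J$ with the combinatorial $\widetilde{R}$-polynomial of the subgroup's Coxeter system as the only step with real content, which matches the paper's setup in Definition~\ref{defin::r-poly-from-hecke-algebras} and Lemma~\ref{lemma::equality-on-strata}.
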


\begin{proof}
This is a direct consequence of Dyer's formula and the preceding lemmas.
\end{proof}

\begin{lemma}
\label{invariance-of-paths}
Let $w = t_\lambda \bar{w} \in {\rm{Adm}}_{G_r}(\mu)$. For any chain of root systems $J^\prime \subseteq J \subseteq \Phi$, there is an equality of sets
$$
\{\Delta \in B_J^{\prec} (w_\lambda^{-1}\wbar, w_\lambda^{-1}) \mid J_\Delta = J^\prime \} = \{\Delta \in B_\Phi^{\prec} (w_\lambda^{-1}\wbar, w_\lambda^{-1}) \mid J_\Delta = J^\prime\}.
$$
\end{lemma}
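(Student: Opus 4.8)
The plan is to prove the asserted set equality by the two inclusions, which turn out to follow almost immediately once the definitions are unwound. The key structural observation is that both $B_J^{\prec}(w_\lambda^{-1}\wbar, w_\lambda^{-1})$ and $B_\Phi^{\prec}(w_\lambda^{-1}\wbar, w_\lambda^{-1})$ consist of $\prec$-increasing paths through the \emph{same} ambient Bruhat graph $\Omega_{(W,S)}$ with the \emph{same} pair of endpoints; the only difference between the two sets is the constraint placed on the edge set of a path, namely $E(\Delta) \subset W_J$ on the left and $E(\Delta) \subset W$ on the right. Before treating the inclusions I would record that the reflection ordering used on $W_J$ is, by construction, the restriction of $\prec$ from $\Phi^+$ to $J^+$. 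Since restricting a total order preserves all comparisons among the elements that survive, any path whose edges lie entirely in $W_J$ has the same descent set $D(\Delta;\prec)$ whether computed with the ambient ordering or the restricted one. Thus the condition $D(\Delta;\prec)=\emptyset$ means the same thing on both sides for such paths.

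The inclusion of the left-hand set into the right-hand set is immediate: because $J \subseteq \Phi$ gives $W_J \subseteq W$, a path with $E(\Delta) \subset W_J$ is in particular a path with $E(\Delta) \subset W$, and the remaining defining conditions ($\prec$-increasing and $J_\Delta = J^\prime$) are literally identical. For the reverse inclusion I would take any $\Delta$ in the right-hand set, so that $J_\Delta = J^\prime$. By Lemma~\ref{path-root-system-lemma}, $J_\Delta$ is the minimal root subsystem with $E(\Delta) \subset W_{J_\Delta}$, hence $E(\Delta) \subset W_{J^\prime}$. Using $J^\prime \subseteq J$ together with $W_{J^\prime} = \langle s_\alpha \mid \alpha \in J^{\prime +}\rangle \subseteq \langle s_\alpha \mid \alpha \in J^+\rangle = W_J$, I conclude $E(\Delta) \subset W_J$. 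Therefore $\Delta$ satisfies the defining edge condition of $B_J^{\prec}$, so $\Delta$ lies in the left-hand set. This gives the reverse inclusion and hence the equality. (Implicitly, the identity of the product of edges forces $\wbar^{-1} \in W_{J^\prime}$ whenever either set is nonempty; when $\wbar \notin W_{J^\prime}$ both sides are simply empty, and the equality holds trivially.)

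The one point genuinely requiring care—and the step I expect to be the main, if modest, obstacle—is the verification flagged in the first paragraph that the $\prec$-increasing condition agrees on the two sides. The subtlety is that $D(\Delta;\prec)$ on the $W_J$ side is nominally defined using the induced ordering on $J_{\rm aff}^+$ rather than the ambient ordering on $\Phi_{\rm aff}^+$; I would make explicit that because the edges of every path under consideration already lie in $W_{J^\prime} \subseteq W_J$, the pairwise comparisons appearing in the descent condition involve only reflections in the subsystem, where the restricted and ambient orderings coincide by definition. With this compatibility in place, the two edge-constraint definitions select exactly the same collection of paths among those with $J_\Delta = J^\prime$, and the proof is complete.
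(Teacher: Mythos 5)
There is a genuine gap, and it sits exactly at the point you treat as definitional. You assert that $B_J^{\prec}(w_\lambda^{-1}\bar{w}, w_\lambda^{-1})$ and $B_\Phi^{\prec}(w_\lambda^{-1}\bar{w}, w_\lambda^{-1})$ are both sets of $\prec$-increasing paths through the \emph{same} ambient Bruhat graph $\Omega_{(W,S)}$, differing only in the edge constraint $E(\Delta)\subset W_J$ versus $E(\Delta)\subset W$. But that is not how the $J$-side object arises in this paper: the polynomials $\widetilde{R}^{J}_{w,t_\lambda}$ come from the Hecke algebra of the endoscopic group whose root system is $J$, so Dyer's formula produces paths through the Bruhat graph of the reflection subgroup $(W_J,\Sigma_J)$ regarded as a Coxeter system in its own right (this is what ``defined with respect to the reflection subgroup $(W_J,\Sigma_J)$'' means in Proposition~\ref{stratified-r-polynomial-formula}). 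The directed-edge relation in that graph is $u \rightarrow ut$ with $\ell_J(u) < \ell_J(ut)$, where $\ell_J$ is the length function of $(W_J,\Sigma_J)$ --- and $\ell_J$ is \emph{not} the restriction of the ambient length function $\ell$. It is therefore not automatic that an edge of the subgroup's graph is an edge, with the same direction, of the ambient graph: a priori a reflection $t \in W_J$ could be $\ell_J$-increasing at a vertex while being $\ell$-decreasing. The identification you take for granted is precisely the content of Theorem~\ref{thm::bruhat-graph-of-reflection-subgroups} (Dyer): the Bruhat graph of a reflection subgroup equals the full directed subgraph of the ambient Bruhat graph, so that $\Omega_{(W_{J'},\Sigma_{J'})} = \Omega_{(W,S)}(W_{J'}) = \Omega_{(W_J,\Sigma_J)}(W_{J'})$. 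The paper's proof consists of exactly this citation; your proof never invokes it, so it assumes the very statement being recorded.

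A second, related omission: the endpoints $w_\lambda^{-1}\bar{w}$ and $w_\lambda^{-1}$ do not lie in $W_{J'}$ but in a coset of it, so even making sense of a path ``in the subgroup's graph'' between them requires part (2) of the same theorem, which transports the directed-graph structure to cosets via a distinguished representative. Your positive contributions are correct but constitute the easy half: the minimality of $J_\Delta$ from Lemma~\ref{path-root-system-lemma} forcing $E(\Delta)\subset W_{J'}\subseteq W_J$, and the check that the restricted reflection ordering computes the same descent sets on edges indexed by roots in $J^{+}$. Once you add the directed-graph identification from Theorem~\ref{thm::bruhat-graph-of-reflection-subgroups} --- both the full-subgraph statement and its coset form --- your two-inclusion argument goes through and essentially recovers the paper's proof.
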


\begin{proof}
The reflection subgroups $W_J$ and $W_{J^\prime}$ can be realized as Coxeter groups $(W_J, \Sigma_J)$ and $(W_{J^\prime}, \Sigma_{J^\prime})$. Let $\Omega_{(W,S)}$ denote the Bruhat graph of $(W,S)$ and use analogous notation for graphs of reflection subgroups.

The key observation comes from Theorem~\ref{thm::bruhat-graph-of-reflection-subgroups}: The Bruhat graph of a reflection subgroup is equal to the full subgraph of the Bruhat graph of an ambient Coxeter group having vertices in the reflection subgroup. Symbolically, this says:
$$
\Omega_{(W,S)} (W_{J^\prime}) = \Omega_{(W_{J^\prime}, \Sigma_{J^\prime})} = \Omega_{(W_J, \Sigma_J)} (W_{J^\prime}).
$$
Therefore, the set of paths through $\Omega_{(W,S)}$ associated to $J^\prime$ equals the set of paths through $\Omega_{(W_J, \Sigma_J)}$ associated to $J^\prime$.
\end{proof}

\begin{cor}
\label{cor::finite-interval-r-polynomial}
Let $w = t_\lambda \bar{w} \in {\rm{Adm}}_{G_r}(\mu)$ and $J \subseteq \Phi$. Then
$$
\widetilde{R}_{w,t_\lambda}^J (Q_r) = \sum_{J^\prime \subseteq J}\; \sum_{\substack{\Delta \in B_\Phi^\prec (w_\lambda^{-1}\wbar, w_\lambda^{-1})\\ J_\Delta = J^\prime}} Q_r^{\ell(\Delta)}.
$$
\end{cor}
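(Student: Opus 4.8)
The plan is to chain together the three results already proved in this section: the stratified formula of Proposition~\ref{stratified-r-polynomial-formula}, the $\prec$-preserving bijection of Proposition~\ref{bruhat-interval-isomorphism}, and the set equality of Lemma~\ref{invariance-of-paths}. Each of these converts a sum over one index set into a sum over another while leaving the summand $Q_r^{\ell(\Delta)}$ untouched, so the corollary should follow by composing them.

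First I would invoke Proposition~\ref{stratified-r-polynomial-formula}, which already expresses $\widetilde{R}_{w,t_\lambda}^J(Q_r)$ as a double sum: an outer sum over subsystems $J^\prime \subseteq J$, and within each an inner sum over $\prec$-increasing paths $\Delta \in B_J^\prec(w, t_\lambda)$ satisfying $J_\Delta = J^\prime$. These are \emph{affine} paths, running from $w$ to $t_\lambda$ with edge set contained in $W_{J,\rm{aff}}$.

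Next I would transport each inner sum down to the finite Weyl group. Proposition~\ref{bruhat-interval-isomorphism} supplies a $\prec$-preserving bijection between $B_{\Phi_{\rm{aff}}}^\prec(w, t_\lambda)$ and $B_\Phi^\prec(w_\lambda^{-1}\wbar, w_\lambda^{-1})$, and its proof shows that this bijection leaves the edge set $E(\Delta)$ and the order in which the edges appear entirely unchanged. Two consequences are immediate: the length $\ell(\Delta)$, being the number of edges, is preserved; and since the associated root system $J_\Delta$ of Lemma~\ref{path-root-system-lemma} is defined purely from $E(\Delta)$, the value $J_\Delta$ is preserved as well. Restricting attention to the paths whose edges lie in $W_{J,\rm{aff}}$ (respectively in $W_J$), the bijection therefore carries $B_J^\prec(w, t_\lambda)$ onto $B_J^\prec(w_\lambda^{-1}\wbar, w_\lambda^{-1})$ and respects the stratification by the condition $J_\Delta = J^\prime$. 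Hence each inner sum over affine paths equals the corresponding inner sum over finite paths in $B_J^\prec(w_\lambda^{-1}\wbar, w_\lambda^{-1})$.

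Finally I would apply Lemma~\ref{invariance-of-paths}, which identifies $\{\Delta \in B_J^\prec(w_\lambda^{-1}\wbar, w_\lambda^{-1}) : J_\Delta = J^\prime\}$ with $\{\Delta \in B_\Phi^\prec(w_\lambda^{-1}\wbar, w_\lambda^{-1}) : J_\Delta = J^\prime\}$, allowing me to replace $B_J^\prec$ by $B_\Phi^\prec$ in each inner sum without altering its value; reassembling the outer sum over $J^\prime \subseteq J$ then gives exactly the stated formula. The one step demanding genuine care—and where I expect to spend the most effort—is checking that the bijection of Proposition~\ref{bruhat-interval-isomorphism} truly restricts at the level of the subsystem $J$ and commutes with the stratification by $J_\Delta$. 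But because both $\ell(\Delta)$ and $J_\Delta$ are determined solely by the ordered edge set, which the bijection reproduces verbatim, this amounts to careful bookkeeping rather than any new combinatorial input.
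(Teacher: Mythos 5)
Your proposal is correct and uses exactly the paper's three ingredients---Proposition~\ref{stratified-r-polynomial-formula}, the $\prec$-preserving bijection of Proposition~\ref{bruhat-interval-isomorphism}, and Lemma~\ref{invariance-of-paths}---differing only in that you apply the bijection before the invariance lemma, whereas the paper's proof states the steps in the opposite order. This reordering is immaterial (if anything, yours matches the literal statement of Lemma~\ref{invariance-of-paths}, which is phrased on the finite interval $B_J^\prec(w_\lambda^{-1}\wbar, w_\lambda^{-1})$), and your verification that the bijection preserves $E(\Delta)$, hence both $\ell(\Delta)$ and $J_\Delta$, and carries edges in $W_{J,\rm{aff}}$ to edges in $W_J$ via Proposition~\ref{prop::finite-reflections-in-interval}, is precisely the bookkeeping the paper leaves implicit.
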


\begin{proof}
Apply Lemma~\ref{invariance-of-paths} to rewrite the formula of Proposition~\ref{stratified-r-polynomial-formula} with $B_\Phi^\prec (w, t_\lambda)$ instead of $B_J^\prec (w, t_\lambda)$. Then apply Proposition~\ref{bruhat-interval-isomorphism}.
\end{proof}


\section{The combinatorial formula and example calculations}
\label{chapter::main-theorem}

Let us recall what we have done leading up to this final chapter. First, we gave an abstract definition of a test function $\phi_r$ via the Bernstein center and then explicitly described the test function in the case of $I_r^+$-level structure for split connected reductive groups with connected center by applying results on depth-zero characters and invoking Haines's formula for Bernstein functions attached to dominant minuscule cocharacters. We then embarked on two (mostly) independent paths. By looking more closely at the depth-zero endoscopic elements $\kapchi$, we obtained information on which $s \in T(k_r)$ help characterize the nonzero coefficients of $\phironeaug$, plus we simplified the sum in the first explicit formula through a stratification process indexed by $\chi$-root systems. This stratification by $\chi$-root systems was also employed in the subsequent chapter to adapt a formula of Dyer concerning $\widetilde{R}$-polynomials. All of these threads will now come together into the main theorem.

The proof of the Main Theorem amounts to showing that the various stratifications and corresponding summations behave in a compatible way. By swapping sums and subsequently reorganizing terms, objects introduced in Chapters 3 and 4 begin to appear in the formula. Finally, all but one summation drops out of the formula for $\phironeaug(I_r^+ sw I_r^+)$. This remaining sum is over a well-studied combinatorial set connected with the Bruhat order of the \emph{ambient} Weyl group.

After proving the formula, we make some remarks about using it in practice. The chapter concludes with some calculations of interest to the study of Shimura varieties in the cases of $GL_n$ and $GSp_{2n}$.


\subsection{The Main Theorem}

Throughout this section, let $G$ be a split connected reductive group with connected center and whose derived group $G_{\rm{der}}$ is simply-connected, and assume $W_\chi^\circ = W_\chi$ (see Remark~\ref{rmk::roche-assumptions}). Fix a split maximal torus $T \subset G$ of rank $d$. Let $\mu$ be a dominant minuscule cocharacter of $T$. Finally, choose a reflection ordering $\prec$ on $\Phi(G,T)$.

Recall that Proposition~\ref{prop::first-explicit-formula} presented our first explicit formula for the coefficients of $\phironeaug$, the function used in place of ${\phirone = q^{\ell(t_{\mu})/2}(Z_{V_\mu} \ast 1_{I_r^+})}$ when computing twisted orbital integrals:
$$
\phironeaug (I_r^+ sw I_r^+) = [I_r: I_r^+]^{-1} \sum_{\kapchi \in K_{q-1}} \gamma_{N_r s} (\kapchi)^{-1} q^{r\ell(w,\wtrans)/2} \widetilde{R}_{w,\wtrans}^\chi(Q_r).
$$

\subsubsection{Proof of the main theorem}

The notation $I_r^+ sw I_r^+$ treats $w \in \widetilde{W}$ as an element of $G_r$ using the set-theoretic embedding $\widetilde{W} \hookrightarrow G_r$ defined in Definition~\ref{defin::set-theoretic-embedding}.

\begin{lemma}
For $w \in {\rm{Adm}}_{G_r}(\mu)$ and $s \in \torus(k_r)$, we have
$$
\phironeaug (I_r^+ sw I_r^+) = [I_r: I_r^+]^{-1} \sum_{J \subseteq \Phi} \sum_{\kapchi \in \dzrelgrp(J)} \gamma_{N_r s}(\kapchi)^{-1} q^{r\ell(w,\wtrans)/2} \widetilde{R}_{w,\wtrans}^J(Q_r).
$$
\end{lemma}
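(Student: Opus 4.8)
The plan is to regard the single sum over $K_{q-1}$ in Proposition~\ref{prop::first-explicit-formula} as a sum over the depth-zero endoscopic elements and to reorganize it along the partition of those elements by their $\chi$-root systems. By Proposition~\ref{prop::dz-endoscopic-elements-equal-kernel} I may identify $K_{q-1}$ with $\dualtorus(\valfield)^{\rm{dz}}$, so that the index set is precisely the object carrying the stratification $\dualtorus(\valfield)^{\rm{dz}} = \coprod_{J\subseteq\Phi}\dualtorus(J)$ introduced in Section~\ref{subsection::stratification-by-chi-root-systems}.

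The essential first move, \emph{before} any stratification, is to discard the summands indexed by $\kapchi$ that are not relevant to $w$. The point is that the summand $\gamma_{N_r s}(\kapchi)^{-1} q^{r\ell(w,\wtrans)/2}\widetilde{R}^\chi_{w,\wtrans}(Q_r)$ is, up to the normalizing constant and the scalar $\gamma_{N_r s}(\kapchi)^{-1} = \chi_r^{-1}(s)$, exactly the contribution of the summand $Z_{V_\mu}\ast e_{\chi_r}$ to the coefficient at $(s,w)$, as extracted from $z_{\mu_\chi,r}(w)$ in the proof of Proposition~\ref{prop::first-explicit-formula}. Since $w\in{\rm{Adm}}_{G_r}(\mu)$ by hypothesis, Lemma~\ref{lemma::support-of-phi-r-chi} forces this contribution to vanish whenever $\kapchi$ is not relevant to $w$; equivalently, only $\kapchi\in\dzrelgrp$ survive. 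I would therefore replace the index set $K_{q-1}$ by $\dzrelgrp$ at this stage.

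With the sum now running over $\dzrelgrp$, I would split it along $\dzrelgrp = \coprod_{J\subseteq\Phi}\dzrelgrp(J)$, where $\dzrelgrp(J) = \dzrelgrp\cap\dualtorus(J)$, and then invoke Lemma~\ref{lemma::equality-on-strata}: on a fixed stratum every $\kapchi$ shares the $\chi$-root system $\Phi_\chi = J$, whence the length function $\ell_\chi$ and the polynomials $\widetilde{R}^\chi_{w,\wtrans}(Q_r)$ depend only on $J$ and may be written $\widetilde{R}^J_{w,\wtrans}(Q_r)$. Reassembling the double sum yields the asserted identity, the overall factor $[I_r:I_r^+]^{-1}$ and the power $q^{r\ell(w,\wtrans)/2}$ being untouched throughout.

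The main obstacle is the ordering of these operations rather than any single computation. The vanishing of the non-relevant summands is genuine only for the honest Bernstein-function coefficient $\widetilde{R}^\chi_{w,\wtrans}$ and is read off from Lemma~\ref{lemma::support-of-phi-r-chi}; once one passes to the stratum-constant symbol $\widetilde{R}^J_{w,\wtrans}$, a non-relevant $\kapchi$ lying in a stratum with $\widetilde{R}^J_{w,\wtrans}\neq 0$ would contribute a spurious nonzero term. Hence the reduction from $K_{q-1}$ to $\dzrelgrp$ must be carried out first, and only afterward may $\widetilde{R}^\chi$ be replaced by $\widetilde{R}^J$. I would also record that relevance is exactly the condition making $\widetilde{R}^\chi_{w,\wtrans}$ well defined, since $\wbar\kapchi = \kapchi$ guarantees $w\in\widetilde{W}_\chi$, so that the surviving symbols are unambiguous.
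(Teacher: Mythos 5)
Your proposal is correct and takes essentially the same route as the paper's proof: restrict the index set from $K_{q-1}$ to $\dzrelgrp$ via Lemma~\ref{lemma::support-of-phi-r-chi}, split along the stratification $\dzrelgrp = \coprod_{J \subseteq \Phi} \dzrelgrp(J)$, and apply Lemma~\ref{lemma::equality-on-strata} to replace $\widetilde{R}^\chi_{w,\wtrans}(Q_r)$ by $\widetilde{R}^J_{w,\wtrans}(Q_r)$ on each stratum. Your observation that the reduction to relevant elements must precede the passage to the stratum-constant polynomials---since a non-relevant $\kapchi$ in a stratum with $\widetilde{R}^J_{w,\wtrans} \neq 0$ would otherwise contribute a spurious term---is a sound clarification of an ordering the paper follows but leaves implicit.
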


\begin{proof}
According to Lemma~\ref{lemma::support-of-phi-r-chi}, $Z_{V_\mu} \ast e_{\chi_r} (w) = 0$ if $\kappa_\chi$ is not relevant to $w$, i.e., if $\kappa_\chi \notin S_w^{\rm{dz}}$. Using this, we may replace the sum in the formula from Proposition~\ref{prop::first-explicit-formula} with $\dzrelgrp$. Then $\phironeaug (I_r^+ sw I_r^+)$ equals
$$
[I_r: I_r^+]^{-1} \sum_{\kapchi \in \dzrelgrp} \gamma_{N_r s} (\kapchi)^{-1} q^{r\ell(w,\wtrans)/2} \widetilde{R}_{w,\wtrans}^\chi(Q_r).
$$
Next, stratify $\dzrelgrp$ as described in Section~\ref{subsection::stratification-by-chi-root-systems}, i.e.,
$\dzrelgrp = \coprod_{J \subseteq \Phi} \dzrelgrp(J)$, so that the coefficient equals,
$$
[I_r: I_r^+]^{-1} \sum_{J \subseteq \Phi} \sum_{\kapchi \in \dzrelgrp(J)} \gamma_{N_r s} (\kapchi)^{-1} q^{r\ell(w,\wtrans)/2} \widetilde{R}_{w,\wtrans}^\chi(Q_r).
$$
Finally, recall that the polynomials $\widetilde{R}_{w,\wtrans}^\chi(Q_r)$ are identical for all $\kapchi$ in $\dzrelgrp(J)$ by Lemma~\ref{lemma::equality-on-strata}. Specifically, they are the polynomial $\widetilde{R}_{w,\wtrans}^J(Q_r)$ with $J = \Phi_\chi$. This completes the proof.
\end{proof}

\begin{lemma}
\label{lemma::formula-after-chapter-4}
For $w \in {\rm{Adm}}_{G_r}(\mu)$ and $s \in \torus(k_r)$, $\phironeaug (I_r^+ sw I_r^+)$ equals
$$
[I_r: I_r^+]^{-1} \sum_{J \subseteq \Phi} \sum_{\kapchi \in \dzrelgrp(J)} \gamma_{N_r s}(\kapchi)^{-1} q^{r\ell(w,\wtrans)/2} \sum_{J^\prime \subseteq J}\; \sum_{\substack{\Delta \in B_\Phi^\prec (w_\lambda^{-1}\wbar, w_\lambda^{-1})\\ J_\Delta = J^\prime}} Q_r^{\ell(\Delta)}.
$$
\end{lemma}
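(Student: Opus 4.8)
The plan is to obtain this identity by a single direct substitution into the formula established in the immediately preceding lemma. That lemma already expresses $\phironeaug(I_r^+ sw I_r^+)$ as
$$
[I_r: I_r^+]^{-1} \sum_{J \subseteq \Phi} \sum_{\kapchi \in \dzrelgrp(J)} \gamma_{N_r s}(\kapchi)^{-1} q^{r\ell(w,\wtrans)/2} \widetilde{R}_{w,\wtrans}^J(Q_r),
$$
so the only work remaining is to replace the polynomial $\widetilde{R}_{w,\wtrans}^J(Q_r)$ by the stratified expansion supplied by Corollary~\ref{cor::finite-interval-r-polynomial}.

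First I would verify that the hypotheses of Corollary~\ref{cor::finite-interval-r-polynomial} are in force: it requires $w = t_\lambda \wbar \in {\rm{Adm}}_{G_r}(\mu)$ and $J \subseteq \Phi$, both of which hold here, since $w$ is assumed $\mu$-admissible and $J$ ranges over root sub-systems of $\Phi$ in the outer sum. Writing $\wtrans = t_{\lambda(w)} = t_\lambda$, the corollary then gives, for each fixed $J$ appearing in that sum,
$$
\widetilde{R}_{w,\wtrans}^J(Q_r) = \sum_{J^\prime \subseteq J}\; \sum_{\substack{\Delta \in B_\Phi^\prec (w_\lambda^{-1}\wbar, w_\lambda^{-1})\\ J_\Delta = J^\prime}} Q_r^{\ell(\Delta)}.
$$
Substituting this into each inner summand and leaving the prefactor $[I_r:I_r^+]^{-1}$, the character sum over $\kapchi \in \dzrelgrp(J)$, and the power $q^{r\ell(w,\wtrans)/2}$ untouched reproduces the claimed formula verbatim. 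The substitution is legitimate precisely because $\widetilde{R}_{w,\wtrans}^J(Q_r)$ depends only on the stratum index $J$ and not on the individual element $\kapchi \in \dzrelgrp(J)$; this is exactly the content of Lemma~\ref{lemma::equality-on-strata} that was already used to pass from $\widetilde{R}_{w,\wtrans}^\chi$ to the $J$-indexed polynomial in the preceding lemma.

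I expect no genuine obstacle, as the step is purely formal. The only point meriting a line of care is the bookkeeping identification of polynomials: one should confirm that the quantity written $\widetilde{R}_{w,\wtrans}^\chi$ in Proposition~\ref{prop::first-explicit-formula}, and subsequently relabelled $\widetilde{R}_{w,\wtrans}^J$ for $J = \Phi_\chi$, is the same stratified $\widetilde{R}^J$-polynomial to which Corollary~\ref{cor::finite-interval-r-polynomial} applies. This is immediate from the definition of the $\widetilde{R}^J$-polynomials given in the discussion following Lemma~\ref{lemma::equality-on-strata}, so the proof reduces to citing Corollary~\ref{cor::finite-interval-r-polynomial} and performing the substitution.
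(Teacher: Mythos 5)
Your proposal is correct and coincides with the paper's own proof, which likewise consists of substituting the stratified expansion of $\widetilde{R}_{w,\wtrans}^J(Q_r)$ from Corollary~\ref{cor::finite-interval-r-polynomial} into the formula of the preceding lemma. Your added checks---that $w \in {\rm{Adm}}_{G_r}(\mu)$ satisfies the corollary's hypotheses and that the $J$-indexed polynomial matches the one defined via Lemma~\ref{lemma::equality-on-strata}---are sound and merely make explicit what the paper leaves implicit.
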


\begin{proof}
Replace $\widetilde{R}_{w,\wtrans}^J(Q_r)$ with the formula from Corollary~\ref{cor::finite-interval-r-polynomial}.
\end{proof}

For each path $\Delta$ in $B_\Phi^\prec(w, t_{\lambda(w)})$, we defined a root system $J_\Delta \subseteq \Phi$, which in turn determines a diagonalizable subgroup $S_{w, J_\Delta}$ of $\dualtorus(\valfield)$. Let $S_{w, J_\Delta}^{\rm{tors}}$ denote the torsion subgroup of $S_{w, J_\Delta}$. If $w = t_{\lambda(w)}$, then $B_\Phi^\prec(w, t_{\lambda(w)})$ contains only a single element, the ``empty path,'' whose length is zero and has trivial root system $J_\Delta = \emptyset$; see also Corollary~\ref{cor::main-theorem-codim-0}.

Let $w \in \widetilde{W}$, $s \in \torus(k_r)$, and $J \subseteq \Phi$. Definition~\ref{defin::finite-critical-groups} introduced the finite critical groups $A_{w,J,\resfield}$, and we saw in Proposition~\ref{prop::sum-over-group} that there are consequences of $N_r (s)$ belonging to $A_{w,J,\resfield}$ or not. Define the symbol $\delta(s, w, J)$ by
$$
\delta(s, w, J) = \begin{cases}
0, & {\rm{if}}\ w\notin {\rm{Adm}}_{G_r}(\mu) \\
0, & {\rm{if}}\ w \in {\rm{Adm}}_{G_r}(\mu)\ {\rm{and}}\ N_r(s) \notin A_{w, J, \resfield} \\
1, & {\rm{if}}\ w \in {\rm{Adm}}_{G_r}(\mu)\ {\rm{and}}\ N_r(s) \in A_{w, J, \resfield}.
\end{cases}
$$

Recall that the main theorem holds under the conditions of Remark~\ref{rmk::roche-assumptions}.

\begin{thm}
\label{main-theorem}
Let $w \in \widetilde{W}$ and $s \in \torus(k_r)$. Let $d$ be the rank of $T$. Fix a reflection ordering $\prec$ on $\Phi$, and set $c(\Delta) = \left[\ell(w, t_\mu) - \ell(\Delta)\right]/2$. The coefficient of $\phironeaug$ for the $I_r^+$-double coset of $(s,w)$ is given by
$$
(-1)^d \sum_{\Delta \in B_\Phi^{\prec} (w_\lambda^{-1}\wbar, w_\lambda^{-1})} \delta(s, w,J_\Delta) \vert S_{w,J_\Delta}^{\rm{tors}} \cap K_{q-1}\vert (q-1)^{d-{\rm{rank}}(J_\Delta) -1} q^{r c(\Delta)} (1-q^r)^{\ell(\Delta)-d}.
$$
\end{thm}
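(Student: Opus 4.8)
The plan is to start from the expression in Lemma~\ref{lemma::formula-after-chapter-4} and push the summation over paths $\Delta$ to the outside, collapsing the remaining stratified sums over endoscopic elements into a single closed form. First I would interchange the order of the innermost summations so that, for each fixed $\Delta$ with $J_\Delta = J'$, the coefficient $\gamma_{N_r s}(\kapchi)^{-1}$ is summed over all pairs $(J,\kapchi)$ with $J' \subseteq J \subseteq \Phi$ and $\kapchi \in \dzrelgrp(J)$. By Corollary~\ref{cor::union-of-strata} this double sum is precisely the sum of $\gamma_{N_r s}(\kapchi)^{-1}$ over $\kapchi \in S_{w,J_\Delta}^{\rm dz}$, which yields
$$
\phironeaug(I_r^+ sw I_r^+) = [I_r:I_r^+]^{-1}\, q^{r\ell(w,\wtrans)/2} \sum_{\Delta \in B_\Phi^\prec(w_\lambda^{-1}\wbar,\, w_\lambda^{-1})} \Bigg(\sum_{\kapchi \in S_{w,J_\Delta}^{\rm dz}} \gamma_{N_r s}(\kapchi)^{-1}\Bigg) Q_r^{\ell(\Delta)}.
$$
Applying Proposition~\ref{prop::sum-over-group} then replaces the parenthesized sum by $\delta(s,w,J_\Delta)\,\vert S_{w,J_\Delta}^{\rm dz}\vert$; the $\delta$-symbol simultaneously absorbs the case $w \notin {\rm Adm}_{G_r}(\mu)$, where every term vanishes by the support statement underlying Proposition~\ref{prop::first-explicit-formula}, so the remaining argument may assume $w$ is admissible.

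Next I would evaluate $\vert S_{w,J_\Delta}^{\rm dz}\vert = \vert S_{w,J_\Delta} \cap K_{q-1}\vert$. Here I would use the structure theorem for diagonalizable groups to write $S_{w,J_\Delta} = A \times H$ with $A$ a torus and $H$ finite, and observe that $S_{w,J_\Delta}\cap K_{q-1}$ is its full $(q-1)$-torsion subgroup. The torus factor contributes $(q-1)^{{\rm rank}(S_{w,J_\Delta})}$ and the finite factor contributes $\vert S_{w,J_\Delta}^{\rm tors}\cap K_{q-1}\vert$, giving $\vert S_{w,J_\Delta}\cap K_{q-1}\vert = (q-1)^{{\rm rank}(S_{w,J_\Delta})}\,\vert S_{w,J_\Delta}^{\rm tors}\cap K_{q-1}\vert$. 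To identify the rank I would invoke Corollary~\ref{cor::rank-of-swj}, which requires $\wbar \in W_{J_\Delta}$; this holds because the edges of any path $\Delta \in B_\Phi^\prec(w_\lambda^{-1}\wbar,\,w_\lambda^{-1})$ multiply to $(w_\lambda^{-1}\wbar)^{-1} w_\lambda^{-1} = \wbar^{-1}$, so $\wbar^{-1}$ (hence $\wbar$) lies in the reflection subgroup $W_{J_\Delta}$ generated by those edges. Thus ${\rm rank}(S_{w,J_\Delta}) = d - {\rm rank}(J_\Delta) - 1$.

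Finally I would simplify the scalar prefactors. Since $I_r/I_r^+ \cong \torus(k_r) \cong (k_r^\times)^d$ by Proposition~\ref{prop::iwahori-prop-quotient}, we have $[I_r:I_r^+]^{-1} = (q^r-1)^{-d}$. Using $\ell(w,\wtrans) = \ell(w,t_\mu)$ (conjugate translations share a length) together with $Q_r = q^{-r/2}(1-q^r)$ and $c(\Delta) = [\ell(w,t_\mu)-\ell(\Delta)]/2$, the power-of-$q$ factors combine as
$$
q^{r\ell(w,\wtrans)/2}\, Q_r^{\ell(\Delta)} = q^{rc(\Delta)}\,(1-q^r)^{\ell(\Delta)},
$$
and the normalization absorbs the dimension shift via $(q^r-1)^{-d}(1-q^r)^{\ell(\Delta)} = (-1)^d (1-q^r)^{\ell(\Delta)-d}$. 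Assembling these substitutions reproduces the stated formula verbatim.

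I expect the cardinality computation in the second paragraph to be the main obstacle: it is the one step that leaves the realm of formal bookkeeping and relies substantively on the diagonalizable-group theory of Chapter~3, on the correct reading of $S_{w,J_\Delta}^{\rm tors}$ as the finite (non-toral) factor whose $(q-1)$-torsion is counted, and on the verification that $\wbar \in W_{J_\Delta}$ so that Corollary~\ref{cor::rank-of-swj} legitimately supplies the torus rank. Everything before and after is a careful but routine reorganization of sums and an exponent calculation.
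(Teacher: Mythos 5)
Your proposal is correct and follows essentially the same route as the paper's proof: collapse the stratified sums via Corollary~\ref{cor::union-of-strata}, evaluate the character sum with Proposition~\ref{prop::sum-over-group}, factor $S_{w,J_\Delta}^{\rm{dz}}$ as (finite factor's $(q-1)$-torsion) $\times$ (torus part of order $(q-1)^{d-{\rm{rank}}(J_\Delta)-1}$) using Corollary~\ref{cor::rank-of-swj}, and finish with the same $[I_r:I_r^+]=(q^r-1)^d$ and $Q_r=q^{-r/2}(1-q^r)$ bookkeeping. Your explicit verification that $\wbar \in W_{J_\Delta}$ (the edges of $\Delta$ multiply to $\wbar^{-1}$, which therefore lies in the reflection subgroup $W_{J_\Delta}$) is a hypothesis of Corollary~\ref{cor::rank-of-swj} that the paper's proof leaves implicit, and you supply it correctly.
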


\begin{proof}
If $w \notin {\rm{Adm}}_{G_r}(\mu)$, we know $\phi_{r,1}^\prime(I_r^+ sw I_r^+) = 0$ for all $s \in \torus(k_r)$ by combining Corollary~\ref{cor::equality-of-orbital-integrals} and Lemma~\ref{lemma::image-of-phi-r-chi}. Thus we assume $w \in {\rm{Adm}}_{G_r}(\mu)$, and our starting point is the version of the formula given in Lemma~\ref{lemma::formula-after-chapter-4}. The first phase of the proof involves rearranging the four summations therein.

First, observe that the sum indexed by endoscopic elements in the stratum $\dzrelgrp(J)$ does not depend on the choice of $J^\prime \subseteq J$. Exchanging the corresponding summations and moving all terms to the innermost quantity shows that $\phironeaug(I_r^+ sw I_r^+)$ equals
$$
[I_r: I_r^+]^{-1} \sum_{J \subseteq \Phi} \;\sum_{J^\prime \subseteq J}\;\sum_{\kapchi \in \dzrelgrp(J)}\;\sum_{\substack{\Delta \in B_\Phi^{\prec} (w_\lambda^{-1}\wbar, w_\lambda^{-1})\\ J_\Delta = J^\prime}} \gamma_{N_r s}(\kapchi)^{-1} q^{r\ell(w,t_{\lambda(w)})/2}Q_r^{\ell(\Delta)}.
$$
Next, we rewrite the first two sums of this expression as follows: Instead of summing over $J \subseteq \Phi$ and then summing over $J^\prime \subseteq J$; first sum over $J^\prime \subseteq \Phi$ and then $J \supseteq J^\prime$. The new expression is
$$
[I_r: I_r^+]^{-1} \sum_{J^\prime \subseteq \Phi} \;\sum_{J \supseteq J^\prime}\;\sum_{\kapchi \in \dzrelgrp(J)}\;\sum_{\substack{\Delta \in B_\Phi^{\prec} (w_\lambda^{-1}\wbar, w_\lambda^{-1})\\ J_\Delta = J^\prime}} \gamma_{N_r s}(\kapchi)^{-1} q^{r\ell(w,t_{\lambda(w)})/2}Q_r^{\ell(\Delta)}.
$$

The innermost sum does not depend on the choice of $J$ containing a fixed $J^\prime$. Therefore we may move it through the two adjacent summations. That is,
\begin{multline*}
[I_r: I_r^+]^{-1} \sum_{J^\prime \subseteq \Phi} \;\sum_{J \supseteq J^\prime}\;\sum_{\kapchi \in \dzrelgrp(J)}\;\sum_{\substack{\Delta \in B_\Phi^{\prec} (w_\lambda^{-1}\wbar, w_\lambda^{-1})\\ J_\Delta = J^\prime}} \gamma_{N_r s}(\kapchi)^{-1} q^{r\ell(w,t_{\lambda(w)})/2}Q_r^{\ell(\Delta)} \\
 = [I_r: I_r^+]^{-1} \sum_{J^\prime \subseteq \Phi}  \sum_{\substack{\Delta \in B_\Phi^{\prec} (w_\lambda^{-1}\wbar, w_\lambda^{-1})\\ J_\Delta = J^\prime}} q^{r\ell(w,t_{\lambda(w)})/2} Q_r^{\ell(\Delta)} \left( \sum_{J \supseteq J^\prime}\;\sum_{\kapchi \in \dzrelgrp(J)} \gamma_{N_r s}(\kapchi)^{-1}\right).
\end{multline*} 
Corollary~\ref{cor::union-of-strata} simplifies the quantity in parentheses:
$$
[I_r: I_r^+]^{-1} \sum_{J^\prime \subseteq \Phi}  \sum_{\substack{\Delta \in B_\Phi^{\prec} (w_\lambda^{-1}\wbar, w_\lambda^{-1})\\ J_\Delta = J^\prime}} q^{r\ell(w,t_{\lambda(w)})/2} Q_r^{\ell(\Delta)} \left(\sum_{\kapchi \in S_{w, J^\prime}^{\rm{dz}}} \gamma_{N_r s}(\kapchi)^{-1}\right)
$$

The second phase of the proof simplifies the preceding expression by applying our results about paths through the Bruhat graph and sums of character values.

Recall that $\ell(t_\mu) = \ell(t_\lambda)$ for all $\lambda \in W\mu$, so that we may work with $\ell(w, t_\mu)$ in all cases rather than $\ell(w, t_{\lambda(w)})$. Observe that the difference of lengths
$$
\ell(w, t_\mu) = \ell(t_\mu) - \ell(w)
$$
has the same parity as every path length $\ell(\Delta)$. (This a rephrasing of our earlier statement that the orders of terms in $\widetilde{R}$-polynomials all have the same parity.) Thus $c(\Delta) = \left[\ell(w, t_\mu) - \ell(\Delta)\right]/2$ is a nonnegative integer. We also apply Corollary~\ref{prop::sum-over-group} to the quantity $\left(\sum_{\kapchi \in S_{w, J^\prime}^{\rm{dz}}} \gamma_{N_r s}(\kapchi)^{-1}\right)$, which implies
$$
\left(\sum_{\kapchi \in S_{w, J^\prime}^{\rm{dz}}} \gamma_{N_r s}(\kapchi)^{-1}\right) = \delta(s,w,J^\prime) \vert S_{w, J^\prime}^{\rm{dz}} \vert.
$$
Finally, use that $I_r/I_r^+ \cong \torus(k_r)$ and that $\torus$ is a split maximal torus to see
$$
[I_r : I_r^+] = (q^r -1)^d.
$$
The result is that $\phironeaug(I_r^+ sw I_r^+)$ equals
$$
(-1)^d \sum_{J^\prime \subseteq \Phi}\;\sum_{\substack{\Delta \in B_\Phi^{\prec} (w_\lambda^{-1}\wbar, w_\lambda^{-1})\\ J_\Delta = J^\prime}} \delta(s,w,J^\prime) \vert S_{w, J^\prime}^{\rm{dz}} \vert q^{rc(\Delta)} (1-q^r)^{\ell(\Delta)-d}
$$
For simplicity of notation, relabel all $J^\prime$ as $J$,
$$
(-1)^d \sum_{J \subseteq \Phi}\;\sum_{\substack{\Delta \in B_\Phi^{\prec} (w_\lambda^{-1}\wbar, w_\lambda^{-1})\\ J_\Delta = J}} \delta(s,w,J) \vert \dzrelgrpJ\vert q^{rc(\Delta)} (1-q^r)^{\ell(\Delta)-d}.
$$

The third phase of the proof simplifies the double summation in the previous expression.

Recall that there is a well-defined root subsystem $J_\Delta \subseteq \Phi$ associated to each path $\Delta$ in $B_\Phi^\prec(w_\lambda^{-1}\wbar, w_\lambda^{-1})$. This relationship partitions the $\prec$-increasing paths:
$$
B_\Phi^\prec (w_\lambda^{-1}\wbar, w_\lambda^{-1}) = \coprod_{J \subseteq \Phi} \left\{ \Delta \in B_\Phi^\prec (w_\lambda^{-1}\wbar, w_\lambda^{-1}) \mid J_\Delta = J\right\}.
$$ 
For a fixed $J \subseteq \Phi$, suppose that $J_\Delta \neq J$ for all $\Delta \in B_\Phi^\prec(w_\lambda^{-1}\wbar, w_\lambda^{-1})$. Then
$$
\delta(s, w,J) \vert \dzrelgrpJ\vert \sum_{\substack{\Delta \in B_\Phi^{\prec} (w_\lambda^{-1}\wbar, w_\lambda^{-1})\\ J_\Delta = J}} q^{rc(\Delta)} (1-q^r)^{\ell(\Delta)-d} = 0.
$$
On the other hand, for any $J$ where there exist paths $\Delta$ such that $J_\Delta = J$, we have an equality
\begin{multline*}
\delta(s, w,J) \vert \dzrelgrpJ\vert \sum_{\substack{\Delta \in B_\Phi^{\prec} (w_\lambda^{-1}\wbar, w_\lambda^{-1})\\ J_\Delta = J}} q^{rc(\Delta)} (1-q^r)^{\ell(\Delta)-d} \\ 
= \sum_{\substack{\Delta \in B_\Phi^{\prec} (w_\lambda^{-1}\wbar, w_\lambda^{-1})\\ J_\Delta = J}} \delta(s, w,J_\Delta) \vert S_{w, J_\Delta}^{\rm{dz}} \vert q^{rc(\Delta)} (1-q^r)^{\ell(\Delta)-d}
\end{multline*}

\noindent So we have shown that $\phironeaug(I_r^+ sw I_r^+)$ equals
$$
(-1)^d \sum_{\Delta \in B_\Phi^{\prec} (w_\lambda^{-1}\wbar, w_\lambda^{-1})} \delta(s, w,J_\Delta) \vert S_{w, J_\Delta}^{\rm{dz}} \vert  q^{rc(\Delta)} (1-q^r)^{\ell(\Delta)-d},
$$
because $B_\Phi^\prec(w_\lambda^{-1}\wbar, w_\lambda^{-1})$ splits as a disjoint union indexed by $J \subseteq \Phi$.

Finally, let us rewrite $\vert S_{w, J_\Delta}^{\rm{dz}} \vert$. The diagonalizable group $S_{w, J_\Delta} \subseteq \dualtorus(\valfield)$ factors into a direct product of a torus $S_{w, J_\Delta}^\circ$ and a torsion subgroup $S_{w,J_\Delta}^{\rm{tors}}$. Because the group of depth-zero endoscopic elements $S_w^{\rm{dz}}$ in $\dualtorus(\valfield)$ equals the kernel $K_{q-1}$ of the endomorphism on $\dualtorus(\valfield)$ given by multiplication by $(q-1)$,
$$
S_{w, J_\Delta}^{\rm dz} = (S_{w,J_\Delta}^{\rm{tors}} \cap K_{q-1}) \times (S_{w, J_\Delta}^\circ \cap K_{q-1}).
$$
But since ${\rm{rank}}(S_{w, J_\Delta}) = d - {\rm{rank}}(J_\Delta) - 1$ by Corollary~\ref{cor::rank-of-swj}, and $S_{w, J_\Delta}^\circ$ is the connected component, we must have $\vert S_{w, J_\Delta}^\circ \cap K_{q-1} \vert = (q-1)^{d - {\rm{rank}}(J_\Delta) - 1}$; so,
$$
\vert S_{w, J_\Delta}^{\rm{dz}} \vert = \vert S_{w, J_\Delta}^{\rm{tors}} \cap K_{q-1} \vert (q-1)^{d - {\rm{rank}}(J_\Delta) - 1}.
$$
This completes the proof of the main theorem.
\end{proof}


\subsubsection{The Drinfeld Case}
\label{section::the-drinfeld-case}

The formula for test functions in the Drinfeld case found by Haines and Rapoport is a special case of Theorem~\ref{main-theorem}. Their expression depends on the ``set of critical indices'' $S(w)$ associated to a $\mu$-admissible element $w \in \widetilde{W}$ and the corresponding subtorus $T_{S(w)}$ of the split maximal torus $T$ in $G$. Recall the Drinfeld case data: $G = GL_d$, $\mu = (1,0,\dots,0)$, and $k_F \cong \mathbb{F}_p$. Let $e_1 = (1, 0, \ldots)$, $e_2 = (0,1,0,\ldots)$, and so on.

\begin{defin}
In the Drinfeld case, for $w \in {\rm{Adm}}_{G_r}(\mu)$, the set of \textbf{critical indices} is the subset $S(w) \subseteq \{1, \ldots, d\}$ given by
$$
S(w) = \{j \mid w \leq t_{e_j}\}.
$$
The subtorus $T_{S(w)}$ consists of the elements ${\rm{diag}}(t_1, \ldots, t_d) \in T$ such that $t_i = 1$ for all $i \notin S(w)$. See Section 6 of~\emph{\cite{haines-rapoport2012}} for more details.
\end{defin}

\begin{cor} \emph{(Haines-Rapoport, \cite{haines-rapoport2012} 12.2.1)}
\label{cor::drinfeld-case}
With respect to the Haar measure $dx$ on  $G_r$ which gives $I_r^+$ volume equal to 1, the function $\phironeaug$ is given by the formula
$$
\phironeaug(I_r^+ sw I_r^+) =
\begin{cases}
0,\ {\rm{if}}\ w \notin {\rm{Adm}}_{G_r}(\mu) \\
0,\ {\rm{if}}\ w \in {\rm{Adm}}_{G_r}(\mu)\ {\rm{and}}\ N_r(s) \notin T_{S(w)}(k_F) \\
(-1)^{d} (p-1)^{d-\vert S(w)\vert} (1-p^r)^{\vert S(w)\vert-d-1},\ {\rm{otherwise}}. \\
\end{cases}
$$
\end{cor}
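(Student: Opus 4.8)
The plan is to specialize Theorem~\ref{main-theorem} to the Drinfeld data $G = GL_d$, $\mu = (1,0,\dots,0)$ and $q = p$, and to show that the path-sum collapses to the single monomial of Haines--Rapoport. First I would record that $GL_d$ meets the standing hypotheses: its center is connected, $G_{\rm der} = SL_d$ is simply connected, and $W_\chi = W_\chi^\circ$ by Proposition~\ref{prop::w-chi-equality}. The case $w \notin {\rm Adm}_{G_r}(\mu)$ is immediate, since both sides vanish, so I assume $w = \wtrans\wbar$ is $\mu$-admissible. Because $\cochars{\torus}/\mathbb{Z}J^\vee$ is torsion-free for every $J \subseteq \Phi$ when $G = GL_d$ (this is exactly the input used in the proof of Lemma~\ref{lemma::rel-grp-gln-gsp2n}), each group $S_{w,J}$ is a torus, so the torsion factor $\lvert S_{w,J_\Delta}^{\rm tors}\cap K_{q-1}\rvert$ in the Main Theorem equals $1$ for every path $\Delta$.

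The decisive simplification is that, in the minuscule case, the interval $B_\Phi^\prec(w_\lambda^{-1}\wbar, w_\lambda^{-1})$ carries a \emph{unique} $\prec$-increasing path $\Delta_0$. I would obtain this from the fact that $\widetilde R_{w,\wtrans}(Q_r)$ is the single monomial $Q_r^{\ell(w,\wtrans)}$ for minuscule $\mu$ (a manifestation of the rational smoothness of the relevant Schubert variety, readable off Haines's computation of the Bernstein function $z_\mu$); by Dyer's formula (Theorem~\ref{dyer-R-polynomial-formula}) and the bijection of Proposition~\ref{bruhat-interval-isomorphism} this is equivalent to there being a unique increasing path, necessarily a saturated chain of length $\ell(\Delta_0) = \ell(w,\wtrans)$. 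Consequently $c(\Delta_0) = 0$, the power of $p^r$ disappears, and the entire Main Theorem sum reduces to the single term indexed by $J_0 := J_{\Delta_0}$.

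What remains is to identify the abstract data attached to $J_0$ with the Drinfeld data $S(w)$ and $\crittorus$. Since $\wbar$ is the product of the edge-reflections of $\Delta_0$, we have $\wbar \in W_{J_0}$, so Proposition~\ref{prop::structure-of-lwj} applies and gives $L_{w,J_0} = \langle \lambda, \alpha^\vee \mid \alpha \in J_0^+\rangle \supseteq L_w$, while Corollary~\ref{cor::rank-of-swj} gives ${\rm rank}(L_{w,J_0}) = {\rm rank}(J_0)+1$. Haines and Rapoport identify $\crittorus$ as the subtorus of $\torus$ cut out by $L_w$ (Lemma~\ref{lattice-equality-lemma} and \cite{haines-rapoport2012}, Section 6.4), whence ${\rm rank}(L_w) = \lvert S(w)\rvert$. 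The one genuinely combinatorial input I still need is the length identity $\ell(w,\wtrans) = \lvert S(w)\rvert - 1$, which I would derive from the alcove-walk description underlying Proposition~\ref{prop::finite-reflections-in-interval} (equivalently, cite it from \cite{haines-rapoport2012}). Granting it, the inclusion $L_w \subseteq L_{w,J_0}$ forces the rank squeeze $\lvert S(w)\rvert = {\rm rank}(L_w) \le {\rm rank}(L_{w,J_0}) = {\rm rank}(J_0)+1 \le \ell(\Delta_0)+1 = \lvert S(w)\rvert$; hence ${\rm rank}(J_0) = \lvert S(w)\rvert - 1$ and the two lattices have equal rank. As $L_w$ is saturated (torsion-freeness again), they coincide, so $A_{w,J_0,\resfield} = \crittorus(\resfield)$ and $\delta(s,w,J_0) = 1$ precisely when $N_r(s) \in \crittorus(\resfield)$, matching the two vanishing/nonvanishing cases of the corollary.

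Substituting ${\rm rank}(J_0) = \lvert S(w)\rvert - 1$, $\ell(\Delta_0) = \lvert S(w)\rvert - 1$, $c(\Delta_0) = 0$ and torsion factor $1$ into the single surviving term of the Main Theorem produces exactly $(-1)^d(p-1)^{d-\lvert S(w)\rvert}(1-p^r)^{\lvert S(w)\rvert - d - 1}$, completing the proof. The main obstacle is the bridge in the third paragraph: establishing that the minuscule interval admits a unique increasing path (the monomiality of $\widetilde R_{w,\wtrans}$) together with the length identity $\ell(w,\wtrans) = \lvert S(w)\rvert - 1$, and then reconciling the lattice $L_{w,J_0}$ coming from the path-reflection subsystem with Haines--Rapoport's lattice $L_w$ defining $\crittorus$. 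Everything else is bookkeeping in the specialized formula.
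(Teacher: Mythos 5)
Your proposal follows essentially the same route as the paper's proof: both specialize Theorem~\ref{main-theorem}, both reduce $B_\Phi^\prec$ to a single increasing path by combining the monomiality $\widetilde{R}_{w,\wtrans}(Q_r)=Q_r^{\ell(w,\wtrans)}$ (from the proof of Proposition 5.2 of \cite{haines2000b}) with Dyer's formula (Theorem~\ref{dyer-R-polynomial-formula}), and both import the identity $\ell(w,\wtrans)=\vert S(w)\vert -1$ from Haines--Rapoport. Where you differ is in the final identification step, and there your treatment is actually more careful than the paper's: the paper writes the surviving term using $\delta(s,w,\Phi)$ and $\vert\dzrelgrp\vert$, computes $\vert\dzrelgrp\vert=(p-1)^{d-\vert S(w)\vert}$ directly from the relations $\lambda(\kappa)=1$, $\wbar\kappa=\kappa$, and tacitly identifies the path datum $J_{\Delta_0}$ with the data attached to $w$ itself; your rank squeeze $\vert S(w)\vert = {\rm rank}(L_w)\le {\rm rank}(L_{w,J_0}) = {\rm rank}(J_0)+1 \le \ell(\Delta_0)+1 = \vert S(w)\vert$, followed by saturation to force $L_{w,J_0}=L_w$, makes that identification explicit and legitimately yields $A_{w,J_0,\resfield}=\crittorus(\resfield)$ (using that $\wbar\in W_{J_0}$, which holds since the product of the path edges is $\wbar^{-1}$, so Proposition~\ref{prop::structure-of-lwj} and Corollary~\ref{cor::rank-of-swj} apply). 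This buys a cleaner reconciliation with the Haines--Rapoport lattice of Lemma~\ref{lattice-equality-lemma} at the cost of a slightly longer argument.

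One justification in your write-up is wrong as stated, though the conclusion it supports is true here. You claim each $S_{w,J}$ is a torus \emph{because} $\cochars{\torus}/\mathbb{Z}J^\vee$ is torsion-free for $G=GL_d$; that inference fails in general, since torsion in $\chars{S_{w,J}}$ can be created by the further quotient by $\bar{\lambda}$ --- Example~\ref{example::rel-group-not-a-torus} ($GL_4$, $\mu=(1,1,0,0)$) exhibits exactly this, with $\cochars{\torus}/\mathbb{Z}J^\vee$ torsion-free but $\chars{S_w}$ having $2$-torsion. The same objection applies to your parenthetical ``torsion-freeness again'' justifying saturation of $L_w$. The correct reason, specific to the Drinfeld case, is that $\lambda=e_m$ has a single nonzero coordinate equal to $1$, so its image in $\cochars{\torus}/\mathbb{Z}J^\vee$ (and in the coinvariants $\cochars{\torus}_{\langle\wbar\rangle}$) is a generator with coefficient $1$; in the notation of Lemma~\ref{lemma::rel-grp-gln-gsp2n}, the unique invariant factor is $c_1=1$, so all the relevant quotients are torsion-free, the torsion factor in Theorem~\ref{main-theorem} is $1$, and your saturation step goes through. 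With that one-line repair your proof is complete and matches the corollary.
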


\begin{proof}
Admissible elements in the Drinfeld case have the form
$$
w = t_{e_m} (m m_{k-1} \cdots m_1),
$$
where $m > m_{k-1} > \cdots > m_1$. The proof of \cite{haines2000b}, Proposition 5.2, shows that, in this case,
$$
\widetilde{R}_{w, t_{\lambda(w)}}(Q) = Q^{\ell(w, t_{\lambda(w)})}.$$
But Theorem~\ref{dyer-R-polynomial-formula} gives this polynomial in terms of the set $B_\Phi^\prec (w, t_{\lambda(w)})$ for any choice of reflection ordering $\prec$. It follows that $B_\Phi^\prec (w, t_{\lambda(w)})$ consists of a single path of length $\ell(w, t_{\lambda(w)})$. Therefore, if $w \in {\rm{Adm}}_{G_r}(\mu)$ we have
$$
\phironeaug(I_r^+ sw I_r^+) = (-1)^{d} (1-q)^{-d} \delta(s,w,\Phi) \vert \dzrelgrp \vert (1-q)^{\ell(w,t_{\lambda(w)})}.
$$
Haines and Rapoport show that $\ell(w,t_{\lambda}) = \vert S(w)\vert - 1$.

It is clear that in the Drinfeld case $S_w$ is always a torus, hence we can define a torus $T_{S(w)}$ that fits into an exact sequence
$$
1 \rightarrow T_{S(w)} \rightarrow T \rightarrow Q_w \rightarrow 1,
$$
such that $Q_w$ is dual to $\dualtorus/S_w$. We conclude that $\vert S_w^{\rm{dz}} \vert = (p-1)^{d-\vert S(w)\vert}$ by the relations imposed by $\lambda(\kappa) = 1$ and $\wbar(\kappa) = \kappa$.

Now apply the results of Chapter 3: if $w \in {\rm{Adm}}_{G_r}(\mu)$ and $N_r(s) \in T_{S(w)}(k_F)$, then we have
$$
\phironeaug(I_r^+ sw I_r^+) = (-1)^{d} (p-1)^{d-\vert S(w)\vert} (1-q)^{\vert S(w)\vert-d-1},
$$
and the coefficient is zero otherwise.
\end{proof}

\subsubsection{Relationship to test functions with Iwahori level structure}
\label{section::rel-to-fn-iwahori-level}

Let us say something about the relationship between the formula for coefficients $\phironeaug$, which is sufficient for computing twisted orbital integrals of the test function $\phirone$, and the coefficients of the test function $\phi_{r,0}$ with \emph{Iwahori} level structure.

Let $\phi_{r,0} = q^{r\ell(t_\mu)/2} z_{\mu, r}$ be the Kottwitz function, where $\mu$ is a dominant minuscule cocharacter of $G$ as usual and $z_{\mu, r}$ is a Bernstein function in the center of $\mathcal{H}(G_r, I_r)$ as defined in Section~\ref{section::bernstein-functions}. Haines's formula for Bernstein functions of minuscule cocharacters shows that
$$
\phi_{r,0} = q^{r\ell(t_\mu)/2} \sum_{w \in {\rm{Adm}_{G_r}(\mu)}} \widetilde{R}_{w, t_{\lambda(w)}}(Q_r) \tilde{T}_{w, r},
$$
where again notation is the same as in Chapter~\ref{chapter::test-functions}. Its coefficients are
$$
\phi_{r,0}(I_r w I_r) = q^{r\ell(t_\mu)/2} q^{-r\ell(w)/2} \widetilde{R}_{w, t_{\lambda(w)}}(Q_r).
$$
The term $q^{-r\ell(w)/2}$ comes from the normalization of basis elements in the Iwahori-Matsumoto presentation of $\mathcal{H}(G_r, I_r)$. Dyer's formula for $\widetilde{R}$-polynomials, discussed in Chapter~\ref{chapter::combinatorics-on-paths}, implies
$$
\phi_{r,0}(I_r w I_r) = q^{r\ell(t_\mu)/2} q^{-r\ell(w)/2} \sum_{\Delta \in B_{\Phi_{\rm{aff}}}^\prec (w, t_{\lambda(w)})} Q_r^{\ell(\Delta)}.
$$
Let $c(\Delta) = [\ell(w, t_\mu) - \ell{\Delta}]/2$ as in Theorem~\ref{main-theorem} to get
$$
\phi_{r,0}(I_r w I_r) = \sum_{\Delta \in B_{\Phi_{\rm{aff}}}^\prec (w, t_{\lambda(w)})} q^{rc(\Delta)} (1-q^r)^{\ell(\Delta)}
$$
So we see that the formula for coefficients of $\phi_{r,0}$ have a similar structure to those of $\phironeaug$. The latter can be written as
$$
\phironeaug(I_r^+ sw I_r^+) = [I_r : I_r^+]^{-1} \sum_{\Delta \in B_{\Phi_{\rm{aff}}}^\prec (w, t_{\lambda(w)})} \delta(s, w, J_\Delta) \vert S_{w, J_\Delta}^{\rm{dz}} \vert q^{rc(\Delta)} (1-q^r)^{\ell(\Delta)}.
$$


\subsection{Remarks on applying the formula}
\label{section::implementation-remarks}

Now that we have established the combinatorial formula for coefficients of $\phironeaug$, let us say a few words about calculating values of the formula.

\subsubsection{Some special cases}

Understanding the $\prec$-increasing paths through the Bruhat graph is an important first step in calculating results with Theorem~\ref{main-theorem}. In some cases, $B_{\Phi_{\rm{aff}}}^\prec(w, \wtrans)$ has a very simple description. The exercises following Chapter 5 of \cite{bjorner-brenti2005} describe $\widetilde{R}_{u,v}(Q)$ for elements $u, v \in W$ such that $\ell(u,v) \leq 4$; the following two corollaries consider the cases $\ell(w, t_{\lambda(w)}) = 0$ and $\ell(w, t_{\lambda(w)}) = 1$.

\begin{cor}
\label{cor::main-theorem-codim-0}
Suppose $w = t_\lambda$ for $\lambda \in W \mu$. Then
$$
\phironeaug(I_r^+ s t_\lambda I_r^+) = \begin{cases}
(-1)^d (q-1)^{d-1} (1-q^r)^{-d}, & {\rm{if}}\ N_r(s) \in A_{w, \emptyset, k_F} \\
0, & {\rm{otherwise}}.
\end{cases}
$$
\end{cor}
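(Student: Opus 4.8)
The plan is to deduce this corollary directly from the Main Theorem (Theorem~\ref{main-theorem}) by specializing to $w = t_\lambda$ with $\lambda \in W\mu$, so that the translation part is $\lambda(w) = \lambda$ and the finite part is $\wbar = 1$. The entire argument reduces to identifying the single surviving term in the sum over paths and evaluating each of its factors. First I would observe that because $\wbar = 1$ the two endpoints of the Bruhat interval coincide, $w_\lambda^{-1}\wbar = w_\lambda^{-1}$, so that $B_\Phi^\prec(w_\lambda^{-1}\wbar, w_\lambda^{-1}) = B_\Phi^\prec(w_\lambda^{-1}, w_\lambda^{-1})$ consists only of the empty path $\Delta_0$, for which $\ell(\Delta_0) = 0$ and $J_{\Delta_0} = \emptyset$ (as recorded in the discussion preceding Theorem~\ref{main-theorem}). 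The outer sum therefore collapses to one summand.

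Next I would evaluate the routine factors attached to $\Delta_0$. Since $\ell(t_\lambda) = \ell(t_\mu)$ for all $\lambda \in W\mu$, we get $\ell(w, t_\mu) = 0$, hence $c(\Delta_0) = [\ell(w,t_\mu) - \ell(\Delta_0)]/2 = 0$ and $q^{r c(\Delta_0)} = 1$. With $\ell(\Delta_0) = 0$ the factor $(1-q^r)^{\ell(\Delta_0)-d}$ becomes $(1-q^r)^{-d}$, and with ${\rm{rank}}(J_{\Delta_0}) = {\rm{rank}}(\emptyset) = 0$ the factor $(q-1)^{d-{\rm{rank}}(J_{\Delta_0})-1}$ becomes $(q-1)^{d-1}$. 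The symbol $\delta(s, w, J_{\Delta_0}) = \delta(s, t_\lambda, \emptyset)$ equals $1$ exactly when $N_r(s) \in A_{w,\emptyset,\resfield}$ and $0$ otherwise, since $t_\lambda \in {\rm{Adm}}_{G_r}(\mu)$.

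The one genuine computation, and the step I expect to require the most care, is showing $\vert S_{w,J_{\Delta_0}}^{\rm{tors}} \cap K_{q-1}\vert = 1$. Here $J_{\Delta_0} = \emptyset$ gives $S_{w,\emptyset} = S_w = \relgrp$, and Proposition~\ref{characters-of-relevant-subgroup-proposition} together with Lemma~\ref{lattice-equality-lemma} (applied with $\wbar = 1$) shows $\chars{\relgrp} = \chars{\dualtorus}/L_w = \chars{\dualtorus}/\mathbb{Z}\lambda$. I would then argue that $\lambda$ is a primitive vector of $\cochars{\torus} = \chars{\dualtorus}$: as in the proof of Corollary~\ref{cor::rank-of-swj}, taking $\lambda$ dominant without loss of generality, $\lambda$ is a fundamental coweight pairing to $\pm 1$ with some simple root, so it cannot be a proper integer multiple of any lattice element. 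Consequently $\chars{\dualtorus}/\mathbb{Z}\lambda$ is torsion-free, so by the anti-equivalence of Theorem~\ref{thm::cat-antiequiv} the group $S_w$ is a torus; its torsion subgroup $S_{w,\emptyset}^{\rm{tors}}$ is trivial, and hence so is its intersection with $K_{q-1}$.

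Finally I would assemble the pieces. Substituting these values into the formula of Theorem~\ref{main-theorem} leaves
$$
\phironeaug(I_r^+ s t_\lambda I_r^+) = (-1)^d\, \delta(s, t_\lambda, \emptyset)\, (q-1)^{d-1} (1-q^r)^{-d},
$$
which equals $(-1)^d (q-1)^{d-1}(1-q^r)^{-d}$ when $N_r(s) \in A_{w,\emptyset,\resfield}$ and $0$ otherwise, matching the stated case distinction. As a consistency check one notes that this is the same as $(-1)^d \vert \dzrelgrp\vert (1-q^r)^{-d}$ with $\vert \dzrelgrp\vert = (q-1)^{d-1}$, the $(q-1)$-torsion of the rank-$(d-1)$ torus $S_w$.
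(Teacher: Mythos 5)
Your proof is correct and follows essentially the same route as the paper's: both specialize Theorem~\ref{main-theorem} to the unique empty path, identify $\delta(s,t_\lambda,\emptyset)$ with the condition $N_r(s) \in A_{w,\emptyset,k_F}$, and reduce everything to the fact that $S_w$ is a torus, so that the torsion factor is $1$ and the remaining factors give $(-1)^d (q-1)^{d-1}(1-q^r)^{-d}$. The only divergence is in justifying that $S_w$ is a torus: the paper argues directly in coordinates that the single relation $\lambda(\kappa)=1$ leaves all but one coordinate of $\kappa$ free, whereas you pass through $\chars{S_w}=\chars{\dualtorus}/\mathbb{Z}\lambda$ and the primitivity of the nonzero minuscule $\lambda$ --- a slightly more careful version of the same underlying fact, since the paper's coordinate count implicitly relies on that primitivity.
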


\begin{proof}
There are no non-trivial $\prec$-increasing paths from $t_\lambda$ to itself. Therefore, the formula reduces to
$$
[I_r : I_r^+]^{-1} \sum_{\kapchi \in S_w^{\rm{dz}}} 1 = (-1)^d (1 - q^r)^{-d} \vert S_{w,J}^{\rm{dz}}\vert.
$$

The relation $\lambda (\kappa_\chi) = 1$ is the only restriction on depth-zero endoscopic elements. If we view $\kappa_\chi = {\rm{diag}}(\kappa_1, \ldots, \kappa_d) \in \dualtorus(\valfield)$, then this restriction allows for a free choice of all $\kappa_i$ except for one coordinate, which is determined by the relation. It follows that $S_w$ is a torus, hence $\vert S_w^{\rm{dz}} \vert = (q-1)^{d-1}$.
\end{proof}

\begin{cor}
\label{cor::main-theorem-codim-1}
If $w = t_\lambda x$, for $x$ a reflection in $W$, such that $\ell(w, t_{\lambda}) = 1$, then
$$
\phironeaug(I_r^+ s w I_r^+) = \begin{cases}
(-1)^d \vert S_{w, J_\Delta}^{\rm{tors}} \cap K_{q-1} \vert (q-1)^{d-2} (1-q^r)^{1-d}, & {\rm{if}}\ N_r(s) \in A_{w,J_\Delta, k_F} \\
0, & {\rm{otherwise}}.
\end{cases}
$$
\end{cor}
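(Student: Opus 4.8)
The plan is to obtain this statement as a direct specialization of Theorem~\ref{main-theorem}, the entire content being an analysis of the Bruhat interval in the codimension-one case $\ell(w,\wtrans) = 1$. Since $w = t_\lambda x$ with $x$ a reflection and $\ell(w) < \ell(t_\lambda)$, we have $w \lessdot t_\lambda$ in the Bruhat order, so $w$ lies below $\wtrans$ and (taking $\lambda \in W\mu$) is $\mu$-admissible; if instead $w \notin {\rm{Adm}}_{G_r}(\mu)$, then both sides vanish and there is nothing to prove. First I would invoke the $\prec$-preserving bijection of Proposition~\ref{bruhat-interval-isomorphism} to identify $B_\Phi^\prec(w_\lambda^{-1}\wbar, w_\lambda^{-1})$ with $B_{\Phi_{\rm{aff}}}^\prec(w, t_\lambda)$, and observe that because $w \lessdot t_\lambda$ is a covering relation, the interval $[w, t_\lambda]$ contains only the two elements $w$ and $t_\lambda$ joined by the single edge $w^{-1} t_\lambda = x$. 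Hence $B_\Phi^\prec(w_\lambda^{-1}\wbar, w_\lambda^{-1})$ consists of a single path $\Delta$ of length $\ell(\Delta) = 1$, collapsing the outer summation in Theorem~\ref{main-theorem} to one term.

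Next I would determine the data attached to this unique $\Delta$. Its edge set is the single reflection $x$, so by Lemma~\ref{path-root-system-lemma} the root subsystem $J_\Delta$ is the minimal system whose reflection subgroup contains $x$; this is the rank-one system $\{\pm\alpha\}$ for the root $\alpha$ with $s_\alpha = x$, giving ${\rm{rank}}(J_\Delta) = 1$. Substituting ${\rm{rank}}(J_\Delta) = 1$ and $\ell(\Delta) = 1$ into the exponents of Theorem~\ref{main-theorem} yields $(q-1)^{d - {\rm{rank}}(J_\Delta) - 1} = (q-1)^{d-2}$ and $(1-q^r)^{\ell(\Delta) - d} = (1-q^r)^{1-d}$, which are exactly the powers appearing in the claimed formula.

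The one point needing a small argument is the disappearance of the factor $q^{r c(\Delta)}$. Because conjugate translation elements share a common length, $\ell(t_\mu) = \ell(\wtrans)$, so $\ell(w, t_\mu) = \ell(t_\mu) - \ell(w) = \ell(\wtrans) - \ell(w) = \ell(w, \wtrans) = 1$; combined with $\ell(\Delta) = 1$ this gives $c(\Delta) = [\ell(w, t_\mu) - \ell(\Delta)]/2 = 0$ and $q^{r c(\Delta)} = 1$. Assembling the single surviving term then produces $(-1)^d\, \delta(s, w, J_\Delta)\, \vert S_{w, J_\Delta}^{\rm{tors}} \cap K_{q-1}\vert\, (q-1)^{d-2}\, (1-q^r)^{1-d}$, and unwinding $\delta(s, w, J_\Delta)$ via its definition splits this into the two cases according to whether $N_r(s) \in A_{w, J_\Delta, \resfield}$ or not.

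I do not expect any serious obstacle here, since the result is genuinely a corollary. The most delicate step is the combinatorial claim that the covering relation $w \lessdot t_\lambda$ produces exactly one $\prec$-increasing path and that its associated root system has rank one; everything else is bookkeeping of exponents. The only conceptual care needed is to confirm, before applying the formula, that the hypothesis $\ell(w, t_\lambda) = 1$ indeed places $w$ in the admissible range (so that the nonzero branch of $\delta$ is the operative one) and to keep the roles of $\lambda$ and $\lambda(w)$ consistent throughout.
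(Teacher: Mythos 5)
Your proposal is correct and follows essentially the same route as the paper's proof: both reduce the sum in Theorem~\ref{main-theorem} to the unique single-edge path $\Delta$ with $E(\Delta) = \{x\}$, identify $J_\Delta$ as the rank-one system $\{\pm\alpha\}$ with $x = s_\alpha$, and observe $c(\Delta) = [1-1]/2 = 0$ so the power of $q^r$ disappears. Your additional care on two points the paper leaves implicit — justifying uniqueness of the path via the covering relation (each Bruhat-graph edge raises length, so length difference $1$ forces a single edge, trivially $\prec$-increasing) and checking $\ell(w, t_\mu) = \ell(w, t_{\lambda(w)})$ before computing $c(\Delta)$ — is sound and does not change the argument.
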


\begin{proof}
The interval $B_{\Phi_{\rm{aff}}}^\prec(w, t_\lambda)$ contains a single path $\Delta = \{w, wx\}$. It is clear that $J_\Delta$ is a rank 1 system determined by the root $\alpha \in \Phi$ such that $x = s_\alpha$. Applying the formula shows that $\phi_{r,1}(I_r^+ sw I_r^+)$ equals
$$
(-1)^d \vert S_{w, J_\Delta}^{\rm{tors}} \cap K_{q-1} \vert (q-1)^{d-2} (1-q^r)^{1-d},
$$
because $[\ell(w, t_{\lambda}) - \ell(\Delta)]/2 = [1 - 1]/2 = 0$.
\end{proof}

\subsubsection{Implementation details}

Let us describe how to implement the formula in software. The data presented in the next sections was computed using SageMath~\cite{sagemath}; however, this process should be feasible in any mathematics package with a robust implementation of Coxeter groups and finitely generated abelian groups.

The first order of business is to enumerate the $\mu$-admissible set, because if $w \notin \rm{Adm}(\mu)$ then $\phironeaug(I_r^+ sw I_r^+) = 0$. By definition,
$$
{\rm{Adm}}(\mu) = \{w \in \widetilde{W} \mid w \le t_\lambda,\; \rm{some}\; \lambda \in W\mu\}.
$$
Therefore, we can employ the following naive algorithm:
\begin{enumerate}
\item Determine the orbit $W\mu$.
\item For every $\lambda \in W\mu$ and every $\bar{w} \in W$, compute $t_\lambda \bar{w}$.
\item If $t_\lambda \bar{w} \le t_\lambda$, then $w = t_\lambda \bar{w} \in {\rm{Adm}}(\mu)$.
\end{enumerate}
This approach is fast enough to handle small rank cases. For type $A_n$ systems, $\vert W \vert = (n+1)!$, which means this exhaustive strategy would quickly become infeasible.

Now the algorithm proceeds in parallel for each $w = t_\lambda \bar{w} \in {\rm{Adm}}(\mu)$. For each $w \in {\rm{Adm}}(\mu)$, compute $\ell(w)$ and $\ell(t_{\lambda})$; this determines the codimension $\ell(w, t_{\lambda})$. 

Next, we must find the minimal length coset representative of $t_\lambda$ with respect to the finite Weyl group, i.e., find $w_\lambda \in W$ such that $t_\lambda w_\lambda \in \widetilde{W}$ has minimal length. Once $w_\lambda^{-1}$ inverse is in hand, we can focus our attention on the finite group $W$ and the elements $w_\lambda^{-1} \bar{w}$ and $w_\lambda^{-1}$.

In order to compute $B_\Phi^\prec (w_\lambda^{-1}\bar{w}^{-1}, w_\lambda^{-1})$, we need to choose a reflection ordering $\prec$ for the finite Weyl group. Fortunately, one of Dyer's results provides a straightforward algorithm for making a consistent choice across all root system types and for all ranks.

\begin{prop}
\label{prop::refl-order-from-high-root}
Let $(\mathcal{W},\mathcal{S})$ be a finite Coxeter system with longest element $w_0$, and let $\mathcal{T} = \{t_1, \ldots, t_n\}$ be the set of reflections in $\mathcal{W}$. Then the total ordering $\prec$ on $\mathcal{T}$ such that $t_1 \prec \ldots \prec t_n$ is a reflection ordering if and only if there is a reduced expression $w_0 = s_1 \ldots s_n$, where $s_i \in \mathcal{S}$, such that $t_i = s_1 \ldots s_{i-1} s_i s_{i-1} \ldots s_1$.
\end{prop}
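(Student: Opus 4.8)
The plan is to translate the statement into the language of positive roots and inversion sets, where it becomes the classical fact that reflection orderings of a finite Coxeter group correspond bijectively to reduced expressions of $w_0$. The set of reflections $\mathcal{T}$ is in bijection with the positive roots $\Phi_{\mathcal{W}}^+$ via $t \leftrightarrow \beta_t$, where $t = s_{\beta_t}$, so a total ordering $t_1 \prec \cdots \prec t_n$ of $\mathcal{T}$ is the same datum as a total ordering $\beta_1 \prec \cdots \prec \beta_n$ of $\Phi_{\mathcal{W}}^+$. On the other side, a reduced expression $w_0 = s_1 \cdots s_n$ produces the well-known enumeration $\beta_i = s_1 \cdots s_{i-1}(\alpha_{s_i})$ of $\Phi_{\mathcal{W}}^+$ (\cite{bourbaki4-6}, VI.1.6), and a direct computation gives $s_{\beta_i} = s_1 \cdots s_{i-1}\, s_i\, s_{i-1} \cdots s_1$, which is exactly the element called $t_i$ in the statement. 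So the proposition asserts that the orderings arising from reduced expressions of $w_0$ are precisely the reflection orderings.

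The central step will be the following characterization, which I would prove directly: $\prec$ is a reflection ordering if and only if every initial segment $A_k = \{\beta_1, \ldots, \beta_k\}$ is \emph{biconvex}, meaning that both $A_k$ and its complement $\Phi_{\mathcal{W}}^+ \setminus A_k$ are closed under taking positive roots that are nonnegative linear combinations of their members. For the forward direction, if $\beta = \lambda\alpha + \mu\gamma \in \Phi_{\mathcal{W}}^+$ with $\alpha, \gamma$ in a segment and $\lambda, \mu > 0$, then the reflection-ordering axiom forces $\beta$ to lie between $\alpha$ and $\gamma$, hence below the maximum of the two, so $\beta$ stays in the segment; the complement is handled symmetrically. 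For the converse, suppose $\alpha \prec \gamma$ and $\beta = \lambda\alpha+\mu\gamma \in \Phi_{\mathcal{W}}^+$: if $\beta \prec \alpha$ then $\alpha,\gamma$ lie in the complement of the segment $\{\delta : \delta \preceq \beta\}$ while $\beta$ does not, contradicting closure of the complement; if $\gamma \prec \beta$ then $\alpha,\gamma$ lie in the segment $\{\delta : \delta \preceq \gamma\}$ while $\beta$ does not, contradicting closure. Thus $\alpha \prec \beta \prec \gamma$. To reconcile the general positive-combination condition with the root-addition notion of closedness used for inversion sets, I would restrict attention to the rank-two subsystem spanned by $\alpha$ and $\gamma$, where the two notions coincide.

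With this lemma in hand, the proposition follows from the classical correspondence between biconvex subsets of $\Phi_{\mathcal{W}}^+$ and inversion sets: a subset is biconvex if and only if it equals $N(w) = \{\beta \in \Phi_{\mathcal{W}}^+ : w(\beta) \in \Phi_{\mathcal{W}}^-\}$ for a unique $w \in \mathcal{W}$, and $|N(w)| = \ell(w)$ (\cite{bjorner-brenti2005}, Section 4.4). For the ``if'' direction, a reduced expression $w_0 = s_1 \cdots s_n$ makes each $A_k$ equal to $N(s_1 \cdots s_k)$, an inversion set and hence biconvex, so $\prec$ is a reflection ordering by the lemma. For the ``only if'' direction, a reflection ordering yields a chain $\emptyset = A_0 \subset A_1 \subset \cdots \subset A_n = \Phi_{\mathcal{W}}^+$ of biconvex sets with $|A_k \setminus A_{k-1}| = 1$; writing $A_k = N(w_k)$, the covers $w_{k-1} \lessdot w_k$ in weak order assemble into a reduced expression $w_n = s_1 \cdots s_n$ for $w_n = w_0$ (forced since $N(w_n) = \Phi_{\mathcal{W}}^+$), and reading off the newly added root at each step recovers $\beta_k = s_1 \cdots s_{k-1}(\alpha_{s_k})$ together with the asserted formula for $t_k$. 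This is the content of Dyer~\cite{dyer1993}.

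The main obstacle I anticipate is bookkeeping the conventions so that the enumeration $\beta_i = s_1 \cdots s_{i-1}(\alpha_{s_i})$, the inversion-set convention for $N(w)$, and the direction of weak order all match, and in particular that the unique new root of $A_k$ is indeed $s_1 \cdots s_{k-1}(\alpha_{s_k})$ rather than its image under some inverse. Finiteness of $\mathcal{W}$ is used essentially, both to guarantee the existence of $w_0$ and to invoke the biconvex/inversion-set dictionary in the stated form; the rank-two reduction inside the lemma is where the only genuinely geometric input enters, and verifying it carefully is the crux of the argument.
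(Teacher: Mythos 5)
Your proposal is correct, but there is nothing in the paper to compare it against line by line: the paper does not prove this proposition at all, it simply cites Dyer~\cite{dyer1993}, Proposition 2.13. What you have written out is essentially the canonical argument (and in substance Dyer's own): identify orderings of $\mathcal{T}$ with orderings of $\Phi_{\mathcal{W}}^+$ via $t \leftrightarrow \beta_t$ and $s_{u(\alpha)} = u s_\alpha u^{-1}$, characterize reflection orderings by biconvexity of initial segments, and then invoke the dictionary between biconvex subsets of $\Phi_{\mathcal{W}}^+$ and inversion sets $N(w)$, so that maximal chains of biconvex sets correspond to maximal chains in weak order, i.e.\ to reduced words for $w_0$ (using $|\mathcal{T}| = |\Phi_{\mathcal{W}}^+| = \ell(w_0) = n$ to match lengths). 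Both directions of your key lemma are sound; in the converse you should add the one-line observation that $\beta = \lambda\alpha + \mu\gamma$ with $\lambda,\mu>0$ cannot equal $\alpha$ or $\gamma$ (otherwise $\alpha$ and $\gamma$ would span the same ray, contradicting $\Phi \cap \mathbb{R}\alpha = \{\pm\alpha\}$), so the strict inequalities $\alpha \prec \beta \prec \gamma$ really follow. Your two flagged caveats are exactly the right ones. First, since $(\mathcal{W},\mathcal{S})$ is an arbitrary finite Coxeter system, the root system may be non-crystallographic ($H_3$, $H_4$, $I_2(m)$), so the convexity notion must be closure under positive linear combinations, not root addition; your rank-two reduction is the standard and correct way to reconcile this, and it is also where one checks directly that $N(w)$ is biconvex in the strong sense ($N(w) = \Phi_{\mathcal{W}}^+ \cap w^{-1}(\Phi_{\mathcal{W}}^-)$ is the intersection of $\Phi_{\mathcal{W}}^+$ with a cone). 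Second, the convention bookkeeping is real but routine: with $N(w) = \{\beta \in \Phi_{\mathcal{W}}^+ \mid w^{-1}(\beta) \in \Phi_{\mathcal{W}}^-\}$ one gets $N(s_1\cdots s_k) = N(s_1\cdots s_{k-1}) \sqcup \{s_1\cdots s_{k-1}(\alpha_{s_k})\}$ when lengths add, which yields the asserted enumeration $\beta_i = s_1\cdots s_{i-1}(\alpha_{s_i})$ and hence $t_i = s_1\cdots s_{i-1}s_i s_{i-1}\cdots s_1$ without any inverses creeping in. With those two points nailed down, your argument is a complete and self-contained proof of the statement the paper outsources.
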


\begin{proof}
This is \cite{dyer1993}, Proposition 2.13.
\end{proof}

We come now to the main combinatorial part of the algorithm: enumerating the $\prec$-increasing paths through the Bruhat graph of the finite Weyl group. This is hard insofar as the Bruhat graph of $(W,S)$ grows rapidly in complexity as the rank of the group increases, but the basic problem has been studied due to the connection with Kazhdan-Lusztig theory.

The naive approach of creating the full Bruhat graph and then considering all paths is very expensive even in small examples. Instead, we take advantage of the relative scarcity of reflections in $W$ compared to $\vert W \vert$. For example, there are $\frac{n(n+1)}{2}$ reflections in a Weyl group of type $A_n$ while the group has order $(n+1)!$.

For elements $u, v \in W$, enumerate $B_\Phi^\prec(u, v)$ as follows:
\begin{enumerate}
\item Let $C$ denote the set of ``candidate paths'' in the Bruhat graph $\Omega_{(W,S)}$. This set is initially empty and will be built up as the algorithm proceeds.
\item For each reflection $t \in T$, if $u < ut$ in Bruhat order, add $\{u, ut\}$ to $C$.
\item For each reflection $t \in T$, and each candidate path $\Delta^\prime \in C$, with $x$ equal to the last vertex in $\Delta^\prime$ and $e$ the last edge in $\Delta^\prime$, check whether $x < xt$ and $e \prec t$. If both conditions are satisfied, add $\{\Delta^\prime, xt\}$ to $C$.
\item Iterate the above procedure until the total number of trials equals $\ell(u,v)$.
\item Finally, for each $\Delta^\prime \in C$, if the last vertex of $\Delta^\prime$ equals $v$, then $\Delta^\prime$ is a $\prec$-increasing path in $\Omega_{(W,S)}$ from $u$ to $v$.
\end{enumerate}
While this algorithm is not necessarily efficient, there is a manageable upper bound on the number of candidate paths to be considered: $\ell(u, v) \cdot \vert T \vert$.

Once we have enumerated $B_{\Phi}(w_\lambda^{-1} \bar{w}, w_\lambda^{-1})$, we can read off the statistics $c(\Delta)$ and $\ell(\Delta)$ for each path. The edges of the path determines the root system $J_\Delta$ for by looking at the intersection of all root subsystems in $\Phi$ which contain $E(\Delta)$.

For each path $\Delta \in B_\Phi^\prec (w_\lambda^{-1} \bar{w}, w_\lambda^{-1})$, the group $\chars{S_{w, J_\Delta}}$ is a finitely generated abelian group. It corresponds to $S_{w, J_\Delta}$ under the categorical anti-equivalence between diagonalizable algebraic groups and their character groups. Therefore, if we find the invariant factors for $\chars{S_{w, J}}$, as a finitely generated abelian group, then we get a description of $S_{w, J}$.

We know that $\chars{S_{w, J_\Delta}} = \cochars{\torus} / L_{w, J_\Delta}$, and $L_{w, J_\Delta}$ is generated by $\lambda$ and $\alpha^\vee$ for $\alpha \in J_\Delta^+$ (whenever $\bar{w} \in W_{J_\Delta}$, which is true here). Suppose we choose a generating set for $\cochars{\torus}$ and find the coordinates for $\lambda$ and the coroots $\alpha^\vee$ in terms of these generators. This is sufficient data to compute the invariant factors of $\chars{S_{w, J_{\Delta}}}$. If this group has any torsion, then take the additional step of computing $\vert S_{w, J_{\Delta}}^{\rm{tors}} \cap K_{q-1} \vert$ by comparing the order of the torsion elements with the $(q-1)$-th roots of unity.

This concludes the mathematical considerations for implementing the calculations in software. The calculations reported on in the next two sections and in the Appendix were done through a combination of by-hand calculation and SageMath. Results were obtained on a single 1.1 GHz core using approximately 800 MB of RAM in the largest cases; however, once the up-front work of computing the $\mu$-admissible set is done, the computation could be trivially parallelized across many cores.


\subsection{Results for general linear groups}

Let $F$ be a $p$-adic field. The $F$-points of the general linear group $GL_d$ are
$$
GL_d(F) = \{ g \in M_{d,d} (F) \mid {\rm{det}}(g) \neq 0 \},
$$
where $M_{d, d}(F)$ is the group of $M_{d, d}$ matrices under multiplication with coefficients in $F$. We fix a split maximal torus $T = \{{\rm{diag}}(t_1, \ldots, t_d) \mid t_i \in \mathbb{G}_m\}$ in $GL_d$. Since $GL_d$ is self-dual, we can identify $\torus$ with its dual $\dualtorus$ so that
$$
\dualtorus(\valfield) = \{ \kappa = {\rm{diag}}(\kappa_1, \ldots, \kappa_d) \mid \kappa_i \in \mathbb{C}^\times\}.
$$
The character group $\chars{\dualtorus}$ consists of coordinate projections $\varepsilon_i (\kappa) = \kappa_i$, which can also be viewed as cocharacters of $\torus$. The root system $\Phi = \Phi(G,T)$ is of type $A_{n-1}$, and its positive roots are $\alpha_{ij} = \varepsilon_i - \varepsilon_j$ for $1 \leq i < j \leq n$.

\subsubsection{Example: $GL_4(F)$, $\mu = (1,1,0,0)$}
\label{section::worked-example-gl4-1100}

Let us give a detailed overview of how to compute the coefficients of $\phironeaug$ when $G=GL_4$ and $\mu = (1, 1, 0, 0)$. Although this case is small enough for us to work through the details, we will explain how to read the results out of the tables found in the Appendix to help the reader understand the data in larger examples. Despite being the smallest example different from the Drinfeld case, we will see several interesting distinctions between the coefficients in Corollary~\ref{cor::drinfeld-case} and the coefficients described below.

First, let us consider how data specific to this case fill in some of the values in the formula of Theorem~\ref{main-theorem}. The split maximal torus $T$ has rank equal to 4. Translation elements $t_\lambda$ in the $W$-orbit of $t_\mu$ have length $\ell(t_\lambda) = 4$. Therefore, the formula becomes
$$
\sum_{\Delta \in B_{\Phi_{\rm{aff}}}^{\prec} (w, t_{\lambda(w)})} \delta(s, w,J_\Delta) \vert S_{w,J_\Delta}^{\rm{tors}} \cap K_{q-1}\vert (q-1)^{A(\Delta)} q^{r B(w, \Delta)} (1-q^r)^{C(\Delta)}
$$
where
$$
\begin{cases}
A(\Delta) = 4 - {\rm{rank}}(J_\Delta) - 1 \\
B(w, \Delta) = [(4 - \ell(w)) - \ell(\Delta)]/2 \\
C(\Delta) = \ell(\Delta) - 4
\end{cases}
$$

The root system is type $A_3$, so it has six positive roots, and the longest element of the finite Weyl group is $w_0 = s_{123121}$, where $s_1$, $s_2$ and $s_3$ correspond to the simple positive roots. The notation $s_{i_1 \cdots i_n}$ is shorthand for the product $s_{i_1} \cdots s_{i_n}$. Using Proposition~\ref{prop::refl-order-from-high-root}, we compute the reflection ordering,
$$
s_1 \prec s_{121} \prec s_{12321} \prec s_2 \prec s_{232} \prec s_3,
$$
which for the positive roots is
$$
\alpha_{12} \prec \alpha_{13} \prec \alpha_{14} \prec \alpha_{23} \prec \alpha_{24} \prec \alpha_{34}.
$$
It is also useful to note that the rank of $\Phi$ equals 3, so that we know any $\chi$-root system $J_\Delta$ of rank 3 must equal $\Phi$.

The $\mu$-admissible set ${\rm{Adm}}_{G_r}(\mu)$ contains 33 elements, according to the formula in \cite{haines2000}, Proposition 8.2; the highest length for $w \in {\rm{Adm}}_{G_r}(\mu)$ is $\ell(w) = 4$. We will group the calculations according to these lengths.

\noindent\textbf{Length 0:}

There is a unique $\mu$-admissible element such that $\ell(w) = 0$: $w = t_\mu s_{2312}$. The minimal coset representative $w_\lambda$ equals $\bar{w}^{-1} = s_{2312}$. Calculations show that
$$
B_\Phi^\prec(e, s_{2312}) = \{\Delta_1, \Delta_2\},
$$
where $E(\Delta_1) = \{s_{121},\: s_{232}\}$ and $E(\Delta_2) = \{s_1,\: s_{121},\: s_2,\: s_{232}\}$. The reflection subgroups of $W$ corresponding to these paths are
\begin{itemize}
\item $W_{J_{\Delta_1}} = \{e, s_{121}, s_{232}, s_{2312}\}$
\item $W_{J_{\Delta_2}} = W$
\end{itemize}
Hence $J_{\Delta_1} = \{\pm\alpha_{13}, \pm\alpha_{24}\}$ and $J_{\Delta_2} = \Phi$. So as an intermediate result we have
$$
\phironeaug (I_r^+ sw I_r^+) = \delta(s, w, J_{\Delta_1}) \vert S_{w,J_{\Delta_1}}^{\rm{dz}}\vert q^r (1-q^r)^{-2} + \delta(s, w, J_{\Delta_2}) \vert S_{w,J_{\Delta_2}}^{\rm{dz}} \vert (1-q^r)^{0}.
$$

Elements $\kappa \in S_{w, J_{\Delta_1}}$ satisfy $\alpha_{13}^\vee (\kappa) = \alpha_{24}^\vee (\kappa) = \mu (\kappa) = 1$,
hence they are subject to constraints $\kappa_1 \kappa_2 = 1$, $\kappa_1 = \kappa_3$ and $\kappa_2 = \kappa_4$; meanwhile $\kappa \in S_{w, J_{\Delta_2}}$ satisfy $\kappa_1 \kappa_2 = 1$ and $\kappa_1 = \kappa_2 = \kappa_3 = \kappa_4$ because $\alpha^\vee (\kappa) = 1$ for all $\alpha \in \Phi$. Therefore,
\begin{itemize}
\item $S_{w, J_{\Delta_1}} = \{ {\rm{diag}}(\kappa_1, \kappa_1^{-1}, \kappa_1, \kappa_1^{-1}) \in \dualtorus(\valfield) \mid \kappa_1 \in \mathbb{C}^\times\}$
\item $S_{w, J_{\Delta_2}} = \{ {\rm{diag}}(a, a, a, a) \in \dualtorus(\valfield) \mid a^2 = 1,\ a \in \mathbb{C}^\times \}$
\end{itemize}
The group $S_{w, J_{\Delta_1}}$ is a torus with rank equal to 1, while $S_{w, J_{\Delta_2}}$ is a torsion group of order 2. Assuming ${\rm{char}}(\resfield) \neq 2$, we have $\vert S_{w, J_{\Delta_2}}^{\rm{tors}} \cap K_{q-1} \vert = 2.$ Here's a second intermediate result, assuming ${\rm{char}}(\resfield) \neq 2$:
$$
\phironeaug (I_r^+ sw I_r^+) = \delta(s, w, J_{\Delta_1}) (q-1) q^r (1-q^r)^{-2} + 2 \delta(s, w, J_{\Delta_2}).
$$

Now we describe the finite critical groups $A_{w, J_{\Delta_1}, \resfield}$ and $A_{w, J_{\Delta_2}, \resfield}$, which tell us when $s \in \torus(k_r)$ give $\delta(s, w, J) = 1$. By definition, $\critgrpJ = \langle \nu(\resfield) \mid \nu \in L_{w,J}\rangle$ and $L_{w,J} = \langle w(\nu) - \nu, \alpha^\vee \mid \nu \in \chars{\dualtorus}, \alpha \in J^+\rangle$. Because $J_{\Delta_1} \subset J_{\Delta_2}$, we have the containment $A_{w, J_{\Delta_1}, \resfield} \subset A_{w, J_{\Delta_2}, \resfield}$.

In conclusion, if ${\rm{char}}(\resfield) \neq 2$ and $w = t_\mu s_{2312}$,
$$
\phironeaug (I_r^+ sw I_r^+) =
\begin{cases}
0, & {\rm{if}}\ N_r(s) \notin A_{w, J_{\Delta_2}, \resfield} \\
2, & {\rm{if}}\ N_r(s) \in A_{w, J_{\Delta_2}, \resfield}\setminus A_{w, J_{\Delta_1}, \resfield} \\
(q-1) q^r (1-q^r)^{-2} + 2, & {\rm{if}}\ N_r(s) \in A_{w, J_{\Delta_1}, \resfield}.
\end{cases}
$$

\noindent\textbf{Length 1:}

There are four $\mu$-admissible elements with $\ell(w) = 1$:
\begin{center}
\begin{tabular}{cccc}
$t_{(1,1,0,0)} s_{312}$, & $t_{(1,1,0,0)} s_{231}$, & $t_{(1,1,0,0)} s_{12312}$, & $t_{(1,0,1,0)} s_{23121}$.
\end{tabular}
\end{center}

Let us explain the calculation when $w = t_\mu s_{12312}$. There is a unique $\prec$-increasing path $\Delta$ from $w$ to $t_\mu$; $\ell(\Delta) =3$ and $E(\Delta) = \{s_{121}, s_2, s_{232}\}$. Thus $J_\Delta$ must contain the positive roots $\alpha_{13}$, $\alpha_{23}$, and $\alpha_{24}$, which forces $J_\Delta = \Phi$. Then
$$
S_{w, J_\Delta} = \{{\rm{diag}}(a,a,a,a) \in \dualtorus(\valfield) \mid a^2 = 1\}.
$$
Since there is only a single path $\Delta \in B_{\Phi_{\rm{aff}}}^\prec (w, t_{\lambda(w)})$, there is only a single finite critical group $A_{w, J_\Delta, \resfield}$.

The calculations for the other three $\mu$-admissible elements of length 1 are completely analogous, in the sense that each $B_{\Phi_{\rm{aff}}}^\prec (w, t_{\lambda(w)})$ contains a single path $\Delta$ such that the numbers $A(\Delta)$, $B(w,\Delta)$ and $C(\Delta)$ are as before. Therefore, if $\ell(w) = 1$ and ${\rm{char}}(\resfield) \neq 2$,
$$
\phironeaug (I_r^+ sw I_r^+) =
\begin{cases}
0, & {\rm{if}}\ N_r(s) \notin A_{w, J_{\Delta}, \resfield} \\
2(1-q^r)^{-1}, & {\rm{if}}\ N_r(s) \in A_{w, J_\Delta, \resfield}.
\end{cases}
$$

\noindent\textbf{Length 2:}

There are ten $\mu$-admissible elements with $\ell(w) = 2$; however, as we shall see in a moment, it will help to group them into two subsets.

\begin{center}
\begin{tabular}{|l|l|}
\hline
$X_1$ & $X_2$ \\
\hline
$t_{(0, 1, 1, 0)} s_{32}$ & $t_{(1, 0, 0, 1)} s_{12}$ \\
$t_{(1, 0, 1, 0)} s_{3121}$ & $t_{(1, 0, 1, 0)} s_{1232}$ \\
$t_{(1, 1, 0, 0)} s_{21}$ & $t_{(1, 0, 1, 0)} s_{31}$ \\
$t_{(1, 1, 0, 0)} s_{2321}$ & $t_{(1, 1, 0, 0)} s_{123121}$ \\
  & $t_{(1, 1, 0, 0)} s_{1231}$ \\
  & $t_{(1, 1, 0, 0)} s_{23}$  \\
\hline
\end{tabular}
\end{center}

The elements within each subset determine the same coefficients as the other elements of their subset. We will discuss a representative from each subset.

Suppose we choose $t_{(0, 1, 1, 0)} s_{32} \in X_1$. The set $B_{\Phi_{\rm{aff}}^\prec(w, t_{\lambda(w)})}$ contains a unique path $\Delta$ whose edges are $E(\Delta) = \{s_2, s_3\}$. Then $J_\Delta$ is the rank 2 subsystem whose positive roots are $\alpha_{23}, \alpha_{34}, \alpha_{24}$. Next, we have that
$$
S_{w,J_\Delta} = \{ {\rm{diag}}(\kappa_1, a, a, a) \in \dualtorus(\valfield) \mid \kappa_1 \in \mathbb{C}^\times, a^2 = 1\}.
$$
This is the direct product of a rank 1 torus and the torsion group
$$
S_{w,J_\Delta}^{\rm{tors}} = \{{\rm{diag}}(1, a, a, a) \in \dualtorus(\valfield) \mid a^2 = 1\}.
$$
So we have an example of a diagonalizable $S_{w,J_\Delta}$ that is neither a torus nor a torsion group. Then if $w \in X_1$ and ${\rm{char}}(\resfield) \neq 2$,
$$
\phironeaug(I_r^+ s w I_r^+) =
\begin{cases}
0, & {\rm{if}}\ N_r(s) \notin A_{w, J_\Delta, \resfield}, \\
2(q-1)(1-q^r)^{-2}, & {\rm{if}}\ N_r(s) \in A_{w, J_\Delta, \resfield}.
\end{cases}
$$

Now consider $t_{(1, 0, 0, 1)} s_{12} \in X_2$. There is again a unique $\prec$-increasing path from $w$ to $t_{(1,0,0,1)}$ whose edges are $E(\Delta) = \{s_1, s_{121}\}$. But now
$$
S_{w, J_\Delta} = \{\kappa = {\rm{diag}}(\kappa_1, \kappa_1, \kappa_1, \kappa_1^{-1}) \mid \kappa_1 \in \mathbb{C}^\times\}
$$
is a torus of rank 1, determined by the root system $J_\Delta$ whose positive roots are $J_\Delta^+ = \{\alpha_{12}, \alpha_{23}, \alpha_{13}\}$. We conclude that if $w \in X_2$, then
$$
\phironeaug(I_r^+ s w I_r^+) =
\begin{cases}
0, & {\rm{if}}\ N_r(s) \notin A_{w, J_\Delta, \resfield}, \\
(q-1)(1-q^r)^{-2}, & {\rm{if}}\ N_r(s) \in A_{w, J_\Delta, \resfield}.
\end{cases}
$$

\noindent\textbf{Length 3:}

There are twelve $\mu$-admissible elements with $\ell(w) = 3$:

\begin{center}
\begin{tabular}{llll}
$t_{(1, 1, 0, 0)} s_{12321}$, & $t_{(1, 1, 0, 0)} s_{121}$, & $t_{(1, 1, 0, 0)} s_{232}$, & $t_{(1, 1, 0, 0)} s_2$, \\
$t_{(1, 0, 1, 0)} s_{12321}$, & $t_{(1, 0, 1, 0)} s_3$, & $t_{(1, 0, 1, 0)} s_1$, & $t_{(0, 1, 0, 1)} s_2$ \\
$t_{(1, 0, 0, 1)} s_{121}$, & $t_{(1, 0, 0, 1)} s_1$, & $t_{(0, 1, 1, 0)} s_3$ & $t_{(0, 1, 1, 0)} s_{232}$.
\end{tabular}
\end{center}

Corollary~\ref{cor::main-theorem-codim-1} covers this case. Each $B_{\Phi_{\rm{aff}}}^\prec (w, t_{\lambda(w)})$ contains a unique path of length 1, whose sole edge can be read directly off of each element, e.g. the path $\Delta$ in the case $w = t_{(1, 1, 0, 0)} s_{12321}$ has edge $E(\Delta) = \{s_{12321}\}$.

In order to finish the calculation after invoking Corollary~\ref{cor::main-theorem-codim-1}, we need to check that $S_{w, J_\Delta}$ is torsion-free. We will do so in the case $w = t_{(1, 1, 0, 0)} s_{12321}$, since again, all of the other cases work the same way. Here,
$$
S_{w, J_\Delta} = \{ \kappa = {\rm{diag}}(\kappa_1, \kappa_1^{-1}, \kappa_3, \kappa_1) \mid \kappa_i \in \mathbb{C}^\times\},
$$
which is a rank-2 torus. Then $\vert S_{w, J_\Delta}^{\rm{tors}} \cap K_{q-1} \vert = 1$, and by the Corollary
$$
\phironeaug(I_r^+ s w I_r^+) =
\begin{cases}
0, & {\rm{if}}\ N_r(s) \notin A_{w, J_\Delta, \resfield}, \\
(q-1)^{2} (1-q^r)^{-3}, & {\rm{if}}\ N_r(s) \in A_{w, J_\Delta, \resfield}.
\end{cases}
$$

\noindent\textbf{Length 4:}

There are six $\mu$-admissible elements with $\ell(w) = 4$, namely the six elements $\lambda \in W\mu$. This case is settled by Corollary~\ref{cor::main-theorem-codim-0}:
$$
\phironeaug(I_r^+ s w I_r^+) =
\begin{cases}
0, & {\rm{if}}\ N_r(s) \notin A_{w, \emptyset, \resfield}, \\
(q-1)^{3} (1-q^r)^{-4}, & {\rm{if}}\ N_r(s) \in A_{w, \emptyset, \resfield}.
\end{cases}
$$

Let us summarize the nonzero values from the different cases. We use the notation $A_{w,J_\Delta,\resfield}$ to refer to the finite critical group determined by the unique path in each case, except when $\ell(w) = 0$, where we need to consider two such groups.
$$
\begin{cases}
2, & \ell(w) = 0,\ N_r(s) \in A_{w, J_{\Delta_2}, \resfield}\setminus A_{w, J_{\Delta_1}, \resfield} \\
(q-1) q^r (1-q^r)^{-2} + 2, & \ell(w)=0,\ N_r(s) \in A_{w, J_{\Delta_1}, \resfield}, \\ 
2(1-q^r)^{-1}, & \ell(w) = 1,\ N_r(s) \in A_{w, J_\Delta, \resfield}, \\
2(q-1)(1-q^r)^{-2}, & \ell(w)=2,\ w \in X_1\ N_r(s) \in A_{w, J_\Delta, \resfield}, \\
(q-1)(1-q^r)^{-2}, & \ell(w)=2,\ w \in X_2\ N_r(s) \in A_{w, J_\Delta, \resfield}, \\
(q-1)^{2} (1-q^r)^{-3}, & \ell(w) = 3,\ N_r(s) \in A_{w, J_\Delta, \resfield},\\
(q-1)^{3} (1-q^r)^{-4}, & \ell(w)=4,\ N_r(s) \in A_{w, \emptyset, k_F}.
\end{cases}
$$

\noindent\textbf{Reading data from tables in the appendix:}

The preceding example shows how the values $\phironeaug (I_r^+ sw I_r^+)$ all follow the same basic template. Table~\ref{table::gl4-1100-coeff} contains all of the data needed to compute a coefficient of $\phironeaug$. Here is the row for the length-zero element:

\begin{center}
\begin{tabular}{|c|c|c|c||c|c|c|}
\hline
\multicolumn{7}{|c|}{\cellcolor[gray]{0.9} $\ell(w) = 0$} \\
\hline
Path & $\ell(\Delta)$ & ${\rm{rank}}(J_\Delta)$ & Isom. class of $S_{w,J_\Delta}$ & $A(\Delta)$ &$B(w, \Delta)$ & $C(\Delta)$ \\
\hline
$\Delta_1$ & 2 & 2 & $\mathbb{Z}$ & 1 & 1 & -2 \\
$\Delta_2$ & 4 & 3 & $\mathbb{Z}/2\mathbb{Z}$ & 0 & 0 & 0 \\
\hline
\multicolumn{7}{|c|}{$A_{w, \Delta_1, k_F} \subseteq A_{w, \Delta_2, k_F}$} \\
\hline
\end{tabular}
\end{center}

Of course, we already know the rank of $S_{w,J_\Delta}$ is $A(\Delta)$. The ``Isom. class of $S_{w, J_\Delta}$'' is important because it specifies the torsion subgroup $S_{w,J_\Delta}^{\rm{tors}}$, if present. Thus we have the necessary path data to plug into
$$
\sum_{\Delta \in B_{\Phi_{\rm{aff}}}^{\prec} (w, t_{\lambda(w)})} \delta(s, w,J_\Delta) \vert S_{w,J_\Delta}^{\rm{tors}} \cap K_{q-1}\vert (q-1)^{A(\Delta)} q^{r B(w, \Delta)} (1-q^r)^{C(\Delta)},
$$
the template formula for this case, while the containment data $A_{w, \Delta_1, k_F} \subseteq A_{w, \Delta_2, k_F}$ tells us how to arrange the path values into the expression
$$
\phironeaug (I_r^+ sw I_r^+) =
\begin{cases}
0, & {\rm{if}}\ N_r(s) \notin A_{w, J_{\Delta_2}, \resfield} \\
2, & {\rm{if}}\ N_r(s) \in A_{w, J_{\Delta_2}, \resfield}\setminus A_{w, J_{\Delta_1}, \resfield} \\
(q-1) q^r (1-q^r)^{-2} + 2, & {\rm{if}}\ N_r(s) \in A_{w, J_{\Delta_1}, \resfield}.
\end{cases}
$$
If $B_{\Phi_{\rm{aff}}}^{\prec} (w, t_{\lambda(w)})$ contains a single path, the table will omit the information about $A_{w, J_\Delta, \resfield}$.

We saw that when $\ell(w) = 2$, there were two possibilities for $\phironeaug (I_r^+ sw I_r^+)$ depending on whether $w \in X_1$ or $w \in X_2$. Table~\ref{table::gl4-1100-adm} addresses this issue: Suppose we were given $w \in {\rm{Adm}}_{G_r}(\mu)$ with $\ell(w) = 2$; we would look up which subset $X_i$ contains $w$ in Table~\ref{table::gl4-1100-adm}, which would tell us which row in Table~\ref{table::gl4-1100-coeff} to use.

\subsubsection{$GL_5(F)$, $\mu= (1, 1, 0, 0, 0)$}

The group $GL_5$ is type $A_4$, hence its root system $\Phi$ has rank equal to 4. The maximal split torus $T \subset GL_5$ has rank $d = 5$. For all conjugates $\lambda \in W\cdot\mu$, $\ell(t_\lambda) = 6$. Therefore, the formula becomes
$$
(-1) \sum_{\Delta \in B_\Phi^{\prec} (w,\wtrans)} \delta(s, w,J_\Delta) \vert S_{w,J_\Delta}^{\rm{tors}} \cap K_{q-1} \vert (q-1)^{A(w, \Delta)} q^{r B(w, \Delta)} (1-q^r)^{C(\Delta)}
$$
where
$$
\begin{cases}
A(\Delta) = 5 - {\rm{rank}}(J_\Delta) - 1 \\
B(w, \Delta) = [(6 - \ell(w)) - \ell(\Delta)]/2 \\
C(\Delta) = \ell(\Delta) - 5
\end{cases}
$$
Fix the following reflection ordering on $W$:
$$
\alpha_{12} \prec \alpha_{13} \prec \alpha_{14} \prec \alpha_{15} \prec \alpha_{23} \prec \alpha_{24} \prec \alpha_{25} \prec \alpha_{34} \prec \alpha_{35} \prec \alpha_{45}.
$$
There are 131 elements in ${\rm{Adm}}_{G_r}(\mu)$. 

The raw data for this case can be found in the Appendix, Tables~\ref{table::gl5-11000-coeff} and \ref{table::gl5-11000-adm}. The story for this case is similar to what we saw for $(GL_4, (1,1,0,0))$; however, we will point out a few features before moving on to the $GL_6$ cases.

First, let us discuss the length-zero alcove: $w = t_{(1,1,0,0,0)} s_{234123}$. There are three paths in $B_{\Phi_{\rm{aff}}}^\prec(w, t_{\lambda(w)})$ whose edge sets are:
\begin{itemize}
\item $E(\Delta_1) = \{ s_1, s_{12321}, s_2, s_{23432}\}$
\item $E(\Delta_2) = \{ s_{121}, s_{12321}, s_{232}, s_{23432}\}$
\item $E(\Delta_3) = \{ s_1, s_{121}, s_{12321}, s_2, s_{232}, s_{23432}\}$
\end{itemize}
All three paths have $J_\Delta = \Phi$, hence they determine the same finite critical group $A_{w, J_\Delta, k_F}$. Compare this to the length-zero element in the case $(GL_4, \mu=(1,1,0,0))$, where there were two paths in $B_{\Phi_{\rm{aff}}}^\prec(w, t_{\lambda(w)})$ but one finite critical group was a subgroup of the other.

The relevant subgroup is $S_{w,J_\Delta} = \{{\rm{diag}}(a, a, a, a, a) \in \dualtorus(\valfield) \mid a^2 = 1\}$, i.e., a torsion group of order 2. Therefore, if $\ell(w) = 0$ is $\mu$-admissible and ${\rm{char}}(k_F) \neq 2$,
$$
\phironeaug(I_r^+ sw I_r^+) = 
\begin{cases}
0, & {\rm{if}}\ N_r(s) \notin A_{w,J_\Delta, k_F}, \\
(-1)\Big(4q^r(1-q^r)^{-1} + 2(1-q^r)\Big), & {\rm{if}}\ N_r(s) \in A_{w,J_\Delta, k_F}.
\end{cases}
$$

Second, notice that the coefficient for certain $\mu$-admissible length-two $w$ (specifically, those $w \in X_2$ listed in Table~\ref{table::gl5-11000-adm}) is very similar to that of the length-zero element in $(GL_4, \mu=(1,1,0,0))$. Here is a representative element:
$$
w = t_{(1,1,0,0,0)} s_{2312}.
$$
The set $B_{\Phi_{\rm{aff}}}^\prec(w, t_{\lambda(w)})$ for this element contains two paths, whose edge sets are:
\begin{itemize}
\item $E(\Delta_1) = \{s_{121}, s_{232}\}$
\item $E(\Delta_2) = \{s_1, s_{121}, s_2, s_{232}\}$.
\end{itemize}
These are identical to the paths given in Section~\ref{section::worked-example-gl4-1100} for the length-zero alcove. Of course, here we have a larger torus; so for example
$$
S_{w, J_{\Delta_2}} = \{{\rm{diag}}(a, a, a, a, \kappa_5) \in \dualtorus(\valfield) \mid a^2 = 1, \kappa_5 \in \valfield^\times\}
$$
has a free variable $\kappa_5$ where the corresponding relevant group in the $GL_4$ case was only a torsion group of order 2. If ${\rm{char}}(\resfield) \neq 2$,
$$
\phironeaug(I_r^+ sw I_r^+) = 
\begin{cases}
0,\ {\rm{if}}\ N_r(s) \notin A_{w, J_{\Delta_2}, \resfield}, \\
(-1)\Big(2(q-1)(1-q^r)^{-1}\Big),\ {\rm{if}}\ N_r(s) \in A_{w, J_{\Delta_2}, \resfield}\setminus A_{w, J_{\Delta_1}, \resfield}, \\
(-1)\Big((q-1)^2q^r(1-q^r)^{-3} + 2(q-1)(1-q^r)^{-1}\Big),\ {\rm{else}}.
\end{cases}
$$

On the other hand, if our $\mu$-admissible element $w$ has $\ell(w) = 1$, then the data in Table~\ref{table::gl5-11000-coeff} shows that the coefficient $\phironeaug(I_r^+ sw I_r^+)$ is essentially identical to that of the length-zero element for $GL_4$, even though the former's $\prec$-increasing paths are longer:
$$
\phironeaug (I_r^+ sw I_r^+) =
\begin{cases}
0, & {\rm{if}}\ N_r(s) \notin A_{w, J_{\Delta_2}, \resfield} \\
-2, & {\rm{if}}\ N_r(s) \in A_{w, J_{\Delta_2}, \resfield}\setminus A_{w, J_{\Delta_1}, \resfield} \\
(-1)\Big((q-1) q^r (1-q^r)^{-2} + 2\Big), & {\rm{if}}\ N_r(s) \in A_{w, J_{\Delta_1}, \resfield}.
\end{cases}
$$

Of course, we have called attention to these examples to make the point that patterns appear throughout the data. The polynomial given by the formula ultimately depends on the structure of Bruhat intervals in some finite Weyl group, and there are many isomorphic intervals between pairs of elements in different groups.

The examples given above cover all cases in $(GL_5, \mu = (1,1,0,0,0))$ where $B_{\Phi_{\rm{aff}}}^\prec(w, t_{\lambda(w)})$ contains multiple paths.

\subsubsection{$GL_6(F)$, $\mu= (1, 1, 0, 0, 0, 0)$}
\label{section::gl6-mu-110000}

The group $GL_6$ is type $A_5$, so $\Phi$ has rank 5. The split maximal torus $T$ has rank $d=6$. All $W$-conjugates $t_\lambda$ of $t_\mu$ have $\ell(t_\lambda) = 8$. The $\mu$-admissible set contains 473 elements. Following the previous examples, this data determines our template
$$
\sum_{\Delta \in B_\Phi^{\prec} (w,\wtrans)} \delta(s, w,J_\Delta) \vert S_{w,J_\Delta}^{\rm{tors}} \cap K_{q-1} \vert (q-1)^{A(w, \Delta)} q^{r B(w, \Delta)} (1-q^r)^{C(\Delta)}
$$
where
$$
\begin{cases}
A(\Delta) = 6 - {\rm{rank}}(J_\Delta) - 1 \\
B(w, \Delta) = [(8 - \ell(w)) - \ell(\Delta)]/2 \\
C(\Delta) = \ell(\Delta) - 6
\end{cases}
$$

Fix the following reflection ordering on $W$:
\begin{multline*}
\alpha_{12} \prec \alpha_{13} \prec \alpha_{14} \prec \alpha_{15} \prec \alpha_{16} \prec \alpha_{23} \prec \alpha_{24} \prec \alpha_{25} \prec \alpha_{26} \prec \\ 
\prec \alpha_{34} \prec \alpha_{35} \prec \alpha_{36} \prec \prec \alpha_{45} \prec \alpha_{46} \prec \alpha_{56}.
\end{multline*}

The raw data for calculating the coefficients of $\phironeaug$ in this case are spread across Tables~\ref{table::gl6-110000-coeff} and \ref{table::gl6-110000-coeff-cont}. There are sub-cases in the calculations for $\mu$-admissible $w$ of lengths 2, 3, 4, 5 and 6; Tables~\ref{table::gl6-110000-adm} and \ref{table::gl6-110000-adm-cont} list which elements correspond to a given sub-case.

The unique length-zero $\mu$-admissible element is $w = t_{(1, 1, 0, 0, 0, 0)} s_{23451234}$. There are five $\prec$-increasing paths from $w$ to $t_{\lambda(w)}$; their edge sets are:
\begin{itemize}
\item $E(\Delta_1) = \{s_{\alpha_{13}}, s_{\alpha_{15}}, s_{\alpha_{24}}, s_{\alpha_{26}}\}$
\item $E(\Delta_2) = \{s_{\alpha_{12}}, s_{\alpha_{13}}, s_{\alpha_{15}}, s_{\alpha_{23}}, s_{\alpha_{24}}, s_{\alpha_{26}}\}$
\item $E(\Delta_3) = \{s_{\alpha_{12}}, s_{\alpha_{14}}, s_{\alpha_{15}}, s_{\alpha_{23}}, s_{\alpha_{25}}, s_{\alpha_{26}}\}$
\item $E(\Delta_4) = \{s_{\alpha_{13}}, s_{\alpha_{14}}, s_{\alpha_{15}}, s_{\alpha_{24}}, s_{\alpha_{25}}, s_{\alpha_{26}}\}$
\item $E(\Delta_5) = \{s_{\alpha_{12}}, s_{\alpha_{13}}, s_{\alpha_{14}}, s_{\alpha_{15}} , s_{\alpha_{23}}, s_{\alpha_{24}}, s_{\alpha_{25}}, s_{\alpha_{26}}\}$
\end{itemize}
For paths $\Delta_2, \ldots \Delta_5$, $J_{\Delta_i} = \Phi$. The root system $J_{\Delta_4}$ is a subsystem of rank 4. Using the data from Table~\ref{table::gl6-110000-coeff}, we have for ${\rm{char}}(\resfield) \neq 2$,
$$
\phironeaug(I_r^+ sw I_r^+) = 
\begin{cases}
0,\ {\rm{if}}\ N_r(s) \notin A_{w, J_{\Delta_2}, \resfield}, \\
6q^r + (1 - q^r)^{2},\ {\rm{if}}\ N_r(s) \in A_{w, J_{\Delta_2}, \resfield}\setminus A_{w, J_{\Delta_1}, \resfield}, \\
(q-1)q^{2r}(1-q^r)^{-2} + 6q^r + (1 - q^r)^{2},\ {\rm{if}}\ N_r(s) \in A_{w, J_{\Delta_1}, \resfield}.
\end{cases}
$$

The data shows that when $w \in {\rm{Adm}}_{GL_6}(\mu=(1,1,0,0,0,0))$ has $\ell(w) = 1$, the coefficient $\phironeaug (I_r^+ sw I_r^+)$ is the same as the coefficient for the unique length-zero $w \in {\rm{Adm}}_{GL_5}(\mu=(1,1,0,0,0))$---other than their difference in sign.

\subsubsection{$GL_6(F)$, $\mu= (1, 1, 1, 0, 0, 0)$}

Most of the objects here are the same as in Section~\ref{section::gl6-mu-110000}, such as $\Phi$, the choice of maximal torus $\torus$, and the reflection ordering $\prec$ on $W$. In this case, the translation $t_\mu$ and its conjugates have length $\ell(t_\mu) = 9$. The template is
$$
\sum_{\Delta \in B_\Phi^{\prec} (w,\wtrans)} \delta(s, w,J_\Delta) \vert S_{w,J_\Delta}^{\rm{tors}} \cap K_{q-1} \vert (q-1)^{A(w, \Delta)} q^{r B(w, \Delta)} (1-q^r)^{C(\Delta)}
$$
where
$$
\begin{cases}
A(\Delta) = 6 - {\rm{rank}}(J_\Delta) - 1 \\
B(w, \Delta) = [(9 - \ell(w)) - \ell(\Delta)]/2 \\
C(\Delta) = \ell(\Delta) - 6.
\end{cases}
$$
There are 883 $\mu$-admissible elements, and as one would expect, there are many different cases for the coefficients of $\phironeaug$. Raw data for the coefficients is spread across Tables~\ref{table::gl6-111000-coeff}, \ref{table::gl6-111000-coeff-cont} and \ref{table::gl6-111000-coeff-cont2}. The subsets of $\mu$-admissible elements are further explained in Tables~\ref{table::gl6-111000-adm} and \ref{table::gl6-111000-adm-cont} as appropriate.

Let us discuss the coefficient for $w = t_{(1, 1, 1, 0, 0, 0)} s_{345234123}$, the unique length-zero $\mu$-admissible element. This is the most complex example we shall consider in this chapter: $B_{\Phi_{\rm{aff}}}^\prec (w, t_{\lambda(w)})$ contains nine paths, which yield five distinct root systems $J_\Delta$. The edges sets are:
\begin{itemize}
\item $E_{\Delta_1} = \{s_{\alpha_{14}}, s_{\alpha_{25}}, s_{\alpha_{36}}  \}$
\item $E_{\Delta_2} = \{s_{\alpha_{13}}, s_{\alpha_{14}}, s_{\alpha_{25}}, s_{\alpha_{34}}, s_{\alpha_{36}}  \}$
\item $E_{\Delta_3} = \{s_{\alpha_{14}}, s_{\alpha_{23}}, s_{\alpha_{25}}, s_{\alpha_{35}}, s_{\alpha_{36}}  \}$
\item $E_{\Delta_4} = \{s_{\alpha_{12}}, s_{\alpha_{14}}, s_{\alpha_{24}}, s_{\alpha_{25}}, s_{\alpha_{36}}  \}$
\item $E_{\Delta_5} = \{s_{\alpha_{12}}, s_{\alpha_{13}}, s_{\alpha_{14}}, s_{\alpha_{23}}, s_{\alpha_{25}}, s_{\alpha_{34}}, s_{\alpha_{36}}  \}$
\item $E_{\Delta_6} = \{s_{\alpha_{12}}, s_{\alpha_{14}}, s_{\alpha_{23}}, s_{\alpha_{25}}, s_{\alpha_{34}}, s_{\alpha_{35}}, s_{\alpha_{36}}  \}$
\item $E_{\Delta_7} = \{s_{\alpha_{13}}, s_{\alpha_{14}}, s_{\alpha_{24}}, s_{\alpha_{25}}, s_{\alpha_{34}}, s_{\alpha_{35}}, s_{\alpha_{36}}  \}$
\item $E_{\Delta_8} = \{s_{\alpha_{12}}, s_{\alpha_{13}}, s_{\alpha_{14}}, s_{\alpha_{24}}, s_{\alpha_{25}}, s_{\alpha_{35}}, s_{\alpha_{36}}  \}$
\item $E_{\Delta_9} = \{s_{\alpha_{12}}, s_{\alpha_{13}}, s_{\alpha_{14}}, s_{\alpha_{23}}, s_{\alpha_{24}}, s_{\alpha_{25}}, s_{\alpha_{34}}, s_{\alpha_{35}}, s_{\alpha_{36}}  \}$
\end{itemize}

The paths $\Delta_5, \ldots, \Delta_9$ all have $J_\Delta = \Phi$; however, the other four paths all give a distinct subsystem $J_\Delta \subset \Phi$.
\begin{itemize}
\item $J_{\Delta_1} = \{\pm \alpha_{14}\} \times \{\pm \alpha_{25}\} \times \{ \pm \alpha_{36}\}$
\item $J_{\Delta_2} = \{\pm\alpha_{13}, \pm\alpha_{14} \pm\alpha_{16}, \pm\alpha_{34}, \pm\alpha_{36}, \pm\alpha_{46} \} \times \{\pm \alpha_{25}\}$
\item $J_{\Delta_3} = \{\pm\alpha_{14}\} \times \{\pm\alpha_{23}, \pm\alpha_{25}, \pm\alpha_{26}, \pm\alpha_{35}, \pm\alpha_{36}, \pm\alpha_{56}\}$
\item $J_{\Delta_4} = \{\pm\alpha_{12}, \pm\alpha_{14}, \pm\alpha_{15}, \pm\alpha_{24}, \pm\alpha_{25}, \pm\alpha_{45}\} \times \{ \pm \alpha_{36}\}$
\end{itemize}

Here is the graph of inclusions for the finite critical groups:
\[
\xymatrix{
 & A_{w,J_{\Delta_5}, k_F} = \cdots = A_{w, J_{\Delta_9}, k_F} & \\
A_{w, J_{\Delta_2}, k_F} \ar[ur] & A_{w, J_{\Delta_3}, k_F} \ar[u] & A_{w, J_{\Delta_4}, k_F} \ar[ul] \\
 & A_{w, J_{\Delta_1}, k_F} \ar[ul] \ar[u] \ar[ur]
}
\]

As usual, the relationships between finite critical groups characterize the different cases for the coefficient $\phironeaug (I_r^+ sw I_r^+)$ when $w = t_{(1, 1, 1, 0, 0, 0)} s_{345234123}$. If $N_r(s) \notin A_{w, J_{\Delta_5}, \resfield}$, then $\phironeaug (I_r^+ sw I_r^+) = 0$. Otherwise, there are three nonzero possibilities:

\noindent\textbf{Case 1:} If $N_r(s) \in A_{w, J_{\Delta_5}, \resfield} \setminus \Big(\bigcup_{i=2}^4 A_{w, J_{\Delta_i}, \resfield}\Big)$, the coefficient is
$$
12q^r(1-q^r) + 3(1-q^r)^3.
$$
\textbf{Case 2:} If $N_r (s) \in A_{w, J_{\Delta_i},\resfield} \setminus A_{w, J_{\Delta_1},\resfield}$ for $i = 2,3,4$, the coefficient is
$$
(q-1)q^{2r}(1-q^r)^{-1} + 12q^r(1-q^r) + 3(1-q^r)^3.
$$
\textbf{Case 3:} If $N_r(s) \in A_{w, J_{\Delta_1},\resfield}$, the coefficient is
$$
(q-1)^2q^{3r}(1-q^r)^{-3} + 3(q-1)q^{2r}(1-q^r)^{-1} + 12q^r(1-q^r) + 3(1-q^r)^3.
$$


\subsection{Results for general symplectic groups}

We begin by recalling the definition of $GSp_{2n}$ and the associated data required for the calculations of $\phironeaug$. Let $\tilde{I}_n$ be the $n\times n$ matrix with ones on the anti-diagonal and zeroes everywhere else. Let
$$
J = \begin{pmatrix} 0 & \tilde{I}_n \\ -\tilde{I}_n & 0 \end{pmatrix}.
$$
Then for a $p$-adic field $F$, the $F$-rational points of $GSp_{2n}$ are
$$
GSp_{2n}(F) = \{ g \in GL_{2n}(F) \mid {^t}g J g = c(g) J, c(g) \in F^\times\}
$$
We choose a split maximal torus $T$ with
$$
T(F) = \{{\rm{diag}}(t_1, \ldots, t_n, t_n^{-1}t_0,\ldots t_1^{-1}t_0) \mid t_i \in F^\times\}.
$$

For the $GL_n$ case, we showed how to compute $\vert S_{w, J}^{\rm{dz}}\vert$ from the definitions, looking at endoscopic elements in $\dualtorus(\valfield)$. For examples when $G=GSp_{2n}$, we show how to use Lemma~\ref{lemma::rel-grp-gln-gsp2n} to get the structure of $\chars{S_{w,J}}$, and hence that of $S_{w,J}$.

\subsubsection{Example: $GSp_4(F)$, $\mu = (1,1,0,0)$}

As with the $GL_n$ examples, we use data from this case to first write down a template formula for the coefficients of $\phironeaug$. The rank of $T$ is 3, while translation elements $t_\lambda$ for $\lambda \in W\mu$ have $\ell(t_\lambda) = 3$. Thus for $w \in \widetilde{W}$ and $s \in \torus(k_r)$, the formula for $\phironeaug(I_r^+ sw I_r^+)$ becomes
$$
(-1) \sum_{\Delta \in B_\Phi^{\prec} (w,\wtrans)} \delta(s, w,J_\Delta) \vert S_{w,J_\Delta}^{\rm{tors}} \cap K_{q-1} \vert (q-1)^{A(w, \Delta)} q^{r B(w, \Delta)} (1-q^r)^{C(\Delta)}
$$
where
$$
\begin{cases}
A(\Delta) = 3 - {\rm{rank}}(J_\Delta) - 1 \\
B(w, \Delta) = [(3 - \ell(w)) - \ell(\Delta)]/2 \\
C(\Delta) = \ell(\Delta) - 3.
\end{cases}
$$

Let us describe the characters and cocharacters of $T$. Let $\varepsilon_i$ be the $i$-th coordinate projection of $T$, and let $c$ be the similitude character. Then
$$
\chars{\torus} = \langle c, \varepsilon_1, \varepsilon_2, \varepsilon_3, \varepsilon_4\rangle / \langle \varepsilon_1 + \varepsilon_4 = \varepsilon_2 + \varepsilon_3 = c\rangle.
$$
Using coordinates in terms of the $\varepsilon_i$, the positive roots in $\Phi$, which is type $C_2$, are
\begin{align*}
\alpha_1 &= (1/2, -1/2, 1/2, -1/2) \\
\alpha_2 &= (0, 1, -1, 0) \\
\alpha_3 &= (1/2, 1/2, -1/2, -1/2) \\
\alpha_4 &= (1, 0, 0 -1).
\end{align*}
We can also describe the character lattice as $\chars{\torus} = \langle c_0, c_1, c_2 \rangle$, where the generators act on $t = {\rm{diag}}(t_1, t_2, t_2^{-1}t_0, t_1^{-1}t_0)$ by $c_i (t) = t_i$ for $i = 0,1,2$. In this coordinate system, the roots are written
\begin{align*}
\alpha_1 &= c_1 - c_2  \\
\alpha_2 &= 2 c_2 - c_0 \\
\alpha_3 &= c_1 + c_2 - c_0\\
\alpha_4 &= 2 c_1 - c_0.
\end{align*}
The cocharacter lattice $\cochars{T}$ is the free group on generators $e_0, e_1, e_2$ where for $x \in F2^\times$,
\begin{align*}
e_0 (x) &= {\rm{diag}}(1, 1, x, x) \\
e_1 (x) &= {\rm{diag}}(x, 1, 1, x^{-1}) \\
e_2 (x) &= {\rm{diag}}(1, x, x^{-1}, 1).
\end{align*}
We could again use coordinates in terms of maps $\breve{\varepsilon}_i$ sending $x \in F^\times$ to the $i$-th coordinate in $T$ with 1's elsewhere. Then an element $\nu \in \cochars{\torus}$ is a tuple $(a_1, a_2, a_3, a_4)$ such that $a_1 + a_4 = a_2 + a_3 = c$. In these coordinates, $\mu = (1,1,0,0)$ is the map $\mu(x) = {\rm{diag}}(x, x, 1, 1) \in T(F)$ for $x \in F^\times$. So $\mu = e_0 + e_1 + e_2$. The coroots are
\begin{align*}
\alpha_1^\vee &= (1, -1, 1, -1) = e_1 - e_2 \\
\alpha_2^\vee &= (0, 1, -1, 0) = e_2 \\
\alpha_3^\vee &= (1,1,-1,-1) = e_1 + e_2 \\
\alpha_4^\vee &= (1, 0, 0, -1) = e_1 \\
\end{align*}
Let $s_1$ and $s_2$ denote the simple reflections corresponding to the roots $\alpha_1$ and $\alpha_2$, respectively. The longest element of the finite Weyl group is $w_0 = s_{2121}$, where again the notation $s_{i_1 \cdots i_n}$ is shorthand for the product $s_{i_1} \cdots s_{i_n}$. By Proposition~\ref{prop::refl-order-from-high-root}, we have a reflection ordering,
$$
s_2 \prec s_{212} \prec s_{121} \prec s_1.
$$

When $\mu = (1,1,0,0)$, Proposition 8.2 of~\cite{haines2000} shows that the set ${\rm{Adm}}_{G_r}(\mu)$ contains 13 elements. The $\mu$-admissible elements range in length from 0 to 3.

See Table~\ref{table::gsp4-1100-coeff} for the raw coefficient data.

\noindent\textbf{Length 0:}

The unique length-zero $\mu$-admissible element is $w = t_{(1,1,0,0)} s_{212}$. We want to enumerate the $\prec$-increasing paths
$$
B_{\Phi}^{\prec} (w_\lambda^{-1} \bar{w}, w_\lambda^{-1}) = B_{\Phi}^{\prec} (1, s_{212}).
$$
It turns out there are only two paths, whose edge sets are
\begin{align*}
E(\Delta_1) &= \{s_{212}\}, \\
E(\Delta_2) &= \{s_2, s_{212}, s_{121}\}.
\end{align*}
It is clear that $J_{\Delta_1} = \{ \pm \alpha_3 \}$ and $J_{\Delta_2} = \Phi$. Plugging this data into the template gives us
$$
(-1) \Big( \delta(s, w,J_{\Delta_1}) \vert S_{w,J_{\Delta_1}}^{\rm{tors}} \cap K_{q-1} \vert (q-1) q^{r} (1-q^r)^{-2} + \delta(s, w,J_{\Delta_2}) \vert S_{w,J_{\Delta_2}}^{\rm{tors}} \cap K_{q-1} \vert \Big)
$$

It remains to describe $S_{w, J_{\Delta_i}}$ for $i = 1,2$. Consider the quotient
$$
\cochars{\torus} / \mathbb{Z}J_{\Delta_1}^\vee = \langle e_0, e_1, e_2 \rangle / \langle e_1 + e_2 = 0\rangle \cong \langle \bar{e}_0, \bar{e}_1 \rangle.
$$
Now use the method of Lemma~\ref{lemma::rel-grp-gln-gsp2n}, i.e., consider how $\lambda(w) = \mu = e_0 + e_1 + e_2$ appears in the above quotient. It is just $\bar{\mu} = \bar{e}_0 + \bar{e}_1 - \bar{e_1} = \bar{e_0}$. But then
$$
\chars{S_{w, J_{\Delta_1}}} \cong \langle \bar{e}_0, \bar{e}_1 \rangle / \langle \bar{e}_0 \rangle \cong \langle \bar{e}_1 \rangle \cong \mathbb{Z}.
$$
Similarly, for $J_{\Delta_2} = \Phi$,
$$
\cochars{\torus}/\mathbb{Z}\Phi^\vee = \langle \bar{e}_0 \rangle,
$$
because the relations imposed by the coroots in $\Phi^\vee$ force $\bar{e}_1 = \bar{e}_2 = 0$. Then in the quotient,
$$
\overline{\lambda(w)} = \bar{\mu} = \bar{e}_0.
$$
So $\chars{S_{w, J_{\Delta_2}}} = \{1\}$.

The finite critical groups satisfy $A_{w, J_{\Delta_1}, k_F} \subset A_{w, J_{\Delta_2}, k_F}$. This is enough information to give the coefficient data for this alcove:
$$
\phironeaug (I_r^+ sw I_r^+) = \begin{cases}
0, & {\rm{if}}\ N_r (s) \notin A_{w, J_{\Delta_2}, k_F} \\
-1, & {\rm{if}}\ N_r (s) \in A_{w, J_{\Delta_2}, k_F} \setminus A_{w, J_{\Delta_1}, k_F} \\
(-1) \Big( 1 + (q-1) q^{r} (1-q^r)^{-2} \Big), & {\rm{if}}\ N_r (s) \in A_{w, J_{\Delta_1}, k_F}.
\end{cases}
$$

\noindent\textbf{Length 1:}

There are three $\mu$-admissible elements $w$ such that $\ell(w) = 3$:
$$
\begin{tabular}{ccc}
$t_{(1,1,0,0)} s_{2121}$ & $t_{(1,1,0,0)} s_{21}$ & $t_{(1,0,1,0)} s_{12}$.
\end{tabular}
$$
In each case, $B_{\Phi}^\prec (w_\lambda^{-1} \bar{w}, w_\lambda^{-1})$ contains a single path $\Delta_1$ with $\ell(\Delta_1) = 2$.

When $w = t_{(1,1,0,0)} s_{2121}$, we have $E(\Delta_1) = \{s_{2}, s_{121}\}$ and $J_{\Delta_1} = \{\pm 2\alpha_2, \pm 2\alpha_4\}$. In this case,
$$
\cochars{\torus}/\mathbb{Z}J_{\Delta_1}^\vee = \langle e_0, e_1, e_2 \rangle / \langle e_1, e_2 \rangle.
$$
Then $\overline{\lambda(w)} = \bar{\mu} = \bar{e}_0$ means
$$
\chars{S_{w,J_{\Delta_1}}} = \langle \bar{e}_0 \rangle / \langle \bar{e}_0 \rangle = \{1\}.
$$
For $w = t_{(1,1,0,0)} s_{21}$, the edge set is $E(\Delta_1) = \{s_{212}, s_{121}\}$ and $J_{\Delta_1} = \Phi$. Finally, if $w = t_{(1,0,1,0)} s_{12}$, then $E(\Delta_1) = \{s_2, s_1\}$ and $J_{\Delta_1} = \Phi$. In both of these cases, $\chars{S_{w, J_{\Delta_1}}}$ is again trivial.

We have shown that
$$
\phironeaug (I_r^+ sw I_r^+) = \begin{cases}
0, & {\rm{if}}\ N_r (s) \notin A_{w, J_{\Delta_1}, k_F} \\
(-1)(1-q^r)^{-1}, & {\rm{if}}\ N_r (s) \in A_{w, J_{\Delta_1}, k_F}.
\end{cases}
$$

\noindent\textbf{Length 2:}

There are five $\mu$-admissible elements whose length is two. In each case, there is a single $\prec$-increasing path $w \stackrel{\Delta_1}{\longrightarrow} t_{\lambda(w)}$. Here's the data:

\begin{center}
\begin{tabular}{|c|c|c|}
\hline
$w$ & $E(\Delta_1)$ & $J_{\Delta_1}$ \\
\hline
$t_{(1,1,0,0)} s_2$ & $\{s_2\}$ & $\{\pm \alpha_2\}$ \\
$t_{(0,1,0,1)} s_2$ & $\{s_2\}$ & $\{\pm \alpha_2\}$ \\
$t_{(1,0,1,0)} s_1$ & $\{s_1\}$ & $\{\pm \alpha_1\}$ \\
$t_{(1,1,0,0)} s_{121}$ & $\{s_{121}\}$ & $\{\pm \alpha_4\}$ \\
$t_{(1,0,1,0)} s_{121}$ & $\{s_{121}\}$ & $\{\pm \alpha_4\}$ \\
\hline
\end{tabular}
\end{center}

Let us work out $\chars{S_{w, J_{\Delta_1}}}$ for $w = t_{(1,1,0,0)} s_2$. Starting with
$$
\cochars{\torus} / \mathbb{Z}J_{\Delta_1}^\vee = \langle e_0, e_1, e_2 \rangle / \langle e_2 \rangle = \langle \bar{e}_0, \bar{e}_1 \rangle,
$$
we have that $\overline{\lambda(w)} = \bar{\mu} = \bar{e}_0 + \bar{e}_1$. Then in the notation of Lemma~\ref{lemma::rel-grp-gln-gsp2n}, the map $c: \mathbb{Z} \rightarrow \mathbb{Z}^2$ is given by the tuple $c = (1, 1, 0)$. Since $c_1$ is a unit, $\chars{S_{w, J_{\Delta_1}}} \cong \mathbb{Z}$. The calculation of $\chars{S_{w, J_{\Delta_1}}}$ is similar for the other length-two $\mu$-admissible elements.

The coefficient for this case is
$$
\phironeaug (I_r^+ sw I_r^+) = \begin{cases}
0, & {\rm{if}}\ N_r (s) \notin A_{w, J_{\Delta_1}, k_F} \\
(-1)(q-1)(1-q^r)^{-2}, & {\rm{if}} N_r (s) \in A_{w, J_{\Delta_1}, k_F}.
\end{cases}
$$

\noindent\textbf{Length 3:}

The translation elements are $t_{(1,1,0,0)}$, $t_{(1,0,1,0)}$, $t_{(0,1,0,1)}$ and $t_{(0,0,1,1)}$. For any of these, Corollary~\ref{cor::main-theorem-codim-0} shows
$$
\phironeaug (I_r^+ s t_\lambda I_r^+) = \begin{cases}
0, & {\rm{if}}\ N_r(s) \notin A_{t_\lambda, \emptyset, k_F}, \\
(-1)(q-1)^{2}(1-q^r)^{-3}, & {\rm{if}} N_r(s) \in A_{t_\lambda, \emptyset, k_F}.
\end{cases}
$$

\subsubsection{$GSp_6(F)$, $\mu = (1,1,1,0,0,0)$}

In this case, the root system has type $C_3$, so ${\rm{rank}}(T) = 4$. For $\lambda \in W\mu$, with $\mu = (1,1,1,0,0,0)$, translation elements have $\ell(t_\lambda) = 6$. Use this data to fill in the template:
$$
\sum_{\Delta \in B_\Phi^{\prec} (w,\wtrans)} \delta(s, w,J_\Delta) \vert S_{w,J_\Delta}^{\rm{tors}} \cap K_{q-1} \vert (q-1)^{A(w, \Delta)} q^{r B(w, \Delta)} (1-q^r)^{C(\Delta)}
$$
where
$$
\begin{cases}
A(\Delta) = 4 - {\rm{rank}}(J_\Delta) - 1 \\
B(w, \Delta) = [(6 - \ell(w)) - \ell(\Delta)]/2 \\
C(\Delta) = \ell(\Delta) - 4.
\end{cases}
$$

Roots and coroots for $GSp_6$ can be described in the same coordinate systems used for $GSp_4$. In particular, the cocharacter lattice of $T$ is $\cochars{T} = \langle e_0, e_1, e_2, e_3 \rangle$, and we can express the coroots in terms of these generators:
\begin{align*}
\alpha_1^\vee &= (1, -1, 0, 0, 1, -1) = e_1 - e_2 & \alpha_6^\vee &= (1,1,0,0,-1,-1) = e_1 + e_2 \\
\alpha_2^\vee &= (0, 1, -1, -1, 1, 0) = e_2 - e_3 & \alpha_7^\vee &= (1,0,1,-1, -,-1) = e_1 + e_3 \\
\alpha_3^\vee &= (0,0,1,-1,0,0) = e_3 & \alpha_8^\vee &= (0, 1, 0, 0, -1, 0) = e_2 \\
\alpha_4^\vee &= (1, 0,-1,1, 0, -1) = e_1 - e_3 & \alpha_9^\vee &= (0, 1,1, -1, -1, 0) = e_2 + e_3 \\
\alpha_5^\vee &= (1,0,0,0,0,-1) = e_1 
\end{align*}
The simple reflections in $W$ are $s_1$, $s_2$ and $s_3$, corresponding to the roots $\alpha_1$, $\alpha_2$ and $\alpha_3$, respectively. We sometimes specify a reflection in terms of its corresponding positive coroot,
\begin{align*}
s_{\alpha_1} &= s_1 & s_{\alpha_4} &= s_{121}  & s_{\alpha_7} &= s_{31213} \\
s_{\alpha_2} &= s_2 & s_{\alpha_5} &= s_{12321} & s_{\alpha_8} &= s_{232} \\
s_{\alpha_3} &= s_3 & s_{\alpha_6} &= s_{2132312} & s_{\alpha_9} &= s_{323}
\end{align*}
The reflection ordering is:
$$
s_3 \prec s_{323} \prec s_{232} \prec s_{31213} \prec s_{2132312} \prec s_{12321} \prec s_2 \prec s_{121} \prec s_1.
$$
There are 79 $\mu$-admissible elements in this case, ranging in length from 0 to 6.

The data for this case is in Tables~\ref{table::gsp6-1100-coeff} and~\ref{table::gsp6-1100-adm}.

Consider $w=t_{(1,1,1,0,0,0)} s_{323123}$, the unique length-zero $\mu$-admissible element. The set $B_{\Phi}^{\prec} (w, t_{\lambda(w)})$ contains five paths, whose edge sets are
\begin{align*}
E(\Delta_1) &= \{s_{232}, s_{31213}\} \\
E(\Delta_2) &= \{s_3, s_{232}, s_{31213}, s_{12321}\} \\
E(\Delta_3) &= \{s_{323}, s_{31213}, s_{2132312}, s_{12321}\} \\
E(\Delta_4) &= \{s_3, s_{323}, s_{31213}, s_{2132312}\} \\
E(\Delta_5) &= \{s_3, s_{323}, s_{232}, s_{31213}, s_{2132312}, s_{12321}\}.
\end{align*}
The associated root systems for the first two paths are $J_{\Delta_1} = \{\pm \alpha_7\} \coprod \{\pm \alpha_8\}$, which is type $A_1 \times A_1$, and $J_{\Delta_2} = \{\pm \alpha_3, \pm \alpha_4, \pm \alpha_5, \pm \alpha_7\} \coprod \{ \pm \alpha_8 \}$, which has type $C_2 \times A_1$. The latter three associated root systems are $J_{\Delta_3} = J_{\Delta_4} = J_{\Delta_5} = \Phi$.

Next, we want to find $\chars{S_{w, J_{\Delta_i}}}$ to see if any torsion elements exist. Let us do the calculation for $\Delta_1$. We have
$$
\cochars{\torus}/\mathbb{Z}J_{\Delta_1}^\vee = \langle e_0, e_1, e_2, e_3 \rangle / \langle e_1 + e_3, e_2 \rangle = \langle \bar{e}_0, \bar{e}_1 \rangle.
$$
Since $\lambda(w) = \mu = e_0 + e_1 + e_2 + e_3 \in \cochars{\torus}$, it's image in the above quotient is $\bar{\mu} = \bar{e}_0$. This means $c_1 = 1$ in the notation of Lemma~\ref{lemma::rel-grp-gln-gsp2n}, so $\chars{S_{w, J_{\Delta_1}}} \cong \mathbb{Z}$ is torsion-free. A similar calculation for the other paths shows that $\chars{S_{w, J_{\Delta_i}}} = \{1\}$ for $i = 2,3,4,5$.

Here are the inclusions between the finite critical groups:
$$
A_{w, J_{\Delta_1}, k_F} \subset A_{w, J_{\Delta_2}, k_F} \subset A_{w, J_{\Delta_3}, k_F} = A_{w, J_{\Delta_4}, k_F} = A_{w, J_{\Delta_5}, k_F}.
$$
This is enough information to write down the coefficient $\phironeaug(I_r^+ s t_{(1,1,1,0,0,0)} s_{313123} I_r^+)$:
$$
\begin{cases}
0, & {\rm{if}}\ N_r (s) \notin A_{w,J_{\Delta_3}, k_F}, \\
2q^r + (1-q^r)^2, & {\rm{if}}\ N_r(s) \in A_{w, J_{\Delta_3}, k_F} \setminus A_{w, J_{\Delta_2}, k_F}, \\
3q^r + (1-q^r)^2, & {\rm{if}} N_r (s) \in A_{w, J_{\Delta_2}, k_F}, \\
(q-1)q^{2r} (1-q^r)^{-2} + 3q^r + (1-q^r)^2, & {\rm{if}}\ N_r (s) \in A_{w, J_{\Delta_1}, k_F}.
\end{cases}
$$

The $\mu$-admissible elements such that $\ell(w) = 3$ split into two cases (the exact lists are in Table~\ref{table::gsp6-1100-adm}). Consider $w = t_\mu s_{323}$ as an exemplar for the first case. There are two $\prec$-increasing paths $w \stackrel{\Delta_i}{\longrightarrow} t_{\lambda(w)}$; their edge sets are:
\begin{align*}
E(\Delta_1) &= \{ s_{323} \} \\
E(\Delta_2) &= \{ s_3, s_{323}, s_{232} \}.
\end{align*}
The associated root systems are $J_{\Delta_1} = \{ \pm \alpha_9 \}$ and $J_{\Delta_2} = \{\pm \alpha_2, \pm \alpha_3, \pm \alpha_8, \pm \alpha_9\}$, which has type $C_2$.

The usual method of calculation shows that
$$
\chars{S_{w, J_{\Delta_1}}} \cong \mathbb{Z} \times \mathbb{Z}
$$
and
$$
\chars{S_{w, J_{\Delta_2}}} \cong \mathbb{Z}.
$$
We also have $A_{w, J_{\Delta_1}, k_F} \subset A_{w, J_{\Delta_2}, k_F}$. So the coefficient in this case is,
$$
\begin{cases}
0, & {\rm{if}}\ N_r (s) \notin A_{w, J_{\Delta_2}, k_F}, \\
(q-1)(1-q^r)^{-1}, & {\rm{if}}\ N_r (s) \in A_{w, J_{\Delta_2}, k_F} \setminus A_{w, J_{\Delta_1}, k_F}, \\
(q-1)^2 q^r (1-q^r)^{-3} + (q-1)(1-q^r)^{-1}, & {\rm{if}}\ N_r (s) \in A_{w, J_{\Delta_1}, k_F}.
\end{cases}
$$

Now choose $w = t_{(1,0,0,1,1,0)} s_{123}$ as the representative for the second class of length-three $\mu$-admissible elements. There is only one $\prec$-increasing path in this case, whose edge set is
$E(\Delta_1) = \{s_3, s_2, s_1\}.$ These edges correspond to the three simple roots, so $J_{\Delta_1} = \Phi$. It follows that
$$
\phironeaug(I_r^+ sw I_r^+) = \begin{cases}
0, & {\rm{if}}\ N_r (s) \notin A_{w, J_{\Delta_1}, k_F}, \\
(1-q^r)^{-1}, & {\rm{if}}\ N_r (s) \in A_{w, J_{\Delta_1}, k_F}.
\end{cases}
$$


\appendix

\section{Tables of coefficient data}

This appendix presents the data needed to fully explain several cases of $\phironeaug$ for general linear groups and general symplectic groups:
\begin{enumerate}
\item $GL_4$, $\mu = (1, 1, 0, 0)$
\item $GL_5$, $\mu = (1, 1, 0, 0, 0)$
\item $GL_6$, $\mu = (1, 1, 0, 0, 0, 0)$
\item $GL_6$, $\mu = (1, 1, 1, 0, 0, 0)$
\item $GSp_4$, $\mu = (1, 1, 0, 0)$
\item $GSp_6$, $\mu = (1, 1, 1, 0, 0, 0)$
\end{enumerate}

In some tables, we will describe $\mu$-admissible elements; these have the form $w = t_\lambda \bar{w}$, where $\lambda$ is a conjugate of $\mu$ and $\bar{w}$ is an element of the finite Weyl group. The coordinates of the coweight $\lambda$ in $\cochars{T}$ are used. To save space, we write $\bar{w}$ as $s_{i_1 i_2 \ldots i_r}$ rather than $s_{i_1} s_{i_2} \cdots s_{i_r}$, where the $s_{i_k}$ are simple reflections in $W$.

See Section~\ref{section::worked-example-gl4-1100} for directions on using these tables.


\begin{table}[hp]
\caption{Coefficient data for $GL_4$, $\mu = (1,1,0,0)$}
\label{table::gl4-1100-coeff}
\renewcommand{\baselinestretch}{1}
\small\normalsize
\begin{center}

\end{center}
\renewcommand{\baselinestretch}{2}
\small\normalsize
\end{table}

\bibliographystyle{amsplain}
\bibliography{thesis_bibliography}

\end{document}